\numberwithin{equation}{section}
\numberwithin{figure}{section}
\newtheorem{theorem}{Theorem}[section]
\newtheorem*{theorem*}{Theorem}
\newtheorem{lemma}[theorem]{Lemma}
\newtheorem{prop}[theorem]{Proposition}
\newtheorem{corollary}[theorem]{Corollary}
\theoremstyle{definition}
\newtheorem{definition}[theorem]{Definition}
\newtheorem{example}[theorem]{Example}
\newtheorem{remark}[theorem]{Remark}
\newcommand{\Var}{\mathrm{Var}}
\newcommand\inner[1]{\left\langle #1 \right\rangle}
\newcommand{\brackets}[1]{\left\{ #1 \right\}}
\newcommand{\floor}[1]{\left\lfloor #1 \right\rfloor}
\newcommand{\ceil}[1]{\left\lceil #1 \right\rceil}
\newcommand{\indicator}{\mathbf{1}}
\newcommand{\indicatorWithSetBrackets}[1]{\indicator \brackets{#1}}
\renewcommand{\P}{\mathbb{P}}
\newcommand{\E}{\mathbb{E}}
\newcommand{\R}{\mathbb{R}}
\newcommand{\dee}[1]{\; \mathrm{d} #1}
\newcommand{\eps}{\varepsilon}
\newcommand{\EHI}{\hyperref[EHI:d:ehi]{\mathrm{EHI}}}
\newcommand{\EHIPoisson}{\hyperref[EHI:d:ehiPoisson]{\mathrm{EHI}_{\mathrm{PK}}}}
\newcommand{\EHIp}{\hyperref[EHI:d:EquivalentEhis]{\mathrm{\EHI'}}}
\newcommand{\EHIpp}{\hyperref[EHI:d:EquivalentEhis]{\mathrm{\EHI''}}}
\title{\textbf{Towards a characterization of elliptic Harnack inequality for jump processes}}
\author{Jens Malmquist\\
\textit{Department of Mathematics, The University of British Columbia}\\
\textit{Vancouver, BC, Canada}\\
\textit{ORCID: 0000-0002-5439-2367}\\
\texttt{jens@math.ubc.ca}}
\date{}
\begin{document}

\maketitle

\begin{abstract}
    Let $X$ be an isotropic unimodal L\'{e}vy jump process on $\mathbb{R}^d$. We develop probabilistic methods which in many cases allow us to determine whether $X$ satisfies the elliptic Harnack inequality (EHI), by looking only at the jump kernel of $X$, and its truncated second moments. Both our positive results and our negative results can be applied to subordinated Brownian motions (SBMs) in particular. We produce the first known example of an SBM that does \textit{not} satisfy EHI. We show that for many SBMs that were previously known to satisfy EHI (such as the geometric stable process, the iterated geometric stable process, and the relativistic geometric stable process), bounded perturbations of them also satisfy EHI (which was not previously clear). We show that certain SBMs with Laplace exponent $\phi(\lambda) = \tilde{\Omega}(\lambda)$ satisfy EHI, which previous methods were unable to determine.
\end{abstract}

\tableofcontents

\section{Introduction}

Harnack inequalities are the subject of significant research in probability, harmonic analysis, and partial differential equations. The earliest Harnack inequality, proved by Carl Gustav Axel von Harnack \cite{harnack}, was for harmonic functions on the plane: if $B(x_0, r)$ and $B(x_0, R)$ are concentric balls in $\mathbb{R}^2$, with $R > r > 0$, and $u$ is a non-negative solution to the Laplace equation ($\Delta u = 0$) on $B(x_0, R)$, then
\begin{equation} \label{STAB:EarliestEhi}
    \sup_{B(x_0, r)} u \leq C \inf_{B(x_0, r)} u,
\end{equation}
where $C$ is a constant depending on $R/r$ but not on $u$.
Pini \cite{pini} and Hadamard \cite{hadamard} established an analogous inequality for caloric functions: if $R>r>0$, $t_4 > t_3 > t_2 > t_1 > 0$, and $u \in C^\infty((0, \infty) \times \mathbb{R}^d)$ is a non-negative solution to the heat equation ($\frac{\partial u}{\partial t} - \Delta u = 0$) on $(0, t_4) \times B(0, R)$, then
\begin{equation}\label{STAB:EarliestPhi}
    \sup_{B(0, r) \times [t_1, t_2]} u(t, x) \leq C \inf_{B(0, r) \times [t_3, t_4]} u(t, x),
\end{equation}
where $C$ is a constant depending on $d$, $r$, $R$, $t_1$, $t_2$, $t_3$, and $t_4$. Both inequalities have since been generalized to broader settings and to different operators playing the role of $\Delta$. Generalizations of \eqref{STAB:EarliestEhi} are called \textit{elliptic} Harnack inequalities and generalizations of \eqref{STAB:EarliestPhi} are called \textit{parabolic} Harnack inequalities. For a more detailed introduction to the history and basic theory of Harnack inequalities, we refer the reader to \cite{kassman}.

A major use of Harnack inequalities is that they often imply H\"{o}lder continuity for harmonic or caloric functions. 
This theory is well-developed in the case of diffusions (or equivalently, local Dirichlet forms), where Harnack inequalities are a central ingredient in the De Giorgi-Nash-Moser theory in harmonic analysis and partial differential equations. In the case of jump processes (or equivalently, non-local Dirichlet forms), the theory is still in its infancy. Chen, Kumagai, and Wang \cite{CkwElliptic, chen-kumagai-wang2020, ckw2, ckw} have done much of the early work in this area.
In \cite{ckw2}, they established that the parabolic Harnack inequality implies H\"{o}lder regularity for a certain class of jump processes on metric measure spaces. In \cite{Malm}, the author showed that the conditions of \cite{ckw2} can be relaxed, in particular to allow for graphs. It is unknown whether the elliptic Harnack inequality implies H\"{o}lder regularity in the non-local case.

In this paper, we explore the following question: given a jump process $X$ on $\R^d$, can we determine whether $X$ satisfies the elliptic Harnack inequality by looking at the jump kernel of $X$? For simplicity, we assume $X$ is an isotropic unimodal L\'{e}vy process.
We prove strong positive results and find a revealing counterexample. We do not achieve a full characterization of elliptic Harnack inequality for jump processes in terms of the jump kernel, but many naturally-arising jump kernels fall under the perview of our results, and we hope that this paper is the first step towards a more complete picture of what the jump kernel reveals about elliptic Harnack inequality. We also hope that our key constructions (the small/large and small/flat Meyer decompositions\textemdash see Section \ref{ss:zooOfMeyer}) will be useful tools be adapted by future researchers.

Before we discuss the results, let us give some background information on isotropic unimodal L\'{e}vy jump processes and past work on the elliptic Harnack inequality for jump processes.
A \emph{L\'{e}vy process} is a stochastic process with independent and stationary increments. Such a process is called \emph{isotropic unimodal} if for all $t>0$, there exists a non-increasing kernel $m_t : (0, \infty) \to [0, \infty)$ such that $\P(X_t \in A | X_0=x) = \int_A m_t(|x|) \dee{x}$ for all Lebesgue-measurable $A \subseteq \R^d$. So an \emph{isotropic unimodal L\'{e}vy jump process} on $\R^d$ is a process that only moves via jumps, whose increments are independent, stationary, and radially symmetric in distribution, with a density that decreases with distance from the starting point.
Such a process always has a jump kernel $j(r)$, a non-increasing function from $(0, \infty)$ to $[0, \infty)$ such that for all Lebesgue-measurable $A \subseteq \R^d$, the rate at which jumps such that $X_t - X_{t-} \in A$ occur is $\int_A j(|x|) \dee{x}$. The jump kernel always satisfies the \emph{L\'{e}vy-Khintchine condition}:
\begin{equation*}
    \int_{\R^d} \min\{1, |x|^2\} j(|x|) \dee{x} < \infty.
\end{equation*}

We are particularly interested in a subclass of isotropic unimodal L\'{e}vy processes called \emph{subordinate Brownian motions}. A non-decreasing L\'{e}vy process $S=(S_t)_{t \geq 0}$ on $[0, \infty)$ is known as a \emph{subordinator}. A subordinate Brownian motion is a process of the form $X=(X_t)_{t \geq 0} = (B_{S_t})_{t \geq 0}$, where $S$ is a subordinator and $B$ is a standard Brownian motion (sped up or slowed down by a constant factor if desired) independent of $S$.

There are three key prior works to draw from when it comes to positive results establishing elliptic Harnack inequality for jump processes.
Kim and Mimica \cite{KM} provide a set of sufficient conditions that imply elliptic Harnack inequality at small scales for a subordinate Brownian motion. (We cite this result in its entirety as our Theorem \ref{EHI:t:KM}.) Grzywny \cite{G} provides a set of conditions (involving the characteristic exponent of $X$) that imply elliptic Harnack inequality at small scales for isotropic unimodal L\'{e}vy processes.
In the much more general setting of jump processes on abstract metric measure spaces, Chen, Kumagai, and Wang \cite[Corollary 1.2]{CkwElliptic} characterize elliptic Harnack inequality in terms of Faber-Krahn, the Poincar\'{e} inequality, and cutoff-Sobolev. All three of these works use highly analytic proof techniques, and the statements of their sufficient conditions involve more sophisticated machinery than the jump process of $X$. By contrast, our proofs are mostly probabilistic and in many cases our results allow for verifying elliptic Harnack inequality via a glance at the jump kernel.

The exact statements of our main results are somewhat complicated, but let us summarize some of the highlights:

\begin{itemize}
    \item We show that if there exist exponents $\alpha$ and $\beta$ such that if $j(r) \asymp r^{d-2+\alpha} \left(\log(r^{-1}) \right)^{-1-\beta}$ for small $r$, and there exists a constant $c>0$ such that $j(2r) \geq c j(r)$ for all $r$, then $X$ satisfies the elliptic Harnack inequality at small scales. Note that in order for the L\'{e}vy-Khintchine condition to be met, we must have either $\alpha>0$ or $\alpha=0<\beta$. Otherwise, this result places no restrictions on $\alpha$ and $\beta$.
    \item Kim and Mimica \cite{KM} proved that geometric stable processes, iterated geometric stable processes, and relativistic geometric stable processes (see \eqref{EHI:GeomStablelaplaceExp}-\eqref{EHI:RelGeomStablelaplaceExp}) all satisfy the elliptic Harnack inequality at small scales. However, their method does not clearly extend to bounded perturbations of these processes. We show that bounded perturbations of these processes do indeed satisfy the elliptic Harnack inequality at small scales.
    \item In Example \ref{EHI:ex:NoA3example}, we produce a subordinate Brownian motion that satisfies elliptic Harnack inequality at small scales despite clearly falling outside the scope of the results of \cite{KM} and \cite{G}.
\end{itemize}

We are also interested in negative results.
Until this work, it was an open question whether there exists a subordinate Brownian motion that does not satisfy the elliptic Harnack inequality.
Bass and Chen \cite[Section 3]{BC} give an example of a L\'{e}vy process on $\mathbb{R}^d$ that does not satisfy the elliptic Harnack inequality. The process they consider has a high degree of non-regularity (it is not isotropic unimodal; its L\'{e}vy measure is singular with respect to the Lebesgue measure), and they exploit this in their proof. Grzywny and Kwa\'{s}nicki \cite[Example 5.5]{GK} give another process, this one with more regularity, that also fails to satisfy the elliptic Harnack inequality. The process considered in \cite[Example 5.5]{GK} is an isotropic unimodal L\'{e}vy process, but not a subordinated Brownian motion as the jump sizes are uniformly bounded from above, which is crucial to their proof. In Example \ref{EHI:ex:counterexample}, we provide the first known example of a subordinate Brownian motion for which the elliptic Harnack inequality fails.

\subsection{Setting and main results}

Let $X=(X_t)_{t \geq 0}$ be a continuous-time, L\'{e}vy jump process on $\mathbb{R}^d$ with isotropic unimodal increments. In other words, $X$ has each of the following:
\begin{itemize}
    \item Stationary increments: For all $t>s>0$, $X_t-X_s \overset{d}{=} X_{t-s}-X_0$.
    \item Independent increments: For all $t_n>\cdots>t_1>t_0$, the collection $\left\{ X_{t_i} - X_{t_{i-1}} : 1 \leq i \leq n \right\}$ is independent.
    \item Isotropic increments: For all $t>0$, the distribution of $X_t-X_0$ is radially symmetric.
    \item Unimodal increments: For all $t>0$, $X_t-X_0$ has a density which is non-increasing in $|\cdot|$.
\end{itemize}
We use the notation $X_t$ and $X(t)$ interchangeably.
We assume that $X$ is right-continuous, with left-limits (\textit{cadlag}). For all $t>0$, we use the notation $X_{t-}$ or $X(t-)$ to denote the left-limit $\lim_{s \nearrow t} X_s$.

Given such a process, let $\mathbb{P}_x$ and $\mathbb{E}_x$ denote probabilities and expectations for the initial value $X_0=x$.
Let us denote hitting times and exit times as follows:
\begin{equation*}
    T_A := \min\{t\geq 0:X_t \in A\}, \qquad \tau_A := \min\{t \geq 0:X_t \notin A\}.
\end{equation*}
For all $t>0$, let $\Delta X(t) := X_t - X_{t-}$ denote the displacement vector of the jump taken at time $t$ (or $0$, if there is not a jump taken at this time).
For all $r>0$, let $T^{(r)}$ denote the first time that the process takes a jump of magnitude greater than or equal to $r$:
\begin{equation*}
    T^{(r)} := \inf\{ t>0 : |\Delta X(t)| \geq r \}.
\end{equation*}
Such a process has an associated regular Dirichlet form $(\mathcal{E}, \mathcal{F})$.
By the Beurling-Deny formula \cite[Theorem 3.2.1]{FOT}, $\mathcal{E}$ can be decomposed into a strongly local component, a jumping component, and a killing component:
\begin{equation*}
    \mathcal{E}(f, g) = \mathcal{E}^c(f, g) + \int_{\mathbb{R}^d \times \mathbb{R}^d \setminus \mathrm{diag}} (f(x)-f(y))(g(x)-g(y)) \, \widehat{J}(dx, dy) + \int_{\mathbb{R}^d} f(x) g(x) \, \widehat{k}(dx)
\end{equation*}
where $\mathrm{diag}$ denotes the diagonal of $\mathbb{R}^d \times \mathbb{R}^d$, $\mathcal{E}^c$ is a strongly local symmetric form, $\widehat{J}$ is a symmetric non-negative Radon measure (which we call the \textit{jumping measure}), and $\widehat{k}$ is a non-negative Radon measure (which we call the \textit{killing measure}).

In our setting, $X$ is pure-jump, so the killing and strongly local components are identically $0$.
Since $X$ is isotropic unimodal, it follows from the $(1)\Longrightarrow(3)$ implication of the proposition on page 488 of \cite{W} that there exists a non-increasing function $j : (0, \infty) \to [0, \infty)$ such that $\widehat{J}(dx, dy) = j(|x-y|) \, dx \, dy$.
We refer to the function $j(r)$ as the \emph{jump kernel} of $X$. We also sometimes use the notation $J(x, y):=j(|x-y|)$ and $J(x, U):=\int_U j(|x-y|) \dee{y}$.

If $X(t) \neq X(t-)$ for some $t>0$, we refer to $X(t)-X(t-)$ as the \emph{displacement} of $X$ at time $t$, which we denote by $\Delta X(t)$. We refer to $|\Delta X(t)|$ as the \emph{magnitude} of the jump at time $t$. There is a probabilistic interpretation of the jump kernel: for all Lebesgue-measurable $A \subseteq \mathbb{R}^d$, jumps in $X$ with displacement in $A$ occur as a Poisson point process with rate $\int_A j(|x|) \dee{x}$.

\begin{definition}[Harmonicity]
    For all open $D \subseteq \mathbb{R}^d$, we say that a function $h:\mathbb{R}^d \to \mathbb{R}$ is \emph{harmonic} on $D$ (with respect to $X$) if for all open $U \Subset D$ and for all $x$, we have
    \begin{equation*}
        h(x) = \mathbb{E}_x \left[ h(X_{\tau_U}) \right].
    \end{equation*}
    (Note that when $X$ is a jump process, the harmonicity of $h$ on $D$ depends on the values $h$ takes on $D^c$, because for $U \Subset D$, $\tau_U$ need not be in $D$.)
\end{definition}

For all $x \in \mathbb{R}^d$ and $r>0$, let $B(x, r)$ denote the open ball in $\mathbb{R}^d$ with center $x$ and radius $r$. Given a Lebesgue-measurable set $A \subseteq \mathbb{R}^d$, let $|A|$ denote the Lebesgue measure of $A$.
    
\begin{definition}[Elliptic Harnack inequality] \label{EHI:d:ehi}
    We say that $X$ satisfies the \emph{elliptic Harnack inequality} ($\EHI$) if there exist constants $C \geq 1$ and $\kappa \in (0, 1)$ such that for all $x_0 \in \mathbb{R}^d$ and $r>0$, if a function $h$ is non-negative everywhere and harmonic with respect to $X$ on $B(x_0, r)$, then
    \begin{equation}\label{EHI:e:ehiDef}
        C^{-1} h(y) \leq h(x) \leq C h(y)
    \end{equation}
    for all $x, y \in B(x_0, \kappa r)$.

    Given a set $S \subseteq (0, \infty)$, let us say that $\EHI(r \in S)$ holds if there exist $C$ and $\kappa$ such that \eqref{EHI:e:ehiDef} holds for all $x_0 \in \R^d$, $r \in S$, all functions $h$ that are non-negative everywhere and harmonic on $B(x_0, r)$, and $x, y \in B(x_0, \kappa r)$. Let us also write $\EHI(r \leq 1)$ for $\EHI(r \in (0, 1])$, $\EHI(r \geq 1)$ for $\EHI(r \in [1, \infty))$, etc.
\end{definition}

In many applications, the elliptic Harnack inequality is primarily of interest at small scales. In fact, some other authors such as Kim and Mimica \cite{KM} \textit{define} the elliptic Harnack inequality as what we call $\EHI(r \leq 1)$.

For our main positive results, we make a simplifying assumption that there exists a $c_j>0$ such that for all $r>0$,
    \begin{equation} \label{EHI:e:regularJumps}
        j(2r) \geq c_j j(r).
    \end{equation}
In Section \ref{EHI:s:GreenPoisson}, we use a compactness argument to show that if \eqref{EHI:e:regularJumps} holds, then $X$ satisfies $\EHI(r \in [a, b])$ for all $0<a<b<\infty$. Therefore, when \eqref{EHI:e:regularJumps} is met, $\EHI$ only needs to be verified at very small and very large scales.

For all $r>0$, let us define the \emph{truncated second moment} of $X$ at $r$ as
\begin{equation}\label{EHI:e:m2def}
    m_2(r) := \int_{B(0, r)} |x|^2 j(|x|) \dee{x}
\end{equation}
and the \emph{tail} of $X$ at $r$ as
\begin{equation}\label{EHI:e:lambdadef}
    \lambda(r) := \int_{B(0, r)^c} j(|x|) \dee{x}.
\end{equation}
Our main results are expressed in terms of comparisons between the quantities $m_2(r)$, $r^{d+2} j(r)$, and $r^2 \lambda(r)$, all of which vary with $r$ and can be thought of as having the same ``units." In the following lemma, we see that when \eqref{EHI:e:regularJumps} holds, $r^{d+2} j(r)$ is at most a constant multiple of each of the other two quantities $m_2(r)$ and $r^2 \lambda(r)$.

\begin{lemma}\label{EHI:l:SmallestOfThreeQuantities}
    Let $X$ be an isotropic unimodal L\'{e}vy process on $\R^d$. There exists a universal constant $C_1=C_1(d)>0$ (depending only on $d$) such that for all $r>0$,
    \begin{equation*}
        r^{d+2}j(r) \leq C_1 m_2(r).
    \end{equation*}
    If $X$ satisfies \eqref{EHI:e:regularJumps}, then there exists a $C_2=C_2(c_j, d)>0$ (depending only on $c_j$ and $d$) such that for all $r>0$,
    \begin{equation*}
        r^{d+2} j(r) \leq C_2 r^2 \lambda(r).
    \end{equation*}
\end{lemma}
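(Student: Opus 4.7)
The plan is to prove each inequality by bounding $m_2(r)$ and $r^2\lambda(r)$ from below by an integral over a carefully chosen annulus, using the monotonicity of $j$ (and, for the second bound, the doubling condition \eqref{EHI:e:regularJumps}). The key observation is that $r^{d+2}j(r)$ is, up to dimensional constants, the contribution to the truncated second moment coming from a single shell at radius $\sim r$, so to dominate $r^{d+2}j(r)$ it suffices to isolate one shell in each integral.

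For the first inequality, I would restrict the integral defining $m_2(r)$ to the annulus $A = B(0,r) \setminus B(0, r/2)$. On $A$ we have $|x|^2 \ge r^2/4$, and since $j$ is non-increasing and $|x| \le r$, also $j(|x|) \ge j(r)$. The volume of $A$ is $c_d(1-2^{-d}) r^d$, where $c_d$ is the volume of the unit ball. Hence
\begin{equation*}
    m_2(r) \;\geq\; \int_A |x|^2 j(|x|)\,dx \;\geq\; \frac{r^2}{4}\, j(r)\, c_d(1-2^{-d})\, r^d,
\end{equation*}
and rearranging gives $r^{d+2}j(r) \le C_1 m_2(r)$ with $C_1 = 4/(c_d(1-2^{-d}))$.

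For the second inequality, I would restrict the integral defining $\lambda(r)$ to the outer annulus $A' = B(0,2r) \setminus B(0,r)$. On $A'$, we have $|x| \le 2r$, so \eqref{EHI:e:regularJumps} gives $j(|x|) \ge j(2r) \ge c_j j(r)$. Since $|A'| = c_d(2^d-1) r^d$, this yields
\begin{equation*}
    \lambda(r) \;\geq\; \int_{A'} j(|x|)\,dx \;\geq\; c_j j(r)\, c_d(2^d-1)\, r^d,
\end{equation*}
and multiplying by $r^2$ gives the desired bound with $C_2 = 1/(c_j c_d (2^d-1))$.

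Neither step presents any real obstacle, but it is worth noting the asymmetric role of the doubling hypothesis: the first inequality is purely a consequence of monotonicity of $j$ because we shrink $r$ (where $j$ only gets larger for free), while the second requires comparing $j(r)$ to $j(2r)$ on a shell strictly farther from the origin, which without \eqref{EHI:e:regularJumps} could make $j$ arbitrarily small and break the bound. This also explains why only $C_2$ depends on $c_j$.
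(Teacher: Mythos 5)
Your proof is correct and follows essentially the same approach as the paper: both bound $m_2(r)$ below by the integral over $B(0,r)\setminus B(0,r/2)$ using monotonicity of $j$, and bound $\lambda(r)$ below by the integral over $B(0,2r)\setminus B(0,r)$ using \eqref{EHI:e:regularJumps}. The constants you obtain match the paper's up to rearrangement.
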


\begin{proof}
    For all $r>0$,
    \begin{align*}
        m_2(r) &\geq \int_{B(0, r) \setminus B(0, r/2)} |x|^2 j(|x|) \dee{x} \geq \left|B(0, r) \setminus B\left(0, \frac{r}{2} \right)\right| \cdot \left(\frac{r}{2}\right)^2 j(r) \\
        &= \frac14 (1-2^{-d}) |B(0, 1)| r^{d+2} j(r).
    \end{align*}
    If $X$ satisfies \eqref{EHI:e:regularJumps}, then
    \begin{equation*}
        \lambda(r) \geq \int_{B(0, 2r) \setminus B(0, r)} j(|x|) \dee{x} \geq |B(0, 2r) \setminus B(0, r)| \cdot c_j j(r) \geq (2^d-1)|B(0, 1)| c_j r^d j(r).
    \end{equation*}
\end{proof}

Our main result, in its full generality, is the following theorem.

\begin{theorem} \label{EHI:t:main}
    Let $X$ be an isotropic unimodal L\'{e}vy process on $\R^d$ satisfying \eqref{EHI:e:regularJumps}.
    Let $j(r)$ be its jump kernel, and let $m_2(r)$ and $\lambda(r)$ be as defined in \eqref{EHI:e:m2def} and \eqref{EHI:e:lambdadef}. Let $M, C, c > 0$ and $\eps \in (0, 1]$, and let $(r_n) \subseteq (0, \infty)$ be a sequence of positive numbers such that $M^{-1} \leq \frac{r_{n+1}}{r_n} \leq M$ for all $n$.
    Suppose that for all $n$, we have either
    \begin{equation}\label{EHI:e:bigm2}
        m_2(r_n) \geq c r_n^2 \lambda(r_n)
    \end{equation}
    or
    \begin{equation}\label{EHI:e:smallm2}
        m_2(r_n) \leq C \left( r_n^{d+2} j(r_n) \right)^\eps \left( r_n^2 \lambda(r_n) \right)^{1-\eps}.
    \end{equation}
    If $r_n \to 0$, then $X$ satisfies $\EHI(r \leq 1)$. If $r_n \to \infty$, then $X$ satisfies $\EHI(r \geq 1)$.
\end{theorem}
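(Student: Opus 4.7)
My plan is to verify $\EHI$ separately at each scale in the sequence $(r_n)$, with constants uniform in $n$, and then pass to intermediate scales. The compactness argument from Section \ref{EHI:s:GreenPoisson} already gives $\EHI(r \in [a, b])$ for every $0 < a < b < \infty$ under \eqref{EHI:e:regularJumps}, so together with the bounded-ratio condition $M^{-1} \le r_{n+1}/r_n \le M$ and the doubling of $j$, what is missing is uniform control at each $r_n$; obtaining it handles both the $r_n \to 0$ and $r_n \to \infty$ regimes.

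\textbf{Ingredients at a fixed scale.} At each scale $r_n$ I would prove the three standard probabilistic ingredients that are known to imply $\EHI$ for jump processes: two-sided exit-time estimates $\E_x[\tau_{B(x_0, r_n)}] \asymp \Phi(r_n)$, where $\Phi(r) := r^2/(m_2(r) + r^2 \lambda(r))$ is the natural time scale; comparability of hitting probabilities $\P_x(T_K < \tau_{B(x_0, r_n)})$ for $x \in B(x_0, \kappa r_n)$ and compact sets $K$ of non-negligible capacity; and an averaging upper bound on the Poisson kernel on $B(x_0, r_n)$. Given these, the usual balayage argument (averaging a non-negative harmonic function over excursions from small sub-balls and swapping with the Poisson kernel via the L\'evy system formula) yields $\EHI$ at scale $r_n$.

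\textbf{Two cases via Meyer decomposition.} To derive these ingredients from \eqref{EHI:e:bigm2} or \eqref{EHI:e:smallm2}, I would apply the Meyer decomposition from Section \ref{ss:zooOfMeyer} appropriate to each hypothesis. In Case A, where \eqref{EHI:e:bigm2} holds, the small/large Meyer decomposition at threshold $r_n$ splits $X$ into a small-jump process with kernel $j(|x|)\indicator\{|x| < r_n\}$ and truncated second moment $m_2(r_n)$, plus an independent compound Poisson process of rate $\lambda(r_n)$. Since $m_2(r_n) \gtrsim r_n^2 \lambda(r_n)$, the small-jump part dominates; its three ingredients follow by a scaling argument, while the large-jump part contributes only an $O(1)$ expected number of jumps during time $\Phi(r_n)$ and is absorbed perturbatively. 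In Case B, where \eqref{EHI:e:smallm2} holds, the small-jump process alone is too weak, so I would instead use the small/flat Meyer decomposition, in which the jumps of magnitude at least $r_n$ are replaced by a ``flat'' component of intensity comparable to $j(r_n)$ on an annulus whose outer radius is tuned by $\eps$; the interpolation bound \eqref{EHI:e:smallm2} is designed precisely so that this flat piece, combined with the small jumps, restores the required exit-time and hitting-probability comparability.

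\textbf{Main obstacle.} I expect Case B to be the hardest. The small/flat decomposition is non-standard, and the exponent $\eps$ must be threaded carefully through all three ingredients with constants uniform in $n$. In particular, the hitting-probability comparability for a process with a flattened jumping component does not follow from classical scaling; it will likely require an ad-hoc analysis using the L\'evy system synthesis of the Poisson kernel together with the doubling hypothesis \eqref{EHI:e:regularJumps}, and keeping the estimates uniform as $r_n$ varies along the sequence is the delicate point.
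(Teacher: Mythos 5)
Your overall skeleton (reduce to the scales $r_n$, invoke Proposition \ref{EHI:p:EHI[a,b]forfree} for intermediate scales, split into two cases by hypothesis, use Meyer decompositions) matches the paper, but the middle of your argument diverges from the paper's and, as written, contains a real gap. You propose to establish the ``three standard probabilistic ingredients'' — two-sided exit-time estimates, hitting-probability comparability, and a Poisson kernel upper bound — and then run a balayage argument. The hitting-probability comparability step is essentially as hard as $\EHI$ itself for jump processes: it is the part of the classical machinery that does not reduce to scaling, and you have not indicated how to get it from \eqref{EHI:e:bigm2} or \eqref{EHI:e:smallm2} alone. Your Case A, in which you say the large-jump part contributes ``only an $O(1)$ expected number of jumps'' and can be ``absorbed perturbatively'', is also too optimistic: a single scattering jump is exactly what could break comparability, and the paper has to control this carefully via explicit upper and lower bounds on $G^{\inner{r_0},0}_{B(0,r)}$ (Lemmas \ref{EHI:l:G0upperbound}, \ref{EHI:l:G0lowerbound}), with the $\exp(-\frac{1}{10} r^2\lambda\inner{r_0}/m_2\inner{r_0})$ factor showing why \eqref{EHI:e:bigm2} is the right hypothesis.

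The paper circumvents your hitting-probability obstacle entirely by exploiting isotropic unimodality through the reformulation $\EHIPoisson$ (Proposition \ref{EHI:p:EHIpoisson=EHI}): by translation and rotation invariance one needs only the two-point comparison $K_{B(0,r)}(x,w) \asymp K_{B(0,r)}(-x,w)$. The small/flat decomposition then provides a reflection symmetry: $-X^{\inner{r_0}} + \hat X^{\inner{r_0}} \overset{d}{=} X$, and the flatness of $\hat j^{\inner{r_0}}$ makes the location immediately after the first flattening jump essentially independent of the direction of $X^{\inner{r_0}}_{T^{\inner{r_0}}}$; this gives $K^{\inner{r_0},>0}(x,w)\asymp K^{\inner{r_0},>0}(-x,w)$ in Corollary \ref{EHI:c:K1(x)=K1(-x)} with no balayage. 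You have also misdescribed the small/flat decomposition: the flat piece $\hat j^{\inner{r_0}}$ equals $\frac12 j(r_0)$ on the \emph{entire} ball $(0, r_0]$, not an annulus, and its radius is not tuned by $\eps$. The role of $\eps$ is instead in Proposition \ref{EHI:p:EHIsmallm2}: one takes $k := \ceil{1 + \eps^{-1}}$ doublings and uses Lemma \ref{EHI:l:CloseToSubmultiplicative} to gain a factor $(m_2\inner{r_0}/r_0^2\lambda\inner{r_0})^k$ in the probability that $X^{\inner{r_0}}$ travels far without a flattening jump, which exactly matches \eqref{EHI:e:smallm2} so that $K^{\inner{r_0},0}\lesssim K^{\inner{r_0},>0}$ and the dominant term is the one controlled by symmetry. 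Without the $\EHIPoisson$ reduction and this dominance argument, I do not see how to close your Case B.
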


Note that each $r_n$ need only satisfy one of \eqref{EHI:e:bigm2} and \eqref{EHI:e:smallm2}, and it need not be the same one for all $n$. 
However, in practice, it will usually be easiest to verify that just one of \eqref{EHI:e:bigm2} or \eqref{EHI:e:smallm2} holds as $r \to 0^+$ or $r \to \infty$.

Before we state some corollaries of Theorem \ref{EHI:t:main}, let us establish some notation for comparing quantities. Given functions $f$ and $g$ taking positive values, let us say that $f(x) \lesssim g(x)$ on some set $S$ if there exists a constant $C>0$ such that $f(x) \leq Cg(x)$ for all $x \in S$.
We refer to the constant $C$ as the ``implicit constant" in $\lesssim$.
If the set $S$ is unspecified, assume it to be the entire domain of $f$ and $g$.
We say that $f(x) \lesssim g(x)$ as $x \to 0^+$ if there exists a $\delta>0$ such that $f(x) \lesssim g(x)$ on $(0, \delta]$, and that $f(x) \lesssim g(x) $ as $x \to \infty$ if there exists a $C>0$ such that $f(x) \lesssim g(x)$ on $[C, \infty)$.
We use the notation $f(x) \gtrsim g(x)$ when $g(x) \lesssim f(x)$, and $f(x) \asymp g(x)$ when we have both $f(x) \lesssim g(x)$ and $g(x) \lesssim f(x)$. Let us also say that $f(x) \ll g(x), x \to a$ (or equivalently, $g(x) \gg f(x), x \to a$) if $\lim_{x \to a} f(x)/g(x)=0$. For sequences $(a_n), (b_n) \subseteq (0, \infty)$, we similarly say that $a_n \ll b_n$ (or $b_n \gg a_n)$ if $\lim_{n \to \infty} a_n/b_n=0$.
We also apply the notation $\lesssim$, $\gtrsim$, and $\asymp$ to sequences, when thought of as functions on $\mathbb{N}$.

The following corollary describes the most common ways we check the conditions of Theorem \ref{EHI:t:main} in practice.

\begin{corollary} \label{EHI:c:main}
    Let $X$ be an isotropic unimodal L\'{e}vy process on $\R^d$ with jump kernel $j(r)$ satisfying \eqref{EHI:e:regularJumps}, and let $m_2(r)$ and $\lambda(r)$ be as defined in \eqref{EHI:e:m2def} and \eqref{EHI:e:lambdadef}.
    \begin{itemize}
        \item (a) If $m_2(r) \gtrsim r^2 \lambda(r)$ as $r \to 0^+$, then $X$ satisfies $\EHI(r \leq 1)$.
        \item (b) If $m_2(r) \gtrsim r^2 \lambda(r)$ as $r \to \infty$, then $X$ satisfies $\EHI(r \geq 1)$.
        \item (c) If there exists an $\eps \in (0, 1]$ such that $m_2(r) \lesssim (r^{d+2} j(r))^\eps (r^2 \lambda(r))^{1-\eps}$ as $r \to 0^+$, then $X$ satisfies $\EHI(r \leq 1)$.
        \item (d) If there exists an $\eps \in (0, 1]$ such that $m_2(r) \lesssim (r^{d+2} j(r))^\eps (r^2 \lambda(r))^{1-\eps}$ as $r \to \infty$, then $X$ satisfies $\EHI(r \geq 1)$.
    \end{itemize}
\end{corollary}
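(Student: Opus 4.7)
The plan is to derive the corollary directly from Theorem \ref{EHI:t:main} by constructing, in each of the four cases, a geometric sequence $(r_n)$ with bounded ratios such that every term satisfies the relevant hypothesis \eqref{EHI:e:bigm2} or \eqref{EHI:e:smallm2}. There is no substantive obstacle: unpacking the meaning of an asymptotic inequality "as $r \to 0^+$" or "as $r \to \infty$" supplies a uniform range on which the pointwise inequality holds, and a dyadic sequence inside that range feeds directly into the theorem.

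For part (a), I would fix $\delta > 0$ and $c > 0$ such that $m_2(r) \geq c r^2 \lambda(r)$ for all $r \in (0, \delta]$, which is what the hypothesis gives. Setting $r_n := \delta \cdot 2^{-n}$ for $n \geq 0$ produces a sequence with constant ratio $r_{n+1}/r_n = 1/2$, so one may take $M = 2$, and $r_n \to 0$. Each $r_n$ lies in $(0, \delta]$, hence each satisfies \eqref{EHI:e:bigm2}, and Theorem \ref{EHI:t:main} yields $\EHI(r \leq 1)$. Part (b) is identical in structure: the hypothesis supplies an $R>0$ with $m_2(r) \geq c r^2 \lambda(r)$ on $[R, \infty)$, and the sequence $r_n := R \cdot 2^n$ satisfies the assumptions of Theorem \ref{EHI:t:main} with $M = 2$ and $r_n \to \infty$, giving $\EHI(r \geq 1)$.

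Parts (c) and (d) are proved in exactly the same way, using the same sequences $r_n := \delta \cdot 2^{-n}$ and $r_n := R \cdot 2^n$ respectively, but now verifying the alternative hypothesis \eqref{EHI:e:smallm2} at each $r_n$ with the value of $\eps$ supplied by the assumption. Since the $\lesssim$ relation on the interval of interest is exactly the pointwise inequality \eqref{EHI:e:smallm2} (with $C$ equal to the implicit constant), no further work is needed.

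The one conceptual point worth highlighting in the writeup is that Theorem \ref{EHI:t:main} allows the choice between \eqref{EHI:e:bigm2} and \eqref{EHI:e:smallm2} to vary with $n$, but in each part of this corollary a single uniform choice suffices; this makes the reduction to the theorem essentially a one-line argument, and I expect the actual proof in the paper to be very short.
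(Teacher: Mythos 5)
Your proof is correct and follows the same route as the paper: the paper uses $r_n = 2^{-n}$ (resp.\ $2^n$), discarding finitely many terms to land inside the interval where the asymptotic inequality holds, which is exactly what your $\delta \cdot 2^{-n}$ (resp.\ $R \cdot 2^n$) achieves more explicitly. Your observation that a single uniform choice between \eqref{EHI:e:bigm2} and \eqref{EHI:e:smallm2} suffices in each part is accurate.
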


\begin{proof}
    For (a) and (c), we can take $r_n=2^{-n}$ for all $n$. For (b) and (d), we can take $r_n=2^n$ for all $n$. If necessary, discard finitely many terms. Then the result is a direct application of Theorem \ref{EHI:t:main}. 
\end{proof}

Note that the assumption of \eqref{EHI:e:regularJumps} must hold at all scales, not just as $r \to 0^+$ or as $r \to \infty$, because for jump processes, the jump that causes the process to exit a bounded open region can take it very far away.
Most of the time, when we check (c) or (d) in Corollary \ref{EHI:c:main}, we will simply take $\eps=1$, so that $m_2(r) \lesssim (r^{d+2}j(r))^\eps (r^2 \lambda(r))^{1-\eps}$ becomes $m_2(r) \lesssim r^{d+2} j(r)$. However, in light of Lemma \ref{EHI:l:m2lambdaComparable}, the existence of an 
$\eps \in (0, 1]$ such that $m_2(r) \lesssim (r^{d+2} j(r))^\eps (r^2 \lambda(r))^{1-\eps}$ is in principle a weaker condition to check than $m_2(r) \lesssim r^{d+2} j(r)$.

The following corollary illustrates the power of Theorem \ref{EHI:t:main}. Consider the ratio $\frac{r^{d+2}j(r)}{s^{d+2}j(s)}$, where $0<s \leq r$. If the growth of this ratio is polynomial or faster, or polylogarithmic or slower, then we have $\EHI$. The only way for $\EHI$ to fail is if the ratio oscillates between fast-growing and slow-growing, or if its growth is slower than polynomial but faster than polylogarithmic (for example, if $j(r) \asymp r^{-d-2} \exp(-\sqrt{\log(r^{-1})})$ as $r \to 0^+$). Most naturally-arising isotropic unimodal L\'{e}vy jump processes such that \eqref{EHI:e:regularJumps} holds seem to satisfy one of the conditions of this corollary, and therefore $\EHI$.

\begin{corollary}\label{EHI:c:mainNoM2}
    Let $X$ be an isotropic unimodal L\'{e}vy jump process with jump kernel $j(r)$, satisfying \eqref{EHI:e:regularJumps}.
    \begin{itemize}
        \item (a) If there exist $c, R,\alpha>0$ such that
        \begin{equation*}
            \frac{r^{d+2} j(r)}{s^{d+2} j(s)} \geq c \left(\frac{r}{s}\right)^\alpha \qquad\mbox{for all $0 < s \leq r \leq R$},
        \end{equation*}
        then
        \begin{equation*}
            m_2(r) \lesssim r^{d+2} j(r) \qquad\mbox{as $r \to 0^+$}
        \end{equation*}
        and therefore $X$ satisfies $\EHI(r \leq 1)$.
        \item (b) If $j(r) \gtrsim r^{-d-2}$ as $r \to \infty$, and there exist $c, R,\alpha>0$ such that
        \begin{equation*}
            \frac{r^{d+2} j(r)}{s^{d+2} j(s)} \geq c \left(\frac{r}{s}\right)^\alpha \qquad\mbox{for all $R \leq s \leq r$},
        \end{equation*}
        then
        \begin{equation*}
            m_2(r) \lesssim r^{d+2} j(r) \qquad\mbox{as $r \to \infty$}
        \end{equation*}
        and therefore $X$ satisfies $\EHI(r \geq 1)$.
        \item (c) If $j(r) \gtrsim r^{-d}$ as $r \to 0^+$, and there exist $C, R, \alpha>0$ such that
        \begin{equation*}
            \frac{r^{d+2} j(r)}{s^{d+2} j(s)} \leq C \left( \frac{\log(s^{-1})}{\log(r^{-1})} \right)^{1+\alpha} \qquad\mbox{for all $0 < s \leq r \leq R$},
        \end{equation*}
        then
        \begin{equation*}
            m_2(r) \gtrsim r^2 \lambda(r) \asymp r^{d+2} j(r) \qquad\mbox{as $r \to 0^+$}
        \end{equation*}
        and therefore $X$ satisfies $\EHI(r \leq 1)$.
        \item (d) If there exist $C, R, \alpha>0$ such that
        \begin{equation*}
            \frac{r^{d+2} j(r)}{s^{d+2} j(s)} \leq C \left( \frac{\log r}{\log s} \right)^{1+\alpha} \qquad\mbox{for all $R \leq s \leq r$},
        \end{equation*}
        then
        \begin{equation*}
            m_2(r) \gtrsim r^2 \lambda(r) \asymp r^{d+2} j(r) \qquad\mbox{as $r \to \infty$}
        \end{equation*}
        and therefore $X$ satisfies $\EHI(r \geq 1)$.
    \end{itemize}
\end{corollary}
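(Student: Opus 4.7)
My plan is to reduce each of the four parts to Corollary \ref{EHI:c:main} by establishing the stated integral comparisons between $m_2(r)$, $r^{d+2}j(r)$, and $r^2 \lambda(r)$. Writing $\psi(r) := r^{d+2}j(r)$ and using polar coordinates, $m_2(r) = c_d \int_0^r \psi(s)/s \, ds$ and $\lambda(r) = c_d \int_r^\infty \psi(s)/s^3 \, ds$ for a dimensional constant $c_d$. Each hypothesis gives a direct pointwise comparison between $\psi(s)$ and $\psi(r)$; interchanging the roles of $r$ and $s$ yields the same estimate with the inequality on $r,s$ reversed. I will plug these comparisons into the integrals and compute.

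For part (a), the hypothesis $\psi(s) \leq c^{-1}(s/r)^\alpha \psi(r)$ for $s \leq r \leq R$ gives $m_2(r) \leq c_d c^{-1}\psi(r)r^{-\alpha}\int_0^r s^{\alpha-1}\,ds = (c_d/(c\alpha))\psi(r)$, so $m_2(r)\lesssim r^{d+2}j(r)$ as $r\to 0^+$, and Corollary \ref{EHI:c:main}(c) applies. Part (b) uses the same computation except the integral splits as $\int_0^r = \int_0^R + \int_R^r$; the first piece is a finite constant absorbed into the main estimate using $j(r)\gtrsim r^{-d-2}$ as $r\to\infty$ (equivalently $\psi(r)\gtrsim 1$), and then Corollary \ref{EHI:c:main}(d) applies.

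Parts (c) and (d) both require a lower bound $m_2(r) \gtrsim \psi(r) \cdot (\text{log factor})$ and a matching upper bound $\lambda(r) \lesssim \psi(r)/r^2$; together these yield $r^2\lambda(r) \asymp \psi(r)$ (combining with Lemma \ref{EHI:l:SmallestOfThreeQuantities}) and $m_2(r)\gtrsim r^2\lambda(r)$, so Corollary \ref{EHI:c:main}(a) or (b) finishes the job. For (c), the hypothesis and its swap give $C^{-1}\left(\frac{\log r^{-1}}{\log s^{-1}}\right)^{1+\alpha}\psi(r) \leq \psi(s) \leq C\left(\frac{\log r^{-1}}{\log s^{-1}}\right)^{1+\alpha}\psi(r)$, the first bound for $s \leq r \leq R$ and the second for $r \leq s \leq R$. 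The substitution $u = \log s^{-1}$ turns $\int_0^r 1/(s(\log s^{-1})^{1+\alpha})\,ds$ into $\int_{\log r^{-1}}^\infty u^{-1-\alpha}\,du = (\log r^{-1})^{-\alpha}/\alpha$, which combined with the $(\log r^{-1})^{1+\alpha}$ prefactor gives $m_2(r) \gtrsim \psi(r)\log r^{-1}$; the same substitution turns $\int_r^R 1/(s^3(\log s^{-1})^{1+\alpha})\,ds$ into $\int_{\log R^{-1}}^{\log r^{-1}} e^{2u}/u^{1+\alpha}\,du$. The main technical point is that this latter integral is dominated by its upper endpoint and behaves like $1/(r^2(\log r^{-1})^{1+\alpha})$, so after multiplying by the $(\log r^{-1})^{-(1+\alpha)}$ prefactor the two log factors cancel exactly and we obtain $\int_r^R \psi(s)/s^3\,ds \lesssim \psi(r)/r^2$. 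The constant tail $\int_R^\infty \psi(s)/s^3\,ds$ is absorbed using $\psi(r)/r^2 = r^d j(r) \gtrsim 1$, which is exactly the hypothesis $j(r) \gtrsim r^{-d}$ as $r\to 0^+$.

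Part (d) is a mirror of (c) at $r\to\infty$, with $\log r^{-1}$ replaced by $\log r$ throughout. No extra hypothesis analogous to $j(r)\gtrsim r^{-d}$ is needed: since $r \geq R$, the $\lambda$ integral already lies entirely within the range controlled by the hypothesis and has no separate tail, while the constant $m_2(R)$ from splitting $m_2$ is dominated by the growing main term $\psi(r)\log r$. The main obstacle throughout is carrying out the logarithmic-substitution integrals carefully enough to see the exact cancellation between the prefactor $(\log r^{-1})^{1+\alpha}$ (or $(\log r)^{1+\alpha}$) and the endpoint behavior of the integral, which is what drives the sharp conclusion $r^2 \lambda(r) \asymp r^{d+2}j(r)$ in parts (c) and (d).
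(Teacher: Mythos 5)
Your proposal is correct and follows essentially the same route as the paper. For parts (a) and (b) the match is exact: convert $m_2$ to a radial integral, use the hypothesis as a pointwise bound on $\psi(s) := s^{d+2}j(s)$, compute, and absorb the constant from the split $\int_0^r = \int_0^R + \int_R^r$ in (b) via $\psi(r)\gtrsim 1$. For parts (c) and (d) you diverge slightly from the paper in one step: the paper only proves the upper bound $\lambda(r)\lesssim \psi(r)/r^2$ (its version of your $e^{2u}/u^{1+\alpha}$ integral is Lemma \ref{EHI:l:SpecialIntegralAsymptotic} resp.\ \ref{EHI:l:SpecialIntegralAsymptotic'}) and then cites the universal lower bounds $m_2(r)\gtrsim r^{d+2}j(r)$ and $r^2\lambda(r)\gtrsim r^{d+2}j(r)$ from Lemma \ref{EHI:l:SmallestOfThreeQuantities}, whereas you additionally compute a sharper lower bound $m_2(r)\gtrsim \psi(r)\log(r^{-1})$ directly from the pointwise hypothesis. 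That extra computation is not needed (Lemma \ref{EHI:l:SmallestOfThreeQuantities} already delivers $m_2\gtrsim \psi$), but it is correct and offers useful intuition for why (c)/(d) land on the opposite side of Theorem \ref{EHI:t:main} from (a)/(b). One small remark: in (d), your worry about the constant $m_2(R)$ being dominated is moot for the lower bound, since $m_2(r)\geq \int_R^r \psi(s)\,ds/s$ regardless of the sign of the leftover piece; your observation that no analogue of $j(r)\gtrsim r^{-d}$ is required because the $\lambda$-integral lies entirely in the controlled range is exactly right and matches the paper.
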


Let us consider some examples.

\begin{example}
    Let $X$ be an isotropic unimodal L\'{e}vy jump process with jump kernel $j(r)$, satisfying \eqref{EHI:e:regularJumps}, such that $j(r) \asymp r^{-d-
    \alpha}$ for some $\alpha \in (0, 2)$. (Note that $\alpha \notin (0, 2)$ would cause the L\'{e}vy-Khintchine condition to fail.) This example was already known to satisfy $\EHI$, but it is a good starting point to familiarize the reader with the three quantities $r^{d+2}j(r)$, $m_2(r)$, and $r^2 \lambda(r)$, and how we compare them to check the conditions of our main results. A simple calculation shows that
    \begin{equation*}
        m_2(r) \asymp r^2 \lambda(r) \asymp r^{d+2} j(r) \asymp r^{2-\alpha}.
    \end{equation*}
    This satisfies all four conditions of Corollary \ref{EHI:c:main}, so $X$ satisfies $\EHI$.
\end{example}

\begin{example}\label{EHI:ex:GeneralExampleSmallR}
    Let $X$ be an isotropic unimodal L\'{e}vy jump process with jump kernel $j(r)$, satisfying \eqref{EHI:e:regularJumps}, such that $j(r) \asymp r^{-d-2+\alpha} \left(\log(r^{-1})\right)^{-1-\beta}$ as $r \to 0^+$, for some $\alpha, \beta \in \R$. Then $X$ satisfies $\EHI(r \leq 1)$. To see this, consider the following argument.
    
    First, note that $\alpha$ must be non-negative, or else the L\'{e}vy-Khintchine condition fails.
    If $\alpha>0$, choose some $\alpha' \in (0, \alpha)$. For all small values $s$ and $r$ such that $0<s \leq r$, we have
    \begin{equation*}
        \frac{r^{d+2}}{s^{d+2}} \asymp \left( \frac{r}{s} \right)^\alpha \left( \frac{\log(r^{-1})}{\log(s^{-1})} \right)^{-1-\beta} \gtrsim \left( \frac{r}{s} \right)^{\alpha'}
    \end{equation*}
    so $X$ satisfies $\EHI(r \leq 1)$ by Corollary \ref{EHI:c:mainNoM2}(a).  
    If $\alpha=0$, then $\beta$ must be positive, or else the L\'{e}vy-Khintchine condition fails. For all small values $s$ and $r$ such that $0<s \leq r$, we have
    \begin{equation*}
        \frac{r^{d+2}}{s^{d+2}} \asymp \left( \frac{\log(r^{-1})}{\log(s^{-1})} \right)^{-1-\beta} \asymp \left( \frac{\log(s^{-1})}{\log(r^{-1})} \right)^{1+\beta}.
    \end{equation*}
    We also have $j(r) \asymp r^{-d-2} \left(\log(r^{-1}) \right)^{-1-\beta} \gtrsim r^{-d}$ as $r \to 0^+$.
    Thus, $X$ satisfies $\EHI(r \leq 1)$ by Corollary \ref{EHI:c:mainNoM2}(c).
\end{example}

\begin{example}
    Let $X$ be an isotropic unimodal L\'{e}vy jump process with jump kernel $j(r)$, satisfying \eqref{EHI:e:regularJumps}, such that $j(r) \asymp r^{-d-\alpha} (\log r)^\beta$ as $r \to \infty$, for some $\alpha \in [0, 2)$ and $\beta \in \R$. Then $X$ satisfies $\EHI(r \geq 1)$.
    
    To see this, choose an $\alpha' \in (0, 2-\alpha)$. For all large values $s$ and $r$ such that $0<s \leq r$, we have
    \begin{equation*}
        \frac{r^{d+2}j(r)}{s^{d+2}j(s)} \asymp \left(\frac{r}{s}\right)^{2-\alpha} \left(\frac{\log r}{\log s} \right)^\beta \gtrsim \left(\frac{r}{s}\right)^{\alpha'}
    \end{equation*}
    so $X$ satisfies $\EHI(r \geq 1)$ by Corollary \ref{EHI:c:mainNoM2}(b).
    
\end{example}

\subsection{Subordinate Brownian motions}\label{EHI:ss:sbm}

We are particularly interested in applying our results to a class of processes called subordinate Brownian motions, which we introduce in this subsection. We show that there exists a subordinate Brownian motion that does not satisfy $\EHI(r \leq 1)$. As far as we could tell, this was previously an open question. Our counterexample is highly regular, and sheds light on what must go wrong in order for a subordinate Brownian motion to not satisfy $\EHI(r \leq 1)$. We also prove $\EHI$ for some subordinate Brownian motions for which it was previously unknown. 

Let $S=(S_t)_{t \geq 0}$ be a right-continuous, non-decreasing L\'{e}vy process on $[0, \infty)$ with $S_0=0$. We refer to such a process as a \emph{subordinator}. Associated with $S$ is a function $\phi : (0, \infty) \to (0, \infty)$ called the \emph{Laplace exponent}, which satisfies
\begin{equation*}
    \E \left[e^{-\lambda S_t} \right] = e^{-\phi(\lambda) t} \qquad\mbox{for all $\lambda>0$, $t>0$}.
\end{equation*}
The Laplace exponent is always of the form
\begin{equation} \label{EHI:e:driftAndLevy}
    \phi(\lambda) = \gamma\lambda + \int_{(0, \infty)} (1-e^{-\lambda t}) \, \mu(\mathrm{d}t)
\end{equation}
for some $\gamma>0$ (called the \emph{drift}) and a measure $\mu$ on $(0, \infty)$ (called the L\'{e}vy measure of $S$) such that
\begin{equation} \label{EHI:e:LevyKhintchineSubordinators}
    \int_{(0, \infty)} (1 \wedge t) \, \mu(\mathrm{d}t) < \infty.
\end{equation}
Equation \eqref{EHI:e:LevyKhintchineSubordinators} is called the L\'{e}vy-Khintchine condition for subordinators. The L\'{e}vy measure of $S$ also has a probabilistic interpretation: for all Lebesgue-measurable $A \subseteq (0, \infty)$, $\mu(A)$ is the rate at which $S$ takes jumps such that $S_t - S_{t-} \in A$.
If $0<\mu(A)<\infty$, then the set of times $t$ such that $S_t - S_{t-} \in A$ is a Poisson point process with intensity $\mu(A)$.

The Laplace exponent is always a \emph{Bernstein function}: $\phi \in C^\infty(0, \infty)$ with $\phi^{(k)} \geq 0$ for all $k \in \{0, 1, 3, 5, 7, 9, \dots\}$ and $\phi^{(k)} \leq 0$ for all $k \in \{2, 4, 6, 8, \dots\}$. Conversely, every Bernstein function is the Laplace exponent of some subordinator, and every $\gamma>0$ and $\mu$ satisfying the L\'{e}vy-Khintchine condition are the drift and L\'{e}vy measure of some subordinator.

The potential measure of a subordinator is defined as
\begin{equation*}
    U(A) := \int_0^\infty \P(S_t \in A) \dee{t}.
\end{equation*}

Let $B=(B_t)_{t \geq 0}$ be a Brownian motion on $\R^d$, independent of $S$, with twice the speed of a standard Brownian motion. We define a new process $X=(X_t)_{t \geq 0} := (B_{S_t})_{t \geq 0}$, and refer to $X$ as a \emph{subordinate Brownian motion} (with subordinator $S$). This new process $X$ is an isotropic unimodal L\'{e}vy process. If the drift $\gamma$ of $S$ is $0$, then $X$ is pure-jump. The jump kernel of $X$ is given by
\begin{equation} \label{EHI:e:JumpKernelIntermsofMu}
    j(r) = \int_{(0, \infty)} (4\pi t)^{-d/2} \exp(-\frac{r^2}{4t}) \mu(\mathrm{d}t).
\end{equation}

Kim and Mimica \cite{KM} prove that a large class of subordinate Brownian motions satisfy $\EHI(r \leq 1)$.

\begin{theorem}[Kim-Mimica]\label{EHI:t:KM}
    Let $X=(X_t)_{t \geq 0}=(B_{S_t})_{t \geq 0}$ be a subordinate Brownian motion, where $S$ is a subordinator and $B$ is a Brownian motion on $\R^d$, independent of $S$. Assume there exists a $C>0$ such that the jump kernel of $X$ satisfies
    \begin{equation*}
        j(r+1) \leq j(r) \leq C j(r+1) \qquad\mbox{for all $r>1$}.
    \end{equation*}
    Suppose also that the subordinator $S$ satisfies the three following conditions:
    \begin{enumerate}[label=A-\arabic*]
        \item\label{EHI:KM:A1} The potential measure of $S$ has decreasing density. In other words, there exists a decreasing function $u(t)$ such that $U(dt)=u(t) \dee{t}$.
        \item\label{EHI:KM:A2} The L\'{e}vy measure of $S$ is infinite (in other words, $\mu((0, \infty))=\infty$) and has a decreasing density.
        \item\label{EHI:KM:A3} There exist constants $\sigma>0$, $\lambda_0>0$, and $\delta \in (0, 1]$ such that
        \begin{equation*}
            \frac{\phi'(\lambda x)}{\phi'(\lambda)} \leq \sigma x^{-\delta} \qquad\mbox{for all $x \geq 1$ and $\lambda \geq \lambda_0$}.
        \end{equation*}
\end{enumerate}
    Then $X$ satisfies $\EHI(r \leq 1)$.
\end{theorem}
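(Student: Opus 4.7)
The plan is to combine analytic estimates on the subordinator coming from A-1, A-2, A-3 with a probabilistic Krylov--Safonov-type hitting argument on small balls.

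First I would translate the conditions on the Laplace exponent $\phi$ into sharp scale information for $X$ at small scales. Condition A-3 is a weak upper scaling on $\phi'$ at infinity; integrating and using the concavity of $\phi$ (it is a Bernstein function) yields a weak upper scaling for $\phi$ itself. Assumption A-1 (decreasing potential density) controls the Green function of $S$, and A-2 together with \eqref{EHI:e:JumpKernelIntermsofMu} implies that the jump kernel $j(r)$ of $X$ is decreasing and admits pointwise control in terms of $\phi$ via Tauberian-type arguments. From these I would derive a two-sided exit time estimate $\E_x[\tau_{B(x_0, r)}] \asymp 1/\phi(r^{-2})$ for $x \in B(x_0, r/2)$ and small $r$, together with matching control of $m_2(r)$ and $\lambda(r)$. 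The large-scale regularity of $j$ built into the hypothesis ensures that jumps leaving a small ball are not pathologically concentrated in the far field.

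Second, I would prove a Krylov--Safonov hitting lemma: there exist $\theta, \eta \in (0,1)$ independent of $r \leq 1$ such that for every Borel $A \subseteq B(x_0, r/2)$ with $|A| \geq \theta |B(x_0, r/2)|$ and every $x \in B(x_0, r/2)$,
\begin{equation*}
    \P_x\bigl(T_A < \tau_{B(x_0, r)}\bigr) \geq \eta.
\end{equation*}
This would follow from the exit time estimate, the L\'evy system formula comparing $\P_x(X_{\tau_B} \in E)$ with $\E_x[\tau_B] \int_E j(|y - x|) \dee{y}$ for $E$ far from $B$, and a covering argument for the contribution from nearby sets. From this hitting lemma, $\EHI(r \leq 1)$ follows by the standard chain argument: for $h \geq 0$ harmonic on $B(x_0, r)$ with $h(x_1) = M$ large, the structure of the level sets of $h$ forces a large level set to exist inside $B(x_0, r/2)$, which any other point $x_2 \in B(x_0, \kappa r)$ hits with probability at least $\eta$ before exiting $B(x_0, r)$, and iterating produces $h(x_2) \gtrsim M$.

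The main obstacle is the first step. A-3 is only a one-sided (upper) scaling, so matching lower bounds on $\phi$ and on the exit times are not automatic; to produce them one exploits the monotonicity assumptions in A-1 and A-2 together with the Bernstein structure, and one has to verify carefully that the constants do not degenerate as $r \to 0$. This one-sidedness is also, as the author emphasizes in the introduction, why the method does not transparently extend to bounded perturbations of these processes, and is part of the motivation for the techniques developed in the rest of the paper.
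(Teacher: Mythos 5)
This theorem is not proved in the paper at all: the text immediately preceding it reads ``We cite this result in its entirety as our Theorem~\ref{EHI:t:KM},'' and the theorem carries the attribution ``Kim--Mimica.'' So there is no proof in the paper to compare yours against; the result is treated as a black box imported from \cite{KM}, and the introduction even contrasts it explicitly with the present paper's methods, noting that \cite{KM}, \cite{G}, and \cite{CkwElliptic} ``use highly analytic proof techniques.''

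With that said, your sketch does diverge from Kim--Mimica's actual argument. Their proof goes through Green-function and Poisson-kernel estimates for balls (the two-sided bound $j(r) \asymp r^{-d-2}\phi'(r^{-2})$ from their Proposition~4.2, followed by sharp two-sided estimates on $G_{B(0,r)}$ and $K_{B(0,r)}$, with EHI read off from the Poisson kernel bounds), rather than a Krylov--Safonov hitting lemma. Your proposed route is probabilistic and, in spirit, closer to the techniques the present paper develops in Sections~\ref{EHI:s:meyer}--\ref{EHI:s:positiveResults}. One concern about feasibility: the Krylov--Safonov hitting estimate for pure-jump processes needs more than the exit-time estimate and the L\'evy system formula. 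You need control on the near-diagonal heat kernel of the killed process (or equivalently a lower bound on $G_{B(x_0,r)}$ away from the boundary) so that the probability of hitting a set $A$ \emph{near} the starting point can be bounded below; for jump kernels as heavy as $j(r) \asymp r^{-d}$ (geometric stable), this step is genuinely delicate and is precisely what the Green-function estimates in \cite{KM} handle. Your sketch acknowledges this difficulty implicitly (``verify carefully that the constants do not degenerate'') but does not resolve it. As a comparison: a completed Krylov--Safonov argument would be more robust and transferable, but would require reconstructing essentially the same Green-function lower bounds that Kim--Mimica derive directly.
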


Theorem \ref{EHI:t:KM} can be used to show that each of the following subordinate Brownian motions satisfies $\EHI(r \leq 1)$:
\begin{itemize}
    \item Geometric stable processes:
    \begin{equation} \label{EHI:GeomStablelaplaceExp}
        \phi(\lambda) = \log(1+\lambda^{\beta/2}), \qquad 0 < \beta \leq 2.
    \end{equation}
    \item Iterated geometric stable processes:
    \begin{align}
        \phi_1(\lambda) &= \log(1+\lambda^{\beta/2}), \quad 0 < \beta \leq 2, \notag\\
        \phi_{n+1} &=\phi_n \circ \phi_1 \qquad\qquad\mbox{for all $n$}. \label{EHI:IteratedGeomStablelaplaceExp}
    \end{align}
    \item Relativistic geometric stable processes:
    \begin{equation} \label{EHI:RelGeomStablelaplaceExp}
        \phi(\lambda)=\log(1+\left( \lambda+m^{\beta/2} \right)^{2/\beta} -m) \qquad 0 < \beta \leq 2.
    \end{equation}
\end{itemize}
Note that condition \ref{EHI:KM:A3} implies $\gamma=0$ (by taking $x \to \infty$ and applying \eqref{EHI:e:driftAndLevy}), so Theorem \ref{EHI:t:KM} only applies to jump processes.

We say that two isotropic unimodal L\'{e}vy jump processes $X$ and $Y$, with jump kernels $j_X(r)$ and $j_Y(r)$ respectively, are \emph{bounded perturbations} of one another if there exist constants $C \geq c>0$ such that $cj_X(r) \leq j_Y(r) \leq C j_X(r)$ for all $r>0$. The results of \cite{KM} do not extend immediately to bounded perturbations of the above examples (geometric stable, iterated geometric stable, and relativistic geometric stable), but our results do.

\begin{example}\label{EHI:ex:geometric}
    Let $X$ be a bounded perturbation of the geometric stable process, with parameter $\beta \in (0, 2]$. 
    \v{S}iki\'{c}, Song, and Vondra\v{c}ek \cite[Theorem 3.4]{SSV} showed that  $j(r) \asymp r^{-d}$ as $r \to 0^+$. Thus, for small values $0<s\leq r$, we have
    \begin{equation*}
        \frac{r^{d+2}j(r)}{s^{d+2}j(s)} \asymp \left(\frac{r}{s}\right)^2,
    \end{equation*}
    so by Corollary \ref{EHI:c:mainNoM2}(a) (with $\alpha=2$), $X$ satisfies $\EHI(r \leq 1)$. If $\beta<2$, then by  \cite[Theorem 3.5]{SSV}, we also have $j(r) \asymp r^{-d-\beta}$ as $r \to \infty$, which means that 
    \begin{equation*}
        \frac{r^{d+2}j(r)}{s^{d+2}j(s)} \asymp \left(\frac{r}{s}\right)^{2-\beta}
    \end{equation*}
    for large $s<r$,
    so we also have $\EHI(r \geq 1)$ by Corollary \ref{EHI:c:mainNoM2}(b).
\end{example}

To show that bounded perturbations of the iterated geometric stable and relativistic geometric stable processes satisfy $\EHI(r \leq 1)$,  we use \cite[Proposition 4.2]{KM} to get the asymptotic form of $j(r)$ for $r \to 0^+$. The details of these calculations are shown in Section \ref{EHI:ss:IteratedRelativistic}.

\begin{example}\label{EHI:ex:relativistic}
    Let $X$ be a bounded perturbation of the relativistic geometric stable process. Then $j(r) \asymp r^{-d}$ as $r \to 0^+$, so $X$ satisfies $\EHI(r \leq 1)$.
\end{example}

For all $n$, let $\log^{(n)}$ denote the $n$-fold composition of $\log$.

\begin{example}\label{EHI:ex:iterated}
    Let $X$ be a bounded perturbation of the iterated geometric stable process, with $n$ iterations. Then 
    \begin{equation*}
        j(r) \asymp \frac{r^{-d}}{\log(r^{-2}) \log^{(2)}(r^{-2}) \cdots \log^{(n-1)}(r^{-2})} \qquad \mbox{as $r\to 0^+$}.
    \end{equation*}
     For large $s<r$,
     \begin{equation*}
         \frac{r^{d+2}j(r)}{s^{d+2}j(s)} \asymp \frac{r^2 \left(\log(r^{-2}) \right)^{-1} \left(\log^{(2)}(r^{-2}) \right)^{-1} \cdots \left(\log^{(n-1)}(r^{-2}) \right)^{-1}}{s^2 \left(\log(s^{-2}) \right)^{-1} \left(\log^{(2)}(s^{-2}) \right)^{-1} \cdots \left(\log^{(n-1)}(s^{-2}) \right)^{-1}} \geq \left(\frac{r}{s}\right)^2
     \end{equation*}
     so we have $\EHI(r \leq 1)$ by Corollary \ref{EHI:c:mainNoM2}(a).
\end{example}

For our last example, we identify a subordinate Brownian motion that does not satisfy condition \ref{EHI:KM:A3} of Kim and Mimica, but does satisfy $\EHI(r \leq 1)$.

\begin{example} \label{EHI:ex:NoA3example}
    Let $X=(X_t)_{t \geq 0} = (B_{S_t})_{t \geq 0}$ be a subordinate Brownian motion on $\R^d$ satisfying \eqref{EHI:e:regularJumps}, such that the subordinator $S=(S_t)_{t \geq 0}$ has drift $0$ and a L\'{e}vy measure satisfying $\mu(dt) \asymp t^{-2} \left(\log(t^{-1}) \right)^{-2} \dee{t}$ as $t \to 0^+$.
    In Section \ref{EHI:ss:NoA3example}, we show that $S$ has Laplace exponent $\phi(\lambda) \gtrsim \lambda/\log\lambda$, which implies that $X$ can not satisfy \ref{EHI:KM:A3}, and we calculate that $X$ has jump kernel
    \begin{equation*}
        j(r) \asymp r^{-d-2} \left(\log(r^{-1}) \right)^{-2} \qquad\mbox{as $r \to 0^+$}.
    \end{equation*}
    This is a special case of Example \ref{EHI:ex:GeneralExampleSmallR}, so $X$ satisfies $\EHI(r \leq 1)$.
\end{example}

\subsection{Counterexample and negative result}

In light of Theorem \ref{EHI:t:main}, its corollaries, and the above examples, a natural question to ask is whether every isotropic unimodal L\'{e}vy jump process on $\R^d$ that satisfies \eqref{EHI:e:regularJumps} also satisfies $\EHI(r \leq 1)$. Another question that has been open until this work (and was the original motivator that led to the research in this chapter) is whether every subordinate Brownian motion satisfies $\EHI(r \leq 1)$. We show that the answer to both of these questions is ``no," as the following process is a counterexample to both.

\begin{example}\label{EHI:ex:counterexample}
    Let $S=(S_t)_{t \geq 0}$ be a subordinator with drift $0$ and L\'{e}vy measure
    \begin{equation*}
        \mu(\mathrm{d}t) = \frac23 \sum_{n=1}^\infty 2^{3n^2} \min\left\{ 1, \left(2^{2n^2} t \right)^{-3}\right\} \dee{t}.
    \end{equation*}
    Let $X=(X_t)_{t \geq 0} = (B_{S_t})_{t \geq 0}$ be a subordinate Brownian motion with subordinator $S$.
    Let $\mu(t) := \frac23 \sum_{n=1}^\infty 2^{3n^2} \min\left\{ 1, \left(2^{-2n^2} t \right)^{-3}\right\}$, so that $\mu(\mathrm{d}t) = \mu(t)\dee{t}$. Note that $\mu(t) \asymp \mu(2t)$, since $\mu$ is the sum of infinitely many non-increasing functions that decay no faster than $t^{-3}$. From this, it is easy to show that the jump kernel of $X$ satisfies \eqref{EHI:e:regularJumps}. However, we show in Section \ref{EHI:ss:Counterexample} that $X$ does not satisfy $\EHI(r \leq 1)$.
\end{example}

We also prove the following general negative result.

\begin{theorem}\label{EHI:t:mainNegative}
    Let $X=(X_t)_{t \geq 0}$ be an isotropic unimodal L\'{e}vy jujmp process on $\R^d$ with jump kernel $j(r)$. Suppose there exists a sequence $(r_n) \subseteq (0, \infty)$ such that $r_n \to 0^+$ and
    \begin{equation} \label{EHI:e:m2intermedCondition}
        r_n^2 \lambda(r_n) \lesssim m_2(r_n) \ll (r_n^{d+2} j(r_n))^{-\frac{2}{d}} (r_n^2 \lambda(r_n))^{\frac{d+2}{2}}.
    \end{equation}
    Then $X$ does not satisfy $\EHI(r\leq 1)$. Similarly, if there exists a sequence $(r_n) \subseteq (0, \infty)$ such that $r_n \to \infty$ and \eqref{EHI:e:m2intermedCondition} holds, then $X$ does not satisfy $\EHI(r \geq 1)$.
\end{theorem}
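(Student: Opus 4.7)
The plan is to refute $\EHI(r \leq 1)$ (resp.\ $\EHI(r \geq 1)$ when $r_n \to \infty$) by producing, for each $r_n$ in the hypothesized sequence, a non-negative function $h_n$ harmonic on some ball $B(0, R_n)$ with $R_n \asymp r_n$ whose ratio of supremum to infimum on $B(0, \kappa R_n)$ diverges as $n \to \infty$. The natural candidate is an exit-distribution functional
\[
    h_n(x) = \P_x\bigl(X_{\tau_{B(0, R_n)}} \in A_n\bigr),
\]
where $A_n \subseteq B(0, R_n)^c$ is a small set whose size and location are carefully dictated by the two hypotheses. By the L\'evy system identity, $h_n(x) = \E_x \int_0^{\tau_{B(0, R_n)}} J(X_s, A_n) \dee{s}$, so $h_n$ is governed by both the occupation time of the killed process (i.e.\ the Green function $G_{B(0,R_n)}$) and the spatial behavior of $j$ near the corridor separating $B(0, R_n)$ from $A_n$.

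I would first use the diffusive hypothesis $r_n^2 \lambda(r_n) \lesssim m_2(r_n)$ to get the sharp two-sided estimate $\E_x \tau_{B(0, R_n)} \asymp R_n^2/m_2(R_n)$ on $B(0, R_n/2)$: the upper bound follows from Dynkin's formula applied to $f(x) = (R_n^2 - |x|^2)_+$ (where the large-jump contribution, bounded above by $R_n^2 \lambda(R_n)$, is absorbed into the small-jump variance using the first hypothesis), and the matching lower bound follows from using $u(x) = (R_n - |x|)_+$ as a trial function in the variational upper bound $\lambda_1^X(B(0, R_n)) \leq \mathcal{E}(u, u)/\|u\|^2 \lesssim m_2(R_n)/R_n^2$. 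Because mean exit times are thus \emph{comparable}, the EHI failure cannot be read off from the total time spent in $B(0, R_n)$; it must instead come from how this time is distributed within $B(0, R_n)$. This suggests placing $A_n$ not far outside $B(0, R_n)$ but close to the boundary, so that $h_n$ becomes sensitive to $G_{B(0,R_n)}(x, y)$ as $y$ approaches $\partial B(0, R_n)$ from inside.

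The technical heart, and the main obstacle, is quantifying the asymmetry that the second hypothesis $m_2(r_n) \ll (r_n^{d+2} j(r_n))^{-2/d}(r_n^2 \lambda(r_n))^{(d+2)/2}$ forces on this Green function. My plan is to decompose $X$ via the small/flat Meyer decomposition of Section \ref{ss:zooOfMeyer} into a bulk (small-jump) process $\widetilde X$ and a punctuation (large-jump) compound Poisson process of rate $\lambda(R_n)$, using the first hypothesis to guarantee that only a bounded number of large jumps occur before exit, and using the second hypothesis to show that the Green function comparison $G_{B(0,R_n)}(0, y) \asymp G_{B(0,R_n)}(x_1, y)$ fails for $x_1 \in B(0, \kappa R_n)$ and $y$ in a thin shell near $\partial B(0, R_n)$. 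The exponents $-2/d$ and $(d+2)/2$ should emerge from a volume/Sobolev optimization at an intermediate scale $\rho_n \ll R_n$: more precisely, a Faber-Krahn lower bound for the principal Dirichlet eigenvalue of $\widetilde X$ on a shell of width $\rho_n$, coupled with the Sobolev exponent $2^\ast = 2d/(d-2)$ acting on the volume factor $\rho_n^d$, should produce exactly the stated inequality when $\rho_n$ is chosen optimally; the conclusion is then that $A_n := B(z_n, \rho_n)$ placed at distance $\rho_n$ outside $\partial B(0, R_n)$ witnesses an imbalance $h_n(x_1)/h_n(0) \to \infty$. Matching the exponents rigorously, and managing the interplay between $\widetilde X$ and the punctuation component so that the latter does not wash out the estimate, is the delicate part. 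The case $r_n \to \infty$ proceeds identically with ``small scale'' and ``large scale'' interchanged throughout.
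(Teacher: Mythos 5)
Your proposal shares the correct strategic observation that the failure cannot be detected from total occupation time, and it correctly identifies Meyer decomposition as a key tool, but the detection mechanism you propose is genuinely different from the paper's, and I believe it does not work.

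The central gap is the step where you place $A_n = B(z_n, \rho_n)$ in a thin shell near $\partial B(0, R_n)$ and try to show that $G_{B(0,R_n)}(0, y)$ and $G_{B(0,R_n)}(x_1, y)$ fail to be comparable for $y$ near the boundary. For isotropic unimodal processes, a discrepancy in Green-function values between the center $0$ and a point $x_1$ with $|x_1| = \kappa R_n$, when probed at a boundary point $y$, is a standard boundary-layer effect present already for Brownian motion (where the ratio is $\asymp d(x_1,\partial B)/d(0,\partial B)$ and also depends on the angle between $x_1$ and $y$); yet Brownian motion satisfies $\EHI$. The discrepancy is washed out when integrating against the jump kernel to form the Poisson kernel, which is what $\EHI$ actually sees. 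Moreover, under the first hypothesis $r_n^2 \lambda(r_n) \lesssim m_2(r_n)$, the paper's Lemma \ref{EHI:l:G0lowerbound} and Proposition \ref{EHI:p:EHIbigm2} show that (after the small/flat decomposition at the appropriate scale) the relevant Green-function values near the boundary \emph{are} two-sided comparable between $\pm x$, so an argument proving the opposite would be in conflict with those positive results. Your Faber--Krahn/Sobolev heuristic also does not produce the exponents $-2/d$ and $(d+2)/2$ (the Sobolev exponent $2d/(d-2)$ does not appear anywhere in the correct derivation).

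The paper's actual mechanism is scattering rather than boundary asymmetry, and the witness set is \emph{large}, not small. In Proposition \ref{EHI:p:GeneralCounterex}, the harmonic function is $h(x) = \P_x\bigl(X_{\tau_{B(0,R_1+R_2)}} \in B(0, R_3)\bigr)$ with $R_3 \asymp R_1 \asymp R_2$. The ball $B(0,R_3)$ captures essentially every exit that happens ``diffusively'' (before the first big jump $T^{(r_n)}$), while the second hypothesis \eqref{EHI:e:m2intermedCondition} ensures the scattering jump at $T^{(r_n)}$ almost never lands in $B(0,R_3)$. The imbalance $h(0) \ll h(y)$ (for $y$ near $\partial B(0, \kappa(R_1+R_2))$) then comes from a renewal argument: exiting $B(0, R_1+R_2)$ diffusively from $0$ requires two independent excursions of diffusive scale $R_1 = \rho_n$, so its probability is at most $\P_0(\tau_{B(0,\rho_n)} < T^{(r_n)})^2$, whereas from $y$ a single excursion suffices. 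One then chooses $\rho_n$ optimally: setting $s_n := 2\sqrt{m_2(r_n)/\lambda(r_n)}$ (the diffusive scale over an exponential time of rate $\lambda(r_n)$), the diffusive exit probability decays like $\exp(-c\rho_n^2/s_n^2)$ while the scattering probability grows like $\rho_n^d j(r_n)/\lambda(r_n)$. Requiring $\exp(-c\rho_n^2/s_n^2) \gg \rho_n^d j(r_n)/\lambda(r_n)$ while also keeping the left side $o(1)$ is exactly the condition $s_n^d j(r_n)/\lambda(r_n) \ll 1$, which after substituting $s_n^2 = 4 m_2(r_n)/\lambda(r_n)$ yields the exponents in \eqref{EHI:e:m2intermedCondition}. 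Your exit-time estimate $\E_x\tau_{B(0,R_n)} \asymp R_n^2/m_2(R_n)$ is essentially correct (and roughly corresponds to Lemmas \ref{EHI:l:PE} and \ref{EHI:l:LeaveBallConstant} in the paper), but it only provides the entry point; the arithmetic of \eqref{EHI:e:m2intermedCondition} is settled by the scattering-vs-excursion competition, not by eigenvalue/Sobolev bounds.
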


Note that Theorem \ref{EHI:t:mainNegative} does not require \eqref{EHI:e:regularJumps}.

\section{\texorpdfstring{Green's function and Poisson kernel}{TEXT}}
 \label{EHI:s:GreenPoisson}

In this section, we introduce the Green's function and Poisson kernel of $X$, and then give an equivalent formulation of the condition $\EHI$ in terms of the Poisson kernel, which we will use to prove our main results.

Let $p_t(x, y)$ be the heat kernel of $X$.
For all $t>0$, $p_t(\cdot, \cdot)$ is a symmetric function such that $p_t(x, \cdot)$ is the density of $X_t$ given an initial value of $X_0=x$. For all open sets $D \subseteq \mathbb{R}^d$, let $p^D_t(x, y)$ be the heat kernel of the process killed upon exiting $D$. Then
\begin{align} \label{EHI:e:HeatKernelDef}
\begin{split}
    \mathbb{P}_x(X_t \in A)=\int_A p_t(x, y) \, dy;\\
    \mathbb{P}_x(X_t \in A, \tau_D>t)=\int_A p^D_t(x, y) \, dy
\end{split}
\end{align}
for all $x \in \mathbb{R}^d$, all $t>0$, and all measurable $A$. The killed heat kernel $p^D_t$ is also symmetric. Because $X$ is isotropic unimodal, it can be shown that $p_t$ and $p^D_t$ are jointly continuous and isotropic unimodal.

Let us define the Green's functions $G$ and $G_D$, as
\begin{equation} \label{EHI:e:GreenDef}
    G(x, y) := \int_0^\infty p_t(x, y) \, dt, \qquad G_D(x, y) := \int_0^\infty p^D_t(x, y) \, dt.
\end{equation}
The Green's function has a probabilistic interpretation. For all measurable $A \subseteq D$, if the process starts at $X_0=x$, then $\int_A G_D(x, y) \, dy$ is the expected amount of time the process spends in $A$ before time $\tau_D$.

By a result of Ikeda and Watanabe \cite[Theorem 1]{IW}, if $D$ is a measurable set, then for any $x \in D$ and any function $h$ that is harmonic on $D$,
\begin{equation*}
    h(x) = \mathbb{E}_x[h(X_{\tau_D})] = \int_{z \in D} \int_{w \in D^c} G_D(x, z) j(|w-z|) h(w) \, dw \, dz.
\end{equation*}
By Tonelli's theorem, we can switch the order of integration. The result is
\begin{equation} \label{EHI:e:h(x)IntermsofKd}
    h(x) = \int_{w \in D^c} K_D(x, w) h(w) \, dw,
\end{equation}
where $K_D$ is the Poisson kernel, defined by
\begin{equation} \label{EHI:e:DefKd}
    K_D(x, w) := \int_{z \in D} G_D(x, z) j(|w-z|) \, dz.
\end{equation}

\begin{definition}[Characterization of $\EHI$ in terms of Poisson kernel]\label{EHI:d:ehiPoisson}
    Let $S \subseteq (0, \infty)$.
    We say that $X$ satisfies $\EHIPoisson(r \in S)$ if there exist constants $C \geq 1$ and $\kappa \in (0, 1)$ such that for all $r \in S$, $x \in B(0, \kappa r)$, and $w \in B(0, r)^c$, we have
    \begin{equation*}
        C^{-1} K_{B(0, r)}(-x, w) \leq K_{B(0, r)}(x, w) \leq C K_{B(0, r)}(-x, w).
    \end{equation*}
\end{definition}

\begin{prop} \label{EHI:p:EHIpoisson=EHI}
    For all $S \subseteq (0, \infty)$, the conditions $\EHI(r \in S)$ and $\EHIPoisson(r \in S)$ are equivalent.
\end{prop}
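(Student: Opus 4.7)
The plan is to prove both directions of the equivalence, using the integral representation of harmonic functions via the Poisson kernel as the main bridge.

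For the direction $\EHI(r \in S) \Rightarrow \EHIPoisson(r \in S)$, I would approximate the Poisson kernel by genuinely harmonic functions. Fixing $r \in S$ and $w \in B(0, r)^c$, set, for small $\eps > 0$,
\begin{equation*}
    h_\eps(x) := \frac{1}{|B(w, \eps)|} \P_x\bigl(X_{\tau_{B(0,r)}} \in B(w, \eps)\bigr) = \frac{1}{|B(w, \eps)|} \int_{B(w, \eps)} K_{B(0,r)}(x, w') \dee{w'}.
\end{equation*}
By the strong Markov property each $h_\eps$ is nonnegative and harmonic on $B(0, r)$, so $\EHI$ gives $h_\eps(x) \leq C h_\eps(-x)$ for $x \in B(0, \kappa r)$. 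Letting $\eps \to 0^+$ and invoking continuity of the Poisson kernel in its second argument (which follows from the representation \eqref{EHI:e:DefKd} together with continuity of $G_{B(0,r)}$ and of $j$ away from the origin), we pass to the pointwise bound $K_{B(0,r)}(x, w) \leq C K_{B(0,r)}(-x, w)$, and symmetrically in the other direction.

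For the harder direction $\EHIPoisson(r \in S) \Rightarrow \EHI(r \in S)$, I would start with $h$ nonnegative and harmonic on $B(0, r)$ and use \eqref{EHI:e:h(x)IntermsofKd} to write $h(x) = \int_{B(0, r)^c} K_{B(0, r)}(x, w) h(w) \dee{w}$. Pointwise application of $\EHIPoisson$ in $w$, followed by integration against $h$, immediately yields the antipodal comparison $h(x) \asymp h(-x)$ for $x \in B(0, \kappa r)$. To upgrade to the full Harnack statement $h(x) \asymp h(y)$ for arbitrary $x, y \in B(0, \kappa' r)$, the plan is to set $z := (x+y)/2$ and apply the antipodal comparison inside a sub-ball centered at $z$: by translation invariance of $X$ and $\EHIPoisson$ at a scale $r' \in S$ with $B(z, r') \subseteq B(0, r)$, one obtains $h(z + v) \asymp h(z - v)$ for $v \in B(0, \kappa r')$, which with $v = (x - y)/2$ reads $h(x) \asymp h(y)$.

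The main obstacle will be the upgrade step: one must find a suitable $r' \in S$ satisfying simultaneously $B(z, r') \subseteq B(0, r)$ and $\tfrac{|x-y|}{2} < \kappa r'$. For the ranges of $S$ relevant to this paper, namely $(0, 1]$ and $[1, \infty)$, natural choices such as $r' = r/2$ or $r' = r - |z|$ will work once $\kappa'$ is taken sufficiently small relative to $\kappa$; the constants $C$ and $\kappa'$ for $\EHI$ then come from those of $\EHIPoisson$ by routine bookkeeping of the ratios.
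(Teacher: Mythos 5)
Your proposal is correct and mirrors the paper's own proof essentially step for step: the approximation $h_\eps$ of the Poisson kernel by genuinely harmonic functions, followed by passage to the limit, is Lemma~\ref{EHI:l:EHI''impliesEHIpoisson}; deriving the antipodal comparison from the integral formula is Lemma~\ref{EHI:l:EHIpoissonImpliesEHI''}; and the midpoint upgrade is Lemma~\ref{EHI:l:EHI'impliesEHI}, with the paper packaging the antipodal comparison as the intermediate conditions $\EHIp$ and $\EHIpp$. Your worry about finding a scale $r' \in S$ in the upgrade step is sharp and in fact applies verbatim to the paper's Lemma~\ref{EHI:l:EHI'impliesEHI}, which invokes $\EHIp(r\in S)$ on the ball $B\bigl(x_1, \tfrac{1}{1+\kappa}r\bigr)$ of radius $\tfrac{1}{1+\kappa}r$ without verifying that this shrunken radius belongs to $S$ --- harmless for $S=(0,1]$ or any set stable under mild shrinking, but a genuine wrinkle in the ``for all $S \subseteq (0,\infty)$'' phrasing that you are right to flag.
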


In order to prove Proposition \ref{EHI:p:EHIpoisson=EHI}, we define a list of related conditions, all of which we show are equivalent, which ``bridge the gap" between $\EHI(r \in S)$ and $\EHIPoisson(r \in S)$. To make the proof more readable, we split this proof into several lemmas, with a separate lemma for each implication.

\begin{definition} \label{EHI:d:EquivalentEhis}
    Let $S \subseteq (0, \infty)$.
    \begin{itemize}
        \item We say that $X$ satisfies $\EHIp(r \in S)$ if there exist constants $C \geq 1$ and $\kappa \in (0, 1)$ such that for all $x_0 \in \mathbb{R}^d$ and $r \in S$, if a function $h$ is non-negative everywhere and harmonic with respect to $X$ on $B(x_0, r)$, then
    \begin{equation*}
        C^{-1} h(y) \leq h(x) \leq C h(y)
    \end{equation*}
    for all $x, y \in B(x_0, \kappa r)$ such that $x_0$ is the midpoint between $x$ and $y$.
    \item We say that $X$ satisfies $\EHIpp(r \in S)$ if there exist constants $C \geq 1$ and $\kappa \in (0, 1)$ such that for all $r \in S$, if a function $h$ is non-negative everywhere and harmonic with respect to $X$ on $B(0, r)$, then
    \begin{equation*}
        C^{-1} h(-x) \leq h(x) \leq C h(-x)
    \end{equation*}
    for all $x \in B(0, \kappa r)$.
    \end{itemize}
\end{definition}

\begin{remark}\label{EHI:r:easyPartsOfEHI=EHIpoisson}
    Clearly, $\EHI(r \in S) \Longrightarrow \EHIp(r \in S) \Longrightarrow \EHIpp(r \in S)$, since each condition along this chain requires \eqref{EHI:e:ehiDef} to hold on a smaller set of values $(x_0, x, y)$ than the condition preceding it. Since $X$ is an isotropic unimodal L\'{e}vy process, it is also easy to show that $\EHIpp(r \in S) \Longrightarrow \EHIp(r \in S)$ by shifting the argument of the function $h$. Therefore, we already have
    \begin{equation*}
        \EHI(r \in S) \Longrightarrow \EHIp(r \in S) \Longleftrightarrow \EHIpp(r \in S).
    \end{equation*}
\end{remark}

\begin{lemma} \label{EHI:l:EHI'impliesEHI}
    For all $S \subseteq (0, \infty)$, $\EHIp(r \in S) \Longrightarrow \EHI(r \in S)$.
\end{lemma}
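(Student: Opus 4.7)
The plan is to deduce $\EHI(r \in S)$ from $\EHIp(r \in S)$ by a Harnack-chain argument that repositions the center at the midpoint of the pair of points being compared. Fix $h \geq 0$ harmonic on $B(x_0, r)$ with $r \in S$, and take $x, y \in B(x_0, \kappa r)$ for a small constant $\kappa$ to be chosen in terms of the constants $C_0, \kappa_0$ appearing in $\EHIp$.

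The key observation is that harmonicity is preserved on sub-balls: $h$ is harmonic on every ball $B(m, r') \subseteq B(x_0, r)$, in particular on $B(m, r - |m - x_0|)$ for any $m \in B(x_0, r)$. This means $\EHIp$ can be invoked at a shifted center $m$ with an appropriate radius $r' \in S$ to compare values of $h$ at antipodal pairs about $m$, rather than only about $x_0$. Taking $m = (x+y)/2$ realises $x$ and $y$ as an antipodal pair about $m$, with $|x - m| = |y - m| = |x-y|/2 \leq \kappa r$. Provided the radius $r' \approx (1 - \kappa) r$ lies in $S$, a single application of $\EHIp$ at $(m, r')$ gives $h(x) \asymp h(y)$, the antipodal condition $|x - m| \leq \kappa_0 r'$ being met once $\kappa$ is small relative to $\kappa_0$.

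When the required intermediate radius does not lie in $S$, I would replace the single shift by a chain. Subdivide the segment $[x,y]$ into $N$ short pieces $x = z_0, z_1, \ldots, z_N = y$ of equal length, place the midpoints $m_i = (z_i + z_{i+1})/2$, and at each step apply $\EHIp$ to $h$ on $B(m_i, r_i)$ for some $r_i \in S$ with $B(m_i, r_i) \subseteq B(x_0, r)$. Composing the $N$ applications yields $h(x) \leq C_0^N h(y)$, where $N$ depends only on $\kappa, \kappa_0$, and the arithmetic of $S$, not on $h$, $x_0$, or $r$. Reversing the roles of $x$ and $y$ gives the opposite inequality.

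The main obstacle is ensuring that the intermediate radii $r_i$ can be chosen in $S$. For the natural choices of $S$ in this paper ($(0, 1]$, $[1, \infty)$, or $(0, \infty)$), the set is closed under shrinking (possibly after absorbing constants via rescaling arguments that use $\EHI(r \in [a,b])$ on bounded intervals), so the required radii are automatically available and the chaining argument goes through uniformly. The outcome is that $\EHI(r \in S)$ holds with some constants $C = C_0^N$ and $\kappa$ strictly smaller than $\kappa_0$, closing the chain of equivalences $\EHI \Leftrightarrow \EHIp \Leftrightarrow \EHIpp$ flagged in Remark \ref{EHI:r:easyPartsOfEHI=EHIpoisson}.
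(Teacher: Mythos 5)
Your single-step argument---recenter at the midpoint $m = (x+y)/2$, shrink the ball to $B(m, r')$ with $r' = \tfrac{1}{1+\kappa_0}r$, and apply $\EHIp$ there---is exactly the paper's proof (the paper concludes $\EHI$ with the smaller parameter $\kappa_0/(1+\kappa_0)$). You are also right to worry about whether $r'$ lies in $S$: the paper applies $\EHIp(r \in S)$ to a ball of radius $\tfrac{1}{1+\kappa_0}r$ without noting that it is this shrunk radius, not $r$ itself, that must belong to $S$. This is harmless for $S = (0,1]$ or $(0,\infty)$, but near the left endpoint of $[1,\infty)$ or $[a,b]$ it requires a small extra remark, which in the paper's pipeline is supplied by Proposition \ref{EHI:p:EHI[a,b]forfree}.

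The chaining you introduce ``when the required intermediate radius does not lie in $S$'' does not, however, repair that difficulty and should be dropped. Every link $B(m_i, r_i) \subseteq B(x_0, r)$ with $m_i \ne x_0$ again forces $r_i < r$, so if $S$ contains no element just below $r$, no link of the chain can be formed either---the chain has exactly the same admissibility requirement as a single step. Chaining trades one use of $\EHIp$ (constant $C_0$) for $N$ uses (constant $C_0^N$) and is the right tool when $|x - y|$ is too large to fit one antipodal application, but that never happens here: choosing $\kappa = \kappa_0/(1+\kappa_0)$ already puts every $x, y \in B(x_0, \kappa r)$ within one antipodal step of their midpoint. What you actually need is only the observation in your last paragraph, namely that for the sets $S$ used in the paper a slightly smaller admissible radius is always available (possibly after widening $S$ via the bounded-interval result), and then the single-shift argument goes through directly.
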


\begin{proof}
    Suppose $X$ satisfies $\EHIp(r \in S)$. Let $C$ and $\kappa$ be the constants for which $\EHIp(r \in S)$ holds.
    Fix $x_0 \in \R^d$ and $r \in S$, and let $h$ be non-negative everywhere and harmonic on $B(x_0, r)$. Let $x, y \in B(x_0, \frac{\kappa}{1+\kappa}r)$. Let $x_1 := \frac{x+y}{2} \in B(x_0, \frac{\kappa}{1+\kappa}r)$. Then $x$ and $y$ belong to $B(x_1, \frac{\kappa}{1+\kappa}r)$, since $$|x-x_1|=|y-x_1| = \frac12|x-y| \leq \frac12(|x-x_0|+|y-x_0|) < \frac{\kappa}{1+\kappa}r.$$ Also, $B(x_1, \frac{1}{1+\kappa}r) \subseteq B(x_0, r)$, because for all $z \in B(x_1, \frac{1}{1+\kappa}r)$ the triangle inequality tells us that $$|z-x_0| \leq |z-x_1|+|x_1-x_0| < \frac{1}{1+\kappa}r + \frac{\kappa}{1+\kappa}r = r.$$ Therefore, $h$ is non-negative and harmonic on $B(x_1, \frac{1}{1+\kappa}r)$.
    Since $x$ and $y$ belong to $B(x_1, \frac{\kappa}{1+\kappa}r)$, $x_1$ is their midpoint, and $h$ is non-negative and harmonic on $B(x_1, \frac{1}{1+\kappa}r)$, by $\EHI'(r \in S)$, we have
    \begin{equation*}
        C^{-1} h(y) \leq h(x) \leq C h(y).
    \end{equation*}
    Therefore, $X$ satisfies $\EHI(r \in S)$ with $\frac{\kappa}{1+\kappa}$ in place of $\kappa$.
\end{proof}

We have shown that $\EHI(r \in S)$, $\EHIp(r \in S)$, and $\EHIpp(r \in S)$ are equivalent. In order to complete the proof of Proposition \ref{EHI:p:EHIpoisson=EHI}, we must show that $\EHIPoisson(r \in S)$ is equivalent to these three conditions.

\begin{lemma}\label{EHI:l:EHIpoissonImpliesEHI''}
    For all $S \subseteq (0, \infty)$, $\EHIPoisson(r \in S) \Longrightarrow \EHIpp(r \in S)$.
\end{lemma}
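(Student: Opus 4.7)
The plan is to invoke the Ikeda--Watanabe/Poisson kernel representation \eqref{EHI:e:h(x)IntermsofKd} for harmonic functions on $B(0,r)$, and then transfer the ratio bound for the Poisson kernel directly to a ratio bound for $h$.

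More concretely, let $C \geq 1$ and $\kappa \in (0,1)$ be the constants witnessing $\EHIPoisson(r \in S)$. Fix $r \in S$, take $h$ non-negative everywhere and harmonic with respect to $X$ on $B(0,r)$, and pick any $x \in B(0, \kappa r)$. By \eqref{EHI:e:h(x)IntermsofKd} applied with $D = B(0,r)$, I can write
\begin{equation*}
    h(x) = \int_{w \in B(0,r)^c} K_{B(0,r)}(x, w)\, h(w) \dee{w},
    \qquad
    h(-x) = \int_{w \in B(0,r)^c} K_{B(0,r)}(-x, w)\, h(w) \dee{w}.
\end{equation*}
Since $-x \in B(0, \kappa r)$ as well, the hypothesis $\EHIPoisson(r \in S)$ gives $C^{-1} K_{B(0,r)}(-x, w) \leq K_{B(0,r)}(x, w) \leq C K_{B(0,r)}(-x, w)$ for every $w \in B(0,r)^c$. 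Multiplying by the non-negative weight $h(w)$ and integrating preserves these inequalities, yielding $C^{-1} h(-x) \leq h(x) \leq C h(-x)$. Since $r \in S$ and $x \in B(0, \kappa r)$ were arbitrary, this is exactly $\EHIpp(r \in S)$ with the same constants $C$ and $\kappa$.

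The only subtlety I anticipate is the use of \eqref{EHI:e:h(x)IntermsofKd} with $D = B(0,r)$ itself, since the harmonicity definition in the paper only requires $h(x) = \E_x[h(X_{\tau_U})]$ for $U \Subset B(0,r)$; however the paper states the Ikeda--Watanabe identity for any measurable $D$, so this is already folded in. No separate limiting argument is required, and there is no real obstacle to the proof.
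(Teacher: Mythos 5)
Your proof is correct and takes essentially the same approach as the paper: apply the Ikeda--Watanabe identity \eqref{EHI:e:h(x)IntermsofKd} with $D = B(0,r)$ to write $h(x)$ and $h(-x)$ as integrals against the Poisson kernel, then use the pointwise kernel comparison from $\EHIPoisson(r \in S)$ under the integral sign. The subtlety you flag about applying the representation with $D = B(0,r)$ itself is implicitly accepted by the paper as well, so there is no real divergence.
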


\begin{proof}
    Suppose $X$ satisfies $\EHIPoisson(r \in S)$, and let $C$ and $\kappa$ be the constants witnessing $\EHIPoisson(r \in S)$. Fix $r \in S$, let $h$ be non-negative everywhere and harmonic on $B(0, r)$, and let $x \in B(0, \kappa r)$. By \eqref{EHI:e:h(x)IntermsofKd} and $\EHIPoisson(r \in S)$,
    \begin{align*}
        C^{-1} h(-x) &= C^{-1}\int_{w \in B(0, r)^c} K_{B(0, r)}(-x, w) h(w) \dee{w} \\
        &\leq \int_{w \in B(0, r)} K_{B(0, r)}(x, w) h(w) \dee{w} = h(x) \\
        &\leq C\int_{w \in B(0, r)^c} K_{B(0, r)}(-x, w) h(w) \dee{w} = Ch(-x).
    \end{align*}
    Thus, $X$ satisfies $\EHIpp(r \in S)$.
\end{proof}

The final step in the proof of Proposition \ref{EHI:p:EHIpoisson=EHI} is to show that $\EHIpp(r \in S) \Longrightarrow \EHIPoisson(r \in S)$. Note that the function $K_{B(0, r)}(\cdot, w)$ is not harmonic on $B(0, r)$, like it would be if $X$ were a diffusion, so we can not simply apply $\EHI'(r \in S)$ to the function $K_{B(0, r)}(\cdot, w)$. Instead, we will take limits as $s \to 0^+$ of the function $x \mapsto s^{-1}\int_{B(w, s)} K_{B(0, r)}(x, w) \dee{s} = s^{-1} \P_x(X_{\tau_{B(0, r)}} \in B(w, s))$, which truly is harmonic.

\begin{lemma} \label{EHI:l:EHI''impliesEHIpoisson}
    For all $S \subseteq (0, \infty)$, $\EHIpp(r \in S) \Longrightarrow \EHIPoisson(r \in S)$.
\end{lemma}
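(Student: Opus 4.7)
The approach is to approximate $K_{B(0, r)}(\cdot, w)$ by genuinely harmonic functions so that $\EHIpp$ can be applied. Let $C$ and $\kappa$ witness $\EHIpp(r \in S)$, fix $r \in S$, $x \in B(0, \kappa r)$, and $w \in B(0, r)^c$ with $|w|>r$. For each $s \in (0, |w|-r)$, define
\begin{equation*}
    h_s(y) := \P_y \left( X_{\tau_{B(0, r)}} \in B(w, s) \right),
\end{equation*}
a non-negative, bounded function on $\R^d$. To check harmonicity of $h_s$ on $B(0, r)$, take any $U \Subset B(0, r)$; since $\tau_U \leq \tau_{B(0, r)}$, the strong Markov property at $\tau_U$ gives
\begin{equation*}
    h_s(y) = \E_y \left[ \P_{X_{\tau_U}}(X_{\tau_{B(0, r)}} \in B(w, s)) \right] = \E_y[h_s(X_{\tau_U})],
\end{equation*}
where on the event $\{X_{\tau_U} \notin B(0, r)\}$ the inner probability reduces to $\indicator_{B(w, s)}(X_{\tau_U}) = h_s(X_{\tau_U})$ because from a point outside $B(0, r)$ the exit time is zero.

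Applying $\EHIpp(r \in S)$ to $h_s$ yields $C^{-1} h_s(-x) \leq h_s(x) \leq C h_s(-x)$. By the Ikeda-Watanabe identity and the definition \eqref{EHI:e:DefKd} of $K_{B(0, r)}$, for $y \in B(0, r)$ we have
\begin{equation*}
    h_s(y) = \int_{B(w, s)} K_{B(0, r)}(y, z) \dee{z}.
\end{equation*}
Dividing the $\EHIpp$ inequality by $|B(w, s)|$ and letting $s \to 0^+$, the Lebesgue differentiation theorem applied to the locally integrable functions $K_{B(0, r)}(\pm x, \cdot)$ on $B(0, r)^c$ produces
\begin{equation*}
    C^{-1} K_{B(0, r)}(-x, w) \leq K_{B(0, r)}(x, w) \leq C K_{B(0, r)}(-x, w)
\end{equation*}
for almost every such $w$.

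To upgrade from a.e.\ $w$ to every $w \in B(0, r)^c$, I would use dominated convergence in the representation $K_{B(0, r)}(y, w) = \int_{B(0, r)} G_{B(0, r)}(y, z) j(|w - z|) \dee{z}$: on $\{|w| > r\}$, $j(|w - z|)$ is uniformly bounded by $j(|w| - r)$, and since the non-increasing function $j$ has only countably many discontinuities, for each fixed exterior $w$ the map $w' \mapsto j(|w' - z|)$ is continuous at $w$ for a.e.\ $z \in B(0, r)$. This gives continuity of $K_{B(0, r)}(y, \cdot)$ on the strict exterior, so the inequality extends there by density; since $\partial B(0, r)$ has Lebesgue measure zero (and is not charged by $X_{\tau_{B(0, r)}}$), this completes $\EHIPoisson(r \in S)$. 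The main obstacle is the combined care needed for the strong Markov argument establishing harmonicity of $h_s$ together with the $s \to 0^+$ limit and its final a.e.-to-everywhere upgrade.
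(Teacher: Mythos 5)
Your proof takes essentially the same approach as the paper: approximate $K_{B(0,r)}(\cdot,w)$ by the genuinely harmonic averaged hitting probabilities $h_s$, apply $\EHIpp$ to $h_s$, and pass to the limit $s \to 0^+$. The only real difference is in how the limit is justified. The paper asserts that $j$ and the Green's function are continuous (a consequence of isotropic unimodality it invokes), hence $K_{B(0,r)}$ is continuous, and obtains $\lim_{s\to 0^+} h_s(z) = K_{B(0,r)}(z,w)$ directly for every exterior $w$. You instead obtain the limit only for a.e.\ $w$ via Lebesgue differentiation, and then upgrade to every $w$ using continuity of $K_{B(0,r)}(y,\cdot)$, which you establish by dominated convergence, using only that a monotone $j$ has countably many discontinuities. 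This route is slightly longer but makes a weaker regularity demand on $j$, so it is if anything more robust; both end up relying on continuity of $K_{B(0,r)}(y,\cdot)$ on the strict exterior, and both leave the $w \in \partial B(0,r)$ case (which contributes Lebesgue measure zero) at the same informal level as the paper. No gap.
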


\begin{proof}
    Suppose $X$ satisfies $\EHIpp(r \in S)$, and let $C$ and $\kappa$ be the constants witnessing this. Fix $r \in S$, $x \in B(0, \kappa r)$, and $w \in \overline{B(0, r)}^c$. Let $s>0$ be small enough so that $B(w, s) \subseteq B(0, r)^c$. Let $h_s$ be the function
    \begin{equation*}
        h_s(z) := \frac{1}{|B(w, s)|} \int_{B(w, s)} K_{B(0, r)}(z, v) \dee{v} = \frac{1}{|B(w, s)|} \P_z \left( X_{\tau_{B(0, r)}} \in B(w, s) \right).
    \end{equation*}
    Because $X$ is isotropic unimodal, the Green's functions and jump kernels are all continuous, so $K_{B(0, r)}$ is continuous in each argument, and therefore $\lim_{s \to 0^+} h_s(z) = K_{B(0, r)}(z, w)$.
    Since $h_s$ is non-negative everywhere and harmonic on $B(0, r)$, by applying $\EHIpp(r \in S)$ to $h_s$, we obtain
    \begin{align*}
        C^{-1} h_s(-x) \leq h_s(x) \leq Ch_s(-x, w).
    \end{align*}
    By taking the limit as $s \to 0^+$,
    \begin{equation*}
        C^{-1} K_{B(0, r)}(-x, w) \leq K_{B(0, r)}(x, w) \leq C K_{B(0, r)}(-x, w).
    \end{equation*}
    This holds for all $x \in B(0, \kappa r)$ and $w \in \overline{B(0, r)}^c$. By the continuity of $K_{B(0, r)}$ in each argument, the result extends to all $x \in B(0, \kappa r)$ and $w \in B(0, r)^c$, so $X$ satisfies $\EHIPoisson(r \in S)$.
\end{proof}

\begin{proof}[Proof of Proposition \ref{EHI:p:EHIpoisson=EHI}]
    By Remark \ref{EHI:r:easyPartsOfEHI=EHIpoisson} and Lemma \ref{EHI:l:EHI'impliesEHI}, $\EHI(r \in S) \Longleftrightarrow \EHIp(r \in S) \Longleftrightarrow \EHIpp(r \in S)$. By Lemmas \ref{EHI:l:EHIpoissonImpliesEHI''} and \ref{EHI:l:EHI''impliesEHIpoisson}, $\EHIpp(r \in S) \Longleftrightarrow \EHIPoisson(r \in S)$. Thus, all four conditions are equivalent.
\end{proof}

\begin{prop} \label{EHI:p:EHI[a,b]forfree}
    Let $X$ be an isotropic unimodal L\'{e}vy jump process on $\R^d$. If \eqref{EHI:e:regularJumps} holds for all $r>0$, then $X$ satisfies $\EHI(r \in [a, b])$ for all $0<a<b<\infty$.
\end{prop}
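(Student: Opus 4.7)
By Proposition \ref{EHI:p:EHIpoisson=EHI}, $\EHI(r \in [a, b])$ is equivalent to $\EHIPoisson(r \in [a, b])$, so the plan is to produce constants $C \geq 1$ and $\kappa \in (0, 1)$ (I will fix $\kappa = 1/2$) such that
\[
C^{-1} K_{B(0, r)}(-x, w) \leq K_{B(0, r)}(x, w) \leq C K_{B(0, r)}(-x, w)
\]
for every $r \in [a, b]$, $x \in B(0, r/2)$, and $w \in B(0, r)^c$. Using the isotropy of $X$ and the symmetry of $B(0, r)$ under $z \mapsto -z$, which yields $G_{B(0, r)}(-x, -z) = G_{B(0, r)}(x, z)$, and then changing variables $z \mapsto -z$ in the denominator, I rewrite the ratio of interest as
\[
\rho(r, x, w) := \frac{K_{B(0, r)}(x, w)}{K_{B(0, r)}(-x, w)} = \frac{\int_{B(0, r)} G_{B(0, r)}(x, z) \, j(|w - z|) \dee{z}}{\int_{B(0, r)} G_{B(0, r)}(x, z) \, j(|w + z|) \dee{z}}.
\]
Since $\rho(r, -x, w) = 1/\rho(r, x, w)$, it suffices to bound $\rho$ from above. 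I split $w$ into a far-field regime $|w| \geq 3b$ and a near-field regime $r < |w| \leq 3b$.

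For $|w| \geq 3b$, every $z \in B(0, r) \subseteq B(0, b)$ satisfies $|w \pm z| \in [|w| - b, |w| + b] \subseteq [\tfrac{2}{3}|w|, \tfrac{4}{3}|w|]$, so the quantities $|w - z|$ and $|w + z|$ always differ by at most a factor of $2$. Iterating \eqref{EHI:e:regularJumps} a bounded number of times depending only on $c_j$ and $d$, together with the monotonicity of $j$, then gives $j(|w - z|) \asymp j(|w + z|)$ pointwise in $z$ with universal constants. Because the integrands in the numerator and denominator of $\rho$ share the non-negative weight $G_{B(0, r)}(x, z)$, this pointwise comparison transfers to the integrals and bounds $\rho$ uniformly on the far-field regime.

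For the near-field regime, consider $\mathcal{K}_\circ := \{(r, x, w) : r \in [a, b], \, |x| \leq r/2, \, r < |w| \leq 3b\}$. On $\mathcal{K}_\circ$ the ratio $\rho$ is jointly continuous and strictly positive, because $G_{B(0, r)}$ and $j$ are continuous and $K_{B(0, r)}(\cdot, \cdot)$ is strictly positive by the isotropic unimodal structure of $X$. The only obstruction to compactness of $\mathcal{K}_\circ$ is the strict inequality $|w| > r$. The key claim is that $\rho$ extends continuously to $\overline{\mathcal{K}_\circ}$ by allowing $|w| = r$: if $(r_n, x_n, w_n) \to (r, x, w_0)$ with $|w_0| = r$, then although each of $K_{B(0, r_n)}(\pm x_n, w_n)$ may diverge (as happens for infinite-activity processes), the divergent contribution comes from the sub-integral over $z$ near $w_0$, where $G_{B(0, r)}(\pm x, \cdot)$ is continuous; that sub-integral factors as $G_{B(0, r)}(\pm x, w_0)$ times a common divergent scalar, so the ratio approaches the finite positive number $G_{B(0, r)}(x, w_0) / G_{B(0, r)}(-x, w_0)$. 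The continuous extension $\overline{\rho}$ is then bounded on the compact set $\overline{\mathcal{K}_\circ}$, which bounds $\rho$ on $\mathcal{K}_\circ$.

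I expect the main obstacle to be the rigorous verification of this boundary extension. Concretely, for each small $\delta > 0$ I split
\[
K_{B(0, r)}(\pm x, w) = \int_{B(0, r) \cap B(w_0, \delta)} G_{B(0, r)}(\pm x, z) j(|w - z|) \dee{z} + \int_{B(0, r) \setminus B(w_0, \delta)} G_{B(0, r)}(\pm x, z) j(|w - z|) \dee{z};
\]
the second integral is uniformly bounded and continuous in $w$, while in the first integral the continuity of $G_{B(0, r)}(\pm x, \cdot)$ at $w_0$ lets me pull out $G_{B(0, r)}(\pm x, w_0)$ with an error that vanishes uniformly as $\delta \to 0^+$, leaving a common divergent scalar that cancels in the ratio. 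Combining the resulting near-field bound with the far-field bound yields $\EHIPoisson(r \in [a, b])$, and hence $\EHI(r \in [a, b])$.
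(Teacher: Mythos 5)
Your overall strategy matches the paper's: reduce to $\EHIPoisson(r \in [a,b])$ via Proposition \ref{EHI:p:EHIpoisson=EHI}, handle the far field via the jump regularity \eqref{EHI:e:regularJumps}, and bound the near field via continuity plus compactness. The far-field portion of your argument is correct (the paper uses $|w| \geq 3r$ rather than $|w| \geq 3b$, but either works). The near-field portion, however, contains a concrete error in precisely the step you flagged as the main obstacle.

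You propose that as $w_n \to w_0$ with $|w_0| = r$, the ratio tends to $G_{B(0,r)}(x, w_0)/G_{B(0,r)}(-x, w_0)$, obtained by pulling $G_{B(0,r)}(\pm x, w_0)$ out of the sub-integral near $w_0$. But $w_0 \in \partial B(0,r)$, so $G_{B(0,r)}(\pm x, w_0) = 0$: what you pull out is zero, the claimed limit is $0/0$, and the ``error'' you discard is the entire sub-integral. The correct boundary behavior of the ratio is controlled by the \emph{rate} at which $G_{B(0,r)}(\pm x, z)$ decays as $z \to w_0$ from inside $B(0,r)$, not by the value $G_{B(0,r)}(\pm x, w_0)$ itself. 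For instance, in the $\alpha$-stable model, where $G_{B(0,r)}(x, z) \asymp d(z, \partial B(0,r))^{\alpha/2}$ near the boundary, the ratio of Poisson kernels tends to $|{-x} - w_0|^d / |x - w_0|^d$, which is not a ratio of (vanishing) Green's-function values. For what it's worth, the paper handles this point quite tersely, asserting continuity of $f(r,x,w) = K_{B(0,r)}(-x,w)/K_{B(0,r)}(x,w)$ on the compact set $\{r \in [a,b],\ |x| \leq r/2,\ r \leq |w| \leq 3r\}$ without separately treating $|w| = r$; you were right to suspect this needs justification there, but your proposed justification does not work as written.
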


\begin{proof}
    Fix $b>a>0$.
    Consider the function
    \begin{equation*}
        f(r, x, w) := \frac{K_{B(0, r)}(-x, w)}{K_{B(0, r)}(x, w)}
    \end{equation*}
    with domain $\{(r, x, w) : r>0, x \in \overline{B(0, r/2)}, w \in B(0, r)^c \}$. Note that $f$ is continuous (since $X$ is an isotropic unimodal L\'{e}vy process), and only takes positive values. Therefore, on any compact subset of the domain, $f$ has a finite upper bound. In particular, there exists a $C>0$ such that
    \begin{equation}\label{EHI:e:compactnessArgument}
        f(r, x, w) \leq C \qquad\mbox{for all $r \in [a, b]$, $|x| \leq r/2$, and $r \leq |w| \leq 3r$.}
    \end{equation}
    For all $w \in B(0, r)^c$, by \eqref{EHI:e:DefKd}, relabeling $z$ as $-z$, and symmetry (since $X$ is isotropic unimodal), we have
    \begin{align} \label{EHI:e:WFar}
        K_{B(0, r)}(-x, w) &= \int_{z \in B(0, r)} G_{B(0, r)}(-x, z) j(|w-z|) \dee{z} \notag\\
        &= \int_{z \in B(0, r)} G_{B(0, r)}(-x, -z) j(|w+z|) \dee{z} \notag\\
        &= \int_{z \in B(0, r)} G_{B(0, r)}(x, z) j(|-w-z|) \dee{z}.
    \end{align}
    If $|w| \geq 3r$, then $\frac12 \leq \frac{|-w-z|}{|w-z|} \leq 2$ for all $z \in B(0, r)$, so \eqref{EHI:e:regularJumps} gives us
    \begin{equation*}
        c_j \leq \frac{j(|-w-z|)}{j(|w-z|)} \leq c_j^{-1}.
    \end{equation*}
    Thus, by comparing the $j(|-w-z|)$ term in \eqref{EHI:e:WFar} with $j(|w-z|)$,
    \begin{equation} \label{EHI:e:WFar2}
        K_{B(0, r)}(-x, w) \leq c_j^{-1} \int_{z \in B(0, r)} G_{B(0, r)}(x, z) j(|w-z|) \dee{z} = c_j^{-1} K_{B(0, r)}(x, w).
    \end{equation}
    By \eqref{EHI:e:compactnessArgument} and \eqref{EHI:e:WFar2},
    \begin{equation*}
        K_{B(0, r)}(-x, w) \leq \max\{C, c_j^{-1} \} K_{B(0, r)}(x, w).
    \end{equation*}
    By the same argument,
    \begin{equation*}
        K_{B(0, r)}(x, w) \leq \max\{C, c_j^{-1} \} K_{B(0, r)}(-x, w).
    \end{equation*}
    This holds for all $r \in [a, b]$, $x \in \overline{B(0, r/2)}$, and $w \in B(0, r)^c$. Thus, $X$ satisfies $\EHIPoisson(r \in [a, b])$. By Proposition \ref{EHI:p:EHIpoisson=EHI}, this is equivalent to $X$ satisfying $\EHI(r \in [a, b])$.
\end{proof}

\section{Meyer decompositions}\label{EHI:s:meyer}

One of the key ideas of this chapter (for both the positive results and the negative results) is to take a jump process $X$ with jump kernel $j(r)$, and to write $j(r)$ as the sum of two other jump kernels, $j'(r)$ and $\hat{j}'(r)$.
(The apostrophes will be replaced with a superscript indicating which particular decomposition we are considering.)
Usually this will be done in a way so that $j'(r)$ tends to admit smaller jumps, while $\hat{j}'(r)$ tends to admit larger jumps, and $\int_{\mathbb{R}^d} \hat{j}'(|x|) \dee{x} < \infty$ so $\hat{j}'(r)$ only admits finitely many jumps per time-interval. Then $X$ can be thought of as the sum of two independent jump processes $X'$ and $\hat{X}'$, with jump processes $j'(r)$ and $\hat{j}'(r)$ respectively. Such a decomposition is known as a \emph{Meyer decomposition} (see \cite{meyer}, \cite{ms3}). Let $T'$ be the first time the process $\hat{X}'$ with jump kernel $\hat{j}'(r)$ takes a jump. Then $T'$ is an Exponential\footnote{A random variable is said to be Exponential($\lambda$) (or have Exponential($\lambda$) distribution) if it takes non-negative values and has a density of $\lambda e^{-\lambda x}$ for $x \geq 0$. The mean of such a random variable is $\lambda^{-1}$.}($\int_{\R^d} \hat{j}'(|x|) \dee{x}$) stopping time, independent of $X'$. Because of this independence, we can think of the process $X$ as behaving like $X'$ (which might be easier to analyze because it takes shorter jumps) until time $T'$, which often has either a ``flattening" or ``scattering" property: either the jump at time $T'$ takes the process to a point that is near uniform on some large ball (flattening), or the jump at time $T'$ takes the process somewhere very far away from the scale of our analysis (scattering).

In this section, we give definitions for the various Meyer decompositions used throughout this chapter, and prove some of their properties.

\subsection{The zoo of Meyer decompositions}\label{ss:zooOfMeyer}

Let us define some of the decompositions we will use, and introduce some notation related to them.

\begin{definition}[Small/large decomposition]\label{EHI:d:smalllarge}
    Let $X=(X_t)_{t \geq 0}$ be an isotropic unimodal L\'{e}vy jujmp process on $\R^d$ with jump kernel $j(r)$. For each $r_0>0$, we define the \emph{small/large decomposition} of $X$ at $r_0$ as follows: For all $r>0$, let
    \begin{equation*}
        j^{(r_0)}(r) := j(r) \indicatorWithSetBrackets{r \leq r_0}, \qquad \hat{j}^{(r_0)}(r) := j(r) \indicatorWithSetBrackets{r>r_0}.
    \end{equation*}
    Let $X = X^{(r_0)} + \hat{X}^{(r_0)}$ be a Meyer decomposition of $X$, such that $X^{(r_0)}_0 = X_0$, $\hat{X}^{(r_0)}_0=0$, $X^{(r_0)}$ has jump kernel $j^{(r_0)}(r)$, $\hat{X}^{(r_0)}$ has jump kernel $\hat{j}^{(r_0)}(r)$, and $X^{(r_0)}$ and $\hat{X}^{(r_0)}$ are independent.
    Let
    \begin{equation}\label{EHI:e:m2lambda()}
        m_2(r_0) := \int_{\R^d} |x|^2 j^{(r_0)}(|x|) \dee{x} \qquad \mbox{and} \qquad \lambda(r_0) := \int_{\R^d} \hat{j}^{(r_0)}(|x|) \dee{x}.
    \end{equation}
    By the L\'{e}vy-Khintchine condition, $\lambda(r_0)<\infty$.
    Let $T^{(r_0)} := \inf\{ t>0 : \hat{X}^{(r_0)}_t \neq 0 \}$ be the first time that the process $\hat{X}^{(r_0)}$ takes a jump, and note that $T^{(r_0)}$ is Exponential($\lambda(r_0)$) and independent of the process $X^{(r_0)}$.

    Denote exit times for the process $X^{(r)}$ by $\tau^{(r)}_U := \inf \{t \geq 0 : X^{(r)}_t \notin U \}$.
\end{definition}

\begin{figure}[ht] 
    \begin{framed}
        \captionof{figure}{An illustration of the small/large and small/flat decompositions}
        \label{EHI:f:decompositions}
        \centering
        \begin{minipage}{0.45\textwidth}
        \centering
        \begin{tikzpicture}
            \begin{axis}[
                axis lines = middle,
                domain = 0.05:2, % Avoid division by zero at r = 0
                ymin = 0, ymax = 2, % y from 0 to 2
                xtick = \empty,
                ytick = \empty,
                extra x ticks = {1},
                extra x tick labels = {\,}, % Empty label for r_0 tick
                extra y ticks = {1},
                extra y tick labels = {$j(r_0)$},
                width = 6cm, height = 6cm, % Size of the plot
                title = {\textbf{Small/large decomposition}}, % Title for this plot
                samples = 100, % Number of points for a smooth curve
                restrict y to domain = 0:2, % Restrict y values to avoid overflow
                enlargelimits,
                clip=false,
                every axis label/.append style={font=\small}, % Smaller axis labels
                every axis tick label/.append style={font=\small}, % Smaller tick labels
                axis line style={-}, % Solid axis lines
                only marks=false, % Disable markers
                unbounded coords=jump, % Ensures the function doesn't plot points at undefined regions
                xlabel = \( r \), % Label the x-axis
                ylabel = \( j(r) \), % Label the y-axis
                xlabel style={at={(axis cs:2,0)}, anchor=north west}, % Place x-axis label at bottom-right
                ylabel style={at={(axis cs:0.3,2)}, anchor=west},
            ]
                \addplot[domain=0.05:2, smooth] {1/x^2}; % Plot the function j(r) = 1/r, without markers
                % Label the point (r_0, j(r_0)) at the axes
                
                \draw[dotted, thick] (axis cs:1,0) -- (axis cs:1,1);
                \node at (axis cs:1,0) [anchor=north] {$r_0$}; % Label r_0 on x-axis
                \node at (axis cs:0.8, 0.5) [font=\small] {small};
                \node at (axis cs:1.3, 0.3) [font=\small] {large};
            \end{axis}
        \end{tikzpicture}
    \end{minipage}%
    \hspace{0.5cm} % Space between the two plots
    \begin{minipage}{0.45\textwidth}
        \centering
        \begin{tikzpicture}
            \begin{axis}[
                axis lines = middle,
                domain = 0.05:2, % Avoid division by zero at r = 0
                ymin = 0, ymax = 2, % y from 0 to 2
                xtick = \empty,
                ytick = \empty,
                extra x ticks = {1},
                extra x tick labels = {\,}, % Empty label for r_0 tick
                extra y ticks = {1, 0.5},
                extra y tick labels = {$j(r_0)$, $\frac12 j(r_0)$},
                width = 6cm, height = 6cm, % Size of the plot
                title = {\textbf{Small/flat decomposition}}, % Title for this plot
                samples = 100, % Number of points for a smooth curve
                restrict y to domain = 0:2, % Restrict y values to avoid overflow
                enlargelimits,
                clip=false,
                every axis label/.append style={font=\small}, % Smaller axis labels
                every axis tick label/.append style={font=\small}, % Smaller tick labels
                axis line style={-}, % Solid axis lines
                only marks=false, % Disable markers
                unbounded coords=jump, % Ensures the function doesn't plot points at undefined regions
                xlabel = \( r \), % Label the x-axis
                ylabel = \( j(r) \), % Label the y-axis
                xlabel style={at={(axis cs:2,0)}, anchor=north west}, % Place x-axis label at bottom-right
                ylabel style={at={(axis cs:0.3,2)}, anchor=west},
            ]
                \addplot[domain=0.05:2, smooth] {1/x^2}; % Plot the function j(r) = 1/r, without markers
                % Label the point (r_0, j(r_0)) at the axes
                
                \draw[dotted, thick] (axis cs:1,0.5) -- (axis cs:1,1);
                \draw[dotted, thick] (axis cs:0.63, 0.5) -- (axis cs:1, 0.5);
                \node at (axis cs:1,0) [anchor=north] {$r_0$}; % Label r_0 on x-axis
                \node at (axis cs:0.8, 0.8) [font=\small] {small};
                \node at (axis cs:1.3, 0.3) [font=\small] {flat};
            \end{axis}
        \end{tikzpicture}
    \end{minipage}
    \end{framed}
\end{figure}
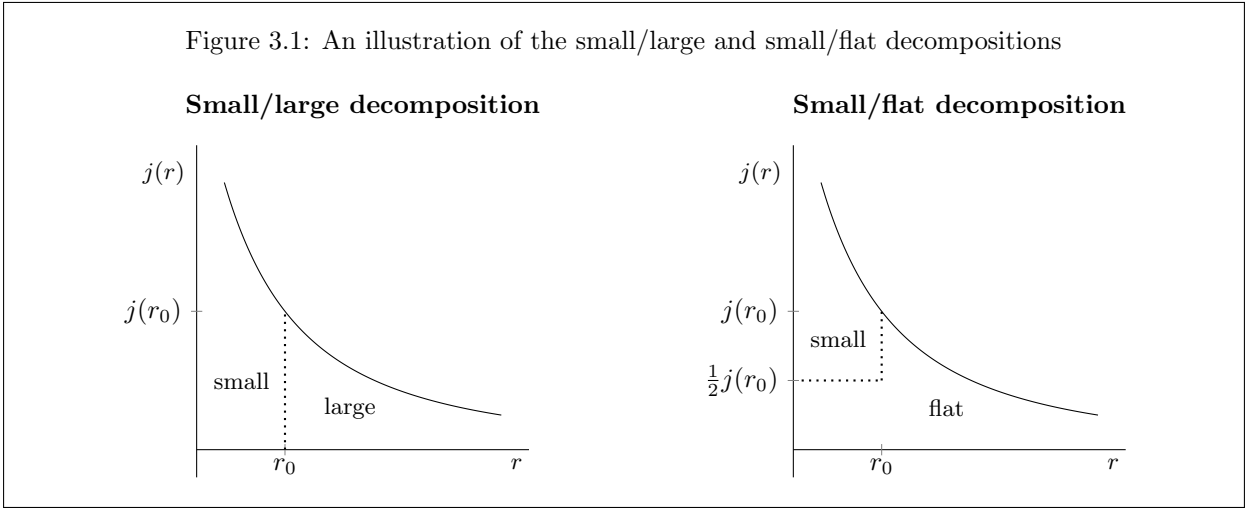

\begin{definition}[Small/flat decomposition] \label{EHI:d:smallflat}
    Let $X=(X_t)_{t \geq 0}$ be an isotropic unimodal L\'{e}vy jujmp process on $\R^d$ with jump kernel $j(r)$. For each $r_0>0$, we define the \emph{small/flat decomposition} of $X$ at $r_0$ as follows: For all $r>0$, let
    \begin{equation*}
        j^{\inner{r_0}}(r) := \left\{\begin{matrix}
            j(r) - \frac12 j(r_0) &:& \mbox{if $r \leq r_0$} \\
            0 &:& \mbox{if $r>r_0$}
        \end{matrix}
        \right.
        , \qquad
        \hat{j}^{\inner{r_0}}(r) := \left\{
        \begin{matrix}
            \frac12 j(r_0) &:& \mbox{if $r \leq r_0$} \\
            j(r) &:& \mbox{if $r>r_0$}
        \end{matrix}
        \right..
    \end{equation*}
    Let $X = X^{\inner{r_0}} + \hat{X}^{\inner{r_0}}$ be a Meyer decomposition of $X$, such that $X^{\inner{r_0}}_0 = X_0$, $\hat{X}^{\inner{r_0}}_0=0$, $X^{\inner{r_0}}$ has jump kernel $j^{\inner{r_0}}(r)$, $\hat{X}^{\inner{r_0}}$ has jump kernel $\hat{j}^{\inner{r_0}}(r)$, and $X^{\inner{r_0}}$ and $\hat{X}^{\inner{r_0}}$ are independent.
    Let
    \begin{equation}\label{EHI:e:m2lambda<>}
        m_2\inner{r_0} := \int_{\R^d} j^{\inner{r_0}} |x|^2 j(|x|) \dee{x}
        \qquad\mbox{and}\qquad
        \lambda\inner{r_0} := \int_{\R^d} \hat{j}^{\inner{r_0}}(|x|) \dee{x}.
    \end{equation}
    By the L\'{e}vy-Khintchine condition, $\lambda\inner{r_0}<\infty$.
    Let $T^{\inner{r_0}} := \inf\{ t>0 : \hat{X}^{\inner{r_0}}_t \neq 0 \}$ be the first time that the process $\hat{X}^{\inner{r_0}}$ takes a jump, and note that $T^{\inner{r_0}}$ is Exponential($\lambda\inner{r_0}$) and independent of the process $X^{\inner{r_0}}$.

    Denote exit times for the process $X^{\inner{r}}$ by $\tau^{\inner{r}}_U := \inf \{t \geq 0 : X^{\inner{r}}_t \notin U \}$.
\end{definition}

Let us briefly give an intuition behind the names we have given these decompositions. In the small/large decomposition, $X^{(r_0)}$ only takes jump of magnitude $r_0$ or smaller, while $\hat{X}^{(r_0)}$ only takes jumps of magnitude larger than $r_0$. In the small/flat decomposition, $X^{\inner{r_0}}$ only takes jumps of magnitude $r_0$ or smaller, and $\hat{j}^{\inner{r_0}}$ is ``flat" in the sense that $\hat{j}^{\inner{r_0}}(r_1) \asymp \hat{j}^{\inner{r_0}}(r_2)$ for all $r_1, r_2 \lesssim r_0$. These decompositions each have their respective advantages: the small/large decomposition is more straightforward and intuitive, while the flattening property of the small/flat decomposition is an essential ingredient in our proof of Theorem \ref{EHI:t:main}.

Note that the definitions of $m_2(r_0)$ and $\lambda(r_0)$ given in \eqref{EHI:e:m2lambda()} are consistent with those from \eqref{EHI:e:m2def}-\eqref{EHI:e:lambdadef}. We only express them in this additional form to emphasize the analogy to $m_2\inner{r_0}$ and $\lambda\inner{r_0}$ in \eqref{EHI:e:m2lambda<>}.
The reader may wonder why the factor of $1/2$ appears in the definition of the small/flat decomposition. As we prove later in Lemma \ref{EHI:l:m2lambdaComparable}, defining the small/flat decomposition this way gives us the convenient property that $m_2\inner{r} \asymp m_2(r)$ and $\lambda\inner{r} \asymp \lambda(r)$ when $X$ satisfies \eqref{EHI:e:regularJumps}.

\begin{definition}[SBM decomposition]\label{EHI:d:sbmDecomposition}
    Let $X=(X_t)_{t \geq 0} = (B_{S_t})_{t \geq 0}$ be a subordinated Brownian motion, and let $\mu$ be the L\'{e}vy measure of its subordinator $S$. For all $s>0$, define the \emph{SBM decomposition} of $X$ at $s$ as follows: Let $\mu^{\brackets{s}}$ and $\hat{\mu}^{\brackets{s}}$ be the measures given by
    \begin{equation*}
        \mu^{\brackets{s}}(A) := \mu(A \cap (0, s]), \quad \hat{\mu}^{\brackets{s}}(A) := \mu(A \cap (s, \infty)) \qquad\mbox{for all Lebesgue-measurable $A \subseteq (0, \infty)$}.
    \end{equation*}
    Let $S=S^{\brackets{s}}+\hat{S}^{\brackets{s}}$ be a Meyer decomposition of $S$ such that $S^{\brackets{s}}_0 = \hat{S}^{\brackets{s}}_0 = 0$, $S^{\brackets{s}}$ has L\'{e}vy measure $\mu^{\brackets{s}}$, $\hat{S}^{\brackets{s}}$ has L\'{e}vy measure $\hat{\mu}^{\brackets{s}}$, and $S^{\brackets{s}}$ and $\hat{S}^{\brackets{s}}$ are independent. Let $X^{\brackets{s}} = (X^{\brackets{s}}_t)_{t \geq 0} = B(S^{\brackets{s}}_t)_{t \geq 0}$ and $\hat{X}^{\brackets{s}} = (\hat{X}^{\brackets{s}}_t)_{t \geq 0} = \left(B(\hat{S}^{\brackets{s}}_t) - B_0 \right)_{t \geq 0}$.
    Then $X=X^{\brackets{s}}+\hat{X}^{\brackets{s}}$ is a Meyer decompposition of $X$, $X^{\brackets{s}}_0=X_0$, $\hat{X}^{\brackets{s}}_0=0$, and $X^{\brackets{s}}$ and $\hat{X}^{\brackets{s}}$ are independent.

    Let $j^{\brackets{s}}$ and $\hat{j}^{\brackets{s}}$ denote the jump kernels of $X^{\brackets{s}}$ and $\hat{X}^{\brackets{s}}$ respectively. Let
    \begin{equation*}
        \lambda\brackets{s} := \hat{\mu}^{\brackets{s}}((s, \infty)).
    \end{equation*}
    By the L\'{e}vy-Khintchine condition, $\lambda\brackets{s} < \infty$. Let $T^{\brackets{s}} := \inf\{ t\geq 0 : \hat{X}^{\brackets{s}} \neq 0 \}$ be the first time that the process $\hat{X}^{\brackets{s}}$ takes a jump, and note that $T^{\brackets{s}}$ is Exponential($\lambda\brackets{s}$) and independent of the process $X^{\brackets{s}}$.

    Denote exit times for the process $X^{\brackets{s}}$ by $\tau^{\brackets{s}}_U := \inf\{ t \geq 0 : X^{\brackets{s}}_t \notin U \}$.

    \begin{definition}[Meyer-decomposition arising exponential stopping time]\label{EHI:d:meyerStoppingTimes}
        If $X=X'+\hat{X}'$ is a Meyer decomposition of $X$, where the processes $X'$ and $\hat{X}'$ have jump kernels $j'$ and $\hat{j}'$ respectively, with $\int_{\R^d} \hat{j}'(|x|) \dee{x} < \infty$, and $T'$ is defined as the first time that the process $\hat{X}'$ takes a jump, then we call $T'$ a \emph{Meyer-decomposition arising exponential stopping time}.
    \end{definition}
\end{definition}
For example, $T^{(r_0)}$, $T^{\inner{r_0}}$, and $T^{\brackets{s}}$ (if $X$ is a subordinate Brownian motion) are all Meyer-decomposition arising exponential stopping times.

\subsection{Decompositions of the Green's function and Poisson kernel}

For the small/flat decomposition, the sequence of ``flattening jumps" (in other words, the sequence of times that $\hat{X}^{\inner{r_0}}$ takes a jump) will be of interest for the proof of Theorem \ref{EHI:t:main}. The following definition gives us notation with which to refer to each of these jumps, as opposed to only the first such jump.

\begin{definition}
    Let $X=(X_t)_{t \geq 0}$ be an isotropic unimodal L\'{e}vy jump process on $\R^d$, and let $X=X^{\inner{r_0}}+\hat{X}^{\inner{r_0}}$ be the small/flat decomposition of $X$ at $r_0$, as defined in Definition \ref{EHI:d:smallflat}. Let $\left(T^{\inner{r_0}}_k \right)_{k=0}^\infty$ be the sequence of stopping times defined recursively by
    \begin{equation*}
        T_0 := 0, \qquad T_{k+1} = \inf \left\{ t>T_k : \hat{X}^{\inner{r_0}}_t \neq \hat{X}^{\inner{r_0}}_{t-} \right\}.
    \end{equation*}
\end{definition}
In other words, for all $k \geq 1$, $T^{\inner{r_0}}_k$ is the $k$th time that $\hat{X}^{\inner{r_0}}$ takes a jump. Note that $T^{\inner{r_0}}_1$ is equal to $T^{\inner{r_0}}$ (as defined in Definition \ref{EHI:d:smallflat}).

Recall how the heat kernels $p_t$ and $p^D_t$, and the Green's functions $G$ and $G_D$, are defined in \eqref{EHI:e:HeatKernelDef} and \eqref{EHI:e:GreenDef}. We would like to decompose the Green's function $G_D(x, y)$ into an infinite sum $G_D(x, y) = \sum_{k=0}^\infty G_D^{\inner{r_0}, k}(x, y)$, where for each $k$, for all measurable $A \subseteq \R^d$,
\begin{equation} \label{EHI:e:GkPurpose}
    \int_A G_D^{\inner{r_0}, k}(x, y) \dee{y} = \int_0^\infty \P_x \left( X_t \in A, \tau_D>t, T^{\inner{r_0}}_k \leq t < T^{\inner{r_0}}_{k+1} \right) \dee{t}.
\end{equation}
Let us start by defining the functions $G_D^{\inner{r_0}, k}: D \times D \to [0, \infty)$ recursively. Then we will show that these functions satisfy \eqref{EHI:e:GkPurpose}.

The following definition makes extensive use of the notation ($\lambda\inner{r_0}$, $X^{\inner{r_0}}$, $T^{\inner{r_0}}$, etc.) from Definition \ref{EHI:d:smallflat}. From now on, we will use this notation without commenting on it.

\begin{definition}\label{EHI:d:GkDef}
    Let $X=(X_t)_{t \geq 0}$ be an isotropic unimodal L\'{e}vy jump process on $\R^d$. Fix an open set $D \subseteq \R^d$.
    Let $p^{\inner{r_0}, D}_t(\cdot, \cdot)$ be the heat kernel of the process $X^{\inner{r_0}}$ killed upon exiting $D$.
    Define the function $G_D^{\inner{r_0}, 0} : D \times D \to [0, \infty)$ by
    \begin{equation*} \label{EHI:e:G0def}
        G_D^{\inner{r_0}, 0}(x, y) := \int_0^\infty e^{-\lambda\inner{r_0} t} p^{\inner{r_0}, D}_t(x, y) \dee{t}.
    \end{equation*}
    For all $k$ such that the function $G_D^{\inner{r_0}, k}(\cdot, \cdot)$ has been defined, define $G_D^{\inner{r_0}, k+1}(\cdot, \cdot)$ by
    \begin{equation*}\label{EHI:e:Gk+1def}
        G_D^{\inner{r_0}, k+1}(x, y) := \E_x \left[ \indicatorWithSetBrackets{T^{\inner{r_0}} < \tau_D} G_D^{\inner{r_0}, k}(X_{T^{\inner{r_0}}}, y) \right].
    \end{equation*}
    Furthermore, let
    \begin{equation*}
        G_D^{\inner{r_0}, >0}(x, y) := \sum_{k=1}^\infty G_D^{\inner{r_0}, k}(x, y).
    \end{equation*}
\end{definition}

\begin{lemma}\label{EHI:l:GkPurpose}
    Let $X=(X_t)_{t \geq 0}$ be an isotropic unimodal L\'{e}vy jump process on $\R^d$, and let the functions $G_D^{\inner{r_0}, k}(\cdot, \cdot)$ be as defined in Definition \ref{EHI:d:GkDef}. Then $G_D^{\inner{r_0}, 0}(\cdot, \cdot)$ is symmetric, and each $G_D^{\inner{r_0}, k}(\cdot, \cdot)$ satisfies \eqref{EHI:e:GkPurpose}.
\end{lemma}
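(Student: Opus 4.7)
The symmetry of $G_D^{\inner{r_0}, 0}$ is immediate: it is the integral of the symmetric killed heat kernel $p^{\inner{r_0}, D}_t(x, y)$ against the positive weight $e^{-\lambda\inner{r_0} t}$, so $G_D^{\inner{r_0}, 0}(x, y) = G_D^{\inner{r_0}, 0}(y, x)$. So the real work lies in the identity \eqref{EHI:e:GkPurpose}, which I would prove by induction on $k$.

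For the base case $k=0$, the key observation is that on the event $\{t < T^{\inner{r_0}}_1\} = \{t < T^{\inner{r_0}}\}$, no jump of $\hat{X}^{\inner{r_0}}$ has occurred, so $X_s = X^{\inner{r_0}}_s$ for all $s \in [0,t]$; in particular $\tau_D > t$ if and only if $\tau^{\inner{r_0}}_D > t$. Using the independence of $T^{\inner{r_0}}$ (which is Exponential($\lambda\inner{r_0}$)) from the process $X^{\inner{r_0}}$, I factor
\begin{equation*}
    \P_x(X_t \in A, \tau_D>t, t < T^{\inner{r_0}}_1) = e^{-\lambda\inner{r_0} t} \int_A p^{\inner{r_0}, D}_t(x, y) \dee{y},
\end{equation*}
then integrate in $t$ and apply Tonelli to match the definition of $G_D^{\inner{r_0}, 0}$.

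For the inductive step, I would condition on $\mathcal{F}_{T^{\inner{r_0}}}$ and invoke the strong Markov property at the stopping time $T^{\inner{r_0}}$, using that $X$ restarted at $T^{\inner{r_0}}$ (on $\{T^{\inner{r_0}} < \tau_D\}$) is a copy of $X$ started at $X_{T^{\inner{r_0}}}$, under which the $k$-th flattening jump becomes the $(k+1)$-th of the original process and $\tau_D$ becomes $\tau_D - T^{\inner{r_0}}$. By Tonelli and the inductive hypothesis applied with starting point $X_{T^{\inner{r_0}}}$,
\begin{equation*}
    \int_A G_D^{\inner{r_0}, k+1}(x, y) \dee{y} = \int_0^\infty \E_x\!\left[ \indicator_{\{T^{\inner{r_0}} < \tau_D\}} \indicator_{\{X_{T^{\inner{r_0}}+t} \in A,\, \tau_D > T^{\inner{r_0}}+t,\, T^{\inner{r_0}}_{k+1} \leq T^{\inner{r_0}}+t < T^{\inner{r_0}}_{k+2}\}} \right] \dee{t}.
\end{equation*}
The substitution $s = T^{\inner{r_0}} + t$ inside the integral then yields the desired expression, after observing that the constraint $T^{\inner{r_0}}_{k+1} \leq s$ already forces $s \geq T^{\inner{r_0}}_1 = T^{\inner{r_0}}$, so the lower endpoint of integration can be extended from $T^{\inner{r_0}}$ back to $0$ without change, and that $\{\tau_D > s\}$ subsumes $\{T^{\inner{r_0}} < \tau_D\}$.

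The main technical obstacle is bookkeeping around the strong Markov property: $T^{\inner{r_0}}$ is a stopping time only with respect to the enlarged filtration generated jointly by $X^{\inner{r_0}}$ and $\hat{X}^{\inner{r_0}}$, so I need to be clear that the Meyer decomposition gives two independent Lévy processes in this joint filtration, and therefore the shifted process after $T^{\inner{r_0}}$ has the same joint law (same small/flat decomposition, same jump times $T^{\inner{r_0}}_k$ re-indexed, same killed exit time) as a fresh copy started from $X_{T^{\inner{r_0}}}$. Once that is articulated cleanly, the change of variables and the re-indexing of the jump times fall out directly, and no other computation is required.
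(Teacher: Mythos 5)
Your proof is correct and follows essentially the same route as the paper: symmetry from the symmetry of the killed heat kernel; base case via independence of $T^{\inner{r_0}}$ from $X^{\inner{r_0}}$, factoring out $e^{-\lambda\inner{r_0}t}$, and Tonelli; inductive step via the strong Markov property at $T^{\inner{r_0}}_1$ with re-indexing of the jump times and a final Tonelli. The paper states the inductive step more tersely, while you spell out the change of variables, the re-indexing, and the enlarged-filtration subtlety explicitly, but the underlying argument is identical.
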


\begin{proof}
    The symmetry of $G_D^{\inner{r_0}, 0}(\cdot, \cdot)$ is clear from the definition, since the heat kernel $p^{\inner{r_0}, D}_D(x, y)$ of $X^{\inner{r_0}}$ (killed upon exiting $D$) is symmetric. Let us prove \eqref{EHI:e:GkPurpose} by induction on $k$. For the base case, because $T^{\inner{r_0}}$ is Exponential($\lambda\inner{r_0}$) and independent of the process $X^{\inner{r_0}}$, we have
    \begin{align*}
        \int_0^\infty \P_x \left( X_t \in A, \tau_D>t, T^{\inner{r_0}}_0 \leq t < T^{\inner{r_0}}_1 \right) \dee{t} &= \int_0^\infty \P_x \left( X^{\inner{r_0}}_t \in A, \tau^{\inner{r_0}}_D > t, T^{\inner{r_0}}>t \right) \dee{t} \\
        &= \int_0^\infty e^{-\lambda\inner{r_0}t} \P_x \left( X^{\inner{r_0}}_t \in A, \tau^{\inner{r_0}}_D > t \right) \dee{t} \\
        &= \int_0^\infty e^{-\lambda\inner{r_0}t} \int_A p^{\inner{r_0}, D}_t(x, y) \dee{y} \dee{t}.
    \end{align*}
    Tonelli's theorem allows us to switch the order of integration, which yields
    \begin{align*}
        \int_0^\infty \P_x \left( X_t \in A, \tau_D>t, T^{\inner{r_0}}_0 \leq t < T^{\inner{r_0}}_1 \right) \dee{t} &= \int_A \int_0^\infty e^{-\lambda\inner{r_0}t} p^{\inner{r_0}, D}_t(x, y) \dee{t} \dee{y} \\
        &=: \int_A G_D^{\inner{r_0}, 0}(x, y) \dee{y},
    \end{align*}
    thus confirming \eqref{EHI:e:GkPurpose} for $k=0$. Assume \eqref{EHI:e:GkPurpose} holds for $k$. By the Strong Markov property, the inductive hypothesis, and multiple uses of Tonelli's theorem,
    \begin{align*}
        \int_0^\infty \P_x \left( X_t \in A, \tau_D>t, T^{\inner{r_0}}_{k+1} \leq t < T^{\inner{r_0}}_{k+2} \right) \dee{t} &= \E_x \left[ \int_0^\infty \indicatorWithSetBrackets{X_t \in A, \tau_D>t, T^{\inner{r_0}}_{k+1} \leq t < T^{\inner{r_0}}_{k+2}} \dee{t} \right] \\
        &= \E_x \left[ \indicatorWithSetBrackets{T^{\inner{r_0}}_1 < \tau_D} \int_A G_D^{\inner{r_0}, k}(X_{T^{\inner{r_0}}_1}, y) \dee{y} \right] \\
        &= \int_A \E_x \left[ \indicatorWithSetBrackets{T^{\inner{r_0}}_1 < \tau_D} G_D^{\inner{r_0}, k}(X_{T^{\inner{r_0}}_1}, y) \right] \dee{y} \\
        &=: \int_A G_D^{\inner{r_0}, k+1}(x, y) \dee{y}.
    \end{align*}
\end{proof}

Let us similarly decompose the Poisson kernel.
Recall that in the small/flat decomposition, we decompose the jump kernel $j(r)$ into $j^{\inner{r_0}}(r) + \hat{j}^{\inner{r_0}}(r)$. The Poisson kernel can be decomposed into the sum $K_D = K^{\inner{r_0}, 0}_D + K^{\inner{r_0}, >0}_D$, where
\begin{equation}\label{EHI:e:K0formula}
    K_D^{\inner{r_0}, 0}(x, w) := \int_{z \in D} G_D^{\inner{r_0}, 0}(x, z) j^{\inner{r_0}}(|w-z|) \dee{z}
\end{equation}
and
\begin{equation} \label{EHI:e:K>0formula}
    K_D^{\inner{r_0}, >0}(x, w) := \int_{z \in D} \left( G_D^{\inner{r_0}, 0}(x, z) \hat{j}^{\inner{r_0}}(|w-z|) + G_D^{\inner{r_0}, >0}(x, z) j(|w-z|) \right) \dee{z}.
\end{equation}
Probabilistically, $K_D^{\inner{r_0}, 0}(x, z)$ accounts for $X$ traveling from $x$ to $z$ without $\hat{X}^{\inner{r_0}}$ taking any jumps, and $K_D^{\inner{r_0}, >0}(x, z)$ accounts for $X$ traveling from $x$ to $z$ involving at least one jump from $\hat{X}^{\inner{r_0}}$. 
Note that
\begin{equation}\label{EHI:e:(K>0)>K1}
    K_D^{\inner{r_0}, >0}(x, w) \geq \int_{z \in D} G_D^{\inner{r_0}, 1}(x, z) j^{\inner{r_0}}(|w-z|) \dee{z}.
\end{equation}

Note that all the constructions in this subsection generalize to the other Meyer decompositions. We only explicitly present them for the small/flat decomposition, since that is what we will use in the proof of our main results.

\subsection{Properties of the small/flat decomposition}

When $X$ satisfies \eqref{EHI:e:regularJumps}, the small/flat decomposition has two important properties which are essential to our proofs: the first of these properties is that $m_2\inner{r_0} \asymp m_2(r_0)$ and $\lambda\inner{r_0} \asymp \lambda(r_0)$, and the second is that $\hat{j}^{\inner{r_0}}(r_1) \asymp \hat{j}^{\inner{r_0}}(r_2)$ whenever $r_1, r_2 \lesssim r_0$ (even if one of them is very close to $0$). We prove these properties in the following two lemmas. Then in Proposition \ref{EHI:p:h1(x)=h1(-x)} we use these properties to prove that $h^{\inner{r_0}, >0}_{B(0, r)}(x, w) \asymp h^{\inner{r_0}, >0}_{B(0, r)}(-x, w)$ for $x \in B(0, r)$ and non-negative functions $h$ that are harmonic on some $D \Supset B(0, r)$.

\begin{lemma}\label{EHI:l:m2lambdaComparable}
    Let $X$ be an isotropic unimodal L\'{e}vy jump process. For all $r_0>0$, we have
    \begin{equation*}
        \frac12 m_2(r_0) \leq m_2\inner{r_0} \leq m_2(r_0) \qquad\mbox{and}\qquad \lambda\inner{r_0} \geq \lambda(r_0).
    \end{equation*}
    If $X$ satisfies \eqref{EHI:e:regularJumps} for all $r>0$, then we also have
    \begin{equation*}
        \lambda\inner{r_0} \lesssim \lambda(r_0),
    \end{equation*}
    where the constant implicit in $\lesssim$ depends only on $c_j$ (from \eqref{EHI:e:regularJumps}) and $d$.
\end{lemma}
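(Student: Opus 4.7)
The plan is to unpack the definitions directly: substituting the explicit formulas for $j^{\inner{r_0}}$ and $\hat{j}^{\inner{r_0}}$ into the integrals defining $m_2\inner{r_0}$ and $\lambda\inner{r_0}$ will reduce every claim to a comparison between $j(r_0)\,|B(0,r_0)|$ (or $j(r_0)\,r_0^{d+2}$) and the already-named quantities $\lambda(r_0)$ or $m_2(r_0)$.

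First I would compute
\begin{equation*}
    m_2\inner{r_0} \;=\; \int_{B(0,r_0)} |x|^2 \bigl(j(|x|) - \tfrac12 j(r_0)\bigr)\dee{x} \;=\; m_2(r_0) - \tfrac12 j(r_0)\int_{B(0,r_0)} |x|^2 \dee{x}.
\end{equation*}
The upper bound $m_2\inner{r_0}\leq m_2(r_0)$ is immediate since the subtracted term is non-negative. For the lower bound, since $j$ is non-increasing we have $j(|x|)\geq j(r_0)$ on $B(0,r_0)$, hence
\begin{equation*}
    j(r_0)\int_{B(0,r_0)} |x|^2\dee{x} \;\leq\; \int_{B(0,r_0)} |x|^2 j(|x|)\dee{x} \;=\; m_2(r_0),
\end{equation*}
which gives $m_2\inner{r_0}\geq \tfrac12 m_2(r_0)$.

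Next I would compute
\begin{equation*}
    \lambda\inner{r_0} \;=\; \tfrac12 j(r_0)\,|B(0,r_0)| \;+\; \lambda(r_0),
\end{equation*}
from which $\lambda\inner{r_0}\geq \lambda(r_0)$ is obvious. For the matching upper bound under \eqref{EHI:e:regularJumps}, I would invoke the second part of Lemma \ref{EHI:l:SmallestOfThreeQuantities}, or rather the intermediate estimate already proved there,
\begin{equation*}
    \lambda(r_0) \;\geq\; (2^d-1)\,|B(0,1)|\,c_j\, r_0^d j(r_0),
\end{equation*}
which rearranges to $j(r_0)\,|B(0,r_0)|\lesssim \lambda(r_0)$ with an implicit constant depending only on $d$ and $c_j$. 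Plugging this back into the expression for $\lambda\inner{r_0}$ yields $\lambda\inner{r_0}\lesssim \lambda(r_0)$.

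There is no real obstacle here; the only thing to watch is that \eqref{EHI:e:regularJumps} is used only in the final bound (the other three inequalities are unconditional and rely solely on monotonicity of $j$), so the statement of the lemma is respected.
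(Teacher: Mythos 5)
Your proposal is correct, and it takes essentially the same route as the paper: the $m_2$ bounds follow from $j(r_0)\leq j(r)$ for $r\leq r_0$ (the paper phrases this as the pointwise inequality $\tfrac12 j(r)\leq j^{\inner{r_0}}(r)\leq j(r)$, while you integrate first, but it is the same observation), and the $\lambda$ upper bound rests on $j(r_0)\,|B(0,r_0)|\lesssim\lambda(r_0)$ under \eqref{EHI:e:regularJumps}, which is exactly the computation the paper performs inline and which you correctly recognize as already contained in Lemma~\ref{EHI:l:SmallestOfThreeQuantities}.
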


\begin{proof}
    For all $r \leq r_0$, it follows from the definition of the small/flat decomposition that $\frac12 j(r) \leq j(r) - \frac12 j(r_0) =: j^{\inner{r_0}}(r) \leq j(r)$. Thus, $\frac12 m_2(r_0) \leq m_2\inner{r_0} \leq m_2(r_0)$. It also follows easily from the definition that $\lambda\inner{r_0} \geq \lambda(r_0)$. If \eqref{EHI:e:regularJumps} holds, then
    \begin{align*}
        \frac{\lambda\inner{r_0} - \lambda(r_0)}{\lambda(r_0)} &= \frac{|B(0, r_0)| \cdot \frac12 j(r_0)}{\int_{B(0, r_0)^c} j(|x|) \dee{x}} \leq \frac{|B(0, r_0)| \cdot \frac12 j(r_0)}{\int_{B(0, 2r_0) \setminus B(0, r_0)} j(|x|) \dee{x}} \\
        &\leq \frac{|B(0, r_0)| \cdot \frac12 j(r_0)}{|B(0, 2r_0) \setminus B(0, r_0)| \cdot c_j j(r_0)} = \frac{1}{2(2^d-1) c_j}.
    \end{align*}
\end{proof}

\begin{lemma} \label{EHI:l:flatness}
    Let $X$ be an isotropic unimodal L\'{e}vy jump process on $\R^d$. If \eqref{EHI:e:regularJumps} holds for all $r>0$, then for all $L>1$, there exists a constant $C_{\ref{EHI:l:flatness}}(L, c_j)$ (depending on $L$ and $c_j$) such that for all $r_0>0$, for all $r_1, r_2 \in (0, Lr_0]$, we have
    \begin{equation*}
        \hat{j}^{\inner{r_0}}(r_1) \leq C_{\ref{EHI:l:flatness}}(L, c_j) \hat{j}^{\inner{r_0}}(r_2).
    \end{equation*}
\end{lemma}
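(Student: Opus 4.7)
The plan is to exploit the very simple structure of $\hat{j}^{\inner{r_0}}$: on the interval $(0, r_0]$ it is identically the constant $\tfrac12 j(r_0)$, and on $(r_0, \infty)$ it equals $j(r)$. So the only way the ratio $\hat{j}^{\inner{r_0}}(r_1)/\hat{j}^{\inner{r_0}}(r_2)$ can be large for $r_1, r_2 \in (0, Lr_0]$ is if one argument falls in the flat region (where the value is pinned at $\tfrac12 j(r_0)$) while the other lies deep in the decaying region (where the value is $j(r)$ with $r \leq Lr_0$). Hence it is enough to produce two-sided bounds for $\hat{j}^{\inner{r_0}}$ on $(0, Lr_0]$ in terms of $j(r_0)$.

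First I would handle the upper bound. Since $j$ is non-increasing, $\hat{j}^{\inner{r_0}}(r) \leq j(r_0)$ on $(r_0, Lr_0]$, and $\hat{j}^{\inner{r_0}}(r) = \tfrac12 j(r_0)$ on $(0, r_0]$. Thus
\begin{equation*}
    \sup_{r \in (0, Lr_0]} \hat{j}^{\inner{r_0}}(r) \leq j(r_0).
\end{equation*}

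Next I would obtain the matching lower bound using only \eqref{EHI:e:regularJumps}. Iterating $j(2r) \geq c_j j(r)$ gives $j(2^n r_0) \geq c_j^n j(r_0)$ for every integer $n \geq 0$. Choosing $n := \lceil \log_2 L \rceil$ so that $2^n \geq L$ and using monotonicity of $j$ yields $j(Lr_0) \geq c_j^{n} j(r_0)$. Combined with the fact that $\hat{j}^{\inner{r_0}} \equiv \tfrac12 j(r_0)$ on $(0, r_0]$, this gives
\begin{equation*}
    \inf_{r \in (0, Lr_0]} \hat{j}^{\inner{r_0}}(r) \geq \min\!\left\{ \tfrac12,\, c_j^{\lceil \log_2 L \rceil} \right\} j(r_0).
\end{equation*}

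Dividing the two displays produces the desired constant
\begin{equation*}
    C_{\ref{EHI:l:flatness}}(L, c_j) := \max\!\left\{ 2,\, c_j^{-\lceil \log_2 L \rceil} \right\},
\end{equation*}
which depends only on $L$ and $c_j$ as required. There is no real obstacle in this argument; the only point requiring care is the (harmless) discontinuity of $\hat{j}^{\inner{r_0}}$ at $r = r_0$, where one must remember that the ``flat'' value $\tfrac12 j(r_0)$ on $(0, r_0]$ is strictly smaller than the limiting value $j(r_0^+)$ of $j$ just past $r_0$ — this is exactly what forces the factor $2$ inside the $\max$.
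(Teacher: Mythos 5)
Your proof is correct and follows essentially the same route as the paper's: both establish the two-sided bound $\min\{\tfrac12 j(r_0), j(Lr_0)\} \leq \hat{j}^{\inner{r_0}}(r) \leq j(r_0)$ on $(0, Lr_0]$ and then iterate \eqref{EHI:e:regularJumps} to bound $j(r_0)/j(Lr_0)$ by $c_j^{-\lceil \log_2 L\rceil}$, arriving at the same constant $C_{\ref{EHI:l:flatness}}(L, c_j) = \max\{2, c_j^{-\lceil \log_2 L\rceil}\}$. (One small inaccuracy in your closing remark: since $j$ is non-increasing we only know $j(r_0^+) \leq j(r_0)$, so it is not always true that $\tfrac12 j(r_0) < j(r_0^+)$; but this aside is not load-bearing and the proof itself stands.)
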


\begin{proof}
    For all $r \in (0, r_0]$, we have $\hat{j}^{\inner{r_0}}(r) = \frac12 j(r_0)$. For all $r \in (r_0, Lr_0]$, we have $j(Lr_0) \leq \hat{j}^{\inner{r_0}}(r) \leq j(r_0)$. Therefore,
    \begin{equation} \label{EHI:e:flatness1}
        \min \left\{ \frac12 j(r_0), j(Lr_0) \right\} \leq \hat{j}^{\inner{r_0}}(r_1), \hat{j}^{\inner{r_0}}(r_2) \leq j(r_0).
    \end{equation}
    By \eqref{EHI:e:flatness1} and \eqref{EHI:e:regularJumps}, 
    \begin{equation*}
        \frac{\hat{j}^{\inner{r_0}}(r_1)}{\hat{j}^{\inner{r_0}}(r_2)} \leq \max \left\{ 2, \frac{j(r_0)}{j(Lr_0)} \right\} \leq \max \left\{ 2, c_j^{-\ceil{\log_2 L}} \right\}.
    \end{equation*}
\end{proof}

Now that we have defined the small/flat decomposition and shown the basic properties above, we can explain how it will be used in the proof of our main results. Suppose a function $h$ is non-negative everywhere and harmonic on a set $D$ such that the ball $B(0, r)$ is compactly supported in $D$. The process $(h(X_{t \wedge \tau_{B(0, r)}}))_{t \geq 0}$ is a martingale with respect to the filtration $\mathcal{F}^X_t := \sigma\left(\left\{ X_s : 0 \leq s \leq t \right\} \right)$.
Fix $x \in B(0, r)$.
We would like to show that $h(x)$ and $h(-x)$ are comparable.

Consider the small/flat decomposition of $X$ at some $r_0$ smaller than but within a large multiplicative constant of $r$.
Let $(\mathcal{F}_t)_{t \geq 0}$ be the filtration that takes into account not only $X$, but also $X^{\inner{r_0}}$ and $\hat{X}^{\inner{r_0}}$ from the small/flat decomposition at $r_0$, so that $T^{\inner{r_0}}$ is a stopping time with respect to $(\mathcal{F}_t)$. The process $(h(X_{t \wedge \tau_{B(0, r)}}))_{t \geq 0}$ is still a martingale with respect to $(\mathcal{F}_t)$, because for all $0 \leq s \leq t$,
\begin{align*}
    \mathbb{E}_x [h(X_{t \wedge \tau_{B(0, r)}})|\mathcal{F}_s] &= \mathbb{E}_x \left[ \mathbb{E}_x [h(X_{t \wedge \tau_{B(0, r)}}) | \mathcal{F}^X_s] \Big| \mathcal{F}_s \right]\\
    &= \mathbb{E}_x [h(X_{s \wedge \tau_{B(0, r)}}) | \mathcal{F}_s] = h(X_{s \wedge \tau_{B(0, r)}}).
\end{align*}
Since $\tau_{B(0, r)}$ and $T^{\inner{r_0}}$ are both stopping times with respect to this new filtration, for all $x \in {B(0, r)}$ we have both
\begin{equation*}
    h(x) = \mathbb{E}_x \left[h(X_{\tau_{B(0, r)}}) \right] = \mathbb{E}_x \left[h(X_{\tau_{B(0, r)}}) \indicatorWithSetBrackets{\tau_{B(0, r)}<T^{\inner{r_0}}} \right] + \mathbb{E}_x \left[h(X_{\tau_{B(0, r)}}) \indicatorWithSetBrackets{\tau_{B(0, r)} \geq T^{\inner{r_0}}} \right]
\end{equation*}
and
\begin{equation*}
    h(x) = \mathbb{E}_x \left[h(X_{T \wedge \tau_{B(0, r)}}) \right] = \mathbb{E}_x \left[h(X_{\tau_{B(0, r)}}) \indicatorWithSetBrackets{\tau_{B(0, r)}<T^{\inner{r_0}}} \right] + \mathbb{E}_x \left[h(X_T) \indicatorWithSetBrackets{\tau_{B(0, r)} \geq T^{\inner{r_0}}} \right].
\end{equation*}
Therefore, we can write $h(x)$ as the sum of $h^{\inner{r_0}, 0}_{B(0, r)}(x)$ and $h^{\inner{r_0}, >0}_{B(0, r)}(x)$, where
\begin{equation}\label{EHI:e:h0def}
    h^{\inner{r_0}, 0}_{B(0, r)}(x) := \mathbb{E}_x \left[h(X_{\tau_{B(0, r)}}) \indicatorWithSetBrackets{\tau_{B(0, r)}<T^{\inner{r_0}}} \right]
\end{equation}
and $h^{\inner{r_0}, >0}_{B(0, r)}(x)$ is the common value of
\begin{equation}\label{EHI:e:h1def}
    \mathbb{E}_x \left[h(X_{\tau_{B(0, r)}}) \indicatorWithSetBrackets{\tau_{B(0, r)} \geq T^{\inner{r_0}}} \right] =: h^{\inner{r_0}, >0}_{B(0, r)}(x) := \mathbb{E}_x \left[h(X_{T^{\inner{r_0}}}) \indicatorWithSetBrackets{\tau_{B(0, r)} \geq T^{\inner{r_0}}} \right].
\end{equation}
We will use these two different expressions for $h^{\inner{r_0}, >0}_{B(0, r)}(x)$ for different purposes. Using right-hand side of \eqref{EHI:e:h1def} and the flatness of $\hat{j}^{\inner{r_0}}$, we show that $h^{\inner{r_0}, >0}_{B(0, r)}(x) \asymp h^{\inner{r_0}, >0}_{B(0, r)}(-x)$ for all $x \in B(0, r)$. Later, in Section \ref{EHI:s:positiveResults}, we use the left-hand side of \eqref{EHI:e:h1def} to compare $h^{\inner{r_0}, >0}_{B(0, r)}(x)$ with $h^{\inner{r_0}, 0}_{B(0, r)}(x)$.

Recall how the Poisson kernel $K_D$ is decomposed into $K^{\inner{r_0}, 0}_D$ and $K^{\inner{r_0}, >0}_D$, as given by \eqref{EHI:e:K0formula} and \eqref{EHI:e:K>0formula}.
It follows from the Ikeda-Watanabe formula (cf. \cite{IW}) that
\begin{equation}\label{EHI:e:h0h1relatedToK0K1}
    h^{\inner{r_0}, 0}_{B(0, r)}(x) = \int_{w \in B(0, r)^c} K^{\inner{r_0}, 0}_{B(0, r)}(x, w) h(w) \dee{w}, \qquad h^{\inner{r_0}, >0}_{B(0, r)}(x) = \int_{w \in B(0, r)^c} K^{\inner{r_0}, >0}_{B(0, r)}(x, w) h(w) \dee{w}.
\end{equation}

In the following proposition, we show that $h^{\inner{r_0}, >0}_{B(0, r)}(x) \asymp h^{\inner{r_0}, >0}_{B(0, r)}(-x)$.

\begin{prop} \label{EHI:p:h1(x)=h1(-x)}
    Let $X$ be an isotropic unimodal L\'{e}vy jump process on $\R^d$ satisfying \eqref{EHI:e:regularJumps}. Suppose $B(0, r) \Subset D$, and let $h$ be a function that is non-negative everywhere and harmonic on $D$. Suppose $r_0 \geq r/L$ for some $L>1$. Then there exists a universal constant $C_{\ref{EHI:p:h1(x)=h1(-x)}}(L, c_j)$ (depending on $L$ and $c_j$) such that for all $x \in B(0, r)$,
    \begin{equation*}
        C_{\ref{EHI:p:h1(x)=h1(-x)}}(L, c_j)^{-1} h^{\inner{r_0}, >0}_{B(0, r)}(-x) \leq h^{\inner{r_0}, >0}_{B(0, r)}(x) \leq C_{\ref{EHI:p:h1(x)=h1(-x)}}(L, c_j) h^{\inner{r_0}, >0}_{B(0, r)}(-x). 
    \end{equation*}
\end{prop}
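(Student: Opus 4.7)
The plan is to exploit the right-hand side of \eqref{EHI:e:h1def} together with the ``flatness'' property of $\hat{j}^{\inner{r_0}}$ encoded in Lemma \ref{EHI:l:flatness}. On the event $\{\tau_{B(0, r)} \geq T^{\inner{r_0}}\}$, the process $X$ agrees with $X^{\inner{r_0}}$ strictly before time $T^{\inner{r_0}}$, and $X_{T^{\inner{r_0}}} = X^{\inner{r_0}}_{T^{\inner{r_0}}} + V$, where $V$ is the first jump of $\hat{X}^{\inner{r_0}}$: a random vector independent of $(X^{\inner{r_0}}, T^{\inner{r_0}})$ with density $\hat{j}^{\inner{r_0}}(|v|)/\lambda\inner{r_0}$. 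Conditioning on $V$, integrating out the Exponential$(\lambda\inner{r_0})$ time $T^{\inner{r_0}}$, and applying Tonelli transforms the right-hand side of \eqref{EHI:e:h1def} into
\[
    h^{\inner{r_0}, >0}_{B(0, r)}(x) = \int_{\R^d} h(w) \int_{B(0, r)} G^{\inner{r_0}, 0}_{B(0, r)}(x, z) \, \hat{j}^{\inner{r_0}}(|w - z|) \dee{z} \dee{w}.
\]

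Since $X^{\inner{r_0}}$ is isotropic unimodal and the ball $B(0, r)$ is symmetric about the origin, the symmetry $G^{\inner{r_0}, 0}_{B(0, r)}(x, z) = G^{\inner{r_0}, 0}_{B(0, r)}(-x, -z)$ holds. Writing the analogous formula for $h^{\inner{r_0}, >0}_{B(0, r)}(-x)$, substituting this identity, and changing variables $z \mapsto -z$ in the inner integral yields
\[
    h^{\inner{r_0}, >0}_{B(0, r)}(-x) = \int_{\R^d} h(w) \int_{B(0, r)} G^{\inner{r_0}, 0}_{B(0, r)}(x, z) \, \hat{j}^{\inner{r_0}}(|w + z|) \dee{z} \dee{w}.
\]
Since $h \geq 0$, the proposition reduces to proving the pointwise comparison $\hat{j}^{\inner{r_0}}(|w - z|) \asymp \hat{j}^{\inner{r_0}}(|w + z|)$ uniformly in $z \in B(0, r)$ and $w \in \R^d$, with constants depending only on $L$ and $c_j$.

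I would check this by splitting on $|w|$. When $|w| \leq 2r$, both $|w \pm z| \leq 3r \leq 3Lr_0$, and Lemma \ref{EHI:l:flatness} applied with parameter $3L$ delivers the bound directly. When $|w| > 2r$, the triangle inequality forces $|w|/2 \leq |w \pm z| \leq 3|w|/2$, so $|w - z|$ and $|w + z|$ are within a factor of three of one another; a short case analysis on whether each of $|w \pm z|$ lies above or below $r_0$ (where $\hat{j}^{\inner{r_0}}$ transitions between $j$ and the constant $\tfrac12 j(r_0)$) reduces the comparison either to a bounded number of applications of \eqref{EHI:e:regularJumps} or to a direct equality.

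The main obstacle is the regime where $|w|$ is close to $r$: there $z$ can be chosen with $|w - z|$ arbitrarily small while $|w + z|$ remains of order $r$. The untruncated comparison $j(|w - z|) \asymp j(|w + z|)$ simply fails in this regime, so an attempt to run the same argument with the small/large decomposition would collapse. The reason the small/flat decomposition succeeds is precisely that $\hat{j}^{\inner{r_0}}$ is constant equal to $\tfrac12 j(r_0)$ on $(0, r_0]$, erasing the near-origin singularity of $j$; Lemma \ref{EHI:l:flatness} is the quantitative form of this flattening, and it is the essential ingredient that makes the whole argument work.
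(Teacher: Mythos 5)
Your argument is correct and matches the paper's proof essentially step for step: you also start from the right-hand representation in \eqref{EHI:e:h1def}, decompose $X_{T^{\inner{r_0}}} = X^{\inner{r_0}}_{T^{\inner{r_0}}} + V$ with $V$ independent of $(X^{\inner{r_0}}, T^{\inner{r_0}})$, and then reduce the comparison of $h^{\inner{r_0}, >0}_{B(0,r)}(\pm x)$ to the pointwise comparison $\hat{j}^{\inner{r_0}}(|w-z|) \asymp \hat{j}^{\inner{r_0}}(|w+z|)$, which you settle by the same dichotomy on $|w| \lessgtr 2r$ using Lemma \ref{EHI:l:flatness} and \eqref{EHI:e:regularJumps}. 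The only presentational difference is that you make the kernel of the occupation measure explicit as $\lambda\inner{r_0}\,G^{\inner{r_0},0}_{B(0,r)}(x,z)\dee{z}$ and then obtain the $(-x)$ formula via the Green-function symmetry $G^{\inner{r_0},0}_{B(0,r)}(-x,z)=G^{\inner{r_0},0}_{B(0,r)}(x,-z)$ and a change of variables, whereas the paper leaves the occupation measure abstract as $\eta_x$ and gets the same formula by a process-level reflection $X^{\inner{r_0}} \mapsto -X^{\inner{r_0}}$; both are equivalent routes to the same integral identity.
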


\begin{proof}
    Let $\eta_x$ be the measure such that
    \begin{equation*}
        \eta_x(E) := \mathbb{P}_x \left(\tau^{\inner{r_0}}_{B(0, r)}>T^{\inner{r_0}}, X^{\inner{r_0}}_{T^{\inner{r_0}}} \in E \right) \qquad\mbox{ for all measurable $E \subseteq B(0, r)$}.
    \end{equation*}
    Consider $\Delta X(T^{\inner{r_0}})$, the displacement vector of the first flattening jump. Note that $X_{T^{\inner{r_0}}} = X^{\inner{r_0}}_{T^{\inner{r_0}}} + \Delta X (T^{\inner{r_0}})$ and that $\Delta X (T^{\inner{r_0}})$ has density $\frac{\hat{j}^{\inner{r_0}}(|\cdot|)}{\lambda\inner{r_0}}$. Thus,
    \begin{align}
        h^{\inner{r_0}, >0}_{B(0, r)}(x) &= \mathbb{E}_x \left[ h(X_{T^{\inner{r_0}}}) \indicatorWithSetBrackets{\tau_{B(0, r)} \geq T^{\inner{r_0}}} \right] \notag \\
        &= \mathbb{E}_x \left[ h \left(X^{\inner{r_0}}_{T^{\inner{r_0}}} + \Delta X(T^{\inner{r_0}}) \right) \indicatorWithSetBrackets{\tau^{\inner{r_0}}_{B(0, r)}>T^{\inner{r_0}}} \right] \notag \\
        &= \int_{w \in \mathbb{R}^d} \int_{z \in B(0, r)} \frac{\hat{j}^{\inner{r_0}}(|w-z|)}{\lambda\inner{r_0}} h(w) \, \eta_x(\mathrm{d}z) \dee{w}. \label{EHI:e:h1comp1}
    \end{align}
    Note that the process $-X^{\inner{r_0}}+\hat{X}^{\inner{r_0}}$ has the same law as $X^{\inner{r_0}}+\hat{X}^{\inner{r_0}}=X$, because $-X^{\inner{r_0}}$ has the same law as $X^{\inner{r_0}}$, and both are independent of $\hat{X}^{\inner{r_0}}$.
    By symmetry, if $T^{\inner{r_0}}$ occurs before $X^{\inner{r_0}}$ exits $B(0, r)$, then $T$ also occurs before $-X^{\inner{r_0}}_t$ exits $B(0, r)$. Thus,
    \begin{align}\label{EHI:e:h1comp2}
        h^{\inner{r_0}, >0}_{B(0, r)}(-x) &= \E_{-x} \left[ h\left( X^{\inner{r_0}}_{T^{\inner{r_0}}} + \Delta X (T^{\inner{r_0}})\right) \indicatorWithSetBrackets{\tau^{\inner{r_0}}_{B(0, r)} > T^{\inner{r_0}}} \right] \notag\\
        &= \E_x \left[ h\left( -X^{\inner{r_0}}_{T^{\inner{r_0}}} + \Delta X (T^{\inner{r_0}})\right) \indicatorWithSetBrackets{\tau^{\inner{r_0}}_{B(0, r)} > T^{\inner{r_0}}} \right] \notag\\
        &= \int_{w \in \R^d} \int_{z \in B(0, r)} \frac{\hat{j}^{\inner{r_0}}(|w+z|)}{\lambda\inner{r_0}} h(w) \, \eta_x(\mathrm{d}z) \dee{w}.
    \end{align}
    
    The only difference between \eqref{EHI:e:h1comp1} and \eqref{EHI:e:h1comp2} is that $\hat{j}^{\inner{r_0}}(|w-z|)$ is replaced with $\hat{j}^{\inner{r_0}}(|w+z|)$. Therefore, in order to show that the quantites in \eqref{EHI:e:h1comp1} and \eqref{EHI:e:h1comp2} are comparable, it is enough to show that $\hat{j}^{\inner{r_0}}(|w-z|)$ and $\hat{j}^{\inner{r_0}}(|w+z|)$ are comparable for all $w \in \mathbb{R}^d$, $z \in B(0, r)$.
    We prove this by two cases. If $|w| \geq 2r$, then $r \leq \frac12 |w| \leq |w-z|, |w+z| \leq 2|w|$, so by the definition of the small/flat decomposition, and by \eqref{EHI:e:regularJumps},
    \begin{equation*}
        \frac{\hat{j}^{\inner{r_0}}(|w-z|)}{\hat{j}^{\inner{r_0}}(|w+z|)} = \frac{j(|w-z|)}{j(|w+z|)} \leq \frac{j(\frac12|w|)}{j(2|w|)} \leq c_j^{-2}.
    \end{equation*}
    On the other hand, if $|w| \leq 2r$, then both $|w-z|$ and $|w+z|$ are at most $3r$, so by Lemma \ref{EHI:l:flatness},
    \begin{equation*}
        \hat{j}^{\inner{r_0}}(|w-z|) \leq C_{\ref{EHI:l:flatness}} \left( 3L, c_j \right) \hat{j}^{\inner{r_0}}(|w+z|).
    \end{equation*}
    In either case, if we let $C_{\ref{EHI:p:h1(x)=h1(-x)}}(L, c_j) := \max\{ c_j^{-2}, C_{\ref{EHI:l:flatness}}(3L, c_j) \} $, then 
    \begin{equation*}
        \hat{j}^{\inner{r_0}}(|w-z|) \leq C_{\ref{EHI:p:h1(x)=h1(-x)}}(L, c_j) \hat{j}^{\inner{r_0}}(|w+z|).
    \end{equation*}
    By the same argument,
    \begin{equation*}
        \hat{j}^{\inner{r_0}}(|w+z|) \leq C_{\ref{EHI:p:h1(x)=h1(-x)}}(L, c_j) \hat{j}^{\inner{r_0}}(|w-z|).
    \end{equation*}
    This holds for all $w \in \R^d$ and $z \in B(0, r)$. Thus, by \eqref{EHI:e:h1comp1} and \eqref{EHI:e:h1comp2},
    \begin{equation*}
        C_{\ref{EHI:p:h1(x)=h1(-x)}}(L, c_j)^{-1} h^{\inner{r_0}, >0}_{B(0, r)}(-x) \leq h^{\inner{r_0}, >0}_{B(0, r)}(x) \leq C_{\ref{EHI:p:h1(x)=h1(-x)}}(L, c_j) h^{\inner{r_0}, >0}_{B(0, r)}(-x). 
    \end{equation*}
\end{proof}

\begin{corollary}\label{EHI:c:K1(x)=K1(-x)}
    Let $X$ be an isotropic unimodal L\'{e}vy jump process on $\R^d$ satisfying \eqref{EHI:e:regularJumps}. Suppose $B(0, r) \Subset D$, and let $h$ be a function that is non-negative everywhere and harmonic on $D$. Suppose $r_0 \geq r/L$ for some $L>1$. For all $x \in B(0, r)$ and $w \in B(0, r)^c$,
    \begin{equation}\label{EHI:e:K1(x)=K1(-x)}
        C_{\ref{EHI:p:h1(x)=h1(-x)}}(L, c_j)^{-1} K^{\inner{r_0}, >0}_{B(0, r)}(-x, w) \leq K^{\inner{r_0}, >0}_{B(0, r)}(x, w) \leq C_{\ref{EHI:p:h1(x)=h1(-x)}}(L, c_j) K^{\inner{r_0}, >0}_{B(0, r)}(-x, w). 
    \end{equation}
\end{corollary}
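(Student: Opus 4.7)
The plan is to mirror the argument of Proposition~\ref{EHI:p:h1(x)=h1(-x)} at the level of the Poisson kernel itself, rather than integrated against a harmonic function. First I would express $K^{\inner{r_0},>0}_{B(0,r)}(x,w)$ for $w \in B(0,r)^c$ by conditioning at the stopping time $T^{\inner{r_0}}$ on the post-jump location $X_{T^{\inner{r_0}}} = X^{\inner{r_0}}_{T^{\inner{r_0}}} + \Delta X(T^{\inner{r_0}})$, and splitting according to whether this location lies outside $B(0,r)$ (in which case the first flattening jump is already the exit jump) or inside (in which case the process continues and exits via the standard Poisson kernel). Using the identity $\eta_x(\mathrm{d}z) = \lambda\inner{r_0}\, G^{\inner{r_0},0}_{B(0,r)}(x,z)\dee{z}$ for the density of $X^{\inner{r_0}}_{T^{\inner{r_0}}}$ on $\{\tau^{\inner{r_0}}_{B(0,r)} > T^{\inner{r_0}}\}$, together with the density $\hat{j}^{\inner{r_0}}(|\cdot|)/\lambda\inner{r_0}$ of $\Delta X(T^{\inner{r_0}})$ and its independence of the pre-jump trajectory, this yields the decomposition
\begin{equation*}
    K^{\inner{r_0},>0}_{B(0,r)}(x,w) \;=\; \tilde{K}(x,w) \;+\; \int_{y \in B(0,r)} \tilde{K}(x,y)\, K_{B(0,r)}(y,w) \dee{y}, \qquad w \in B(0,r)^c,
\end{equation*}
where $\tilde{K}(x,y) := \int_{z \in B(0,r)} G^{\inner{r_0},0}_{B(0,r)}(x,z)\, \hat{j}^{\inner{r_0}}(|y-z|) \dee{z}$ is defined for every $y \in \R^d$ and represents the joint density that, starting from $x$, the first flattening jump carries the process from some $z \in B(0,r)$ to $y$ while the trajectory stays in $B(0,r)$ up to that time.

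Next, the proof of Proposition~\ref{EHI:p:h1(x)=h1(-x)} actually establishes, as its core estimate, the pointwise two-sided bound
\begin{equation*}
    C_{\ref{EHI:p:h1(x)=h1(-x)}}(L,c_j)^{-1}\, \hat{j}^{\inner{r_0}}(|y+z|) \;\leq\; \hat{j}^{\inner{r_0}}(|y-z|) \;\leq\; C_{\ref{EHI:p:h1(x)=h1(-x)}}(L,c_j)\, \hat{j}^{\inner{r_0}}(|y+z|),
\end{equation*}
valid for all $y \in \R^d$ and $z \in B(0,r)$. Combining this with the isotropy-induced symmetry $G^{\inner{r_0},0}_{B(0,r)}(x,z) = G^{\inner{r_0},0}_{B(0,r)}(-x,-z)$ and the change of variables $z \mapsto -z$ in the integral defining $\tilde{K}$, one obtains the pointwise comparison $\tilde{K}(x,y) \asymp \tilde{K}(-x,y)$ with the same constant $C_{\ref{EHI:p:h1(x)=h1(-x)}}(L,c_j)$ for every $y \in \R^d$. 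Feeding this pointwise comparison into both the free term $\tilde{K}(x,w)$ and the integrand $\tilde{K}(x,y)$ of the decomposition above, the constant factors through the sum and the integral, delivering \eqref{EHI:e:K1(x)=K1(-x)} with the stated constant.

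The hypotheses on $D$ and $h$ in the statement of the corollary are inherited from Proposition~\ref{EHI:p:h1(x)=h1(-x)} but are not directly used in this route to the conclusion, since \eqref{EHI:e:K1(x)=K1(-x)} is an intrinsic statement about $K^{\inner{r_0},>0}_{B(0,r)}$. The main obstacle is cleanly justifying the strong Markov decomposition of the first step, in particular identifying the sub-probability density of $X^{\inner{r_0}}_{T^{\inner{r_0}}}$ on the event $\{\tau^{\inner{r_0}}_{B(0,r)} > T^{\inner{r_0}}\}$ with $\lambda\inner{r_0}\, G^{\inner{r_0},0}_{B(0,r)}(x,\cdot)$; this follows by conditioning on $T^{\inner{r_0}}$, which is Exponential$(\lambda\inner{r_0})$ and independent of $X^{\inner{r_0}}$, and applying Fubini, exactly as in the base case of Lemma~\ref{EHI:l:GkPurpose}. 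Once this identity is in place, the rest of the proof is a direct application of the pointwise $\hat{j}^{\inner{r_0}}$ estimate extracted from the proposition.
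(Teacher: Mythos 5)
Your argument is correct, and it takes a genuinely different route from the paper's. The paper proves the corollary by contradiction: it defines the exceptional set $A := \{w \in B(0,r)^c : K^{\inner{r_0},>0}_{B(0,r)}(x,w) > C\, K^{\inner{r_0},>0}_{B(0,r)}(-x,w)\}$, constructs the specific harmonic function $h(x) = \P_x(X_{\tau_{B(0,r)}} \in A)$, applies Proposition~\ref{EHI:p:h1(x)=h1(-x)} to this $h$ (as a black box), and concludes that $A$ must be Lebesgue-null, then empty by continuity of $K^{\inner{r_0},>0}_{B(0,r)}$. You instead obtain a direct pointwise decomposition
\begin{equation*}
    K^{\inner{r_0},>0}_{B(0,r)}(x,w) = \tilde{K}(x,w) + \int_{B(0,r)} \tilde{K}(x,y)\, K_{B(0,r)}(y,w)\dee{y},
\end{equation*}
where $\tilde{K}(x,\cdot)$ is the sub-probability density of the post-jump location $X_{T^{\inner{r_0}}}$ on the event $\{\tau^{\inner{r_0}}_{B(0,r)} > T^{\inner{r_0}}\}$, verify this against \eqref{EHI:e:K>0formula} via the identity $G^{\inner{r_0},>0}_{B(0,r)}(x,z) = \E_x[\indicatorWithSetBrackets{T^{\inner{r_0}} < \tau_{B(0,r)}} G_{B(0,r)}(X_{T^{\inner{r_0}}}, z)]$, and then re-extract the core pointwise bound $\hat{j}^{\inner{r_0}}(|y-z|) \asymp \hat{j}^{\inner{r_0}}(|y+z|)$ from the proposition's proof to get $\tilde{K}(x,y) \asymp \tilde{K}(-x,y)$ for every $y$. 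Your route is more self-contained and transparent: it avoids both the passage through an auxiliary indicator-type harmonic function and the continuity argument needed to upgrade ``measure zero'' to ``empty,'' and it makes explicit that the hypotheses on $D$ and $h$ in the corollary's statement are vestigial. The trade-off is that you do not use Proposition~\ref{EHI:p:h1(x)=h1(-x)} as a modular black box but rather re-derive its key inequality, whereas the paper's proof is shorter given the proposition's statement in hand.
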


\begin{proof}
    Fix $x \in B(0, r)$. Let
    \begin{equation*}
        A := \left\{ w \in B(0, r)^c : K^{\inner{r_0}, >0}_{B(0, r)}(x, w) > C_{\ref{EHI:p:h1(x)=h1(-x)}}(L, c_j) K^{\inner{r_0}, >0}_{B(0, r)}(-x, w)\right\}.
    \end{equation*}
    We would like to prove that $A$ is empty.
    Let $h$ be the function $h(x) := \P_x(X_{\tau_{B(0, r)}} \in A)$. By Proposition \ref{EHI:p:h1(x)=h1(-x)},
    \begin{equation}\label{EHI:e:K1(x)=K1(-x)1}
        h^{\inner{r_0}, >0}_{B(0, r)}(x) \leq C_{\ref{EHI:p:h1(x)=h1(-x)}}(L, c_j) h^{\inner{r_0}, >0}_{B(0, r)}(-x).
    \end{equation}
    By \eqref{EHI:e:h0h1relatedToK0K1},
    \begin{equation}\label{EHI:e:K1(x)=K1(-x)2}
        h^{\inner{r_0}, >0}_{B(0, r)}(x) - C_{\ref{EHI:p:h1(x)=h1(-x)}}(L, c_j) h^{\inner{r_0}, >0}_{B(0, r)}(-x) =\int_{w \in A} \left( K^{\inner{r_0}, >0}_{B(0, r)}(x, w) - C_{\ref{EHI:p:h1(x)=h1(-x)}}(L, c_j) K^{\inner{r_0}, >0}_{B(0, r)}(-x, w) \right) \dee{w}.
    \end{equation}
    By the definition of $A$, the integrand of \eqref{EHI:e:K1(x)=K1(-x)2} is positive for all $w \in A$, so if $A$ has positive Lebesgue measure, then the quantity in \eqref{EHI:e:K1(x)=K1(-x)2} is positive. However, \eqref{EHI:e:K1(x)=K1(-x)1} tells us that this quantity is non-positive. Therefore, $A$ has Lebesgue measure $0$. By the continuity of $K^{\inner{r_0}, >0}$, $A$ is empty, which gives us the second inequality in \eqref{EHI:e:K1(x)=K1(-x)}. Repeating the same argument with $x$ and $-x$ switched gives us the first inequality.
\end{proof}

\section{Preliminary facts about isotropic unimodal L\'{e}vy jump processes with bounded jumps} \label{EHI:s:Prelim}

In this section, we build up a collection of facts to be used in the proof of our main results. Most of these facts relate to processes with bounded jumps, such as $X^{(r_0)}$ and $X^{\inner{r_0}}$ (from the small/large and small/flat decompositions, respectively). To keep the results general, we refer to the process in question as $X$ in the statement of the results of this section, but the reader should keep in mind that when we apply these results, it will usually be to $X^{(r_0)}$ or $X^{\inner{r_0}}$, where $X$ is the process that our main results refer to.

The most important results in this Section are Lemma \ref{EHI:l:m2isvariance}, which tells us that for a process $X$ with bounded jumps, the second moment of the jump kernel of $X$ is the speed at which $|X_t|^2$ grows, and the results of Section \ref{EHI:ss:BerryEsseen}, which use a multivariate form the Berry-Esseen theorem to show that the behavior of such a process at large scales is comparable to a Brownian motion in which $|X_t|^2$ grows at the same speed. Other results in this section are mostly calculations that will simplify the work in the proof of our main results.

\subsection{Basic observations}\label{EHI:ss:PrelimBasic}

\FloatBarrier

\begin{lemma}\label{EHI:l:FactorOf2ForExits}
    Let $X=(X_t)_{t \geq 0}$ be an isotropic unimodal L\'{e}vy jump process on $\mathbb{R}^d$ starting at $X_0=0$, let $R>0$, and let $T$ be a stopping time such that the conditional distribution of $X_T$ given $\tau_{B(0, R)} \leq T$ is isotropic unimodal centered at $\tau_{B(0, R)}$. Then
    \begin{equation*}
        \mathbb{P}_0 \left( |X_T| \geq R \right) \leq \mathbb{P}_0 \left( \tau_{B(0, R)} \leq T \right) \leq 2 \mathbb{P}_0 \left( |X_T| \geq R \right).
    \end{equation*}
\end{lemma}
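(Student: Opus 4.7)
The first inequality is immediate: if $|X_T| \geq R$, then $X_T \notin B(0, R)$, so the process has exited $B(0, R)$ by time $T$, hence $\tau_{B(0, R)} \leq T$.

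For the second inequality, I plan to condition on the exit event $E := \{\tau_{B(0,R)} \leq T\}$ together with the value of $Y := X_{\tau_{B(0,R)}}$, and then show that conditionally on $E$ and on $Y = y$, one has $\P(|X_T| \geq R \mid E, Y = y) \geq 1/2$. Summing this lower bound against the conditional distribution of $Y$ given $E$ yields
\begin{equation*}
    \P_0(|X_T| \geq R) \geq \P_0(|X_T| \geq R,\ \tau_{B(0,R)} \leq T) \geq \tfrac{1}{2}\, \P_0(\tau_{B(0,R)} \leq T),
\end{equation*}
which is exactly what we want.

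To carry out the key step, I read the hypothesis as saying that on the event $E$, the conditional law of $X_T$ given $Y = y$ is that of $y + Z$ where $Z$ is isotropic unimodal centered at $0$ (this is the natural interpretation; the statement appears to contain a typo). Since the process has just exited $B(0,R)$ at time $\tau_{B(0,R)}$, we have $|y| \geq R$. The claim $\P(|y + Z| \geq R) \geq 1/2$ then follows from a simple geometric observation: whenever $\langle y, Z\rangle \geq 0$, we have
\begin{equation*}
    |y+Z|^2 = |y|^2 + 2\langle y, Z\rangle + |Z|^2 \geq |y|^2 \geq R^2,
\end{equation*}
so $|y+Z| \geq R$. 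By the radial symmetry of $Z$, the random variable $\langle y, Z\rangle$ has a symmetric distribution about $0$, so $\P(\langle y, Z \rangle \geq 0) \geq 1/2$, giving $\P(|y+Z| \geq R) \geq 1/2$.

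There is no real obstacle; the only subtlety is parsing the hypothesis (which I interpret as the conditional distribution of $X_T$ being isotropic unimodal about $X_{\tau_{B(0,R)}}$), together with handling measurability of the conditioning on $Y$ — which is standard since $Y$ is well-defined on $E$ by cadlag paths. The proof is then essentially a half-space argument, and the factor of $2$ in the statement is sharp precisely because the inner product $\langle y, Z\rangle$ can be $\leq 0$ with probability up to $1/2$.
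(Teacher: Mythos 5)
Your proof is correct and follows essentially the same route as the paper: condition on the exit event, exploit the radial symmetry of the conditional law of $X_T$ about the exit point $y = X_{\tau_{B(0,R)}}$, and use $|y| \geq R$ to extract the factor $2$. The only minor difference is the geometric device — you use the half-space $\{\langle y, Z\rangle \geq 0\}$ to force $|y+Z| \geq R$, while the paper reflects $B(0,R)$ across $y$ to a disjoint ball of equal mass — but these are the same symmetry observation in slightly different clothing, and both (correctly) use only radial symmetry about the exit point rather than unimodality.
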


\begin{figure}[ht]
    \begin{framed}
        \captionof{figure}[A visual aide for the proof of Lemma \ref{EHI:l:FactorOf2ForExits}]{A visual aide for the proof of Lemma \ref{EHI:l:FactorOf2ForExits}. The dotted ball is the reflection of $B(0, R)$ across $X_{\tau_{B(0, R)}}$. The conditional distribution of $X_T$ given $\tau_{B(0, R)} \leq T$ is isotropic unimodal, centered at $X_{\tau_{B(0, R)}}$. Therefore, it assigns equal mass to each of the disjoint balls in the figure. In particular, it assigns mass at most $1/2$ to $B(0, R)$.}
        \centering
        \begin{tikzpicture}[scale=1]
            \node[shape=circle, fill=black, scale=0.1, label=$0$] (0) at (0, 0) {$0$};
            \node[shape=circle, fill=black, scale=0.1, label=$X_{\tau_{B(0, R)}}$] (x) at (2, 0.5) {$X_{\tau_{B(0, R)}}$};
            \draw[fill=none] (0, 0) circle (1);
            \draw[dotted, thick, fill=none] (4, 1) circle (1);
            \draw (0, 0) -- (1, 0);
            \node at (0.5, -0.2) {$R$};
        \end{tikzpicture}
    \end{framed}
\end{figure}
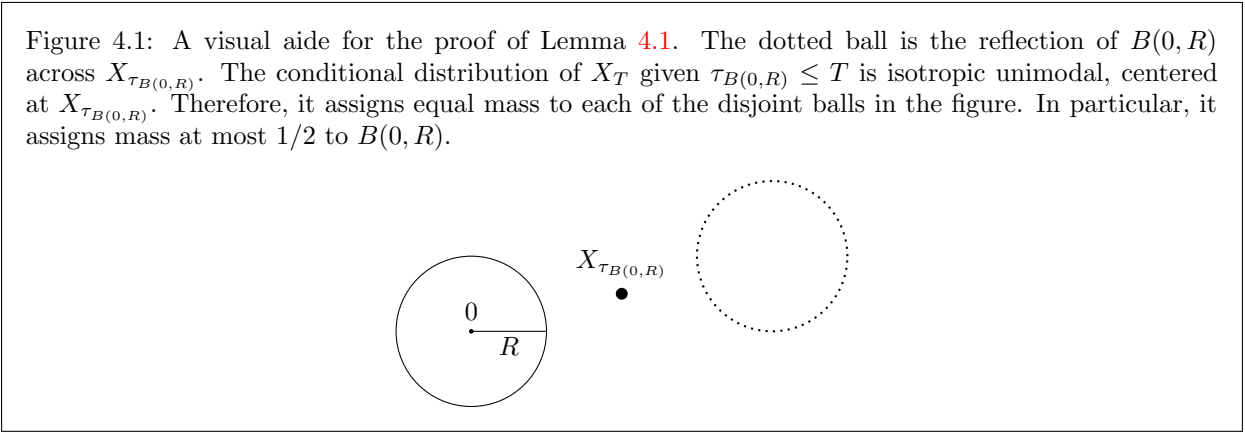

\begin{proof}
    If $|X_T| \geq R$, then we clearly have $\tau_{B(0, R)} \leq T$, by the definition of $\tau_{B(0, R)}$. Therefore, the desired lower bound on $\mathbb{P}_0 \left( \tau_{B(0, R)} \leq T \right)$ holds.
    
    For the upper bound, we will use the Strong Markov property and symmetry to argue that if the process exits $B(0, R)$ before time $t$, then it is more likely than not to still be outside $B(0, R)$ at time $T$. Let us condition on the event $\{\tau_{B(0, R)} \leq T \}$ occurring. The conditional distribution of $X_T$ is isotropic unimodal, centered at $X_{\tau_{B(0, R)}}$, which is outside of $B(0, R)$. Therefore, it assigns mass at most $1/2$ to $B(0, R)$. Thus,
    \begin{equation*}
        \mathbb{P}_0 \left( |X_T| \geq R \right) \geq \mathbb{P}_0 \left( \tau_{B(0, R)} \leq T \right) \mathbb{P}_0 \left( |X_T| \geq R \Big| \tau_{B(0, R)} \leq T \right) \geq \frac12 \mathbb{P}_0 \left( \tau_{B(0, R)} \leq T \right),
    \end{equation*}
    which is equivalent to the upper bound on $\mathbb{P}_0 \left( \tau_{B(0, R)} < T \right)$ that we desired.
\end{proof}

\FloatBarrier

Many of the remaining results in this section will be for processes $X$ that only take bounded up (ie, no jumps of magnitude greater than $r_0$ for some $r_0$). In practice, when we use these results later in this chapter, we will apply them not to the original process $X$, but to $X'$ from a Meyer decomposition $X=X'+\hat{X}'$ in which $X'$ only takes small jumps.

\begin{definition}
    For all $r_0>0$, let $\mathcal{I}(d, r_0)$ be the class of isotropic unimodal L\'{e}vy jump processes that do not take any jumps of magnitude greater than $r_0$ (in other words, whose jump kernel is supported on $[0, r_0]$.)
\end{definition}

\begin{lemma}\label{EHI:l:CloseToSubmultiplicative}
    Let $X \in \mathcal{I}(d, r_0)$, and let $T$ be a random variable with Exponential($\lambda$) distribution, independent of $X$. For all $r > 0$ and $m \in \mathbb{N}$,
    \begin{equation*}
        \mathbb{P}_0 \left( \tau_{B(0, mr+(m-1)r_0)} < T \right) \leq \left( \mathbb{P}_0 \left( \tau_{B(0, r)}<T \right)\right)^m.
    \end{equation*}
\end{lemma}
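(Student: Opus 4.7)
The plan is to proceed by induction on $m$. The base case $m=1$ is tautological. For the inductive step, assume the bound holds for $m$, set $\tau^{(m)} := \tau_{B(0, mr+(m-1)r_0)}$, and try to extract an extra factor of $\mathbb{P}_0(\tau_{B(0,r)} < T)$ by restarting the process at $\tau^{(m)}$.

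The crucial geometric observation is that because $X \in \mathcal{I}(d, r_0)$ has jumps of magnitude at most $r_0$, on the event $\{\tau^{(m)} < \infty\}$ we have $|X_{\tau^{(m)}}| \leq mr + (m-1)r_0 + r_0 = mr + mr_0$, and therefore $B(X_{\tau^{(m)}}, r) \subseteq B(0, (m+1)r + mr_0)$. Consequently, if the process exits the larger ball $B(0, (m+1)r + mr_0)$ before time $T$, it must first exit $B(0, mr+(m-1)r_0)$ at time $\tau^{(m)} < T$, and then the shifted process must exit $B(X_{\tau^{(m)}}, r)$ before time $T$ as well. Writing $\sigma$ for the latter exit time, this gives the inclusion
\begin{equation*}
    \left\{\tau_{B(0, (m+1)r+mr_0)} < T \right\} \subseteq \left\{\tau^{(m)} < T \right\} \cap \left\{\sigma < T\right\}.
\end{equation*}

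Now the probabilistic step is to evaluate $\mathbb{P}_0(\tau^{(m)} < T, \sigma < T)$ using the strong Markov property of $X$ at $\tau^{(m)}$ together with the memoryless property of the exponential random variable $T$ (which is independent of $X$). Since $\tau^{(m)}$ is a stopping time for $X$ alone, the post-$\tau^{(m)}$ increment process $(X_{\tau^{(m)}+s} - X_{\tau^{(m)}})_{s \geq 0}$ is an independent copy of $X$ started at $0$. Conditional on $\tau^{(m)} = t$ and $T > t$, the residual lifetime $T - t$ is again Exponential($\lambda$) and independent of this new process. Therefore the conditional probability that the shifted process exits $B(0, r)$ before this residual time is exactly $\mathbb{P}_0(\tau_{B(0, r)} < T)$, independent of $t$, which yields
\begin{equation*}
    \mathbb{P}_0\bigl(\tau^{(m)} < T,\, \sigma < T\bigr) = \mathbb{P}_0\bigl(\tau^{(m)} < T\bigr) \cdot \mathbb{P}_0\bigl(\tau_{B(0,r)} < T\bigr).
\end{equation*}

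Combining this identity with the set inclusion and the inductive hypothesis $\mathbb{P}_0(\tau^{(m)} < T) \leq (\mathbb{P}_0(\tau_{B(0,r)} < T))^m$ gives the desired bound for $m+1$. I do not anticipate a serious obstacle; the main point requiring care is the simultaneous use of strong Markov (which applies to stopping times of $X$) and memorylessness (which handles the independent exponential clock $T$), so that the post-$\tau^{(m)}$ sub-problem genuinely has the same law as the original ``exit $B(0,r)$ before an independent $\mathrm{Exp}(\lambda)$'' problem. The bounded-jump hypothesis $X \in \mathcal{I}(d, r_0)$ is used only in the geometric step above, to control $|X_{\tau^{(m)}}|$ and hence contain $B(X_{\tau^{(m)}}, r)$ inside $B(0, (m+1)r + mr_0)$.
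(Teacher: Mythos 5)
Your proof is correct and follows essentially the same route as the paper: induction on $m$, the geometric observation that bounded jumps confine $X_{\tau^{(m)}}$ to within distance $r_0$ of the boundary of $B(0, mr+(m-1)r_0)$ (so that $B(X_{\tau^{(m)}}, r)$ sits inside the next ball), and the combination of the strong Markov property with the memoryless property of the independent exponential $T$ to extract one more factor of $\mathbb{P}_0(\tau_{B(0,r)}<T)$. The only cosmetic difference is that you phrase the restart step as an exact equality via translation invariance of the Lévy increments, while the paper bounds by a supremum over starting points before invoking that same translation invariance.
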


\begin{proof}
    For all $m$, let $B_m := B(0, mr+(m-1)r_0)$ and $\tilde{B}_m := B(0, mr+mr_0)$.
    We will show by induction on $m$ that
    \begin{equation} \label{EHI:e:SubmultiplicativityWithBm}
        \mathbb{P}_0 \left( \tau_{B_m} < T \right) \leq \left( \mathbb{P}_0 \left( \tau_{B(0, r)}<T \right)\right)^m
    \end{equation}
    For the base case ($m=1$), the two sides of \eqref{EHI:e:SubmultiplicativityWithBm} are identical. Assume \eqref{EHI:e:SubmultiplicativityWithBm} holds for $m$, and let us show that it also holds for $m+1$. Consider what must happen for the process starting at $0$ to exit $B_{m+1}$ before time $T$. First, it must exit $B_m$, which will bring it to some location in $\tilde{B}_m \setminus B_m$ (since its last jump had a magnitude less than $r_0$, and the radius of $\tilde{B}_m$ is equal to $r_0$ plus the radius of $B_m$).
    From this location, it must travel an additional distance of at least $r$ before time $T$ in order to exit $B_{m+1}$ (since the radius of $B_{m+1}$ is equal to $r$ plus the radius of $\tilde{B}_m$). Therefore, by the Strong Markov property, the memoryless property of $T$, and the inductive hypothesis,
    \begin{align*}
         \mathbb{P}_0 \left( \tau_{B_{m+1}} < T \right) &=  \mathbb{P}_0 \left( \tau_{B_m} < T \right)  \mathbb{P}_0 \left( \tau_{B_{m+1}} < T \Big| \tau_{B_m} < T \right) \\
         &\leq \mathbb{P}_0 \left( \tau_{B_m} < T \right) \sup_{z \in \tilde{B}_m \setminus B_m} \mathbb{P}_z \left( \tau_{B_{m+1}} < T \right) \\
         &\leq \mathbb{P}_0 \left( \tau_{B_m} < T \right) \sup_{z \in \tilde{B}_m \setminus B_m} \mathbb{P}_z \left( \tau_{B(z, r))} < T\right)\\
         &= \mathbb{P}_0 \left( \tau_{B_m} < T \right) \mathbb{P}_0 \left( \tau_{B(0, r))} < T \right) \\
         &\leq \left( \mathbb{P}_0 \left( \tau_{B(0, r))} < T \right) \right)^{m+1},
    \end{align*}
    which completes the inductive proof.
\end{proof}

\subsection{\texorpdfstring{The second moment is the rate of speed of growth of $|X_t|^2$}{TEXT}}

Given a process $X \in \mathcal{I}(d, r_0)$, let
\begin{equation}\label{EHI:e:m2(X)def}
    m_2(X) := \int_{\R^d} |x|^2 j(|x|).
\end{equation}
Because $j(|x|)=0$ for $|x|>r_0$, the L\'{e}vy-Khintchine condition guarantees that $m_2(X) < \infty$ for $X \in \mathcal{I}(d, r_0)$.

\begin{lemma} \label{EHI:l:m2isvariance}
    Given $X \in \mathcal{I}(d, r_0)$, we have $\mathbb{E}_x |X_t-x|^2 = m_2(X) \cdot t$ for all $x \in \mathbb{R}^d$ and $t>0$.
\end{lemma}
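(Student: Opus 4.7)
The plan is to compute $\mathbb{E}_x|X_t - x|^2$ via the Lévy-Itô decomposition of $X$. Since $X \in \mathcal{I}(d, r_0)$ is pure jump with isotropic unimodal increments and jumps bounded by $r_0$, the jump measure $j(|y|) \, dy$ satisfies $\int_{\mathbb{R}^d} |y|^2 j(|y|) \, dy < \infty$ (the L\'evy-Khintchine condition combined with the bound $|y| \leq r_0$ on the support). In particular, $X$ has finite second moments at every time $t$, and the isotropy rules out any drift. Thus the Lévy-Itô representation takes the compensated form
\begin{equation*}
    X_t - x = \int_0^t \int_{\mathbb{R}^d} y \, \widetilde{N}(ds, dy),
\end{equation*}
where $\widetilde{N}(ds, dy) = N(ds, dy) - j(|y|) \, dy \, ds$ is the compensated Poisson random measure associated to the jumps of $X$.

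Next, I would apply the Itô isometry for compensated Poisson stochastic integrals componentwise. For each coordinate $i \in \{1, \dots, d\}$, the integral $\int_0^t \int y_i \, \widetilde{N}(ds, dy)$ is a square-integrable martingale, and the isometry yields
\begin{equation*}
    \mathbb{E}_x\left[(X_t^{(i)} - x^{(i)})^2\right] = \int_0^t \int_{\mathbb{R}^d} y_i^2 \, j(|y|) \, dy \, ds = t \int_{\mathbb{R}^d} y_i^2 \, j(|y|) \, dy.
\end{equation*}
Summing over $i$ and using $\sum_i y_i^2 = |y|^2$ gives $\mathbb{E}_x|X_t - x|^2 = t \int_{\mathbb{R}^d} |y|^2 j(|y|) \, dy = m_2(X) \cdot t$, as claimed.

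The proof is essentially a direct invocation of the standard Lévy-Itô machinery, so there is no real obstacle; the only thing worth being careful about is the justification that the compensated representation has no drift or Brownian component. The absence of a Gaussian part is built into the definition of $X$ as a pure-jump process, and the isotropy of the increments combined with finite first moments (which follow from the bounded support of $j$) forces the drift to vanish. Alternatively, one could give a characteristic-function proof: write the characteristic exponent $\psi(\xi) = \int (1 - \cos(\xi \cdot y)) j(|y|) \, dy$, note that $\psi$ is $C^2$ at the origin because $\int |y|^2 j(|y|) \, dy < \infty$, and extract the second moment by differentiating $\mathbb{E}_x[e^{i \xi \cdot (X_t - x)}] = e^{-t \psi(\xi)}$ twice at $\xi = 0$. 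Either route produces the same identity in a handful of lines.
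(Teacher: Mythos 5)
Your proof is correct, but it takes a genuinely different route than the paper. You invoke the Lévy--Itô representation $X_t - x = \int_0^t\int_{\R^d} y\,\widetilde N(\mathrm{d}s,\mathrm{d}y)$ (after noting that isotropy kills the drift and that $\int |y|^2 j(|y|)\,\mathrm{d}y<\infty$ because $j$ is supported in $B(0,r_0)$), then apply the Itô isometry coordinatewise; your alternative via twice-differentiating $e^{-t\psi(\xi)}$ is equally valid. The paper instead proceeds from scratch: it conditions on the $\sigma$-field $\mathcal{G}_t$ generated by the jump \emph{magnitudes} $\{|\Delta X(s)|\}_{s\le t}$, uses isotropy to show that each coordinate of each jump contributes conditional variance $\frac1d|\Delta X(s)|^2$, sums to obtain $\mathbb{E}_x[|X_t-x|^2\mid\mathcal{G}_t]=[X]_t$ (the quadratic variation), and then computes $\mathbb{E}[[X]_t]=m_2(X)\,t$ by discretizing into annuli $E_k=\{\ell^{-k}r_0<|x|\le\ell^{-(k-1)}r_0\}$, counting jumps in each annulus as a Poisson random variable, and taking $\ell\to 1^+$. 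Your argument is shorter and leans on standard machinery; the paper's is longer but self-contained and, more importantly, deliberately isolates the intermediate identity $\mathbb{E}_x[|X_t-x|^2\mid\mathcal{G}_t]=[X]_t$ and the conditioning on $\mathcal{G}_t$, since exactly this conditioning structure is reused in Section 4.3 to run the Berry--Esseen argument (Lemma \ref{EHI:l:LeaveBallConstantHelper}). So the paper's detour is buying something the Itô-isometry one-liner does not.
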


\begin{proof}
    For all $y \in \mathbb{R}^d$ and $j \in \{1, \dots, d\}$, let $y^{(j)}$ denote the $j$th coordinate of the vector $y$.
    Let $(\mathcal{G}_t)_{t \geq 0}$ be the filtration generated by only the magnitudes of the jumps of $X$: that is, $\mathcal{G}_t := \sigma \left( \left\{ |X_s - X_{s-}| : 0<s \leq t \right\} \right)$. 
    Condition on $\mathcal{G}_t$ and consider an $s \in (0, t]$ at which a jump of size $X_s-X_{s-}$ occurs. By the definition of $|\cdot|$ on $\mathbb{R}^d$,
    \begin{equation*}
        \left|X_s-X_{s-} \right|^2 = \sum_{j=1}^d \left|X_s^{(j)}-X_{s-}^{(j)} \right|^2.
    \end{equation*}
    By symmetry, for each $j \in \{1, \dots, d\}$,
    \begin{equation*}
        \Var_x \left(  \left. X_s^{(j)}-X_{s-}^{(j)} \right| \mathcal{G}_t \right) = \mathbb{E}_x \left[ \left| \left. X_s^{(j)}-X_{s-}^{(j)} \right|^2 \right| \mathcal{G}_t \right] = \frac{1}{d} |X_s-X_{s-}|^2.
    \end{equation*}
    By summing over all $s \in (0, t]$, since variances of independent random variables are additive,
    \begin{align*}
        \mathbb{E}_x \left[ \left. \left| X_t^{(j)}-x^{(j)} \right|^2 \right| \mathcal{G}_t \right] &= \Var_x \left( \left.  X_t^{(j)}-x^{(j)} \right| \mathcal{G}_t \right) = \sum_{0<s \leq t} \Var_x \left(  \left. X_s^{(j)}-X_{s-}^{(j)} \right| \mathcal{G}_t \right) \\
        &= \frac{1}{d} \sum_{0<s \leq t} |X_s-X_{s-}|^2.
    \end{align*}
    Summing over all $d$,
    \begin{equation*}
        \mathbb{E}_x \left[ \left. \left| X_t-x \right|^2 \right| \mathcal{G}_t \right] = \sum_{j=1}^d \mathbb{E}_x \left[ \left. \left| X_t^{(j)}-x^{(j)} \right|^2 \right| \mathcal{G}_t \right] = \sum_{0<s \leq t} |X_s-X_{s-}|^2.
    \end{equation*}
    Taking expectations,
    \begin{equation}\label{EHI:m2tsecondmoment1}
        \mathbb{E}_x |X_t-x|^2 = \mathbb{E}_x \left(  \mathbb{E}_x \left[ \left. \left| X_t-x \right|^2 \right| \mathcal{G}_t \right]\right) = \mathbb{E}_x \left[ \sum_{0<s \leq t} |X_s-X_{s-}|^2 \right].
    \end{equation}

    Fix $\ell>1$. For all $k \in \mathbb{N}$, let $E_k$ denote the annulus $E_k := \{ x \in \mathbb{R}^d : \ell^{-k} r_0 < |x| \leq \ell^{-(k-1)} r_0 \}$. Then $\{E_k\}_{k=1}^\infty$ is a partition of $B(0, r_0)$, so every jump that $X$ takes has its displacement in $E_k$ for some unique $k$. Fix $t>0$. For all $k$, let $N^{(k)}_t$ be the number of jumps $X$ takes with displacement in $E_k$ by time $t$. (In other words, $N^{(k)}_t := \#\{s \in (0, t] : X_s-X_{s-} \in E_k\}$.) The total sum of the squares of the jumps taken by time $t$ can be approximated as follows:
    \begin{equation*}
        \sum_{k=1}^\infty \ell^{-2k} r_0^2 N^{(k)}_t \leq \sum_{0<s \leq t} |X_s-X_{s-}|^2 \leq \sum_{k=1}^\infty \ell^{-2(k-1)} r_0^2 N^{(k)}_t.
    \end{equation*}
    Taking expectations,
    \begin{align}
        \mathbb{E}_x \left[ \sum_{0<s \leq t} |X_s-X_{s-}|^2 \right] &\leq \sum_{k=1}^\infty \ell^{-2(k-1)} r_0^2 \, \mathbb{E}_x \left[ N^{(k)}_t \right] = \sum_{k=1}^\infty \ell^{-2(k-1)} r_0^2 \int_{E_k} j(|x|) \, dx \cdot t \notag\\
        &\leq \sum_{k=1}^\infty \int_{E_k} \ell^2 |x|^2 j(|x|) \, dx \cdot t = \ell^2 m_2(X) t \label{EHI:m2tsecondmoment2}
    \end{align}
    and
    \begin{equation}\label{EHI:m2tsecondmoment3}
        \mathbb{E}_x \left[ \sum_{0<s \leq t} |X_s-X_{s-}|^2 \right] \geq \ell^{-2} m_2(X) t.
    \end{equation}
    Since \eqref{EHI:m2tsecondmoment2} and \eqref{EHI:m2tsecondmoment3} hold for all $\ell>1$,
    \begin{equation}\label{EHI:m2tsecondmoment4}
        \mathbb{E}_x \left[ \sum_{0<s \leq t} |X_s-X_{s-}|^2 \right] = m_2(X) t.
    \end{equation}
    By \eqref{EHI:m2tsecondmoment1} and \eqref{EHI:m2tsecondmoment4}, the proof is complete.
\end{proof}

\begin{lemma} \label{EHI:l:PE}
    Let $X \in \mathcal{I}(d, r_0)$, and let $T$ be a random variable with Exponential($\lambda$) distribution, independent of $X$. For all $x \in \mathbb{R}^d$ and $r>0$, we have each of the following:
    \begin{equation} \label{EHI:e:PEP}
        \mathbb{P}_x(\tau_{B(x, r)}>T) \geq (1-e^{-1}) \left(1 - \frac{2m_2(X)}{r^2 \lambda} \right)
    \end{equation}
    and
    \begin{equation} \label{EHI:e:PEE}
        \mathbb{E}_x \left[ \tau_{B(x, r)} \wedge T \right] \geq e^{-1} \left(1 - \frac{2m_2(X)}{r^2 \lambda} \right) \frac{1}{\lambda} = e^{-1} \left(1 - \frac{2m_2(X)}{r^2 \lambda} \right) \mathbb{E}[T].
    \end{equation}
\end{lemma}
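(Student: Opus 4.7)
The plan is to reduce the exponential-time statement to a deterministic-time statement via independence of $T$ and $X$, and then control the deterministic-time exit probability using the second-moment formula of Lemma \ref{EHI:l:m2isvariance} together with the factor-of-two bound of Lemma \ref{EHI:l:FactorOf2ForExits}.

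By translation-invariance I may take $x=0$. The key deterministic estimate I would first establish is: for every $t>0$,
\begin{equation*}
    \P_0\bigl(\tau_{B(0,r)} \leq t\bigr) \leq \frac{2 m_2(X)\, t}{r^2}.
\end{equation*}
To prove this, apply Lemma \ref{EHI:l:FactorOf2ForExits} with the deterministic stopping time $t$; the hypothesis is met because $X$ is an isotropic unimodal L\'evy process, so the increment $X_t - X_{\tau_{B(0,r)}}$ (conditional on $\tau_{B(0,r)} \leq t$) is isotropic unimodal around $0$ by the strong Markov property. This gives $\P_0(\tau_{B(0,r)} \leq t) \leq 2\P_0(|X_t| \geq r)$. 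Then Markov's inequality combined with Lemma \ref{EHI:l:m2isvariance} yields $\P_0(|X_t| \geq r) \leq \E_0|X_t|^2/r^2 = m_2(X) t / r^2$, completing the bound.

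Next I would specialize to $t = 1/\lambda = \E[T]$, which gives $\P_0(\tau_{B(0,r)} > 1/\lambda) \geq 1 - \frac{2m_2(X)}{r^2 \lambda}$. Since $T$ is independent of $X$, I can split on whether $T \leq 1/\lambda$:
\begin{equation*}
    \P_0(\tau_{B(0,r)} > T) \geq \P_0\bigl(\tau_{B(0,r)} > 1/\lambda\bigr) \cdot \P(T \leq 1/\lambda) \geq \left(1 - \frac{2m_2(X)}{r^2 \lambda}\right)(1 - e^{-1}),
\end{equation*}
using $\P(T \leq 1/\lambda) = 1-e^{-1}$. This yields \eqref{EHI:e:PEP}. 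For \eqref{EHI:e:PEE}, I bound the expectation from below by restricting to the event $\{\tau_{B(0,r)} > 1/\lambda,\ T > 1/\lambda\}$, on which $\tau_{B(0,r)} \wedge T > 1/\lambda$; again using independence of $T$ and $X$,
\begin{equation*}
    \E_0[\tau_{B(0,r)} \wedge T] \geq \frac{1}{\lambda} \cdot \P_0(\tau_{B(0,r)} > 1/\lambda) \cdot \P(T > 1/\lambda) \geq \frac{1}{\lambda} \left(1 - \frac{2m_2(X)}{r^2 \lambda}\right) e^{-1}.
\end{equation*}

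There is no real obstacle here: the argument is a straightforward assembly of the two previously proved lemmas with Markov's inequality and the independence of $T$. The only subtlety worth double-checking is that the hypothesis of Lemma \ref{EHI:l:FactorOf2ForExits} is satisfied when the stopping time is the deterministic time $t$, which follows from the strong Markov property and the fact that $X$ has isotropic unimodal increments. The bounds \eqref{EHI:e:PEP} and \eqref{EHI:e:PEE} are only informative when $r^2 \lambda > 2 m_2(X)$; otherwise the right-hand sides are non-positive and the statements are trivial.
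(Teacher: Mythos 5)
Your proof is correct and follows essentially the same route as the paper: bound $\P_x(\tau_{B(x,r)} \leq t)$ by $2m_2(X)t/r^2$ via Lemma \ref{EHI:l:FactorOf2ForExits}, Markov's inequality, and Lemma \ref{EHI:l:m2isvariance}; then set $t = 1/\lambda$ and use the independence of $T$ and $X$ to split on the events $\{T \leq 1/\lambda\}$ and $\{T > 1/\lambda\}$ for \eqref{EHI:e:PEP} and \eqref{EHI:e:PEE} respectively. Your remark about verifying the hypothesis of Lemma \ref{EHI:l:FactorOf2ForExits} at a deterministic time via the strong Markov property is a correct and worthwhile sanity check.
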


\begin{proof}
    %Let $D:=B(x, r)$.
    For all $t>0$, by Lemma \ref{EHI:l:FactorOf2ForExits}, Markov's inequality, and Lemma \ref{EHI:l:m2isvariance},
    \begin{equation}\label{EHI:e:escapeUnlikely}
        \P_x(\tau_{B(x, r)} \leq t) \leq 2\P_x (|X_t-x| \geq r) \leq 2r^{-2} \E_x \left( |X_t-x|^2 \right) = \frac{2m_2(X)t}{r^2}.
    \end{equation}
    By the independence of $X$ and $T$, and by \eqref{EHI:e:escapeUnlikely},
    \begin{align*}
        \mathbb{P}_x(\tau_{B(x, r)}>T) &\geq \mathbb{P}_x \left(\tau_{B(x, r)} > \frac{1}{\lambda} > T \right) = \mathbb{P}_x \left(\tau_{B(x, r)} > \frac{1}{\lambda} \right) \mathbb{P}\left( T \leq \frac{1}{\lambda} \right) \\
        &\geq \left(1-\frac{2m_2(X)}{r^2 \lambda} \right) (1-e^{-1}).
    \end{align*}
    By Markov's inequality and \eqref{EHI:e:escapeUnlikely},
    \begin{align*}
        \mathbb{E}_x \left[ \tau_{B(x, r)} \wedge T \right] &\geq \frac{1}{\lambda} \mathbb{P}_x \left(\tau_{B(x, r)} \wedge T > \frac{1}{\lambda} \right) = \frac{1}{\lambda} \mathbb{P}_x \left(\tau_{B(x, r)} > \frac{1}{\lambda} \right) \mathbb{P}\left( T > \frac{1}{\lambda} \right) \\
        &\geq \frac{1}{\lambda}\left(1-\frac{2m_2(X)}{r^2 \lambda} \right) e^{-1} .
    \end{align*}
\end{proof}

\subsection{Quadratic variation and Berry-Esseen}\label{EHI:ss:BerryEsseen}

Lemma \ref{EHI:l:m2isvariance} is useful in analyzing the distance a process $X \in \mathcal{I}(d, r_0)$ travels over a certain time. In this subsection, we develop another tool to allow for an even more refined analysis.
Given an isotropic unimodal L\'{e}vy jump process $X$, consider its quadratic variation
\begin{equation} \label{EHI:quadraticVariationDef}
    [X]_t := \sum_{0<s \leq t} |\Delta X(s)|^2,
\end{equation}
The process $[X]=([X]_t)_{t \geq 0}$ is a non-decreasing L\'{e}vy process on $[0, \infty)$. If $X \in \mathcal{I}(d, r_0)$, then $[X]$ only takes jumps of size $r_0^2$ or less, and growths with a ``speed" of $m_2(X)$. The relation between $X$ and $[X]$ is somewhat analogous to the relationship between a subordinate Brownian motion and its subordinator, in that $[X]$ is a one-dimensional, non-decreasing L\'{e}vy process and $[X]_t$ is usually on the order of $|X_t-X_0|^2$. We show in this subsection that the analogy goes further: conditional on $[X]_t$, $X_t$ can be approximated as a Gaussian with variance\footnote{The variance of a random vector refers to its covariance matrix.} proportional to $[X]_t I_d$, using a multidimensional form of the Berry-Esseen theorem (itself a quantitative version of the Central limit theorem), and conditioning on the $\sigma$-field generated by $[X]$.

We begin with the following lemma, which establishes something like a Laplace exponent for $[X]$.

\begin{lemma}\label{EHI:l:ChernoffExpectation}
    If $X$ is an isotropic unimodal L\'{e}vy jump process on $\R^d$, then
    \begin{equation*}
        \mathbb{E}\left[e^{-\lambda[X]_t} \right] = \exp(-t \int_{\mathbb{R}^d} (1-e^{-\lambda|x|^2}) j(|x|) \, dx)
    \end{equation*}
    for all $\lambda>0, t>0$.
\end{lemma}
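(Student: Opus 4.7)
The plan is to recognize $[X]_t$ as an integral of $|x|^2$ against the Poisson random measure of jumps of $X$, and then compute its Laplace transform by truncating the small jumps, using compound Poisson processes for which the formula is elementary, and finally passing to the limit.

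First, recall that since $X$ is an isotropic unimodal L\'{e}vy process with jump kernel $j(|x|)$, the random measure $N(ds, dx) := \sum_{s>0} \delta_{(s, \Delta X(s))}$ (counting jumps of $X$) is a Poisson random measure on $(0, \infty) \times \R^d$ with intensity $ds \otimes j(|x|) \dee{x}$. We can write
\begin{equation*}
    [X]_t = \int_{(0, t] \times \R^d} |x|^2 \, N(ds, dx).
\end{equation*}
The L\'{e}vy-Khintchine condition $\int_{\R^d} \min\{1, |x|^2\} j(|x|) \dee{x} < \infty$ ensures that $\int_{\R^d} (1 - e^{-\lambda |x|^2}) j(|x|) \dee{x} < \infty$ for all $\lambda > 0$, since $1-e^{-\lambda |x|^2} \leq \min\{1, \lambda |x|^2\}$.

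Next, for each $\eps > 0$, consider the truncated process $X^{(\eps), >}$ consisting of only those jumps of $X$ of magnitude greater than $\eps$. This is a compound Poisson process with jump rate $\Lambda_\eps := \int_{|x|>\eps} j(|x|) \dee{x} < \infty$ and displacement distribution $\Lambda_\eps^{-1} j(|x|) \indicatorWithSetBrackets{|x|>\eps} \dee{x}$. Its quadratic variation $[X^{(\eps), >}]_t$ is then a one-dimensional compound Poisson process with the same jump rate, whose jump sizes are distributed as $|Y|^2$ where $Y$ has the above displacement distribution. A direct calculation (conditioning on the number $N_t$ of jumps by time $t$, which is Poisson($\Lambda_\eps t$), and using independence of the jump sizes) gives
\begin{equation*}
    \E \left[e^{-\lambda [X^{(\eps), >}]_t} \right] = \exp(-t \Lambda_\eps (1 - \E e^{-\lambda |Y|^2})) = \exp(-t \int_{|x|>\eps} (1 - e^{-\lambda |x|^2}) j(|x|) \dee{x}).
\end{equation*}

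Finally, I would let $\eps \to 0^+$. On the right-hand side, monotone convergence yields the full integral $\int_{\R^d} (1 - e^{-\lambda |x|^2}) j(|x|) \dee{x}$. For the left-hand side, note that $[X^{(\eps), >}]_t = \sum_{0<s \leq t} |\Delta X(s)|^2 \indicatorWithSetBrackets{|\Delta X(s)|>\eps}$ increases monotonically to $[X]_t$ as $\eps \to 0^+$, so $e^{-\lambda [X^{(\eps), >}]_t} \to e^{-\lambda [X]_t}$ monotonically and bounded convergence (by $1$) gives the desired limit. The only subtle point is ensuring $[X]_t < \infty$ almost surely, which follows because the quadratic variation of the ``small-jump" part $X^{((\eps))}$ (a process in $\mathcal{I}(d, \eps)$) has expectation bounded by $t \int_{|x| \leq \eps} |x|^2 j(|x|) \dee{x}$, and by the L\'{e}vy-Khintchine condition this goes to $0$ as $\eps \to 0^+$.

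The main obstacle is the passage to the limit and justifying that $[X]_t$ is indeed the almost sure limit of $[X^{(\eps), >}]_t$; but since we are only working with squared magnitudes of jumps (always non-negative), monotone convergence handles this cleanly once we observe the finiteness of $\E[[X]_t]$ via the L\'{e}vy-Khintchine condition.
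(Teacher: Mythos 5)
Your proof is correct, and it takes a genuinely different route from the paper's. The paper partitions $\R^d\setminus\{0\}$ into annuli $E_k=\{\ell^k\le|x|<\ell^{k+1}\}$ for a parameter $\ell>1$, sandwiches the contribution of jumps landing in each annulus between $\ell^{2k}N_t^{(k)}$ and $\ell^{2(k+1)}N_t^{(k)}$ where $N_t^{(k)}$ is a Poisson count, multiplies the resulting per-annulus bounds, and finally lets $\ell\to1^+$ (with dominated convergence) to close the gap. You instead truncate the small jumps at level $\eps$, observe that the remaining large-jump part is a genuine compound Poisson process for which the Laplace transform identity is exact (not merely sandwiched), and then let $\eps\to0^+$ with monotone and bounded convergence. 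Your route is the standard Poisson-random-measure (Campbell/exponential-formula) argument and yields an exact identity at every finite stage, which makes the limiting step conceptually simpler; the paper's route avoids invoking the machinery of Poisson random measures and compound Poisson Laplace transforms explicitly, at the cost of the extra $\ell\to1^+$ squeeze. Both rely on the L\'evy--Khintchine bound $1-e^{-\lambda|x|^2}\le\min\{1,\lambda|x|^2\}$ to make the right-hand integral finite.

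One small remark: your concern about establishing $[X]_t<\infty$ almost surely is a red herring. The monotone limit $[X^{(\eps),>}]_t\nearrow[X]_t$ holds in $[0,\infty]$ regardless, and bounded convergence applies to $e^{-\lambda[X^{(\eps),>}]_t}\in[0,1]$ without any finiteness hypothesis; the a.s.~finiteness of $[X]_t$ is then a \emph{consequence} of the proved identity, since the right-hand side is strictly positive. (Your intended argument also works, but the observation that the truncated second moment "goes to $0$" is beside the point; what you need is just that it is finite for one fixed $\eps$.)
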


Before we prove Lemma \ref{EHI:l:ChernoffExpectation}, we need the following simple fact.

\begin{lemma} \label{EHI:l:poisson}
    Fix $\lambda_1, \lambda_2 > 0$. If a random variable $N$ has Poisson($\lambda_1$) distribution, then
    \begin{equation*}
        \mathbb{E}[e^{-\lambda_2 N}] = \exp(-\lambda_1(1-e^{-\lambda_2})).
    \end{equation*}
\end{lemma}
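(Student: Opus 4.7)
The plan is to prove this by a direct computation using the probability mass function of the Poisson distribution, together with the Taylor series for the exponential. Since $N \sim \mathrm{Poisson}(\lambda_1)$, I have $\mathbb{P}(N=k) = e^{-\lambda_1}\lambda_1^k/k!$ for each $k \in \{0,1,2,\dots\}$, so by the definition of expectation for a discrete random variable,
\begin{equation*}
    \mathbb{E}[e^{-\lambda_2 N}] = \sum_{k=0}^\infty e^{-\lambda_2 k} \cdot \frac{e^{-\lambda_1} \lambda_1^k}{k!} = e^{-\lambda_1} \sum_{k=0}^\infty \frac{(\lambda_1 e^{-\lambda_2})^k}{k!}.
\end{equation*}

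The remaining step is to recognize the sum as the Taylor series for $\exp(\lambda_1 e^{-\lambda_2})$, which yields $e^{-\lambda_1} \exp(\lambda_1 e^{-\lambda_2}) = \exp(-\lambda_1(1-e^{-\lambda_2}))$, as desired. There is no real obstacle here; the interchange of sum and expectation is automatic for a non-negative integrand, and everything converges absolutely since $\lambda_1 e^{-\lambda_2} < \infty$. The entire proof should take only a few lines.
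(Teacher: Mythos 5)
Your proof is correct and takes exactly the same approach as the paper: expand the expectation via the Poisson probability mass function, factor out $e^{-\lambda_1}$, recognize the Taylor series for $\exp(\lambda_1 e^{-\lambda_2})$, and simplify.
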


\begin{proof}
    The proof is a simple calculation:
    \begin{align*}
        \mathbb{E}[e^{-\lambda_2 N}] &= \sum_{n=0}^\infty e^{-\lambda_1} \frac{\lambda_1^n}{n!} e^{-\lambda_2 n} = e^{-\lambda_1} \sum_{n=0}^\infty \frac{(\lambda_1 e^{-\lambda_2})^n}{n!} \\
        &= \exp(-\lambda_1) \exp(\lambda_1 e^{-\lambda_2}) = \exp(-\lambda_1 (1-e^{-\lambda_2})) .
    \end{align*}
\end{proof}

\begin{proof}[Proof of Lemma \ref{EHI:l:ChernoffExpectation}]
    Fix $\ell>1$. For all $k \in \mathbb{Z}$, let $E_k$ be the annulus $E_k := \{x \in \mathbb{R}^d : \ell^k \leq |x| < \ell^{k+1}\}$. Then $\{E_k\}_{k \in \mathbb{Z}}$ is a partition of $\mathbb{R}^d \setminus \{0\}$, so each jump that $X$ takes has a displacement in $E_k$ for some unique $k$. For all $k$, let
    \begin{equation*}
        [X]^{(k)}_t := \sum_{0<s\leq t} \Delta |X(s)|^2 \indicatorWithSetBrackets{\Delta X(s) \in E_k}
    \end{equation*}
    and
    \begin{equation*}
        N^{(k)}_t := \# \{ 0<s\leq t : \Delta X(s) \in E_k \}.
    \end{equation*}
    Let $\lambda_k := \int_{E_k} j(|x|) \, dx$, so that $N^{(k)}_t$ is Poisson($\lambda_k t$).

    Whenever $X$ takes a jump whose displacement vector is in $E_k$, $[X]^{(k)}$ increases by a value between $\ell^{2k}$ and $\ell^{2(k+1)}$. Thus,
    \begin{equation} \label{EHI:VtAndNt}
        \ell^{2k} N^{(k)}_t \leq [X]^{(k)}_t \leq \ell^{2(k+1)} N^{(k)}_t.
    \end{equation}
    The collection $\left\{ [X]^{(k)}_t \right\}_{k=1}^\infty$ is independent and its sum is $[X]_t$. Thus, for all $\lambda>0$,
    \begin{equation} \label{EHI:ProductForChernoff}
        \mathbb{E}\left[ e^{-\lambda [X]_t} \right] = \mathbb{E}\left[ \prod_{k=1}^\infty e^{-\lambda [X]^{(k)}_t} \right] = \prod_{k=1}^\infty \mathbb{E}\left[ e^{-\lambda [X]^{(k)}_t} \right].
    \end{equation}
    For all $k$, by \eqref{EHI:VtAndNt} and Lemma \ref{EHI:l:poisson} (a lemma in the appendix that tells us how to compute $\mathbb{E}[e^{-\lambda_2 N}]$ when $N$ is Poisson($\lambda_1$)), and by the fact that $|x| \geq \ell^k$ for all $x \in E_k$,
    \begin{align} \label{EHI:e:ApplyPoissonLemma}
        \mathbb{E}\left[ e^{-\lambda[X]^{(k)}_t} \right] &\leq \mathbb{E} \left[ \exp(-\lambda \ell^{2(k+1)} N^{(k)}_t) \right] = \exp(-\lambda_k t \left(1-e^{-\lambda \ell^{2(k+1)}} \right)) \notag\\
        &= \exp(-t\int_{E_k} \left(1-e^{-\lambda \ell^{2(k+1)}} \right) j(|x|) \, dx ) \notag\\
        &\leq \exp(-t\int_{E_k} \left(1-e^{-\lambda \ell^2 |x|^2} \right) j(|x|) \, dx ).
    \end{align}
    By \eqref{EHI:ProductForChernoff}, taking the product of \eqref{EHI:e:ApplyPoissonLemma} over all $k$ gives
    \begin{equation*}
        \mathbb{E}\left[ e^{-\lambda [X]_t} \right] \leq \exp(-t\int_{\mathbb{R}^d} \left(1-e^{-\lambda \ell^2 |x|^2} \right) j(|x|) \, dx ).
    \end{equation*}
    Since this holds for all $\ell>1$, taking the limit as $\ell\to 1^+$ (using the Dominated convergence theorem) gives
    \begin{equation}\label{EHI:e:ChernoffExpectationIneq}
        \mathbb{E}\left[ e^{-\lambda [X]_t} \right] \leq \exp(-t\int_{\mathbb{R}^d} \left(1-e^{-\lambda |x|^2} \right) j(|x|) \, dx ).
    \end{equation}
    By repeating the same argument but using the lower bound from \eqref{EHI:VtAndNt} instead of the upper bound, and again taking the limit as $\ell \to 1^+$, we find that the inequality in \eqref{EHI:e:ChernoffExpectationIneq} is in fact an equality.
\end{proof}

The Berry-Esseen theorem gives a quantitative version of the Central limit theorem, with an error term that depends on the third moments of the independent random variables being summed. The following multivariate version of the theorem, due to Rai\v{c} \cite[Theorem 1.1]{Rai}, does not require the random variables to be identically-distributed.

\begin{theorem}[Multivariate Berry-Esseen, \cite{Rai}] \label{EHI:t:BerryEsseen}
    Let $(\xi_i)_{i \in I}$ be an independent collection of random vectors in $\mathbb{R}^d$, each with mean $0$. Assume that $\Var(\sum_{i \in I} \xi_i) = \sigma^2 I_d$, where $\sigma>0$. Let $Z$ be a standard multivariate Gaussian (with mean $0$ and variance $I_d$). Then for all measurable convex sets $A \subseteq \mathbb{R}^d$,
    \begin{equation*}
        \left| \mathbb{P}\left( \sum_{i \in I} \xi_i \in A \right) - \mathbb{P}\left( \sigma Z \in A \right) \right| \leq C_{BE} \cdot \frac{\sum_{i \in I} \mathbb{E}|\xi_i|^3}{\sigma^3},
    \end{equation*}
    where $C_{BE} = 42d^{1/4}+16$ (which depends only on $d$).
\end{theorem}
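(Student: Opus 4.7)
The plan is to reduce to the case $\sigma=1$ by rescaling (replace each $\xi_i$ by $\xi_i/\sigma$ and $A$ by $A/\sigma$), so I need only show that for independent mean-zero vectors $W_i = \xi_i/\sigma$ with $\sum_i \Var(W_i) = I_d$, we have $|\mathbb{P}(W \in A') - \mathbb{P}(Z \in A')| \leq C_{BE} \sum_i \mathbb{E}|W_i|^3$ uniformly over convex $A' \subseteq \mathbb{R}^d$, where $W := \sum_i W_i$. The natural tool for this is Stein's method for the multivariate normal distribution: for a given Borel-measurable $A' \subseteq \mathbb{R}^d$, solve the Stein equation
\begin{equation*}
  \Delta f(x) - x \cdot \nabla f(x) = \mathbf{1}_{A'}(x) - \mathbb{P}(Z \in A')
\end{equation*}
via the Ornstein--Uhlenbeck semigroup representation $f(x) = -\int_0^\infty \mathbb{E}\!\left[\mathbf{1}_{A'}(e^{-t}x + \sqrt{1-e^{-2t}}\,Z) - \mathbb{P}(Z \in A')\right]\mathrm{d}t$. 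Then the problem reduces to estimating $\mathbb{E}[\Delta f(W) - W \cdot \nabla f(W)]$.

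Next I would carry out the standard leave-one-out exchange: write $W = W^{(i)} + W_i$ with $W^{(i)}$ independent of $W_i$, and apply Taylor's theorem to expand $W_i \cdot \nabla f(W^{(i)} + W_i)$ around $W^{(i)}$ up to second order. Using $\mathbb{E}[W_i] = 0$, $\mathbb{E}[W_i W_i^\top] = \Sigma_i$, and $\sum_i \Sigma_i = I_d$, the first two Taylor terms cancel the contribution $\mathbb{E}[\Delta f(W)]$ in expectation, and the Taylor remainder is bounded by a constant times $\|\nabla^3 f\|_\infty \cdot \mathbb{E}|W_i|^3$. Summing over $i$ yields the bound $C_{BE} \sum_i \mathbb{E}|W_i|^3$ for a constant $C_{BE}$ depending only on how the third derivatives of $f$ are controlled.

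The main obstacle is exactly this control on $\nabla^3 f$: for a smooth test function, Gaussian smoothing of $\mathbf{1}_{A'}$ via the OU semigroup loses a factor $(1-e^{-2t})^{-3/2}$ per derivative, and integrating against $\mathrm{d}t$ near $0$ diverges. The resolution, which goes back to work of Bhattacharya--Rao, Götze, and in sharp form to Raič, is to exploit that $A'$ is convex: the Gaussian surface measure of the $\varepsilon$-boundary $\partial_\varepsilon A'$ satisfies $\mathbb{P}(Z \in \partial_\varepsilon A') \lesssim d^{1/4} \varepsilon$ uniformly in $A'$ (a Nazarov-type estimate), and this absorbs one factor of $(1-e^{-2t})^{-1/2}$ and makes the time integral converge after integration by parts in $t$. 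Tracking the resulting dimensional factor gives the stated constant of the form $C_1 d^{1/4} + C_2$; Raič optimized this to $42 d^{1/4} + 16$.

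I would organize the write-up in three steps: (i) set up the Stein equation and record the dimension-sensitive derivative bounds (cited from the convex-geometric lemma above), (ii) do the Taylor-exchange computation to express the error as a sum of third-moment terms weighted by $\|\nabla^3 f\|$, (iii) combine to extract the constant. Since the statement is quoted from \cite{Rai}, the cleanest presentation is simply to cite Raič's theorem; a self-contained proof using the scheme above would recover the same rate but not Raič's sharp numerical constants without a careful optimization of the smoothing argument.
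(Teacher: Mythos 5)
Your proposal is correct and, in the end, takes the same route as the paper: the paper simply cites Raič's theorem (stated there with $\Var(\sum_i \xi_i) = I_d$) and observes that the general $\sigma^2 I_d$ form follows by the same renormalization you describe at the outset. Your intermediate sketch of the Stein-equation/OU-semigroup argument and the Nazarov-type Gaussian surface measure bound is a fair account of what goes into Raič's proof, but the paper does not reproduce it, and your concluding recommendation to just cite Raič is exactly what the paper does.
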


In \cite{Rai}, the theorem is expressed slightly differently, with $\Var(\sum_{i \in I} \xi_i)=I_d$. Our formulation, which allows $\Var(\sum_{i \in I} \xi_i)=\sigma^2 I_d$ for any $\sigma>0$, can be obtained by a simple renormalization.

We would like to put a lower bound on $\P_x(|X_t-x|>r)$. It is not enough to use Lemma \ref{EHI:l:m2isvariance} and Markov's or Chebyshev's inequality, since that would only give us an upper bound. Instead, we use Berry-Esseen.
In the following lemma, we condition on the values of $|\Delta X(s)|$ for all $0<s \leq t$, and apply Theorem \ref{EHI:t:BerryEsseen} to the conditional distribution of $X_t = X_0+\sum_{0<s \leq t} \Delta X(s)$. The result is that $\P_x(|X_t-x|>r)$ can be bounded below by the probability of $[X]_t$ being sufficiently large.

\begin{lemma} \label{EHI:l:LeaveBallConstantHelper}
    For all $\eps \in (0, 1)$, there exists a universal constant $C_{\ref{EHI:l:LeaveBallConstantHelper}}(\eps)=C_{\ref{EHI:l:LeaveBallConstantHelper}}(\eps, d)>0$ such that if $X \in \mathcal{I}(d, r_0)$, then for all $r>0$ and $x \in \R^d$,
    \begin{equation*}
        \P_x \left( |X_t-x| > r \right) \geq (1-\eps) \P \left( [X]_t \geq C_{\ref{EHI:l:LeaveBallConstantHelper}}(\eps) (r \vee r_0)^2\right).
    \end{equation*}
\end{lemma}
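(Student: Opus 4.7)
The plan is to condition on the magnitudes of the jumps of $X$ up to time $t$ and then apply the multivariate Berry–Esseen bound (Theorem \ref{EHI:t:BerryEsseen}) to the conditional law of $X_t - x$. Let $\mathcal{G}_t := \sigma(\{|\Delta X(s)| : 0 < s \leq t\})$, as in the proof of Lemma \ref{EHI:l:m2isvariance}. Because $X$ is isotropic unimodal, given $\mathcal{G}_t$ the nonzero displacements $\{\Delta X(s) : 0 < s \leq t\}$ are conditionally independent, with each $\Delta X(s)$ uniformly distributed on the sphere of radius $|\Delta X(s)|$ in $\R^d$. A uniform point on such a sphere has mean $0$, covariance $\frac{|\Delta X(s)|^2}{d} I_d$ (by symmetry and a trace computation), and third absolute moment $|\Delta X(s)|^3$. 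Hence the conditional covariance of $X_t - x = \sum_{0 < s \leq t} \Delta X(s)$ is $\sigma^2 I_d$ with $\sigma^2 := [X]_t/d$, a $\mathcal{G}_t$-measurable quantity.

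Next, I would apply Theorem \ref{EHI:t:BerryEsseen} to the conditional measure with convex set $A = B(0, r)$ to obtain
\begin{equation*}
    \left| \P_x(|X_t - x| \leq r \mid \mathcal{G}_t) - \P(\sigma |Z| \leq r) \right| \leq C_{BE} \cdot \frac{\sum_{0 < s \leq t} |\Delta X(s)|^3}{\sigma^3}.
\end{equation*}
Using $|\Delta X(s)| \leq r_0$, the numerator is at most $r_0 \sum_s |\Delta X(s)|^2 = r_0 [X]_t$, while $\sigma^3 = (d^{-1} [X]_t)^{3/2}$. The bound thus simplifies to $C_{BE}\, d^{3/2}\, r_0/\sqrt{[X]_t}$. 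If the jump kernel is such that $X$ takes infinitely many jumps in $(0, t]$, I would first restrict to jumps of magnitude at least $1/n$ (which gives a compound Poisson process with finitely many terms, to which Berry--Esseen applies directly), and then pass to the limit $n \to \infty$; both the resulting convex-set probabilities and the third-moment sum pass through by dominated/monotone convergence, since $\sum_s |\Delta X(s)|^3 \leq r_0 [X]_t < \infty$ almost surely.

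Finally, on the event $\{[X]_t \geq C(r \vee r_0)^2\}$ the Berry--Esseen error is at most $C_{BE} d^{3/2}/\sqrt{C}$, and $\sigma \geq \sqrt{C/d}\,(r \vee r_0)$ gives $r/\sigma \leq \sqrt{d/C}$, so $\P(\sigma|Z| > r) \geq \P(|Z| > \sqrt{d/C})$. Choosing $C = C_{\ref{EHI:l:LeaveBallConstantHelper}}(\eps)$ large enough (depending only on $\eps$ and $d$) so that both $C_{BE} d^{3/2}/\sqrt{C} \leq \eps/2$ and $\P(|Z| > \sqrt{d/C}) \geq 1 - \eps/2$, I get
\begin{equation*}
    \P_x(|X_t - x| > r \mid \mathcal{G}_t) \geq 1 - \eps \qquad \mbox{a.s.\ on } \{[X]_t \geq C(r \vee r_0)^2\}.
\end{equation*}
Since this event is $\mathcal{G}_t$-measurable, taking expectations yields the claim.

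The principal obstacle I anticipate is bookkeeping around the truncation limit in the infinite-jump case—one must verify that the Gaussian comparison probability, the conditional small-ball probability, and the third-moment ratio all behave continuously in the truncation parameter—together with a careful appeal to isotropy/unimodality to guarantee the spherical-uniform conditional structure of each $\Delta X(s)$ given $\mathcal{G}_t$. The covariance computation, the $|\Delta X(s)|^3 \leq r_0 |\Delta X(s)|^2$ bound, and the selection of $C$ are elementary once this groundwork is in place.
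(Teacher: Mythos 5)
Your proof is correct and follows essentially the same strategy as the paper: condition on $\mathcal{G}_t$, identify the conditional covariance $d^{-1}[X]_t I_d$, apply the multivariate Berry--Esseen bound with the third-moment estimate $\sum_s |\Delta X(s)|^3 \leq r_0[X]_t$, and choose $C_{\ref{EHI:l:LeaveBallConstantHelper}}(\eps)$ large enough to control both the Gaussian tail term and the Berry--Esseen error by $\eps/2$ on the event $\{[X]_t \geq C(r\vee r_0)^2\}$. Your truncation-and-limit remark for the infinite-jump case is a sensible extra precaution not spelled out in the paper, but it does not change the substance of the argument.
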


\begin{proof}
    Let $(\mathcal{G}_t)_{t\geq0}$ be the filtration of $\sigma$-fields that takes into account only the magnitude of the jumps $X$ takes at each time:
    \begin{equation*}
        \mathcal{G}_t := \sigma\left(\left\{ |\Delta X(s)| : 0<s\leq t \right\} \right).
    \end{equation*}
    The process $[X] = ([X]_t)_{t \geq 0}$ is adapted to $(\mathcal{G}_t)_{t \geq 0}$. Let us condition on $\mathcal{G}_t$ for some fixed $t>0$. Suppose the magnitudes of the jumps, and when they occur, are determined. Given this information, let us analyze the conditional distribution of $X_t$.
    For simplicity, let us assume without loss of generality the process begins at $X_0=0$, so $X_t = \sum_{0<s \leq t} \Delta X(s)$.
    By summing over all $s$, the conditional variance of $X_t$ is
    \begin{equation*}
        \Var(X_t | \mathcal{G}_t) = \sum_{0<s \leq t} d^{-1} |\Delta X(s)|^2 I_d = d^{-1} [X]_t I_d.
    \end{equation*}
    (The variance of a uniform point on the sphere $\{x:|x|=r\}$ is $d^{-1} r^2 I_d$.) Since each value of $|\Delta X(s)|^3$ is $\mathcal{G}_t$-measurable, by Theorem \ref{EHI:t:BerryEsseen}, for all convex sets $A \subseteq \mathbb{R}^d$, we have
    \begin{equation*}
        \left| \mathbb{P}_0(X_t \in A | \mathcal{G}_t) - \mathbb{P} \left( \sqrt{d^{-1} [X]_t} Z \in A \right) \right| \leq C_{BE} \frac{\sum_{0<s \leq t} |\Delta X(s)|^3}{(d^{-1}[X]_t)^{3/2}} = C_1 \frac{\sum_{0<s \leq t} |\Delta X(s)|^3}{([X]_t)^{3/2}}
    \end{equation*}
    where $C_1 := d^{3/2} C_{BE}$.
    The same result holds for sets whose complements are convex, by considering the complements of the probabilities.
    Therefore, we can take $A$ to be $B(0, r)^c$, and obtain
    \begin{equation} \label{EHI:BeforePhi}
        \mathbb{P}_0 \left(|X_t|>r \Big| \mathcal{G}_t \right) \geq \mathbb{P}\left( |Z| > r\sqrt{\frac{d}{[X]_t}} \right) - C_1 \frac{\sum_{0<s \leq t} |\Delta X(s)|^3}{([X]_t)^{3/2}}.
    \end{equation}
    Let $F_{|Z|}:[0, \infty) \to [0, 1)$ be the cumulative distribution function $F_{|Z|}(z)=\mathbb{P}(|Z| \leq z)$. (Recall that $Z$ is a standard $d$-dimensional Gaussian, so $F_{|Z|}$ depends on $d$.)
    Then \eqref{EHI:BeforePhi} can be written as
    \begin{equation} \label{EHI:AfterPhi}
        \mathbb{P}_0 \left(|X_t|>r \Big| \mathcal{G}_t \right) \geq 1- F_{|Z|} \left( r\sqrt{\frac{d}{[X]_t}}\right) - C_1 \frac{\sum_{0<s \leq t} |\Delta X(s)|^3}{([X]_t)^{3/2}}.
    \end{equation}
    Note that $\sum_{0<s \leq t} |\Delta X(s)|^3 \leq r_0 [X]_t$, since $|\Delta X(s)|^3 \leq r_0 |\Delta X(s)|^2$ for all $s$. Also, $\mathbb{P}_0 \left(|X_t|>r \Big| \mathcal{G}_t \right) \geq 0$ almost surely. Thus, \eqref{EHI:AfterPhi} implies
    \begin{equation*}
        \mathbb{P}_0 \left(|X_t|>r \Big| \mathcal{G}_t \right) \geq \left( 1-F_{|Z|} \left( r\sqrt{\frac{d}{[X]_t}}\right) - C_1 \frac{r_0}{\sqrt{[X]_t}} \right)_+.
    \end{equation*}
    Taking the expectation,
    \begin{equation} \label{EHI:CondOnGt}
        \mathbb{P}_0 \left(|X_t|>r \right) \geq \mathbb{E}_0 \left[ \left( 1-F_{|Z|} \left( r\sqrt{\frac{d}{[X]_t}}\right) - C_1 \frac{r_0}{\sqrt{[X]_t}} \right)_+ \right].
    \end{equation}

    We will show that when $[X]_t / (r \vee r_0)^2$ is sufficiently large, $F_{|Z|}(r\sqrt{d/[X]_t})$ and $C_1 r_0/\sqrt{[X]_t}$ are each at most $\eps/2$, so the quantity whose expectation is being taken in \eqref{EHI:CondOnGt} is at least $1-\eps$.
    Let $c_2(\eps)$ be the value such that $F_{|Z|}(c_2(\eps))=\eps/2$, and note that
    \begin{equation*}
        \left\{ F_{|Z|} \left( r \sqrt{\frac{d}{[X]_t}} \right) \leq \frac{\eps}{2} \right\} \quad=\quad \left\{ r \sqrt{\frac{d}{[X]_t}} \leq c_2(\eps) \right\} \quad=\quad \left\{ [X]_t \geq d (c_2(\eps))^{-2} r^2 \right\}
    \end{equation*}
    and
    \begin{equation*}
        \quad \left\{ C_1 \frac{r_0}{\sqrt{[X]_t}} \leq \frac{\eps}{2}\right\} \quad=\quad \left\{ [X]_t \geq 4 C_1^2 \eps^{-2} r_0^2 \right\}.
    \end{equation*}
    Let $C_{\ref{EHI:l:LeaveBallConstantHelper}}(\eps) := \max\{ d (c_2(\eps))^{-2}, 4 C_1^2 \eps^{-2} \}$. Then whenever $[X]_t \geq C_{\ref{EHI:l:LeaveBallConstantHelper}}(\eps)(r \vee r_0)^2$, we have both $F_{|Z|}(r \sqrt{d/[X]_t}) \leq \eps/2$ and $C_1 \frac{r_0}{\sqrt{[X]_t}} \leq \eps/2$.
    By this fact and \eqref{EHI:CondOnGt},
    \begin{equation*}
        \P_0 (|X_t|>r) \geq (1-\eps ) \P \left( [X]_t \geq C_{\ref{EHI:l:LeaveBallConstantHelper}}(\eps) (r \vee r_0)^2 \right).
    \end{equation*}
\end{proof}

Next, we use Lemma \ref{EHI:l:ChernoffExpectation} to obtain a lower bound on the probability that $[X]_t \geq C_{\ref{EHI:l:LeaveBallConstantHelper}}(\eps) (r \vee r_0)^2$.

\begin{lemma} \label{EHI:l:LeaveBallConstant}
    For all $\eps \in (0, 1)$, there exists a universal constant $C_{\ref{EHI:l:LeaveBallConstant}}(\eps)=C_{\ref{EHI:l:LeaveBallConstant}}(\eps, d)>0$ such that if $X \in \mathcal{I}(d, r_0)$, then for all $r>0$ and $x \in \R^d$,
    \begin{equation*}
        \P_x \left( \tau_{B(x, r)} > C_{\ref{EHI:l:LeaveBallConstant}}(\eps) \frac{(r \vee r_0)^2}{m_2(X)} \right) \leq \eps. 
    \end{equation*}
\end{lemma}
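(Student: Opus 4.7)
I would convert the exit-time bound into a concentration estimate on the quadratic variation $[X]_s$ via two already-proven lemmas, and then close the gap with a Chebyshev argument using the Lévy measure of $[X]$. Let $s := C(r \vee r_0)^2/m_2(X)$, where the value $C = C_{\ref{EHI:l:LeaveBallConstant}}(\eps)$ is to be pinned down at the end. Applying Lemma \ref{EHI:l:FactorOf2ForExits} with $T = s$ (its unimodality hypothesis is automatic at a deterministic time by the strong Markov property and the isotropy of $X$) gives $\P_x(\tau_{B(x,r)} > s) \leq 1 - \P_x(|X_s - x| > r)$, and then Lemma \ref{EHI:l:LeaveBallConstantHelper} applied with parameter $\eps/2$ gives
\[
\P_x(\tau_{B(x,r)} > s) \leq 1 - (1-\eps/2)\, \P\bigl([X]_s \geq C_{\ref{EHI:l:LeaveBallConstantHelper}}(\eps/2)(r \vee r_0)^2\bigr).
\]
Hence it suffices to show $\P\bigl([X]_s \geq C_{\ref{EHI:l:LeaveBallConstantHelper}}(\eps/2)(r \vee r_0)^2\bigr) \geq (1-\eps)/(1-\eps/2)$.

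For that concentration step I would use that $[X]$ is a non-negative pure-jump Lévy process whose Lévy measure is the pushforward of $j(|x|)\dee{x}$ under $x \mapsto |x|^2$. Therefore $\E[[X]_s] = s \cdot m_2(X)$, and because $j$ is supported in $[0, r_0]$,
\[
\Var([X]_s) \;=\; s \int |x|^4 j(|x|)\dee{x} \;\leq\; s\, r_0^2\, m_2(X).
\]
Chebyshev then yields $\P\bigl([X]_s < m_2(X)s - k r_0 \sqrt{m_2(X)s}\bigr) \leq 1/k^2$ for any $k>0$. Substituting $s = C(r \vee r_0)^2/m_2(X)$ and using $r_0 \leq r \vee r_0$ turns this into
\[
\P\bigl([X]_s < (C - k\sqrt{C})(r \vee r_0)^2\bigr) \;\leq\; 1/k^2.
\]

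To finish, I would pick $k = k(\eps)$ so that $1/k^2 \leq (\eps/2)/(1-\eps/2)$, then pick $C = C_{\ref{EHI:l:LeaveBallConstant}}(\eps)$ large enough that $C - k\sqrt{C} \geq C_{\ref{EHI:l:LeaveBallConstantHelper}}(\eps/2)$. Since $C - k\sqrt{C} \to \infty$ as $C \to \infty$, such a $C$ exists and depends only on $\eps$ and $d$. I do not expect a genuine obstacle in this argument — it is largely a bookkeeping chase stringing together Lemmas \ref{EHI:l:FactorOf2ForExits}, \ref{EHI:l:LeaveBallConstantHelper}, and a variance bound — but the one mildly delicate point is that Chebyshev gives a sharp-enough bound only because the jumps of $[X]$ are themselves bounded (by $r_0^2$), which is precisely what produces the $r_0^2$ factor in $\Var([X]_s)$ and makes the scaling work out.
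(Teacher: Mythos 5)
Your proposal is correct, and it shares the first reduction step with the paper (pass to the quadratic variation via Lemma \ref{EHI:l:LeaveBallConstantHelper}), but the concentration step you use afterwards is genuinely different. The paper bounds $\P\bigl([X]_s < C_{\ref{EHI:l:LeaveBallConstantHelper}}(r\vee r_0)^2\bigr)$ by a Chernoff argument: Markov's inequality applied to $e^{-\mu[X]_s}$, with the explicit Laplace transform supplied by Lemma \ref{EHI:l:ChernoffExpectation}, the choice $\mu=r_0^{-2}$ to optimize the trade-off, and a direct choice of $t$ so that the resulting exponential is at most $\delta$. You instead work with the second moment: since $X\in\mathcal{I}(d,r_0)$ the Lévy measure of $[X]$ has finite second moment (dominated by $r_0^2 m_2(X)$), so $\Var([X]_s) \leq s\,r_0^2\,m_2(X)$ and a plain Chebyshev estimate closes the gap. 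Both routes exploit boundedness of the jumps of $[X]$ in essentially the same way — you use it to control $\int|x|^4 j(|x|)\,dx$ by $r_0^2 m_2(X)$, the paper uses it to keep the integrand $1-e^{-\mu|x|^2}$ comparable to $\mu|x|^2$ uniformly on the support — so both lose the same $(r\vee r_0)^2$ factor and both give constants depending only on $\eps$ and $d$. Your version is arguably more elementary, as it does not invoke Lemma \ref{EHI:l:ChernoffExpectation} at all; on the other hand the paper's approach would be the one to reach for if one later wanted a sharper (exponentially small in $C$) tail in the same lemma. Two small remarks on the write-up: you don't actually need Lemma \ref{EHI:l:FactorOf2ForExits} where you cite it, since the inequality $\P_x(\tau_{B(x,r)}>s)\leq 1-\P_x(|X_s-x|>r)$ is just the containment $\{|X_s-x|>r\}\subseteq\{\tau_{B(x,r)}\leq s\}$ (the lemma is the reverse, factor-of-two inequality); and when you assert $\Var([X]_s)=s\int|x|^4 j(|x|)\,dx$ you are implicitly identifying the Lévy measure of $[X]$ as the pushforward of $j(|x|)\,dx$ under $x\mapsto|x|^2$ — this is consistent with Lemma \ref{EHI:l:ChernoffExpectation} but deserves a one-line justification in a final write-up.
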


\begin{proof}
    Let $\eps \in (0, 1)$, $X \in \mathcal{I}(d, r_0)$, $r>0$, and $x \in \R^d$. Fix $t>0$, $\mu>0$, and $\delta>0$, all of which will be specified later. By Lemma \ref{EHI:l:LeaveBallConstantHelper},
    \begin{equation}\label{EHI:e:LeaveBallConstant1}
        \P_x(|X_t-x| > r) \geq (1-\delta) \P\left([X]_t \geq C_{\ref{EHI:l:LeaveBallConstantHelper}}(\delta)(r \vee r_0)^2 \right).
    \end{equation}
    By Markov's inequality and Lemma \ref{EHI:l:ChernoffExpectation},
    \begin{align}\label{EHI:e:LeaveBallConstant2}
        \P\left([X]_t < C_{\ref{EHI:l:LeaveBallConstantHelper}}(r \vee r_0)^2 \right) &= \P\left(e^{-\lambda [X]_t} > \exp(-C_{\ref{EHI:l:LeaveBallConstantHelper}}(\delta) \mu (r \vee r_0)^2) \right) \notag\\
        &\leq \exp(C_{\ref{EHI:l:LeaveBallConstantHelper}}(\delta) \mu (r \vee r_0)^2) \E \left[ e^{-\mu [X]_t} \right] \notag\\
        &= \exp( C_{\ref{EHI:l:LeaveBallConstantHelper}}(\delta) \mu (r \vee r_0)^2 - t \int_{B(0, r_0)} (1-e^{-\mu|x|^2}) j(|x|) \dee{x} ).
    \end{align}
    The integral in \eqref{EHI:e:LeaveBallConstant2} is over $B(0, r_0)$ instead of $\R^d$ because $X \in \mathcal{I}(d, r_0)$. We would like to replace the integrand $1-e^{-\mu|x|^2}$ with some multiple of $\mu|x|^2$, using the approximation $1-e^{-u} \approx u$.
    To make this precise, note that the function $u \mapsto \frac{1-e^{-u}}{u}$ is decreasing, so $1-e^{-\mu|x|^2} \geq \frac{1-e^{-\mu r_0^2}}{\mu r_0^2} \mu |x|^2$ for all $x \in B(0, r_0)$. Thus,
    \begin{equation}\label{EHI:e:LeaveBallConstant3}
        \int_{B(0, r_0)} (1-e^{-\mu|x|^2}) j(|x|) \dee{x} \geq \frac{1-e^{-\mu r_0^2}}{\mu r_0^2} \int_{B(0, r_0)} \mu|x|^2 j(|x|) \dee{x} = \frac{1-e^{-\mu r_0^2}}{\mu r_0^2} \mu m_2(X).
    \end{equation}
    Combining \eqref{EHI:e:LeaveBallConstant2} and \eqref{EHI:e:LeaveBallConstant3},
    \begin{equation}\label{EHI:e:LeaveBallConstant4}
        \P\left([X]_t < C_{\ref{EHI:l:LeaveBallConstantHelper}}(r \vee r_0)^2 \right) \leq \exp( C_{\ref{EHI:l:LeaveBallConstantHelper}}(\delta) \mu (r \vee r_0)^2 - \frac{1-e^{-\mu r_0^2}}{\mu r_0^2} \mu m_2(X) t ).
    \end{equation}
    Recall that $\mu$, $t$, and $\delta$ have been unspecified up to now. We now specify them one-by-one.
    Let $\mu:=r_0^{-2}$. Then \eqref{EHI:e:LeaveBallConstant4} becomes
    \begin{equation}\label{EHI:e:LeaveBallConstant5}
        \P\left([X]_t < C_{\ref{EHI:l:LeaveBallConstantHelper}}(r \vee r_0)^2 \right) \leq \exp( C_{\ref{EHI:l:LeaveBallConstantHelper}}(\delta) \cdot \left( \frac{r \vee r_0}{r_0} \right)^2 - (1-e^{-1}) \frac{m_2(X)}{r_0^2} t ).
    \end{equation}
    Let
    \begin{equation*}
        t:= \frac{C_{\ref{EHI:l:LeaveBallConstantHelper}}(\delta)+\log(\delta^{-1})}{1-e^{-1}} \cdot \frac{(r \vee r_0)^2}{m_2(X)}.
    \end{equation*}
    Then \eqref{EHI:e:LeaveBallConstant5} becomes
    \begin{align}\label{EHI:e:LeaveBallConstant6}
        \P\left([X]_t < C_{\ref{EHI:l:LeaveBallConstantHelper}}(r \vee r_0)^2 \right) \leq \exp(-\log(\delta^{-1}) \cdot \left( \frac{r \vee r_0}{r_0} \right)^2) \leq \delta.
    \end{align}
    By \eqref{EHI:e:LeaveBallConstant1} and \eqref{EHI:e:LeaveBallConstant6},
    \begin{equation*}
        \P_x(|X_t-x| > r) \geq (1-\delta)^2.
    \end{equation*}
    Finally, let $\delta:=\eps/2$, so that $\P_x(|X_t-x| > r) \geq (1-\eps/2)^2 \geq 1-\eps$. Note that if $|X_t-x|>r$, then $\tau_{B(x, r)} \leq t$. In conclusion,
    \begin{equation*}
        \P_x \left(\tau_{B(x, r)} \leq \frac{C_{\ref{EHI:l:LeaveBallConstantHelper}}(\eps/2)+\log(\frac{1}{2\eps})}{1-e^{-1}} \cdot \frac{(r \vee r_0)^2}{m_2(X)} \right) = \P_x \left(\tau_{B(x, r)} \leq t \right) \geq \P_x(|X_t-x|>r) \geq 1-\eps.
    \end{equation*}
    This completes the proof, with
    \begin{equation*}
        C_{\ref{EHI:l:LeaveBallConstant}}(\eps) := \frac{C_{\ref{EHI:l:LeaveBallConstantHelper}}(\eps/2)+\log(\frac{1}{2\eps})}{1-e^{-1}}.
    \end{equation*}
\end{proof}

By a chaining argument, we can improve Lemma \ref{EHI:l:LeaveBallConstant} to get that $\P_x(\tau_{B(x, r)}>t)$ decays exponentially in $t$, with the rate of decay proportional to $m_2(X)/(r \vee r_0)^2$.

\begin{lemma}\label{EHI:l:LeaveBallExponential}
    There exists a universal constant $c_{\ref{EHI:l:LeaveBallExponential}}=c_{\ref{EHI:l:LeaveBallExponential}}(d)>0$ such that if $X \in \mathcal{I}(d, r_0)$, then for all $r>0$, $x \in \R^d$, and $t>0$,
    \begin{equation*}
        \P_0(\tau_{B(x, r)}>t) \leq \exp(-\floor{c_{\ref{EHI:l:LeaveBallExponential}} \frac{m_2(X)}{(r \vee r_0)^2} t}).
    \end{equation*}
\end{lemma}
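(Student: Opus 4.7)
The plan is a standard chaining argument, iterating Lemma~\ref{EHI:l:LeaveBallConstant} via the Strong Markov property. First, fix $\eps = e^{-1}$ and set $C_0 := C_{\ref{EHI:l:LeaveBallConstant}}(e^{-1})$. Applying Lemma~\ref{EHI:l:LeaveBallConstant} with radius $2r$ in place of $r$, and using that $(2r \vee r_0)^2 \leq 4(r \vee r_0)^2$, yields
\begin{equation*}
    \P_y(\tau_{B(y, 2r)} > T_0) \leq e^{-1} \qquad \mbox{for every } y \in \R^d,
\end{equation*}
where $T_0 := 4 C_0 (r \vee r_0)^2 / m_2(X)$.

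The one geometric observation I need is: whenever $y \in B(x, r)$, the triangle inequality gives $B(x, r) \subseteq B(y, 2r)$, hence $\tau_{B(x, r)} \leq \tau_{B(y, 2r)}$ pathwise. Therefore $\P_y(\tau_{B(x, r)} > T_0) \leq e^{-1}$ for every $y \in B(x, r)$. Now I iterate: on the event $\{\tau_{B(x, r)} > n T_0\}$ we have $X_{n T_0} \in B(x, r)$, so the Strong Markov property gives
\begin{equation*}
    \P_0(\tau_{B(x, r)} > (n+1) T_0 \mid \mathcal{F}_{n T_0}) \leq e^{-1} \qquad \mbox{on } \{\tau_{B(x, r)} > n T_0 \}.
\end{equation*}
By induction, $\P_0(\tau_{B(x, r)} > n T_0) \leq e^{-n}$ for every $n \geq 0$ (with the $n=0$ base case being the trivial bound $\P_0(\tau_{B(x,r)} > 0) \leq 1$, and the statement being trivial if $0 \notin B(x,r)$, in which case $\tau_{B(x,r)}=0$ under $\P_0$).

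For arbitrary $t > 0$, set $n := \floor{t / T_0}$, so
\begin{equation*}
    \P_0(\tau_{B(x, r)} > t) \leq \P_0(\tau_{B(x, r)} > n T_0) \leq e^{-n} = \exp(-\floor{t / T_0}).
\end{equation*}
Since $1/T_0 = m_2(X)/(4 C_0 (r \vee r_0)^2)$, this is exactly the claimed bound with the universal constant $c_{\ref{EHI:l:LeaveBallExponential}} := 1/(4 C_0)$. There is no substantive obstacle here: all the analytic work has already been done in Lemma~\ref{EHI:l:LeaveBallConstant}, and the present proof is essentially the inclusion $B(x, r) \subseteq B(y, 2r)$ for $y \in B(x, r)$, combined with a routine Strong Markov iteration.
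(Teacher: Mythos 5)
Your proof is correct and follows essentially the same chaining strategy as the paper: iterate Lemma~\ref{EHI:l:LeaveBallConstant} over time steps of length $T_0 \asymp (r\vee r_0)^2/m_2(X)$, using the Markov property at each step and the inclusion $B(x,r)\subseteq B(y,2r)$ for $y\in B(x,r)$. If anything your write-up is slightly more careful: the paper's chain passes through the identity $\P_x(\text{all increments } \leq 2r) = \left(\P_0(\tau_{B(0,r)}>t_0)\right)^n$, which is an imprecise shorthand (the right-hand side should involve $|X_{t_0}|\leq 2r$, not $\tau_{B(0,r)}>t_0$), whereas you condition on $\mathcal{F}_{nT_0}$ and apply the lemma to $\tau_{B(X_{nT_0},2r)}$ directly, which makes every inequality exact.
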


\begin{proof}
    Let $t_0 := C_{\ref{EHI:l:LeaveBallConstant}}(e^{-1})(2r \vee r_0)/m_2(X)$. Consider the times $0, t_0, 2t_0, \dots, nt_0$, where $n := \floor{t/t_0}$. In order for the process to remain in the ball $B(x, r)$ up until time $t$, we must have $X_{jt_0} \in B(x, r)$ for all $0 \leq j \leq n$. This means that $\left|X_{(j+1)t_0} - X_{j t_0} \right| \leq 2r$ for all $0 \leq j<n$. Thus, by the Strong Markov property and Lemma \ref{EHI:l:LeaveBallConstant},
    \begin{align*}
        \P_x(\tau_{B(x, r)} > t) & \leq \P_x \left( \mbox{$\left|X_{(j+1)t_0} - X_{j t_0} \right| \leq 2r$ for all $0 \leq j<n$} \right) = \left( \P_0(\tau_{B(0, r)} > t_0) \right)^n \\
        &\leq e^{-n} =e^{-\floor{t/t_0}} = \exp(-\floor{\frac{1}{C_{\ref{EHI:l:LeaveBallConstant}}(e^{-1})} \cdot \frac{m_x(X)}{(2r \vee r_0)^2} t}) \\
        &\leq \exp(-\floor{\frac{1}{4C_{\ref{EHI:l:LeaveBallConstant}}(e^{-1})} \cdot \frac{m_x(X)}{(r \vee r_0)^2} t}).
    \end{align*}
\end{proof}

\section{Proof of main results and examples}\label{EHI:s:positiveResults}

In this section, we prove Theorem \ref{EHI:t:main} (our main result) and the corollaries and examples that follow from it in the introduction.

\subsection{Proof of Theorem \ref{EHI:t:main}}

\begin{lemma} \label{EHI:l:G0upperbound}
    Let $X$ be an isotropic unimodal L\'{e}vy jump process on $\R^d$. For $r, r_0 > 0$ and all $x \in B(0, r)$, we have
    \begin{equation*}
        G^{\inner{r_0}, 0}_{B(0, r)}(0, x) \lesssim \frac{(r \vee r_0)^2 |x|^{-d}}{m_2 \inner{r_0}}.
    \end{equation*}
    The constant implicit in $\lesssim$ depend only on the dimension $d$. 
\end{lemma}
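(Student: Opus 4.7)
The plan is to bound the pointwise value of $G^{\inner{r_0}, 0}_{B(0,r)}(0, x)$ by its radial average, and then to estimate the total mass of this kernel against the exit time from $B(0,r)$ for the small process $X^{\inner{r_0}}$.

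First I would note that $G^{\inner{r_0}, 0}_{B(0,r)}(0, \cdot)$ is isotropic unimodal on $B(0,r)$. This is because $G^{\inner{r_0}, 0}_{B(0,r)}(0, y) = \int_0^\infty e^{-\lambda\inner{r_0}t} p^{\inner{r_0}, B(0,r)}_t(0, y)\,\dee{t}$, and each heat kernel $p^{\inner{r_0}, B(0,r)}_t(0, \cdot)$ is isotropic unimodal (by the paper's earlier observation, since $X^{\inner{r_0}}$ is isotropic unimodal and $B(0,r)$ is centered at $0$), and isotropic unimodality is preserved by the positive Laplace transform. Thus, for any $x \in B(0,r)$,
\begin{equation*}
    G^{\inner{r_0}, 0}_{B(0,r)}(0, x) \;\leq\; \frac{1}{|B(0, |x|)|} \int_{B(0, |x|)} G^{\inner{r_0}, 0}_{B(0,r)}(0, y)\,\dee{y} \;\lesssim\; |x|^{-d} \int_{\R^d} G^{\inner{r_0}, 0}_{B(0,r)}(0, y)\,\dee{y}.
\end{equation*}

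Next I would compute the total integral using Lemma \ref{EHI:l:GkPurpose} (the $k=0$ case): taking $A = \R^d$ gives
\begin{equation*}
    \int_{\R^d} G^{\inner{r_0}, 0}_{B(0,r)}(0, y)\,\dee{y} \;=\; \int_0^\infty e^{-\lambda\inner{r_0}t} \P_0(\tau^{\inner{r_0}}_{B(0,r)} > t)\,\dee{t} \;\leq\; \E_0 \tau^{\inner{r_0}}_{B(0,r)}.
\end{equation*}
Since $X^{\inner{r_0}}$ belongs to $\mathcal{I}(d, r_0)$ and has $m_2(X^{\inner{r_0}}) = m_2\inner{r_0}$, Lemma \ref{EHI:l:LeaveBallExponential} gives $\P_0(\tau^{\inner{r_0}}_{B(0,r)} > t) \leq \exp(-\lfloor c_{\ref{EHI:l:LeaveBallExponential}} m_2\inner{r_0} t / (r \vee r_0)^2 \rfloor)$. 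Integrating this bound in $t$ yields $\E_0 \tau^{\inner{r_0}}_{B(0,r)} \lesssim (r \vee r_0)^2 / m_2\inner{r_0}$, with an implicit constant depending only on $d$.

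Combining the two bounds gives
\begin{equation*}
    G^{\inner{r_0}, 0}_{B(0,r)}(0, x) \;\lesssim\; \frac{(r \vee r_0)^2 \, |x|^{-d}}{m_2\inner{r_0}},
\end{equation*}
as required. I do not anticipate any real obstacle: the argument reduces to combining the isotropic unimodal averaging trick with the already-established exponential tail bound on exit times, and both of these ingredients are cleanly available from the preceding sections.
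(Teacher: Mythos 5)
Your proposal is correct and follows essentially the same path as the paper's proof: bound the pointwise value of $G^{\inner{r_0}, 0}_{B(0,r)}(0,\cdot)$ by its average over $B(0,|x|)$ using radial monotonicity, identify the total mass via Lemma \ref{EHI:l:GkPurpose}, and control it with the exit-time tail bound from Lemma \ref{EHI:l:LeaveBallExponential}. The only cosmetic differences are that you pass through $\E_0\tau^{\inner{r_0}}_{B(0,r)}$ as an intermediate quantity (dropping $e^{-\lambda\inner{r_0}t}\le 1$) where the paper drops the event $\{T^{\inner{r_0}}>t\}$ directly, and that the integral should really be written over $B(0,r)$ rather than $\R^d$ since $G^{\inner{r_0},0}_{B(0,r)}$ is defined on $B(0,r)\times B(0,r)$.
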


\begin{proof}
    By Lemma \ref{EHI:l:GkPurpose}, the independence of $X^{\inner{r_0}}$ from $\hat{X}^{\inner{r_0}}$, and Lemma \ref{EHI:l:LeaveBallExponential} applied to $X^{\inner{r_0}}$,
    \begin{align} \label{EHI:e:G0upperbound1}
        \int_{B(0, r)} G^{\inner{r_0}, 0}_{B(0, r)}(0, y) \dee{y} &= \int_0^\infty \P_0 \left( \tau_{B(0, r)}>t , T^{\inner{r_0}}>t \right) \dee{t} \notag\\
        &= \int_0^\infty \P_0 \left( \tau^{\inner{r_0}}_{B(0, r)} > t , T^{\inner{r_0}}>t \right) \dee{t} \notag\\
        &\leq \int_0^\infty \P_0 \left( \tau^{\inner{r_0}}_{B(0, r)} > t \right) \dee{t} \notag\\
        &\leq \int_0^\infty \exp( -\floor{c_{\ref{EHI:l:LeaveBallExponential}} \frac{m_2\inner{r_0}}{(r \vee r_0)^2} t } ) \dee{t}
    \end{align}
    Let $t_0 := (r \vee r_0)^2 / (c_{\ref{EHI:l:LeaveBallExponential}} m_2\inner{r_0})$, so that \eqref{EHI:e:G0upperbound1} is equivalent to
    \begin{equation*}\label{EHI:e:G0upperbound2}
        \int_{B(0, r)} G^{\inner{r_0}, 0}_{B(0, r)}(0, y) \dee{y} \leq \int_0^\infty e^{-\floor{t/t_0}} \dee{t} = \sum_{k=0}^\infty t_0 e^{-k} = (1-e^{-1})^{-1} t_0.
    \end{equation*}
    Now fix $x \in B(0, r)$. Note that $G^{\inner{r_0}, 0}_{B(0, r)}(0, \cdot)$ is non-increasing in $|\cdot|$, so the value of $G^{\inner{r_0}, 0}_{B(0, r)}(0, x)$ is at most equal to the average of $G^{\inner{r_0}, 0}_{B(0, r)}(0, y)$ for $y \in B(0, |x|)$. Thus,
    \begin{align*}
        G^{\inner{r_0}, 0}_{B(0, r)}(0, x) &\leq \frac{\int_{B(0, |x|)} G^{\inner{r_0}, 0}_{B(0, r)}(0, y) \dee{y}}{|B(0, r)|} \leq \frac{\int_{B(0, r)} G^{\inner{r_0}, 0}_{B(0, r)}(0, y) \dee{y}}{|x|^d|B(0, 1)|} \\
        &\leq \frac{(1-e^{-1})^{-1}}{|B(0, 1)|} |x|^{-d} t_0 = \frac{(1-e^{-1})^{-1}}{c_{\ref{EHI:l:LeaveBallExponential}}|B(0, 1)|} \cdot \frac{(r \vee r_0)^2 |x|^{-d}}{m_2\inner{r_0}}.
    \end{align*}
\end{proof}

\begin{lemma} \label{EHI:l:G0lowerbound}
    Let $X$ be an isotropic unimodal L\'{e}vy jump process on $\R^d$.
    There exist universal constants $L_{\ref{EHI:l:G0lowerbound}} >1$ and $c_{\ref{EHI:l:G0lowerbound}}>0$ such that if $0<r_0 \leq r/L_{\ref{EHI:l:G0lowerbound}} \leq r$, then
    \begin{equation*}
        G^{\inner{r_0}, 0}_{B(0, r)}(0, x) \geq c_{\ref{EHI:l:G0lowerbound}} \frac{r^{2-d}}{m_2\inner{r_0}} \exp(-\frac{1}{10} \cdot \frac{r^2 \lambda\inner{r_0}}{m_2\inner{r_0}}) \qquad\mbox{for all $x \in B(0, r/L_{\ref{EHI:l:G0lowerbound}})$}.
    \end{equation*}
\end{lemma}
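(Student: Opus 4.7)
The plan is to write the Green's function as $G^{\inner{r_0}, 0}_{B(0, r)}(0,x) = \int_0^\infty e^{-\lambda\inner{r_0} t} p^{\inner{r_0}, B(0, r)}_t(0, x) \dee{t}$, fix the natural time scale $t_\ast := r^2/(10\, m_2\inner{r_0})$, and restrict the integral to $t\in [t_\ast/2, t_\ast]$. On this range the exponential factor stays above $\exp(-\lambda\inner{r_0} t_\ast) = \exp(-r^2\lambda\inner{r_0}/(10\, m_2\inner{r_0}))$, which is exactly the exponential appearing in the target bound, and integrating over an interval of length $t_\ast/2 \asymp r^2/m_2\inner{r_0}$ supplies the prefactor $r^2/m_2\inner{r_0}$. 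Dimensionally the only missing ingredient is a pointwise density lower bound of the form $p^{\inner{r_0}, B(0, r)}_t(0, x)\gtrsim r^{-d}$ for $x\in B(0,r/L)$ and $t\in[t_\ast/2,t_\ast]$.

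For this density bound I would use that $X^{\inner{r_0}}$ is isotropic unimodal, so $p^{\inner{r_0}, B(0, r)}_t(0,\cdot)$ is non-increasing in $|\cdot|$, and for $|x|\leq r/L$ it dominates its own average over the annulus $B(0,r)\setminus B(0,r/L)$. By Lemma \ref{EHI:l:GkPurpose},
\begin{equation*}
    \int_{B(0,r)\setminus B(0,r/L)} G^{\inner{r_0}, 0}_{B(0,r)}(0,y)\dee{y} = \int_0^\infty e^{-\lambda\inner{r_0} t}\, \P_0\!\left(X^{\inner{r_0}}_t\in B(0,r)\setminus B(0,r/L),\; \tau^{\inner{r_0}}_{B(0,r)}>t\right)\dee{t},
\end{equation*}
so after dividing by $|B(0,r)\setminus B(0,r/L)|\leq |B(0,1)|r^d$ it suffices to bound the integrand probability below by an absolute positive constant uniformly for $t\in[t_\ast/2,t_\ast]$. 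I would bound it as
\begin{equation*}
    \P_0\!\left(X^{\inner{r_0}}_t\in B(0,r)\setminus B(0,r/L),\;\tau^{\inner{r_0}}_{B(0,r)}>t\right) \geq \P_0\!\left(|X^{\inner{r_0}}_t|>r/L\right) - \P_0\!\left(\tau^{\inner{r_0}}_{B(0,r)}\leq t\right).
\end{equation*}
The subtrahend is immediate: Lemma \ref{EHI:l:FactorOf2ForExits}, Markov's inequality, and Lemma \ref{EHI:l:m2isvariance} give $\P_0(\tau^{\inner{r_0}}_{B(0,r)}\leq t)\leq 2m_2\inner{r_0} t/r^2\leq 1/5$ for $t\leq t_\ast$.

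The main obstacle is the lower bound $\P_0(|X^{\inner{r_0}}_t|>r/L)\geq 3/8$ for $t\geq t_\ast/2$. My plan is to invoke the Berry-Esseen based estimate of Lemma \ref{EHI:l:LeaveBallConstantHelper} with $\epsilon=1/4$ (valid since $r_0\leq r/L$), which reduces the task to showing that $[X^{\inner{r_0}}]_{t_\ast/2}\geq C_{\ref{EHI:l:LeaveBallConstantHelper}}(1/4)(r/L)^2$ with probability at least $1/2$; monotonicity of $[X^{\inner{r_0}}]_\cdot$ then promotes the estimate to every $t\geq t_\ast/2$. The mean is $\E[X^{\inner{r_0}}]_{t_\ast/2}=r^2/20$, and the compound Poisson variance formula, combined with the bound $|y|^4\leq r_0^2|y|^2$ on the support of $j^{\inner{r_0}}$, yields $\Var[X^{\inner{r_0}}]_{t_\ast/2}\leq r_0^2\, m_2\inner{r_0}\cdot t_\ast/2\leq r^4/(20L^2)$. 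Choosing $L=L_{\ref{EHI:l:G0lowerbound}}$ large enough that simultaneously $C_{\ref{EHI:l:LeaveBallConstantHelper}}(1/4)(r/L)^2\leq r^2/40$ and $80/L^2\leq 1/2$, Chebyshev's inequality delivers the required lower bound.

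Assembling the pieces yields a constant $c=7/40$ for the integrand probability, and the estimate
\begin{equation*}
    G^{\inner{r_0}, 0}_{B(0,r)}(0,x) \geq \frac{7/40}{|B(0,1)|r^d}\cdot\frac{t_\ast}{2}\cdot\exp(-\lambda\inner{r_0} t_\ast) = \frac{7}{800\,|B(0,1)|}\cdot\frac{r^{2-d}}{m_2\inner{r_0}}\exp(-\tfrac{1}{10}\cdot\tfrac{r^2\lambda\inner{r_0}}{m_2\inner{r_0}}),
\end{equation*}
which is precisely the claimed bound with $c_{\ref{EHI:l:G0lowerbound}}=7/(800\,|B(0,1)|)$.
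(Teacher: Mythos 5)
Your proof is correct and the overall architecture matches the paper's: lower-bound $G^{\inner{r_0}, 0}_{B(0, r)}(0, x)$ by the average of $G^{\inner{r_0}, 0}_{B(0, r)}(0,\cdot)$ over an annulus, rewrite that average as a time-integral using Lemma~\ref{EHI:l:GkPurpose}, restrict to a window of length $\asymp r^2/m_2\inner{r_0}$ ending at $t_\ast = r^2/(10 m_2\inner{r_0})$, use the killed exponential factor to produce the stated exponential, and show the on-window survival-plus-annulus probability is bounded below by a constant. One small presentational point: the opening paragraph frames the plan as a pointwise lower bound on the killed heat kernel $p^{\inner{r_0}, B(0,r)}_t(0,x)$, but that is never produced; the second paragraph quietly switches to the averaged-Green's-function argument, which is what actually carries the proof (and is also what the paper does). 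The exposition would be cleaner if you stated the averaging argument from the start.

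The one genuinely different ingredient is how you handle the lower tail of the quadratic variation $[X^{\inner{r_0}}]_t$. The paper calls Lemma~\ref{EHI:l:LeaveBallConstant} (together with Lemma~\ref{EHI:l:FactorOf2ForExits}) to get $\P_0(|X^{\inner{r_0}}_t| \geq s) \geq 9/20$; internally, Lemma~\ref{EHI:l:LeaveBallConstant} is a Chernoff argument built on the explicit Laplace transform of $[X^{\inner{r_0}}]_t$ from Lemma~\ref{EHI:l:ChernoffExpectation}. You instead apply Lemma~\ref{EHI:l:LeaveBallConstantHelper} directly with $\eps=1/4$ and then run Chebyshev using $\E[X^{\inner{r_0}}]_{t_\ast/2} = m_2\inner{r_0}\, t_\ast/2$ and $\Var[X^{\inner{r_0}}]_{t_\ast/2} = \tfrac{t_\ast}{2}\int_{\R^d} |y|^4\, j^{\inner{r_0}}(|y|)\dee{y} \leq \tfrac{t_\ast}{2}\, r_0^2\, m_2\inner{r_0}$, which is finite since $j^{\inner{r_0}}$ is supported in $B(0,r_0)$. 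This is a perfectly valid, more elementary substitute: it trades the exponential-moment machinery for a second-moment computation, at the cost of needing $L^2$ large enough to make the Chebyshev error small. The monotonicity of $t\mapsto[X^{\inner{r_0}}]_t$ correctly lets you push the $t_\ast/2$ bound to all $t\geq t_\ast/2$. Both approaches produce a valid universal $L$; yours bypasses Lemmas~\ref{EHI:l:poisson}, \ref{EHI:l:ChernoffExpectation}, and \ref{EHI:l:LeaveBallConstant} entirely, which is a modest simplification if this lemma were read in isolation, though those lemmas are needed elsewhere in the paper anyway.
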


\begin{proof}
    Let
    \begin{equation*}
        L_{\ref{EHI:l:G0lowerbound}} := \sqrt{ 20 C_{\ref{EHI:l:LeaveBallConstant}}\left(\frac{1}{10} \right) }.
    \end{equation*}
    Suppose $0<r_0 \leq r/L_{\ref{EHI:l:G0lowerbound}} \leq r$ and fix $x \in B(0, r/L_{\ref{EHI:l:G0lowerbound}})$.
    Let $s:=|x|$ and $A:=B(0, r) \setminus \overline{B(0, s)}$. We will put a lower bound on $\int_A G^{\inner{r_0}, 0}_{B(0, r)}(0, y) \dee{y}$, and then conclude that $G^{\inner{r_0}, 0}_{B(0, r)}(0, x)$ is at least the average value of $G^{\inner{r_0}, 0}_{B(0, r)}(0, y)$ for $y \in A$, much like in the proof of Lemma \ref{EHI:l:G0upperbound}, because $G^{\inner{r_0}, 0}_{B(0, r)}(0, \cdot)$ is non-increasing in $|\cdot|$.

    By Lemma \ref{EHI:l:GkPurpose} and the independence of $X^{\inner{r_0}}$ from $\hat{X}^{\inner{r_0}}$,
    \begin{align}\label{EHI:e:G0lowerbound1}
        \int_A G^{\inner{r_0}, 0}_{B(0, r)}(0, y) \dee{y} &= \int_0^\infty \P_0 \left( X_t \in A, \tau_{B(0, r)}>t, T^{\inner{r_0}}>t \right) \dee{t} \notag\\
        &= \int_0^\infty \P_0 \left( X^{\inner{r_0}}_t \in A, \tau^{\inner{r_0}}_{B(0, r)}>t, T^{\inner{r_0}}>t \right) \dee{t} \notag\\
        &= \int_0^\infty e^{-\lambda\inner{r_0}t} \P_0 \left( X^{\inner{r_0}}_t \in A, \tau^{\inner{r_0}}_{B(0, r)}>t \right) \dee{t} \notag\\
        &= \int_0^\infty e^{-\lambda\inner{r_0}t} \left( \P_0 \left( X^{\inner{r_0}}_t \in A \right) - \P_0 \left( X^{\inner{r_0}}_t \in A, \tau^{\inner{r_0}}_{B(0, r)} \leq t \right) \right) \dee{t}.
    \end{align}
    For all $t>0$,
    \begin{equation}\label{EHI:e:G0lowerbound2}
        \P_0 \left( X^{\inner{r_0}}_t \in A \right) = \P_0 \left( |X^{\inner{r_0}}_t| \geq s \right) - \P_0 \left( |X^{\inner{r_0}}_t| \geq r \right)
    \end{equation}
    and by Lemma \ref{EHI:l:FactorOf2ForExits},
    \begin{equation}\label{EHI:e:G0lowerbound3}
        \P_0 \left( X^{\inner{r_0}}_t \in A, \tau^{\inner{r_0}}_{B(0, r)} \leq t \right) \leq \P_0 \left( \tau^{\inner{r_0}}_{B(0, r)} \leq t \right) \leq 2\P_0 \left( |X^{\inner{r_0}}_t| > r \right).
    \end{equation}
    By substituting \eqref{EHI:e:G0lowerbound2} and \eqref{EHI:e:G0lowerbound3} into \eqref{EHI:e:G0lowerbound1},
    \begin{equation}\label{EHI:e:G0lowerbound4}
        \int_A G^{\inner{r_0}, 0}_{B(0, r)}(0, y) \dee{y} \geq \int_0^\infty e^{-\lambda\inner{r_0}t} \left( \P_0 \left( |X^{\inner{r_0}}_t| \geq s \right) - 3\P_0 \left( |X^{\inner{r_0}}_t| \geq r \right) \right) \dee{t}.
    \end{equation}
    
    Let us put a lower bound on the integrand of \eqref{EHI:e:G0lowerbound4} for all $t \in \left[\frac{r^2}{20 m_2\inner{r_0}} , \frac{r^2}{10 m_2\inner{r_0}} \right]$. Fix $t$ in this set. We then have
    \begin{equation}\label{EHI:e:G0lowerbound5}
        e^{-\lambda\inner{r_0}t} \geq \exp(-\frac{1}{10} \cdot \frac{r^2 \lambda\inner{r_0}}{m_2\inner{r_0}}).
    \end{equation}
    By Markov's inequality,
    \begin{equation}\label{EHI:e:G0lowerbound6}
        \P_0 \left( |X^{\inner{r_0}}_t| \geq r \right) \leq r^{-2} \E_0 \left[ |X^{\inner{r_0}}_t|^{-2} \right] = \frac{m_2\inner{r_0} t}{r^2} \leq \frac{1}{10}.
    \end{equation}
    We still need a lower bound on $\P_0 \left( |X^{\inner{r_0}}_t| \geq s \right)$.
    Recall that $(s \vee r_0)=(|x| \vee r_0) \leq r/L_{\ref{EHI:l:G0lowerbound}}$ and $t \geq \frac{r^2}{20 m_2\inner{r_0}}$. Thus,
    \begin{equation*}
        t \geq \frac{L_{\ref{EHI:l:G0lowerbound}}^{2} (s \vee r_0)^2}{20 m_2\inner{r_0}} = C_{\ref{EHI:l:LeaveBallConstant}}\left(\frac{1}{10}\right) \frac{(s \vee r_0)^2}{m_2\inner{r_0}}.
    \end{equation*}
    Therefore, by Lemmas \ref{EHI:l:FactorOf2ForExits} and \ref{EHI:l:LeaveBallConstant} (applied to $X^{\inner{r_0}}$),
    \begin{align}\label{EHI:e:G0lowerbound7}
        \P_0 \left( |X^{\inner{r_0}}_t| \geq s \right) &\geq \frac12 \P_0 \left( \tau^{\inner{r_0}}_{B(0, s)} \leq t \right) \notag\\
        &\geq \frac12 \P_0 \left( \tau^{\inner{r_0}}_{B(0, s)} \leq C_{\ref{EHI:l:LeaveBallConstant}}\left(\frac{1}{10}\right) \frac{(s \vee r_0)^2}{m_2\inner{r_0}} \right) \notag\\
        &\geq \frac{1}{2}\left(1-\frac{1}{10}\right) = \frac{9}{20}.
    \end{align}
    Substituting \eqref{EHI:e:G0lowerbound5}-\eqref{EHI:e:G0lowerbound7} into the integral from \eqref{EHI:e:G0lowerbound4} for $t \in \left[\frac{r^2}{20 m_2\inner{r_0}} , \frac{r^2}{10 m_2\inner{r_0}} \right]$, we obtain
    \begin{equation*}
        \int_A G^{\inner{r_0}, 0}_{B(0, r)}(0, y) \dee{y} \geq \frac{r^2}{20 m_2\inner{r_0}} \cdot \exp(-\frac{1}{10} \cdot \frac{r^2 \lambda\inner{r_0}}{m_2\inner{r_0}}) \cdot \frac{3}{20}.
    \end{equation*}
    Since $|A| \leq |B(0, r)| = r^d |B(0, 1)|$ and $G^{\inner{r_0}, 0}_{B(0, r)}(0, x)$ is at least the average value of $G^{\inner{r_0}, 0}_{B(0, r)}(0, y)$ for $y \in A$,
    \begin{equation*}
        G^{\inner{r_0}, 0}_{B(0, r)}(0, x) \geq \frac{3}{400 |B(0, 1)|} \cdot \frac{r^{2-d}}{m_2\inner{r_0}} \exp(-\frac{1}{10} \cdot \frac{r^2 \lambda\inner{r_0}}{m_2\inner{r_0}}).
    \end{equation*}
\end{proof}

\begin{prop}\label{EHI:p:EHIbigm2}
    Let $X$ be an isotropic unimodal L\'{e}vy jump process on $\R^d$, satisfying \eqref{EHI:e:regularJumps}. Suppose there exist $c>0$, and a set $E \subseteq (0, \infty)$ such that
    \begin{equation} \label{EHI:e:bigm2Condition}
        m_2(r) \geq c r^2 \lambda(r) \qquad\mbox{for all $r \in E$}.
    \end{equation}
    Let $L_{\ref{EHI:l:G0lowerbound}}$ be the constant from Lemma \ref{EHI:l:G0lowerbound}. For all $L \geq L_{\ref{EHI:l:G0lowerbound}}$ and $M>1$, if $S :=  \bigcup_{r \in E} [100Lr, 100MLr]$, then $X$ satisfies $\EHI(r \in S)$.
\end{prop}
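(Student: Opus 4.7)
The plan is to reduce $\EHI(r \in S)$ to $\EHIPoisson(r \in S)$ via Proposition \ref{EHI:p:EHIpoisson=EHI}, and then to control the Poisson kernel using the small/flat decomposition at an appropriate scale. For each $r \in S$, I would pick $r_0 \in E \cap [r/(100ML), r/(100L)]$, which exists by the definition of $S$, and decompose $K_{B(0,r)} = K^{\inner{r_0}, 0}_{B(0,r)} + K^{\inner{r_0}, >0}_{B(0,r)}$ via the small/flat decomposition at $r_0$. Since $r_0 \geq r/(100ML)$, Corollary \ref{EHI:c:K1(x)=K1(-x)}, applied with its parameter taken to be $100ML$, yields $K^{\inner{r_0}, >0}_{B(0,r)}(x, w) \asymp K^{\inner{r_0}, >0}_{B(0,r)}(-x, w)$ uniformly in $x \in B(0, r)$ and $w \in B(0, r)^c$, with constants depending only on $M, L, c_j, d$. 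It therefore suffices to prove $K^{\inner{r_0}, 0}_{B(0,r)}(x, w) \lesssim K^{\inner{r_0}, >0}_{B(0,r)}(x, w)$ for $x \in B(0, \kappa r)$ and $w \in B(0, r)^c$ (for some $\kappa$ independent of $r$, say $\kappa = 1/2$), since combining the two comparisons gives $K(x, w) \lesssim K(-x, w)$ and, by symmetry, the reverse.

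Observe that $K^{\inner{r_0}, 0}(x, w) = 0$ whenever $|w| > r + r_0$ because $j^{\inner{r_0}}$ is supported on $[0, r_0]$, so attention reduces to $w$ in the thin shell $\{r < |w| \leq r + r_0\}$. For such $w$, the integrand of $K^{\inner{r_0}, 0}(x, w) = \int G^{\inner{r_0}, 0}(x, z) j^{\inner{r_0}}(|w-z|) \dee{z}$ is supported on $z \in B(0, r) \cap B(w, r_0)$, where $|x-z| \gtrsim r$. Lemma \ref{EHI:l:G0upperbound}, applied after the monotonicity inclusion $B(0, r) \subseteq B(x, 2r)$, gives $G^{\inner{r_0}, 0}_{B(0,r)}(x, z) \lesssim r^{2-d}/m_2\inner{r_0}$ on this support, whence
\begin{equation*}
    K^{\inner{r_0}, 0}(x, w) \lesssim \frac{r^{2-d}}{m_2\inner{r_0}} J(w), \qquad J(w) := \int_{B(0,r) \cap B(w, r_0)} j^{\inner{r_0}}(|w-z|) \dee{z}.
\end{equation*}
By \eqref{EHI:e:(K>0)>K1}, $K^{\inner{r_0}, >0}(x, w) \geq \int_{B(0, r)} G^{\inner{r_0}, 1}(x, z) j^{\inner{r_0}}(|w-z|) \dee{z}$, with the same weight $j^{\inner{r_0}}(|w-z|)$ as in the upper bound; the task thus reduces to establishing a matching pointwise lower bound $G^{\inner{r_0}, 1}(x, z) \gtrsim r^{2-d}/m_2\inner{r_0}$, uniformly for $z$ in the above support.

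To produce this, I would write $G^{\inner{r_0}, 1}(x, z) = \E_x[\indicatorWithSetBrackets{T^{\inner{r_0}} < \tau_{B(0, r)}} G^{\inner{r_0}, 0}(X_{T^{\inner{r_0}}}, z)]$ and restrict the expectation to an event on which (i) the first flattening jump occurs before $X^{\inner{r_0}}$ exits a ball of radius $\asymp r$ around $x$, and (ii) the jump $\Delta X(T^{\inner{r_0}})$ carries the process to a target region from which Lemma \ref{EHI:l:G0lowerbound}, applied after a suitable translation and domain-monotonicity inclusion, yields $G^{\inner{r_0}, 0}(X_{T^{\inner{r_0}}}, z) \gtrsim r^{2-d}/m_2\inner{r_0}$. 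The flatness of $\hat{j}^{\inner{r_0}}$ on scales up to the diameter of $B(0, r)$ (Lemma \ref{EHI:l:flatness} with multiplier $\leq 100ML$) provides a uniform positive lower bound on the conditional density of $\Delta X(T^{\inner{r_0}})$ inside the target region. The hypothesis $m_2(r_0) \geq c r_0^2 \lambda(r_0)$ together with Lemma \ref{EHI:l:m2lambdaComparable} gives $r^2 \lambda\inner{r_0}/m_2\inner{r_0} \leq (100ML)^2/c'$ for some $c' = c'(c, c_j, d) > 0$, which simultaneously controls the exponential factor $\exp(-r^2 \lambda\inner{r_0}/(10 m_2\inner{r_0}))$ appearing in Lemma \ref{EHI:l:G0lowerbound} and, via Lemma \ref{EHI:l:PE} applied to $X^{\inner{r_0}}$ (or a chaining argument using Lemma \ref{EHI:l:LeaveBallExponential} when this ratio is not small), yields uniform positive probability of event (i).

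The main obstacle will be the geometric arrangement in step (ii): because $z$ lies in the thin shell near $\partial B(0, r)$, the target region for $X_{T^{\inner{r_0}}}$ cannot simultaneously be centred deep inside $B(0, r)$ and close to $z$ in the sense required to apply Lemma \ref{EHI:l:G0lowerbound} with a translated ball of radius $\asymp r$. The radius of the translated ball will have to be chosen carefully, possibly of order $r_0$, and one must verify that the resulting lower bound still carries the correct power of $r$. Once this matching estimate $G^{\inner{r_0}, 1}(x, z) \gtrsim r^{2-d}/m_2\inner{r_0}$ is in hand, the common factor $J(w)$ cancels between numerator and denominator, giving $K^{\inner{r_0}, 0}(x, w) \lesssim K^{\inner{r_0}, >0}(x, w)$ with constants independent of $r \in S$, completing the proof.
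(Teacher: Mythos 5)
Your proposal uses the domination strategy that the paper reserves for the \emph{other} regime. After the reduction to $\EHIPoisson$ and the small/flat decomposition at $r_0$ (which you set up exactly as the paper does), you aim to show $K^{\inner{r_0},0}_{B(0,r)}(\pm x,w)\lesssim K^{\inner{r_0},>0}_{B(0,r)}(\pm x,w)$, via a pointwise bound of the form $G^{\inner{r_0},0}(x,z)\lesssim G^{\inner{r_0},1}(x,z)$ for $z$ in the thin shell near $\partial B(0,r)$. That is the plan of Proposition \ref{EHI:p:EHIsmallm2}, and the paper's preamble to its proof contrasts the two strategies explicitly. For Proposition \ref{EHI:p:EHIbigm2} the paper instead shows both pieces separately satisfy $K^{\inner{r_0},0}(x,w)\asymp K^{\inner{r_0},0}(-x,w)$ and $K^{\inner{r_0},>0}(x,w)\asymp K^{\inner{r_0},>0}(-x,w)$, then adds. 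It does this by proving $G^{\inner{r_0},0}(\pm x,y)\asymp r^{2-d}/m_2\inner{r_0}$ uniformly for $y$ in a well-separated annulus $A$ (Lemmas \ref{EHI:l:G0upperbound} and \ref{EHI:l:G0lowerbound}, the exponential factor in the latter being $\asymp 1$ by \eqref{EHI:e:bigm2Condition}), and then extending to the thin shell by factoring out the common passage probability $\P_z(T_A<\tau_{B(0,r)}\wedge T^{\inner{r_0}})$.

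The domination you want does not hold under \eqref{EHI:e:bigm2Condition}, so the gap you flag in your last paragraph is in fact fatal. Following the analysis of Proposition \ref{EHI:p:EHIsmallm2}: the best available bounds give
\begin{equation*}
    \frac{G^{\inner{r_0},0}_{B(0,r)}(x,z)}{G^{\inner{r_0},1}_{B(0,r)}(x,z)} \;\lesssim\; \frac{\P_z\left(T_{A_1}<T^{\inner{r_0}}\right)}{\P_z\left(T_{A_2}<T^{\inner{r_0}}\right)}\cdot\frac{r^{2-d}\,(\lambda\inner{r_0})^2}{m_2\inner{r_0}\,j(2r)},
\end{equation*}
and the probability ratio is controlled via Lemma \ref{EHI:l:CloseToSubmultiplicative} by powers of $\P_0(\tau_{B(0,r_0)}<T^{\inner{r_0}})\lesssim m_2\inner{r_0}/(r_0^2\lambda\inner{r_0})$. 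Under the small-$m_2$ hypothesis this probability is small and the product can be made bounded; under the big-$m_2$ hypothesis it can be $\asymp 1$, so the probability ratio is only $\leq 1$, and the remaining factor, after using $\lambda\inner{r_0}\lesssim m_2\inner{r_0}/r_0^2$ and $j(2r)\asymp j(r_0)$, is $\asymp m_2\inner{r_0}/(r_0^{d+2}j(r_0))$, which is unbounded in the big-$m_2$ regime. (Concretely, for $j(r)\asymp r^{-d-2}(\log r^{-1})^{-1-\beta}$ one gets $m_2(r_0)/(r_0^{d+2}j(r_0))\asymp\log(r_0^{-1})\to\infty$, while $m_2(r_0)\asymp r_0^2\lambda(r_0)$ so \eqref{EHI:e:bigm2Condition} holds.) More directly, your proposed pointwise inequality $G^{\inner{r_0},1}(x,z)\gtrsim r^{2-d}/m_2\inner{r_0}$ cannot be uniform in $z$ close to $\partial B(0,r)$: integrating the left side over such $z$ gives an expected occupation time that vanishes as $z$ approaches the boundary, whereas the right side does not. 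The correct route for this proposition is the paper's: do not try to make one term dominate the other; instead establish $G^{\inner{r_0},0}(x,z)\asymp G^{\inner{r_0},0}(-x,z)$ directly by the two-sided Green bound on the annulus $A$ (with the exponential factor controlled by the $m_2\gtrsim r^2\lambda$ hypothesis), and use the fact that any path from $z$ in the thin shell to the inner region must pass through $A$.
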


\begin{proof}
    Throughout this proof, we will use the notation ``$\lesssim$," ``$\asymp$," and ``$\gtrsim$." The implied constant may depend on $M$, $c$, and universal constants, but nothing else.
    By Proposition \ref{EHI:p:EHIpoisson=EHI}, it is enough to prove $\EHIPoisson(r \in S)$.
    We must show that there exists a $\kappa \in (0, 1)$ (which may depend on $X$) such that for all $r \in S$, $x \in B(0, \kappa r)$, and $w \in B(0, r)^c$, we have $K_{B(0, r)}(x, w) \asymp K_{B(0, r)}(-x, w)$. Let
    \begin{equation*}
        \kappa := \frac{1}{100L}.
    \end{equation*}
    Fix $r \in S$, $x \in B(0, \kappa r)$, and $w \in B(0, r)^c$. By the definition of $S$, $r$ satisfies $100L r_0 \leq r \leq 100M L r_0$ for some $r_0 \in E$. We will consider the small/flat decomposition at $r_0$.

    Choose $\eps>0$ small enough that $x, -x \in B(0, (1-\eps)r)$. Since the ratio $r/r_0$ is bounded above and below (with the bounds depending only on $M$), by applying Corollary \ref{EHI:c:K1(x)=K1(-x)} to $B(0, (1-\eps)r) \Subset B(0, r)$ and taking the limit as $\eps \to 0^+$, we already have
    \begin{equation*}
        K^{\inner{r_0}, >0}_{B(0, r)}(x, w) \asymp K^{\inner{r_0}, >0}_{B(0, r)}(-x, w)
    \end{equation*}
    so all that remains is to show that
    \begin{equation*}
        K^{\inner{r_0}, 0}_{B(0, r)}(x, w) \asymp K^{\inner{r_0}, 0}_{B(0, r)}(-x, w),
    \end{equation*}
    or equivalently,
    \begin{equation*}
        \int_{z \in B(0, r)} G^{\inner{r_0}, 0}_{B(0, r)}(x, z) j^{\inner{r_0}}(|w-z|) \dee{z} \asymp \int_{z \in B(0, r)} G^{\inner{r_0}, 0}_{B(0, r)}(-x, z) j^{\inner{r_0}}(|w-z|) \dee{z}.
    \end{equation*}
    Recall that $j^{\inner{r_0}}$ only admits jumps of size $r_0$ or smaller, so $j^{\inner{r_0}}(|w-z|) = 0$ for all $z$ at a distance greater than $r_0$ from the boundary of $B(0, r)$. Therefore, it is enough to show that
    \begin{equation}\label{EHI:e:EHIbigm2-1}
        G^{\inner{r_0}, 0}_{B(0, r)}(x, z) \asymp G^{\inner{r_0}, 0}_{B(0, r)}(-x, z) \qquad\mbox{for all $z \in B(0, r) \setminus B(0, r-r_0)$}.
    \end{equation}
    
    Let
    \begin{align*}
        r_1 &:= \kappa r = r/(100L), \\
        r_2 &:= r/(10L), \\
        \mbox{and} \quad r_3 &:= r/(2L).
    \end{align*}
    Recall that we already have $\frac{r}{100ML} \leq r_0 \leq \frac{r}{100L}$, so there is a hierarchy $r_0 \leq r_1 \leq r_2 \leq r_3 \leq r$. We also have $r_0 \asymp r_1 \asymp r_2 \asymp r_3 \asymp r$, with the ratio between any two of them being at most $100ML$. Observe also that the pairwise difference between any two of these radii (except possibly between $r_0$ and $r_1$) are on the order of $r$, and that $r_3 \leq r-r_0$. These facts will be used in the proof. Let $A$ denote the annulus $B(0, r_3) \setminus B(0, r_2)$.
    
    Recall that $x \in B(0, \kappa r)$. In terms of our newly introduced radii, this is equivalent to $x \in B(0, r_1)$. We will first show that $G^{\inner{r_0}, 0}_{B(0, r)}(x, y) \asymp G^{\inner{r_0}, 0}_{B(0, r)}(-x, y)$ for all $y \in A$, and then we will extend this result from $y \in A$ to all $z \in B(0, r) \setminus B(0, r_2)$.
    
    Since $B(0, r) \subseteq B(x, r+r_1)$, by Lemma \ref{EHI:l:G0upperbound}, for all $y \in A$ we have
    \begin{align}\label{EHI:e:EHIbigm2-2}
        G^{\inner{r_0}, 0}_{B(0, r)}(x, y) &\leq G^{\inner{r_0}, 0}_{B(x, r+r_1)}(x, y) = G^{\inner{r_0}, 0}_{B(0, r+r_1)}(0, y-x) \notag\\
        &\lesssim \frac{(r+r_1)^2 |y-x|^{-d}}{m_2\inner{r_0}} \leq \frac{(r+r_1)^2 (r_2-r_1)^{-d}}{m_2\inner{r_0}} \asymp \frac{r^{2-d}}{m_2\inner{r_0}}.
    \end{align}
    Since $B(0, r) \supseteq B(x, r-r_1)$, by Lemma \ref{EHI:l:G0lowerbound}, for all $y \in A$ we have
    \begin{align}\label{EHI:e:EHIbigm2-3}
        G^{\inner{r_0}, 0}_{B(0, r)}(x, y) &\geq G^{\inner{r_0}, 0}_{B(x, r-r_1)}(x, y) = G^{\inner{r_0}, 0}_{B(0, r-r_1)}(0, y-x) \notag\\
        &\gtrsim \frac{(r-r_1)^{2-d}}{m_2\inner{r_0}} \exp(-\frac{1}{10} \frac{(r-r_1)^2 \lambda\inner{r_0}}{m_2\inner{r_0}}) \notag\\
        &\asymp \frac{r^{2-d}}{m_2\inner{r_0}} \exp(-\frac{1}{10} \frac{(r-r_1)^2 \lambda\inner{r_0}}{m_2\inner{r_0}}).
    \end{align}
    By \eqref{EHI:e:bigm2Condition} and Lemma \ref{EHI:l:m2lambdaComparable}, $m_2\inner{r_0} \asymp m_2(r_0) \gtrsim r_0^2 \lambda(r_0) \asymp r^2 \lambda\inner{r_0}$. Therefore, the exponential term in \eqref{EHI:e:EHIbigm2-3} is on the order of $1$. Thus, by \eqref{EHI:e:EHIbigm2-2} and \eqref{EHI:e:EHIbigm2-3},
    \begin{equation}\label{EHI:e:EHIbigm2-4}
        G^{\inner{r_0}, 0}_{B(0, r)}(x, y) \asymp \frac{r^{2-d}}{m_2\inner{r_0}} \qquad\mbox{for all $y \in A := B(0, r_3) \setminus B(0, r_2)$}.
    \end{equation}

    It still remains to extend \eqref{EHI:e:EHIbigm2-4} from $y \in A$ to $z \in B(0, r) \setminus B(0, r_3)$. Fix $z \in B(0, r) \setminus B(0, r_3)$. Since $r_3-r_2 \geq r_0$, the only way the process $X$ can get near $z$ before the first flattening jump occurs is by first entering $A$ at some point. Thus, by the symmetry of $G^{\inner{r_0}, 0}_{B(0, r)}$ and the probabilistic interpretation of $G^{\inner{r_0}, 0}_{B(0, r)}$,
    \begin{align}\label{EHI:e:EHIbigm2-5}
        G^{\inner{r_0}, 0}_{B(0, r)}(x, z) &= G^{\inner{r_0}, 0}_{B(0, r)}(z, x) = \E_z \left[ \indicatorWithSetBrackets{T_A < \tau_{B(0, r)} \wedge T^{\inner{r_0}}} G^{\inner{r_0}, 0}_{B(0, r)}(X_{T_A}, x) \right] \notag\\
        &= \E_z \left[ \indicatorWithSetBrackets{T_A < \tau_{B(0, r)} \wedge T^{\inner{r_0}}} G^{\inner{r_0}, 0}_{B(0, r)}(x, X_{T_A}) \right].
    \end{align}
    By \eqref{EHI:e:EHIbigm2-4} and \eqref{EHI:e:EHIbigm2-5},
    \begin{equation*}
        G^{\inner{r_0}, 0}_{B(0, r)}(x, z) \asymp \P_z \left( T_A < \tau_{B(0, r)} \wedge T^{\inner{r_0}} \right) \frac{r^{2-d}}{m_2\inner{r_0}} \qquad\mbox{for all $z \in B(0, r) \setminus B(0, r_3)$}.
    \end{equation*}
    The same argument holds if we replace $x$ with $-x$. Therefore,
    \begin{equation*}
        G^{\inner{r_0}, 0}_{B(0, r)}(x, z) \asymp \P_z \left( T_A < \tau_{B(0, r)} \wedge T^{\inner{r_0}} \right) \frac{r^{2-d}}{m_2\inner{r_0}} \asymp G^{\inner{r_0}, 0}_{B(0, r)}(-x, z)  \qquad\mbox{for all $z \in B(0, r) \setminus B(0, r_3)$}.
    \end{equation*}
    Since $r_3 \leq r-r_0$, this completes the proof of \eqref{EHI:e:EHIbigm2-1}, which was all we needed to show.
\end{proof}

\begin{lemma}\label{EHI:l:G1lowerbound}
    Let $X$ be an isotropic unimodal L\'{e}vy jump process on $\R^d$. For all $r \geq r_0 >0$ and all $x, z \in B(0, r)$,
    \begin{equation*}
        G^{\inner{r_0}, 1}_{B(0, r)}(x, z) \geq \frac12 j(2r) \E_x \left[ \tau_{B(0, r)} \wedge T^{\inner{r_0}} \right] \cdot \E_z \left[ \tau_{B(0, r)} \wedge T^{\inner{r_0}} \right].
    \end{equation*}
\end{lemma}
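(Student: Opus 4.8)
The plan is to reduce $G^{\inner{r_0},1}_{B(0,r)}$ to a concrete two-fold integral built from $G^{\inner{r_0},0}_{B(0,r)}$ and $\hat j^{\inner{r_0}}$, bound the jump-kernel factor from below by $\tfrac12 j(2r)$ using only the shape of $\hat j^{\inner{r_0}}$, and then recognize the two surviving integrals as the truncated occupation-time expectations $\E_x[\tau_{B(0,r)}\wedge T^{\inner{r_0}}]$ and $\E_z[\tau_{B(0,r)}\wedge T^{\inner{r_0}}]$.

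First I would establish the identity
\[
    G^{\inner{r_0},1}_{B(0,r)}(x,z)=\int_{B(0,r)}\int_{B(0,r)}G^{\inner{r_0},0}_{B(0,r)}(x,v)\,\hat j^{\inner{r_0}}(|w-v|)\,G^{\inner{r_0},0}_{B(0,r)}(w,z)\dee{w}\dee{v},
\]
where throughout $G^{\inner{r_0},0}_{B(0,r)}$ is understood to vanish off $B(0,r)\times B(0,r)$. Starting from the recursive definition $G^{\inner{r_0},1}_{B(0,r)}(x,z)=\E_x[\indicatorWithSetBrackets{T^{\inner{r_0}}<\tau_{B(0,r)}}G^{\inner{r_0},0}_{B(0,r)}(X_{T^{\inner{r_0}}},z)]$, one checks, using that $X$ and $X^{\inner{r_0}}$ agree on $[0,T^{\inner{r_0}})$, that the indicator $\indicatorWithSetBrackets{T^{\inner{r_0}}<\tau_{B(0,r)}}$ may be replaced inside the expectation by $\indicatorWithSetBrackets{\tau^{\inner{r_0}}_{B(0,r)}>T^{\inner{r_0}}}$: on the difference of the two events one necessarily has $X_{T^{\inner{r_0}}}\notin B(0,r)$, so the integrand vanishes. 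Then, exactly as in the proof of Proposition \ref{EHI:p:h1(x)=h1(-x)}, write $X_{T^{\inner{r_0}}}=X^{\inner{r_0}}_{T^{\inner{r_0}}}+\Delta X(T^{\inner{r_0}})$, where $\Delta X(T^{\inner{r_0}})$ has density $\hat j^{\inner{r_0}}(|\cdot|)/\lambda\inner{r_0}$ and is independent of the pre-$T^{\inner{r_0}}$ path of $X^{\inner{r_0}}$; since $T^{\inner{r_0}}$ is Exponential($\lambda\inner{r_0}$) independent of $X^{\inner{r_0}}$, the sub-probability law of $X^{\inner{r_0}}_{T^{\inner{r_0}}}$ on $\{\tau^{\inner{r_0}}_{B(0,r)}>T^{\inner{r_0}}\}$ has density $v\mapsto\int_0^\infty\lambda\inner{r_0}e^{-\lambda\inner{r_0}t}p^{\inner{r_0},B(0,r)}_t(x,v)\dee{t}=\lambda\inner{r_0}G^{\inner{r_0},0}_{B(0,r)}(x,v)$. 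Disintegrating over $X^{\inner{r_0}}_{T^{\inner{r_0}}}=v$ and integrating out $\Delta X(T^{\inner{r_0}})$ then yields the displayed identity (the $w$-integral being effectively over $B(0,r)$).

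Next I would bound $\hat j^{\inner{r_0}}$ from below. For $v,w\in B(0,r)$ we have $|w-v|<2r$, and since $r_0\le r$, monotonicity of $j$ together with the definition of $\hat j^{\inner{r_0}}$ gives $\hat j^{\inner{r_0}}(s)\ge\tfrac12 j(2r)$ for every $s\in[0,2r)$: if $s\le r_0$ then $\hat j^{\inner{r_0}}(s)=\tfrac12 j(r_0)\ge\tfrac12 j(2r)$, and if $r_0<s<2r$ then $\hat j^{\inner{r_0}}(s)=j(s)\ge j(2r)\ge\tfrac12 j(2r)$. Substituting this into the identity, pulling out $\tfrac12 j(2r)$, and factoring the double integral gives
\[
    G^{\inner{r_0},1}_{B(0,r)}(x,z)\ge\tfrac12 j(2r)\Big(\int_{B(0,r)}G^{\inner{r_0},0}_{B(0,r)}(x,v)\dee{v}\Big)\Big(\int_{B(0,r)}G^{\inner{r_0},0}_{B(0,r)}(z,w)\dee{w}\Big),
\]
where the symmetry of $G^{\inner{r_0},0}_{B(0,r)}$ (Lemma \ref{EHI:l:GkPurpose}) was used for the second factor. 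Finally, Lemma \ref{EHI:l:GkPurpose} with $A=D=B(0,r)$ and $k=0$ gives $\int_{B(0,r)}G^{\inner{r_0},0}_{B(0,r)}(x,v)\dee{v}=\int_0^\infty\P_x(\tau_{B(0,r)}>t,\,T^{\inner{r_0}}>t)\dee{t}=\E_x[\tau_{B(0,r)}\wedge T^{\inner{r_0}}]$, and likewise for $z$, which is the claim.

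The only genuinely delicate step is the bookkeeping behind the identity: replacing $\indicatorWithSetBrackets{T^{\inner{r_0}}<\tau_{B(0,r)}}$ by $\indicatorWithSetBrackets{\tau^{\inner{r_0}}_{B(0,r)}>T^{\inner{r_0}}}$ and confirming that the disintegration over $X^{\inner{r_0}}_{T^{\inner{r_0}}}$ is governed precisely by the density $\lambda\inner{r_0}G^{\inner{r_0},0}_{B(0,r)}(x,\cdot)$; once that is in place, the remainder is a one-line monotonicity estimate for $\hat j^{\inner{r_0}}$ and a direct appeal to Lemma \ref{EHI:l:GkPurpose}.
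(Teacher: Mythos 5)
Your proof is correct and follows essentially the same route as the paper: both write $G^{\inner{r_0},1}_{B(0,r)}(x,z)$ as an integral against the sub-probability law of $X_{T^{\inner{r_0}}}$ on $\{T^{\inner{r_0}}<\tau_{B(0,r)}\}$, bound $\hat j^{\inner{r_0}}\ge\tfrac12 j(2r)$ on distances up to $2r$, and then identify the surviving integrals with $\E_\cdot[\tau_{B(0,r)}\wedge T^{\inner{r_0}}]$ via symmetry of $G^{\inner{r_0},0}$ and Lemma \ref{EHI:l:GkPurpose}. The only cosmetic difference is that the paper packages the first step as a lower bound on a measure $\eta_x$ via the L\'evy system formula before integrating against $G^{\inner{r_0},0}(\cdot,z)$, whereas you expand the full double integral first; the underlying disintegration is the same.
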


\begin{proof}
    By the definition of $G^{\inner{r_0}, 1}_{B(0, r)}$ (that is, Definition \eqref{EHI:d:GkDef}),
    \begin{equation}\label{EHI:e:G1lowerbound1}
        G^{\inner{r_0}, 1}_{B(0, r)}(x, z) := \E_x \left[ \indicatorWithSetBrackets{T^{\inner{r_0}} < \tau_{B(0, r)}} G^{\inner{r_0}, 0}_{B(0, r)}(X_{T^{\inner{r_0}}}, z) \right].
    \end{equation}
    Let $\eta_x$ denote the measure
    \begin{equation*}
        \eta_x(E) := \P_x \left( T^{\inner{r_0}} < \tau_{B(0, r)}, X_{T^{\inner{r_0}}} \in E \right) \qquad\mbox{for all Lebesgue-measurable $E \subseteq B(0, r)$}.
    \end{equation*}
    Then \eqref{EHI:e:G1lowerbound1} can be written as
    \begin{equation}\label{EHI:e:G1lowerbound2}
        G^{\inner{r_0}, 1}_{B(0, r)}(x, z) = \int_{B(0, r)} G^{\inner{r_0}, 0}_{B(0, r)}(y, z) \, \eta_x(\mathrm{d}y).
    \end{equation}
    We will use the flatness of $\hat{j}^{\inner{r_0}}$ to show that the measure $\eta_x$ is bounded below by $\frac12 j(2r) \E_x \left[ \tau_{B(0, r)} \wedge T^{\inner{r_0}} \right]$ times the Lebesgue measure. Therefore, $\eta_x(\mathrm{d}y)$ in \eqref{EHI:e:G1lowerbound2} can be replaced with $\frac12 j(2r) \E_x \left[ \tau_{B(0, r)} \wedge T^{\inner{r_0}} \right] \, dy$.
    Then, in the resulting integral, we can switch $y$ and $z$ since $G^{\inner{r_0}, 0}_{B(0, r)}$, and the result will have a probabilistic interpretation, given by Lemma \ref{EHI:l:GkPurpose}.

    For all $y, y' \in B(0, r)$, we have $|y-y'|<2r$, so by the definition of the small/flat decomposition, $\hat{j}^{\inner{r_0}}(|y-y'|) \geq \min\{ \frac12 j(r_0), j(2r) \} \geq \frac12 j(2r)$. By the L\'{e}vy system formula (cf. \cite{BL}, \cite{ck1}),
    \begin{align*}
        \eta_x(E) &= \E_x \left[ \int_0^{\tau_{B(0, r)} \wedge T^{\inner{r_0}}} \int_{y \in E} \hat{j}^{\inner{r_0}}(|y-X_t|) \dee{y} \dee{t} \right] \\
        &\geq |E| \cdot \frac12 j(2r) \E_x \left[ \tau_{B(0, r)} \wedge T^{\inner{r_0}} \right].
    \end{align*}
    Since this holds for all Lebesgue-measurable $E \subseteq \R^d$, $\eta_x(\mathrm{d}y) \leq \frac12 j(2r) \E_x \left[ \tau_{B(0, r)} \wedge T^{\inner{r_0}} \right] \dee{y}$. Thus, \eqref{EHI:e:G1lowerbound2} implies
    \begin{equation} \label{EHI:e:G1lowerbound3}
        G^{\inner{r_0}, 1}_{B(0, r)}(x, z) \geq \frac12 j(2r) \E_x \left[ \tau_{B(0, r)} \wedge T^{\inner{r_0}} \right]  \int_{B(0, r)} G^{\inner{r_0}, 0}_{B(0, r)}(y, z) \dee{y}.
    \end{equation}
    Since $G^{\inner{r_0}, 0}$ is symmetric, and by Lemma \ref{EHI:l:GkPurpose},
    \begin{equation}\label{EHI:e:G1lowerbound4}
        \int_{B(0, r)} G^{\inner{r_0}, 0}_{B(0, r)}(y, z) \dee{y} = \int_{B(0, r)} G^{\inner{r_0}, 0}_{B(0, r)}(z, y)  \dee{y} = \E_z \left[ \tau_{B(0, r)} \wedge T^{\inner{r_0}} \right].
    \end{equation}
    By substituting \eqref{EHI:e:G1lowerbound4} into \eqref{EHI:e:G1lowerbound3}, the proof is complete.
\end{proof}

\begin{prop} \label{EHI:p:EHIsmallm2}
    Let $X$ be an isotropic unimodal L\'{e}vy jump process on $\R^d$, satisfying \eqref{EHI:e:regularJumps}. Suppose there exist $\eps \in (0, 1)$, $C>0$, and a set $E \subseteq (0, \infty)$ such that
    \begin{equation} \label{EHI:e:EHIsmallm2Condition}
        m_2(r) \leq C (r^{d+2} j(r))^\eps (r^2 \lambda(r))^{1-\eps} \qquad\mbox{for all $r \in E$}.
    \end{equation}
    Then there exists an $L_{\ref{EHI:p:EHIsmallm2}}=L_{\ref{EHI:p:EHIsmallm2}}(C, c_j, d, \eps)>1$ such that for all $L \geq L_{\ref{EHI:p:EHIsmallm2}}$ and all $M>1$, if $S := \bigcup_{r \in E} [Lr, MLr]$, then $X$ satisfies $\EHI(r \in S)$.
\end{prop}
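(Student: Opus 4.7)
Following the template of Proposition \ref{EHI:p:EHIbigm2}, I reduce via Proposition \ref{EHI:p:EHIpoisson=EHI} to verifying $\EHIPoisson(r \in S)$. For $r \in S$, pick $r_0 \in E$ with $r_0 \asymp r/L$ (up to a factor of $M$), and consider the small/flat decomposition at $r_0$ and the resulting split $K_{B(0, r)} = K^{\inner{r_0}, 0} + K^{\inner{r_0}, >0}$. Combining \eqref{EHI:e:EHIsmallm2Condition} with Lemmas \ref{EHI:l:SmallestOfThreeQuantities} and \ref{EHI:l:m2lambdaComparable} gives $m_2\inner{r_0}/(r^2 \lambda\inner{r_0}) \leq C'/L^2$, where $C'$ depends only on $C$, $c_j$, $d$, and $\eps$; choose $L_{\ref{EHI:p:EHIsmallm2}}$ large enough that this ratio is at most $\tfrac18$ whenever $L \geq L_{\ref{EHI:p:EHIsmallm2}}$. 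By Lemma \ref{EHI:l:PE}, we then have $\E_x[\tau_{B(0, r)} \wedge T^{\inner{r_0}}] \asymp \lambda\inner{r_0}^{-1}$ and $\P_x(T^{\inner{r_0}} \leq \tau_{B(0, r)}) \gtrsim 1$ uniformly for $x \in B(0, r/2)$.

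Unlike the previous proposition, the roles of $K^{\inner{r_0}, 0}$ and $K^{\inner{r_0}, >0}$ are now reversed: the exponential factor $\exp(-\tfrac{1}{10} r^2 \lambda\inner{r_0}/m_2\inner{r_0})$ in Lemma \ref{EHI:l:G0lowerbound} is arbitrarily small in the small-$m_2$ regime, so a pointwise comparison of $G^{\inner{r_0}, 0}(x, z)$ with $G^{\inner{r_0}, 0}(-x, z)$ is no longer available. Corollary \ref{EHI:c:K1(x)=K1(-x)} does provide the symmetric comparison $K^{\inner{r_0}, >0}(x, w) \asymp K^{\inner{r_0}, >0}(-x, w)$, so it suffices to establish
\begin{equation*}
    K^{\inner{r_0}, 0}(x, w) \;\lesssim\; K^{\inner{r_0}, >0}(x, w) \qquad \mbox{for all $x \in B(0, \kappa r)$, $w \in B(0, r)^c$}.
\end{equation*}
To do this, apply the strong Markov property at $T^{\inner{r_0}}$: with $q(x, y) \dee{y} := \P_x(X_{T^{\inner{r_0}}} \in \dee{y},\, T^{\inner{r_0}} \leq \tau_{B(0, r)})$ one has $K^{\inner{r_0}, >0}(x, w) = \int_{B(0, r)} q(x, y) K(y, w) \dee{y}$. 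The flatness of $\hat{j}^{\inner{r_0}}$ on $B(0, 2r)$ (Lemma \ref{EHI:l:flatness}), together with the uniform lower bound on $\P_x(T^{\inner{r_0}} \leq \tau)$, yields $q(x, y) \gtrsim j(r_0)/\lambda\inner{r_0}$ for every $y \in B(0, r)$. The symmetry of $G^{\inner{r_0}, 0}$ and Lemma \ref{EHI:l:GkPurpose} allow me to rewrite $\int_{B(0, r)} K^{\inner{r_0}, 0}(y, w) \dee{y} = \int_z j^{\inner{r_0}}(|w - z|) \E_z[\tau \wedge T^{\inner{r_0}}] \dee{z}$, so the desired estimate reduces to the weighted integral inequality
\begin{equation*}
    \int_z G^{\inner{r_0}, 0}(x, z) \, j^{\inner{r_0}}(|w-z|) \dee{z} \;\lesssim\; \frac{j(r_0)}{\lambda\inner{r_0}} \int_z \E_z[\tau \wedge T^{\inner{r_0}}] \, j^{\inner{r_0}}(|w-z|) \dee{z}.
\end{equation*}

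The main obstacle is proving this last integral inequality. The crude bound $G^{\inner{r_0}, 0}(x, z) \lesssim r^{2-d}/m_2\inner{r_0}$ from Lemma \ref{EHI:l:G0upperbound} cannot be applied pointwise in $z$, because it ignores the sub-Gaussian decay of $G^{\inner{r_0}, 0}(x, \cdot)$ that is actually responsible for $K^{\inner{r_0}, 0}$ being small in this regime. The plan is therefore to split the integration in $z$: on a ``bulk'' region where $\mathrm{dist}(z, \partial B(0, r)) \gtrsim \sqrt{m_2\inner{r_0}/\lambda\inner{r_0}}$ one obtains $\E_z[\tau \wedge T^{\inner{r_0}}] \asymp \lambda\inner{r_0}^{-1}$ by applying Lemma \ref{EHI:l:PE} to a concentric sub-ball, while on the remaining thin boundary sliver one integrates directly and exploits both its small Lebesgue measure and the sub-Gaussian decay of $G^{\inner{r_0}, 0}(x, \cdot)$ (which can be made quantitative through Lemmas \ref{EHI:l:LeaveBallConstantHelper}--\ref{EHI:l:LeaveBallExponential} applied to $X^{\inner{r_0}}$). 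The exponent $\eps$ in \eqref{EHI:e:EHIsmallm2Condition} is precisely the interpolation parameter that balances the ``small jump'' scale $r_0^{d+2} j(r_0)$ against the ``flattening'' scale $r_0^2 \lambda(r_0)$, and I expect it to enter the estimate at exactly the point where the bulk and sliver contributions are combined. Once the integral inequality is proved, the desired $K(x, w) \asymp K(-x, w)$ follows as in the final step of the proof of Proposition \ref{EHI:p:EHIbigm2}.
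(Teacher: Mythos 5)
Your high-level framework is right and matches the paper's: reduce to $\EHIPoisson(r \in S)$, use the small/flat decomposition at $r_0$ with $r/r_0 \asymp L$, obtain $K^{\inner{r_0}, >0}(x,w) \asymp K^{\inner{r_0}, >0}(-x,w)$ from Corollary \ref{EHI:c:K1(x)=K1(-x)}, and note the role reversal whereby it suffices to show $K^{\inner{r_0}, 0} \lesssim K^{\inner{r_0}, >0}$. Your lower bound
\begin{equation*}
    K^{\inner{r_0}, >0}(x,w) \;\gtrsim\; \frac{j(r_0)}{\lambda\inner{r_0}} \int_z \E_z\!\left[\tau_{B(0,r)} \wedge T^{\inner{r_0}}\right] j^{\inner{r_0}}(|w-z|)\,\dee{z}
\end{equation*}
via the flatness of $\hat{j}^{\inner{r_0}}$ is also correct, and is essentially a repackaging of the paper's Lemma \ref{EHI:l:G1lowerbound} combined with $\E_x[\tau \wedge T^{\inner{r_0}}] \gtrsim \lambda\inner{r_0}^{-1}$.

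The gap is in the proposed proof of the remaining weighted integral inequality, and it is not a fixable detail but a missing idea. Your plan invokes a bulk/sliver split at scale $\sqrt{m_2\inner{r_0}/\lambda\inner{r_0}}$ and appeals to ``sub-Gaussian decay'' of $G^{\inner{r_0},0}_{B(0,r)}(x,\cdot)$ via Lemmas \ref{EHI:l:LeaveBallConstantHelper}--\ref{EHI:l:LeaveBallExponential}. Two problems. First, on the bulk the argument still requires $G^{\inner{r_0},0}(x,z) \lesssim j(r_0)/\lambda\inner{r_0}^2$ for $z$ near $\partial B(0,r)$, and the crude bound $r^{2-d}/m_2\inner{r_0}$ from Lemma \ref{EHI:l:G0upperbound} does not give this: checking the algebra under \eqref{EHI:e:EHIsmallm2Condition} shows that $\dfrac{r^{2-d}}{m_2\inner{r_0}} \Big/ \dfrac{j(r_0)}{\lambda\inner{r_0}^2}$ need not be $O(1)$, so the bulk contribution does not close. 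Second, ``sub-Gaussian decay'' is a misdiagnosis: Lemmas \ref{EHI:l:LeaveBallConstantHelper}--\ref{EHI:l:LeaveBallExponential} control $\P(\tau > t)$ in $t$, not $G^{\inner{r_0},0}(x,z)$ in $|z-x|$, and the relevant decay here is not Gaussian in distance but geometric in the number of annuli the process must cross before the Exponential clock $T^{\inner{r_0}}$ rings. The mechanism the paper uses — and which is genuinely indispensable — is Lemma \ref{EHI:l:CloseToSubmultiplicative}: crossing a fixed number $k := \lceil 1 + \eps^{-1}\rceil$ of shells of width $\sim r_0$ before $T^{\inner{r_0}}$ costs a factor $\bigl(\P_0(\tau_{B(0,r_0)} < T^{\inner{r_0}})\bigr)^k \lesssim \bigl(m_2\inner{r_0}/(r_0^2 \lambda\inner{r_0})\bigr)^k$, and it is exactly this power $k$ — not the geometry of a bulk/sliver split — that makes the interpolated hypothesis \eqref{EHI:e:EHIsmallm2Condition} yield $\dfrac{m_2\inner{r_0}}{r_0^{d+2} j(r_0)}\bigl(\dfrac{m_2\inner{r_0}}{r_0^2 \lambda\inner{r_0}}\bigr)^{k-2} \lesssim 1$. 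Your proposal has $\eps$ ``entering where bulk and sliver are combined,'' but without the chained ball-exit estimate there is no quantity into which $\eps$ can enter. The paper's pointwise route — bound $G^{\inner{r_0},0}(x,y)$ on a middle annulus $A_1$ by Lemma \ref{EHI:l:G0upperbound}, then propagate to boundary $z$ via $\P_z(T_{A_1} < T^{\inner{r_0}})$ and Lemma \ref{EHI:l:CloseToSubmultiplicative} — is what produces the needed gain, and it is absent from your outline.
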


The proof of Proposition \ref{EHI:p:EHIsmallm2} will follow a very similar pattern to that of Proposition \ref{EHI:p:EHIbigm2}, but with some key differences. In the proof of Proposition \ref{EHI:p:EHIbigm2} we showed that
\begin{equation*}
    K^{\inner{r_0}, >0}_{B(0, r)}(x, w) \asymp K^{\inner{r_0}, >0}_{B(0, r)}(-x, w) \qquad\mbox{and}\qquad K^{\inner{r_0}, 0}_{B(0, r)}(x, w) \asymp K^{\inner{r_0}, 0}_{B(0, r)}(-x, w)
\end{equation*}
and by taking the sum concluded that $K_{B(0, r)}(x, w) \asymp K_{B(0, r)}(-x, w)$. In the proof of Proposition \ref{EHI:p:EHIsmallm2}, we instead show that
\begin{equation*}
    K^{\inner{r_0}, >0}_{B(0, r)}(x, w) \asymp K^{\inner{r_0}, >0}_{B(0, r)}(-x, w) \qquad\mbox{and}\qquad \left\{\begin{matrix}
        K^{\inner{r_0}, 0}_{B(0, r)}(x, w) \lesssim K^{\inner{r_0}, >0}_{B(0, r)}(x, w), \\
        \\
        K^{\inner{r_0}, 0}_{B(0, r)}(-x, w) \lesssim K^{\inner{r_0}, >0}_{B(0, r)}(-x, w).
    \end{matrix}\right.
\end{equation*}
In other words, rather than show that both $K^{\inner{r_0}, >0}_{B(0, r)}(\cdot, w)$ and $K^{\inner{r_0}, 0}_{B(0, r)}(\cdot, w)$ are comparable between $x$ and $-x$, we show that $K^{\inner{r_0}, >0}_{B(0, r)}(\cdot, w)$ is the leading term of both these decompositions and it is therefore enough to simply check that $K^{\inner{r_0}, >0}_{B(0, r)}(x, w) \asymp K^{\inner{r_0}, >0}_{B(0, r)}(-x, w)$.

Note also that in Proposition \ref{EHI:p:EHIsmallm2}, $L_{\ref{EHI:p:EHIsmallm2}}$ depends on $C$, $c_j$, $d$, and $\eps$, whereas in Proposition \ref{EHI:p:EHIbigm2}, $L_{\ref{EHI:l:G0lowerbound}}$ is just a universal constant. This will require us to be just a bit more careful at times in tracking the implicit constants in inequalities involving ``$\lesssim$," ``$\asymp$," and ``$\gtrsim$."

\begin{proof}[Proof of Proposition \ref{EHI:p:EHIsmallm2}]
    Throughout this proof, we will use the notation ``$\lesssim$," ``$\asymp$," and ``$\gtrsim$." The implied constant may depend on $M$, $\eps$, $C$, and universal constants, but nothing else.

    Let $k:=\ceil{1+\eps^{-1}}$. This way, $k \in \mathbb{N}$ and $\frac{1}{k-1} \leq \eps$. For all $r \in E$, by Lemma \ref{EHI:l:m2lambdaComparable}, \eqref{EHI:e:EHIsmallm2Condition}, and Lemma \ref{EHI:l:SmallestOfThreeQuantities},
    \begin{align*}
        m_2\inner{r} &\asymp m_2(r) \lesssim (r^{d+2} j(r))^\eps (r^2 \lambda(r))^{1-\eps} \notag\\
        &\lesssim (r^{d+2} j(r))^{\frac{1}{k-1}} (r^2 \lambda(r))^{\frac{k-2}{k-1}} \notag \\
        &\asymp (r^{d+2} j(r))^{\frac{1}{k-1}} (r^2 \lambda\inner{r})^{\frac{k-2}{k-1}} \label{EHI:e:EHIsmallm2-0} \\
        &\lesssim r^2 \lambda\inner{r}. \notag
    \end{align*}
    It will help to make reference to these implied constants, so let $C_1$ and $C_2$ be constants such that
    \begin{equation} \label{EHI:e:EHIsmallm2-1}
        m_2\inner{r} \leq C_1 \left( \frac{m_2\inner{r_0}}{r_0^{d+2} j(r_0)} \right)^{\frac{1}{k-1}} \left( \frac{m_2\inner{r_0}}{r_0^2 \lambda\inner{r_0}}\right)^{\frac{k-2}{k-1}} \leq C_2 r^2 \lambda\inner{r} \qquad\mbox{for all $r \in E$}.
    \end{equation}
    Let $L_{\ref{EHI:p:EHIsmallm2}}:=\max\{\sqrt{4C_2}, 4k, 100\}$, suppose $L \geq L_{\ref{EHI:p:EHIsmallm2}}$, and let $S := \bigcup_{r \in E} [100Lr, 100MLr]$. We would like to prove $\EHI(r \in S)$.
    By Proposition \ref{EHI:p:EHIpoisson=EHI}, it is enough to prove $\EHIPoisson(r \in S)$.
    We must show that there exists a $\kappa \in (0, 1)$ (which may depend on $X$) such that for all $r \in S$, $x \in B(0, \kappa r)$, and $w \in B(0, r)^c$, we have $K_{B(0, r)}(x, w) \asymp K_{B(0, r)}(-x, w)$. Let
    \begin{equation*}
        \kappa := \frac{1}{32}.
    \end{equation*}
    Fix $r \in S$, $x \in B(0, \kappa r)$, and $w \in B(0, r)^c$. By the definition of $S$, $r$ satisfies $100L r_0 \leq r \leq 100M L r_0$ for some $r_0 \in E$. We will consider the small/flat decomposition at $r_0$.

    Choose $\delta>0$ small enough that $x, -x \in B(0, (1-\delta)r)$. Since the ratio $r/r_0$ is bounded above and below (with the bounds depending only on $M$ and $C$), by applying Corollary \ref{EHI:c:K1(x)=K1(-x)} to $B(0, (1-\delta)r) \Subset B(0, r)$ and taking the limit as $\delta \to 0^+$, we already have
    \begin{equation*}
        K^{\inner{r_0}, >0}_{B(0, r)}(x, w) \asymp K^{\inner{r_0}, >0}_{B(0, r)}(-x, w).
    \end{equation*}
    We claim that it is now sufficient to simply prove that
    \begin{equation}\label{EHI:e:EHIsmallm2-2}
        K^{\inner{r_0}, 0}_{B(0, r)}(x, w) \lesssim K^{\inner{r_0}, >0}_{B(0, r)}(x, w) \qquad\mbox{and}\qquad K^{\inner{r_0}, 0}_{B(0, r)}(-x, w) \lesssim K^{\inner{r_0}, >0}_{B(0, r)}(-x, w).
    \end{equation}
    Indeed, if we had \eqref{EHI:e:EHIsmallm2-2}, then we would have
    \begin{align*}
        K_{B(0, r)}(x, w) &= K^{\inner{r_0}, 0}_{B(0, r)}(x, w) + K^{\inner{r_0}, >0}_{B(0, r)}(x, w) \\
        &\asymp K^{\inner{r_0}, >0}_{B(0, r)}(x, w) \\
        &\asymp K^{\inner{r_0}, >0}_{B(0, r)}(-x, w) \\
        &\asymp K^{\inner{r_0}, 0}_{B(0, r)}(-x, w) + K^{\inner{r_0}, >0}_{B(0, r)}(-x, w)\\
        &= K_{B(0, r)}(-x, w).
    \end{align*}
    Therefore, all that remains is to prove \eqref{EHI:e:EHIsmallm2-2}.
    
    By definition,
    \begin{equation*}
        K^{\inner{r_0}, 0}_{B(0, r)}(x, w) := \int_{z \in B(0, r)} G^{\inner{r_0}, 0}_{B(0, r)}(x, z) j^{\inner{r_0}}(|w-z|) \dee{z}
    \end{equation*}
    and by \eqref{EHI:e:(K>0)>K1},
    \begin{equation*}
        K^{\inner{r_0}, >0}_{B(0, r)}(x, w) \geq \int_{z \in B(0, r)} G^{\inner{r_0}, 1}_{B(0, r)}(x, z) \hat{j}^{\inner{r_0}}(|w-z|) \dee{z}.
    \end{equation*}
    Therefore, it is enough to show that
    \begin{align}\label{EHI:e:EHIsmallm2NewGoal1}
    \begin{split}
        G^{\inner{r_0}, 0}_{B(0, r)}(x, z) j^{\inner{r_0}}(|w-z|) &\lesssim G^{\inner{r_0}, 1}_{B(0, r)}(x, z) j^{\inner{r_0}}(|w-z|), \\
        G^{\inner{r_0}, 0}_{B(0, r)}(-x, z) j^{\inner{r_0}}(|w-z|) &\lesssim G^{\inner{r_0}, 1}_{B(0, r)}(-x, z) j^{\inner{r_0}}(|w-z|)
    \end{split}
    \end{align}
    for all $z \in B(0, r)$. If $z \in B(0, r-r_0)$, then $|z-w| \geq r_0$, so $j^{\inner{r_0}}(|w-z|)=0$, and subsequently sides of \eqref{EHI:e:EHIsmallm2NewGoal1} are $0$. Thus, it is enough to show that
    \begin{equation} \label{EHI:e:EHIsmallm2NewGoal2}
        \left.\begin{matrix}
            G^{\inner{r_0}, 0}_{B(0, r)}(x, z) \lesssim G^{\inner{r_0}, 1}_{B(0, r)}(x, z) \\
            \\
            G^{\inner{r_0}, 0}_{B(0, r)}(-x, z) \lesssim G^{\inner{r_0}, 1}_{B(0, r)}(-x, z)
        \end{matrix} \right\}
        \qquad\mbox{for all $z \in B(0, r) \setminus B(0, r-r_0)$}.
    \end{equation}
    
    Fix $z \in B(0, r) \setminus B(0, r-r_0)$.
    Let
    \begin{align*}
        r_1 &:= \kappa r = r/32, \\
        r_2 &:= r/16, \\
        r_3 &= r/8, \\
        r_4 &= r/4, \\
        \mbox{and} \quad r_5 &:= r/2.
    \end{align*}
    Let $A_1$ and $A_2$ denote the annuli
    \begin{equation*}
        A_1 := B(0, r_3) \setminus B(0, r_2), \qquad\mbox{and}\qquad A_2 := B(0, r_5) \setminus B(0, r_4).
    \end{equation*}
    Then $0<r_0 < r_1 < r_2 < r_3 < r_4 < r_5 < r-r_0 < r$ and $r_0 \asymp r_1 \asymp r_2 \asymp r_3 \asymp r_4 \asymp r_5 \asymp r$, with the ratio between any two of them bounded above by $100ML$. The pairwise difference between any two of these radii is also on the order of $r$.

    Recall that $x \in B(0, \kappa r) = B(0, r_1)$ and $z \in B(0, r) \setminus B(0, r-r_0) \subseteq B(0, r) \setminus B(0, r_5)$. We would like to show that $G^{\inner{r_0}, 0}_{B(0, r)}(x, z) \lesssim G^{\inner{r_0}, 1}_{B(0, r)}(x, z)$. Let us put a lower bound on $ G^{\inner{r_0}, 1}_{B(0, r)}(x, z)$ and an upper bound on $ G^{\inner{r_0}, 0}_{B(0, r)}(x, z)$, and then compare these two bounds.

    By Lemma \ref{EHI:l:G1lowerbound},
    \begin{equation}\label{EHI:e:EHIsmallm2-3}
        G^{\inner{r_0}, 1}_{B(0, r)}(x, z) \geq \frac12 j(2r) \E_x \left[ \tau_{B(0, r)} \wedge T^{\inner{r_0}} \right] \cdot \E_z \left[ \tau_{B(0, r)} \wedge T^{\inner{r_0}} \right].
    \end{equation}
    We need lower bounds for both of the expectations that appear in \eqref{EHI:e:EHIsmallm2-3}.
    Since $B(0, r) \supseteq B(x, r-r_1)$, by Lemma \ref{EHI:l:PE},
    \begin{equation}\label{EHI:e:EHIsmallm2-4}
        \E_x \left[ \tau_{B(0, r)} \wedge T^{\inner{r_0}} \right] \geq \E_x \left[ \tau_{B(x, r-r_1)} \wedge T^{\inner{r_0}} \right] \geq e^{-1} \left(1-\frac{2m_2\inner{r_0}}{(r-r_1)^2 \lambda\inner{r_0}} \right) \frac{1}{\lambda\inner{r_0}}.
    \end{equation}
    Unfortunately, there is no radius $s$ such that $B(z, s)$ is guaranteed to be contained in $B(0, r)$, since $z$ could be very close to the boundary of $B(0, r)$. Thus, we can not directly use the same trick to get a lower bound on $\E_z \left[ \tau_{B(0, r)} \wedge T^{\inner{r_0}} \right]$. Instead, let us condition on reaching $A_2$ from $z$ before $T{\inner{r_0}}$. For all $y \in A_2$, the distance from $y$ to the boundary of $B(0, r)$ is at least $r-r_5$. Therefore, by the Strong Markov property, the memoryless property of $T^{\inner{r_0}}$, and Lemma \ref{EHI:l:PE},
    \begin{align}\label{EHI:e:EHIsmallm2-5}
        \E_z \left[ \tau_{B(0, r)} \wedge T^{\inner{r_0}} \right] &\geq \P_z(T_{A_2} < T^{\inner{r_0}}) \cdot \left(\inf_{y \in A_2} \E_y \left[ \tau_{B(0, r)} \wedge T^{\inner{r_0}} \right] \right) \notag\\
        &\geq \P_z(T_{A_2} < T^{\inner{r_0}}) \cdot \left(\inf_{y \in A_2} \E_y \left[ \tau_{B(y, r/6)} \wedge T^{\inner{r_0}} \right] \right) \notag\\
        &\geq \P_z(T_{A_2} < T^{\inner{r_0}}) \cdot e^{-1} \left( 1- \frac{2m_2\inner{r_0}}{(r-r_5)^2 \lambda\inner{r_0}} \right) \frac{1}{\lambda\inner{r_0}}.
    \end{align}
    By substituting \eqref{EHI:e:EHIsmallm2-4} and \eqref{EHI:e:EHIsmallm2-5} into \eqref{EHI:e:EHIsmallm2-3},
    \begin{equation}\label{EHI:e:EHIsmallm2-6}
        G^{\inner{r_0}, 1}_{B(0, r)}(x, z) \geq \frac12 e^{-2} \left(1-\frac{2m_2\inner{r_0}}{(r-r_1)^2 \lambda\inner{r_0}} \right) \left( 1- \frac{2m_2\inner{r_0}}{(r-r_5)^2 \lambda\inner{r_0}} \right) \P_z(T_{A_2} < T^{\inner{r_0}}) \frac{j(2r)}{(\lambda\inner{r_0})^2}.
    \end{equation}
    We claim that both of the terms in parentheses in \eqref{EHI:e:EHIsmallm2-6} are at least $1/2$.
    Indeed, since $r-r_1 \geq r-r_5 \geq r/2$, $r \geq Lr_0 \geq 4\sqrt{C_2} r_0$, and $r_0 \in E$, by \eqref{EHI:e:EHIsmallm2-1},
    \begin{align*}
        \left( 1-\frac{2m_2\inner{r_0}}{(r-r_1)^2 \lambda\inner{r_0}} \right) , \left(1-\frac{2m_2\inner{r_0}}{(r-r_5)^2 \lambda\inner{r_0}} \right) &\geq 1 - \frac{8m_2\inner{r_0}}{r^2 \lambda\inner{r_0}} \\
        &\geq 1-\frac{m_2\inner{r_0}}{2C_2 r_0^2 \lambda\inner{r_0}} \\
        &\geq 1-\frac{C_2}{2C_2}=\frac12.
    \end{align*}
    Therefore,
    \begin{equation}\label{EHI:e:EHIsmallm2-7}
        G^{\inner{r_0}, 1}_{B(0, r)}(x, z) \geq \frac18 e^{-2} \P_z(T_{A_2} < T^{\inner{r_0}}) \frac{j(2r)}{(\lambda\inner{r_0})^2}.
    \end{equation}

    Now let us establish an upper bound on $G^{\inner{r_0}, 0}_{B(0, r)}(x, z)$. Since $B(0, r) \subseteq B(x, r+r_1)$, by Lemma \ref{EHI:l:G0upperbound}, for all $y \in A_1$ we have
    \begin{align}\label{EHI:e:EHIsmallm2-8}
        G^{\inner{r_0}, 0}_{B(0, r)}(x, y) &\leq G^{\inner{r_0}, 0}_{B(x, r+r_1)}(x, y) = G^{\inner{r_0}, 0}_{B(0, r+r_1)}(0, y-x) \notag\\
        &\lesssim \frac{(r+r_1)^2 |y-x|^{-d}}{m_2\inner{r_0}} \leq \frac{(r+r_1)^2 (r_2-r_1)^{-d}}{m_2\inner{r_0}} \asymp \frac{r^{2-d}}{m_2\inner{r_0}}.
    \end{align}
    In order for the process to travel from $x$ to $z$ before time $T^{\inner{r_0}}$, it must pass through $A_1$ first, because $A_1$ has a thickness greater than $r_0$ and the process only takes jumps of magnitude $r_0$ or smaller before time $T^{\inner{r_0}}$. Thus, by the probabilistic interpretation of $G^{\inner{r_0}, 0}_{B(0, r)}$, the symmetry of $G^{\inner{r_0}, 0}_{B(0, r)}$, and \eqref{EHI:e:EHIsmallm2-8},
    \begin{align}\label{EHI:e:EHIsmallm2-9}
        G^{\inner{r_0}, 0}_{B(0, r)}(x, z) &= G^{\inner{r_0}, 0}_{B(0, r)}(x, z) = \E_z \left[ \indicatorWithSetBrackets{T_{A_1} < T^{\inner{r_0}}} G^{\inner{r_0}, 0}_{B(0, r)} \left(X_{T_{A_1}}, x\right) \right] \notag\\
        &= \E_z \left[ \indicatorWithSetBrackets{T_{A_1} < T^{\inner{r_0}}} G^{\inner{r_0}, 0}_{B(0, r)} \left(x, X_{T_{A_1}} \right) \right] \notag\\
        &\lesssim \P_z \left( T_{A_1} < T^{\inner{r_0}} \right) \frac{r^{2-d}}{m_2\inner{r_0}}.
    \end{align}
    In order for the process to travel from $z$ to $A_1$ before time $T^{\inner{r_0}}$, two things must happen: first it must reach $A_2$ (since $A_2$ also has a thickness greater than $r_0$), and then it must travel from $A_2$ to $A_1$. In order to travel from $A_2$ to $A_1$, it must cover a distance of at least $r_4-r_3$ before $T^{\inner{r_0}}$. Since $L \geq 4k$, we have $r_4-r_3 = r/8 \geq Lr_0/8 \geq 2k r_0 \geq (2k-1)r_0$. Therefore, by the Strong Markov property and Lemma \ref{EHI:l:CloseToSubmultiplicative} (applied to $X^{\inner{r_0}}$ and $T^{\inner{r_0}}$),
    \begin{align}\label{EHI:e:EHIsmallm2-10}
        \P_z \left( T_{A_1} < T^{\inner{r_0}} \right) &\leq \P_z \left( T_{A_2} < T^{\inner{r_0}} \right) \cdot \left( \sup_{y \in A_2} \P_y \left( T_{A_1} < T^{\inner{r_0}} \right) \right) \notag\\
        &\leq \P_z \left( T_{A_2} < T^{\inner{r_0}} \right) \cdot \left( \sup_{y \in A_2} \P_y \left( \tau_{B(y, 2k r_0)} < T^{\inner{r_0}} \right) \right) \notag\\
        &= \P_z \left( T_{A_2} < T^{\inner{r_0}} \right) \cdot \P_0 \left( \tau_{B(0, (2k-1) r_0)} < T^{\inner{r_0}} \right) \notag\\
        &\leq \P_z \left( T_{A_2} < T^{\inner{r_0}} \right) \left( \P_0 \left( \tau_{B(0, r_0)} < T^{\inner{r_0}} \right)\right)^k.
    \end{align}
    Recall that $T^{\inner{r_0}}$ has Exponential($\lambda\inner{r_0}$) distribution, and therefore its mean is $1/\lambda\inner{r_0}$.
    By Lemma \ref{EHI:l:FactorOf2ForExits}, Markov's inequality, and Lemma \ref{EHI:l:m2isvariance} (applied to $X^{\inner{r_0}}$),
    \begin{align}\label{EHI:e:EHIsmallm2-11}
        \P_0 \left( \tau_{B(0, r_0)} < T^{\inner{r_0}} \right) &\leq 2\P_0 \left( |X_{T^{\inner{r_0}}}| \geq r_0 \right) \leq 2r_0^{-2} \E_0 \left[ \left| X_{T^{\inner{r_0}}} \right|^2 \right] = \frac{2m_2\inner{r_0}}{r_0^2 \lambda\inner{r_0}}.
    \end{align}
    By plugging \eqref{EHI:e:EHIsmallm2-10} and then \eqref{EHI:e:EHIsmallm2-11} into \eqref{EHI:e:EHIsmallm2-9},
    \begin{equation}\label{EHI:e:EHIsmallm2-12}
        G^{\inner{r_0}, 0}_{B(0, r)}(x, z) \lesssim \P_z \left( T_{A_2} < T^{\inner{r_0}} \right) \frac{r^{2-d}}{m_2\inner{r_0}} \left( \frac{m_2\inner{r_0}}{r_0^2 \lambda\inner{r_0}} \right)^k.
    \end{equation}

    We now have a lower bound on $G^{\inner{r_0}, 1}_{B(0, r)}(x, z)$ in \eqref{EHI:e:EHIsmallm2-7} and an upper bound on $G^{\inner{r_0}, 0}_{B(0, r)}(x, z)$ in \eqref{EHI:e:EHIsmallm2-12}. By comparing these two bounds,
    \begin{align*}
        G^{\inner{r_0}, 0}_{B(0, r)}(x, z) &\lesssim \frac{r^{2-d}}{m_2\inner{r_0}} \left( \frac{m_2\inner{r_0}}{r_0^2 \lambda\inner{r_0}} \right)^k \frac{(\lambda\inner{r_0})^2}{j(2r)} G^{\inner{r_0}, 1}_{B(0, r)}(x, z) \\
        &= \left(\frac{r}{r_0}\right)^4 \frac{m_2\inner{r_0}}{r^{d+2} j(2r)} \left( \frac{m_2\inner{r_0}}{r_0^2 \lambda\inner{r_0}} \right)^{k-2} G^{\inner{r_0}, 1}_{B(0, r)}(x, z).
    \end{align*}
    Recall that $r_0 \asymp r$. By \eqref{EHI:e:regularJumps}, the fact that $r_0 \asymp r$ implies $j(2r) \asymp j(r_0)$. Thus,
    \begin{equation*}
        G^{\inner{r_0}, 0}_{B(0, r)}(x, z) \lesssim \frac{m_2\inner{r_0}}{r_0^{d+2} j(r_0)} \left( \frac{m_2\inner{r_0}}{r_0^2 \lambda\inner{r_0}} \right)^{k-2} G^{\inner{r_0}, 1}_{B(0, r)}(x, z).
    \end{equation*}
    By \eqref{EHI:e:EHIsmallm2-1},
    \begin{equation*}
        \frac{m_2\inner{r_0}}{r_0^{d+2} j(r_0)} \left( \frac{m_2\inner{r_0}}{r_0^2 \lambda\inner{r_0}} \right)^{k-2} \leq C_1^{k-1} \asymp 1.
    \end{equation*}
    Thus,
    \begin{equation*}
        G^{\inner{r_0}, 0}_{B(0, r)}(x, z) \lesssim G^{\inner{r_0}, 1}_{B(0, r)}(x, z).
    \end{equation*}
    The same holds for $-x$ by the same argument. This completes the proof of \eqref{EHI:e:EHIsmallm2NewGoal2}, which was all we needed to prove.
\end{proof}

\begin{proof}[Proof of Theorem \ref{EHI:t:main}]
   Suppose $M, C, c>0$ and $\eps \in (0, 1]$. Let $X$ be an isotropic unimodal L\'{e}vy jump process satisfying \eqref{EHI:e:regularJumps}. Let $(r_n)$ be a sequence of positive numbers such that $M^{-1} \leq \frac{r_{n+1}}{r_n} \leq M$ for all $n$, and each $n$ satisfies at least one of \eqref{EHI:e:bigm2} and \eqref{EHI:e:smallm2}. Let $E_1$ be the set of all terms $r_n$ in the sequence satisfying \eqref{EHI:e:bigm2}, and let $E_2$ be the set of all $r_n$ satisfying \eqref{EHI:e:smallm2}. Let
   \begin{equation*}
       L:=\max\{L_{\ref{EHI:l:G0lowerbound}}, L_{\ref{EHI:p:EHIsmallm2}}(C, c_j, d, \eps) \}.
   \end{equation*}
   Let
   \begin{equation*}
       S_1 := \bigcup_{r \in E_1} [100Lr, 100MLr] \qquad\mbox{and}\qquad S_2 := \bigcup_{r \in E} [100Lr, 100MLr].
   \end{equation*}
   By Proposition \ref{EHI:p:EHIbigm2}, $X$ satifies $\EHI(r \in S_1)$. By Proposition \ref{EHI:p:EHIsmallm2}, $X$ satisfies $\EHI(r \in S_2)$. Thus, $X$ satisfies $\EHI(r \in S_1 \cup S_2)$.

   If $r_n \to 0^+$, then $S_1 \cup S_2 \supseteq (0, 100MLr_1]$, so $X$ satisfies $\EHI(r \leq 100MLr_1)$. By Proposition \ref{EHI:p:EHI[a,b]forfree}, this means $X$ satisfies $\EHI(r \leq 1)$.
   Similarly, if $r_n \to \infty$, then $S_1 \cup S_2 \supseteq [Lr_1, \infty)$, so $X$ satisfies $\EHI(r \geq L r_1)$ and therefore (by Proposition \ref{EHI:p:EHI[a,b]forfree}) also satisfies $\EHI(r \geq 1)$.
\end{proof}

\subsection{Proof of Corollary \ref{EHI:c:mainNoM2}}

In this subsection, we prove Corollary \ref{EHI:c:mainNoM2}. Parts (a) and (b) are each a simple calculation.

\begin{proof}[Proof of Corollary \ref{EHI:c:mainNoM2}(a)]
    Let $X$ be an isotropic unimodal L\'{e}vy jump process with jump kernel $j(r)$, satisfying \eqref{EHI:e:regularJumps}. Suppose there exist $c, R,\alpha>0$ such that
        \begin{equation}\label{EHI:e:mainNoM2a1}
            \frac{r^{d+2} j(r)}{s^{d+2} j(s)} \geq c \left(\frac{r}{s}\right)^\alpha \qquad\mbox{for all $0 < s \leq r \leq R$},
        \end{equation}
        For all $r \in (0, R]$, by converting the formula for $m_2(r)$ into polar coordinates and then using \eqref{EHI:e:mainNoM2a1} to get an upper bound on $j(s)$ for all $s<r$, 
        \begin{align*}
            m_2(r) &= \int_{B(0, r)} |x|^2 j(|x|) \dee{x} \asymp \int_0^r s^{d+1} j(s) \dee{s} \lesssim \int_0^r s^{-1+\alpha} r^{d+2-\alpha} j(r) \dee{s} \asymp r^{d+2}j(r) \qquad\mbox{as $r \to 0^+$}.
        \end{align*}
        Thus, by Corollary \ref{EHI:c:main}(c) with $\eps=1$, $X$ satisfies $\EHI(r \leq 1)$.
\end{proof}

\begin{proof}[Proof of Corollary \ref{EHI:c:mainNoM2}(b)]
    Let $X$ be an isotropic unimodal L\'{e}vy jump process with jump kernel $j(r)$, satisfying \eqref{EHI:e:regularJumps}. Suppose $j(r) \gtrsim r^{-d-2}$ as $r \to \infty$, and there exist $c, R,\alpha>0$ such that
    \begin{equation}\label{EHI:e:mainNoM2b1}
        \frac{r^{d+2} j(r)}{s^{d+2} j(s)} \geq c \left(\frac{r}{s}\right)^\alpha \qquad\mbox{for all $R \leq s \leq r$}.
    \end{equation}
    Then, for all $r \geq R$,
    \begin{equation}\label{EHI:e:mainNoM2b2}
        m_2(r) = \int_{B(0, R)} |x|^2 j(|x|) \dee{x} + \int_{B(0, r) \setminus B(0, R)} |x|^2 j(|x|) \dee{x}
    \end{equation}
    We handle each of the two integrals in the right-hand side of \eqref{EHI:e:mainNoM2b2} separately. The first of these integrals, $\int_{B(0, R)} |x|^2 j(|x|) \dee{x}$, remains constant as $r \to \infty$. For the second integral, by converting to polar coordinates and using \eqref{EHI:e:mainNoM2b1} to get an upper bound on $j(s)$ for $s<r$,
    \begin{equation*}
        \int_{B(0, r) \setminus B(0, R)} |x|^2 j(|x|) \dee{x} \asymp \int_R^r s^{d+1} j(s) \dee{s} \lesssim \int_R^r s^{-1+\alpha} r^{d+2-\alpha} j(r) \dee{s} \asymp r^{d+2}j(r) \qquad\mbox{as $r \to \infty$}.
    \end{equation*}
    Since $j(r) \gtrsim r^{-d-2}$ as $r \to \infty$, the second integral dominates the first one. Thus, $m_2(r) \lesssim r^{d+2} j(r)$ as $r \to \infty$. By Corollary \ref{EHI:c:main}(d) with $\eps=1$, $X$ satisfies $\EHI(r \leq 1)$.
\end{proof}

To handle the logarithmic term in parts (c) and (d) of Corollary \ref{EHI:c:mainNoM2}, we use the following lemmas.

\begin{lemma} \label{EHI:l:SpecialIntegralAsymptotic}
    For all $\alpha, \beta>0$ and $\delta \in (0, 1)$, we have
    \begin{equation*}
        \int_r^\delta s^{-1-\alpha} (\log(s^{-1}))^{-1-\beta} \dee{s} \asymp r^{-\alpha} \left(\log(r^{-1}) \right)^{-1-\beta} \qquad\mbox{as $r \to 0^+$}.
    \end{equation*}
    (The constants implicit in $\asymp$ depend on $\alpha$, $\beta$, and $\delta$.)
\end{lemma}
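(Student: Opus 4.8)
The plan is to estimate the integral by direct comparison, both from above and from below, exploiting that the integrand is concentrated near the lower endpoint $s=r$, where $s^{-1-\alpha}$ blows up while the logarithmic factor $(\log(s^{-1}))^{-1-\beta}$ varies only by a bounded multiplicative amount over a single dyadic block.

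First I would handle the lower bound. Once $r<\delta/2$, the interval $[r,2r]$ lies inside $(0,\delta)$, and there $s^{-1-\alpha}\geq (2r)^{-1-\alpha}$; moreover $\log(s^{-1})$ is decreasing in $s$, so $\log(s^{-1})\leq\log(r^{-1})$, and since $-1-\beta<0$ this gives $(\log(s^{-1}))^{-1-\beta}\geq(\log(r^{-1}))^{-1-\beta}$. Multiplying by the length $r$ of the block yields
\begin{equation*}
    \int_r^\delta s^{-1-\alpha}(\log(s^{-1}))^{-1-\beta}\dee{s}\;\geq\;\int_r^{2r} s^{-1-\alpha}(\log(s^{-1}))^{-1-\beta}\dee{s}\;\geq\;2^{-1-\alpha}\,r^{-\alpha}\left(\log(r^{-1})\right)^{-1-\beta},
\end{equation*}
so the integral is $\gtrsim r^{-\alpha}(\log(r^{-1}))^{-1-\beta}$.

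For the upper bound I would split $[r,\delta]=[r,\sqrt r]\cup[\sqrt r,\delta]$, which is legitimate once $r<\delta^2$. On $[r,\sqrt r]$ we have $\log(s^{-1})\geq\log(r^{-1/2})=\tfrac12\log(r^{-1})$, hence $(\log(s^{-1}))^{-1-\beta}\leq 2^{1+\beta}(\log(r^{-1}))^{-1-\beta}$; pulling out this constant and using $\int_r^{\sqrt r}s^{-1-\alpha}\dee{s}\leq\alpha^{-1}r^{-\alpha}$ shows this piece is $\lesssim r^{-\alpha}(\log(r^{-1}))^{-1-\beta}$. On $[\sqrt r,\delta]$, since $s\leq\delta<1$ we have $\log(s^{-1})\geq\log(\delta^{-1})>0$, so $(\log(s^{-1}))^{-1-\beta}\leq(\log(\delta^{-1}))^{-1-\beta}$, a constant, and $\int_{\sqrt r}^\delta s^{-1-\alpha}\dee{s}\leq\alpha^{-1}r^{-\alpha/2}$, so this piece is $\lesssim r^{-\alpha/2}$. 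It then remains to observe that $r^{-\alpha/2}\lesssim r^{-\alpha}(\log(r^{-1}))^{-1-\beta}$ as $r\to0^+$, which is equivalent to $r^{\alpha/2}(\log(r^{-1}))^{1+\beta}\to 0$, a standard fact since powers beat logarithms. Adding the two pieces gives the matching upper bound, and combining with the lower bound completes the proof.

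There is no genuine obstacle here; the only point requiring a moment of care is the choice of the split point $\sqrt r$ — any $r^\theta$ with $0<\theta<1$ works, since then $\log(s^{-1})\geq\theta\log(r^{-1})$ on $[r,r^\theta]$ and the leftover tail $[r^\theta,\delta]$ contributes $O(r^{-\theta\alpha})$, which is $o\!\left(r^{-\alpha}(\log(r^{-1}))^{-1-\beta}\right)$. That last comparison, showing the tail is negligible, is really the only substantive ingredient.
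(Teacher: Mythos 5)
Your proof is correct and follows essentially the same strategy as the paper's: split the integral at a point corresponding to $s=\sqrt r$ so that the logarithmic factor is within a constant of $(\log(r^{-1}))^{-1-\beta}$ on the piece near the singularity, then show the remaining tail is negligible because powers beat logarithms. The only cosmetic difference is that the paper first substitutes $u=\log(s^{-1})$ and splits the $u$-integral at $\tfrac12\log(r^{-1})$, whereas you work directly with the $s$-integral.
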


\begin{proof}
    Fix $\alpha$, $\beta$, and $\delta$, and let $f(r) := \int_r^\delta s^{-1-\alpha} (\log(s^{-1}))^{-1-\beta} \dee{s}$ for all $0 < r \leq \delta$. The substitution $u=\log(s^{-1})$ yields
    \begin{equation*}
        f(r) = \int_{\log(\delta^{-1})}^{\log(r^{-1})} e^{\alpha u} u^{-1-\beta} \dee{u}.
    \end{equation*}
    For $r \leq \delta^2$, we can split this integral in two as follows:
    \begin{equation}\label{EHI:e:SpecialIntegralAsymptotic1}
        f(r) = \int_{\log(\delta^{-1})}^{\frac12\log(r^{-1})} e^{\alpha u} u^{-1-\beta} \dee{u} + \int_{\frac12\log(r^{-1})}^{\log(r^{-1})} e^{\alpha u} u^{-1-\beta} \dee{u}.
    \end{equation}
    For the first integral in \eqref{EHI:e:SpecialIntegralAsymptotic1}, the $e^{\alpha u}$ term is always at most $r^{-\alpha/2}$, so
    \begin{align*}%\label{EHI:e:SpecialIntegralAsymptotic2}
        \int_{\log(\delta^{-1})}^{\frac12\log(r^{-1})} e^{\alpha u} u^{-1-\beta} \dee{u} &\leq r^{-\alpha/2} \int_{\log(\delta^{-1})}^{\frac12\log(r^{-1})} u^{-1-\beta} \dee{u} \notag\\
        &= \frac{r^{-\alpha/2}}{\beta} \left( \left(\log(\delta^{-1}) \right)^{-\beta} - \left(\frac12 \log(r^{-1}) \right)^{-\beta} \right) \notag\\
        &\asymp r^{-\alpha/2} \qquad\mbox{as $r \to 0^+$}.
    \end{align*}
    For the second integral in \eqref{EHI:e:SpecialIntegralAsymptotic1}, $u^{-1-\beta}$ is always between $\left(\log(r^{-1}) \right)^{-1-\beta}$ and $2^{1+\beta}\left(\log(r^{-1}) \right)^{-1-\beta}$, so
        \begin{align*}%\label{EHI:e:SpecialIntegralAsymptotic3}
            \int_{\frac12\log(r^{-1})}^{\log(r^{-1})} e^{\alpha u} u^{-1-\beta} \dee{u} &\asymp \left(\log(r^{-1}) \right)^{-1-\beta} \int_{\frac12\log(r^{-1})}^{\log(r^{-1})} e^{\alpha u} \dee{u} \notag\\
            &= \frac{\left(\log(r^{-1}) \right)^{-1-\beta}}{\alpha} \left( r^{-\alpha}-r^{-\alpha/2} \right) \notag\\
            &\asymp r^{-\alpha} \left(\log(r^{-1}) \right)^{-1-\beta} \qquad\mbox{as $r \to 0^+$}.
        \end{align*}
        The second integral dominates the first one as $r \to 0^+$. Thus,
        \begin{equation*}
            f(r) \asymp r^{-\alpha} \left(\log(r^{-1}) \right)^{-1-\beta} \qquad\mbox{as $r \to 0^+$}.
        \end{equation*}
\end{proof}

\begin{lemma} \label{EHI:l:SpecialIntegralAsymptotic'}
    For all $\alpha \geq 1$ and $\beta>0$, we have
    \begin{equation*}
        \int_r^\infty s^{-2-\alpha} (\log s)^{1+\beta} \dee{s} \asymp r^{-1-\alpha} (\log r)^{1+\beta} \qquad\mbox{as $r \to \infty$}.
    \end{equation*}
    (The constants implicit in $\asymp$ depend on $\alpha$ and $\beta$.)
\end{lemma}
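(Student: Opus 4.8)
The plan is to mirror the proof of Lemma~\ref{EHI:l:SpecialIntegralAsymptotic}, working at infinity instead of at the origin. First I would make the substitution $u = \log s$ (valid once $r > 1$, so that $\log s > 0$ on the whole range of integration and the non-integer power $(\log s)^{1+\beta}$ is unambiguous), which gives
\begin{equation*}
    g(r) := \int_r^\infty s^{-2-\alpha}(\log s)^{1+\beta}\dee{s} = \int_{\log r}^\infty e^{-(1+\alpha)u}\, u^{1+\beta}\dee{u},
\end{equation*}
an incomplete-Gamma-type integral. Since $\alpha \geq 1$ (in fact any $\alpha > -1$ suffices), the exponential decay makes $g(r)$ finite for every $r > 1$.

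Next, for $r$ large I would split the range at $2\log r$, writing $g(r)$ as the sum of $\int_{\log r}^{2\log r} e^{-(1+\alpha)u} u^{1+\beta}\dee{u}$ and $\int_{2\log r}^\infty e^{-(1+\alpha)u} u^{1+\beta}\dee{u}$. On the first range $u^{1+\beta}$ lies between $(\log r)^{1+\beta}$ and $2^{1+\beta}(\log r)^{1+\beta}$, so that piece is comparable to
\begin{equation*}
    (\log r)^{1+\beta}\int_{\log r}^{2\log r} e^{-(1+\alpha)u}\dee{u} = \frac{(\log r)^{1+\beta}}{1+\alpha}\bigl(r^{-1-\alpha} - r^{-2-2\alpha}\bigr) \asymp r^{-1-\alpha}(\log r)^{1+\beta}
\end{equation*}
as $r \to \infty$ (the $r^{-2-2\alpha}$ term being negligible since $\alpha > -1$). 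For the tail, on $[2\log r, \infty)$ I would use $u \geq 2\log r$ to write $e^{-(1+\alpha)u} = e^{-(1+\alpha)u/2}\,e^{-(1+\alpha)u/2} \leq r^{-1-\alpha}\, e^{-(1+\alpha)u/2}$, bounding the tail by $r^{-1-\alpha}\int_0^\infty e^{-(1+\alpha)u/2} u^{1+\beta}\dee{u}$, a fixed constant (a Gamma value, finite because $\beta > 0$) times $r^{-1-\alpha}$; since $r^{-1-\alpha} \ll r^{-1-\alpha}(\log r)^{1+\beta}$ as $r\to\infty$, the tail is dominated by the first piece. Combining, $g(r) \asymp r^{-1-\alpha}(\log r)^{1+\beta}$, with constants depending only on $\alpha$ and $\beta$.

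There is no genuine obstacle here; the proof is entirely routine. The only points requiring a line of care are restricting attention to $r$ large enough that $\log r > 0$ (harmless, as the statement is an asymptotic as $r \to \infty$), and confirming that the implicit constants introduced in the two estimates depend on nothing besides $\alpha$ and $\beta$.
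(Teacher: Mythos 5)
Your proof is correct and follows essentially the same route as the paper: the substitution $u = \log s$, the split at $2\log r$, and the same estimate on the first piece. The only divergence is in the tail: you factor the exponential to pull out a constant Gamma integral, giving $O(r^{-1-\alpha})$ for any $\alpha > -1$, whereas the paper uses the cruder bound $u^{1+\beta} < e^u$ and therefore needs $\alpha \geq 1$ to compare $r^{-2\alpha}$ with the main term; your version is marginally cleaner but the argument is the same.
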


\begin{proof}
    The proof is very similar to that of Lemma \ref{EHI:l:SpecialIntegralAsymptotic}. Fix $\alpha$ and $\beta$, and let $f(r) := \int_r^\infty s^{-2-\alpha} (\log s)^{1+\beta} \dee{s}$. The substitution $u=\log s$ yields
    \begin{align}\label{EHI:e:SpecialIntegralAsymptotic'1}
        f(r) = \int_{\log r}^\infty e^{-(1+\alpha) u} u^{1+\beta} \dee{u} = \int_{\log r}^{2\log r} e^{-(1+\alpha) u} u^{1+\beta} \dee{u} + \int_{2\log r}^\infty e^{-(1+\alpha) u} u^{1+\beta} \dee{u}.
    \end{align}
    We handle each of the two integrals on the right-hand side of \eqref{EHI:e:SpecialIntegralAsymptotic'1} separately. In the first integral, the $u^{1+\beta}$ term is always between $(\log r)^{1+\beta}$ and $2^{1+\beta} (\log r)^{1+\beta}$, so
    \begin{align*}
        \int_{\log r}^{2\log r} e^{-(1+\alpha) u} u^{1+\beta} \dee{u} &\asymp (\log r)^{1+\beta} \int_{\log r}^{2 \log r} e^{-(1+\alpha) u} \dee{u} \\
        &= \frac{(\log r)^{1+\beta}}{1+\alpha} \left( r^{-1-\alpha} - r^{-2-2\alpha} \right) \\
        &\asymp r^{-1-\alpha} (\log r)^{1+\beta} \qquad\mbox{as $r \to \infty$}.
    \end{align*}
    For the second integral, for large enough values of $r$ we have $u^{1+\beta} < e^u$, so
    \begin{align*}
        \int_{2\log r}^\infty e^{-(1+\alpha) u} u^{1+\beta} \dee{u} &\leq \int_{2 \log r}^\infty e^{-\alpha u} \dee{u} = \frac{1}{\alpha} r^{-2\alpha}.
    \end{align*}
    Since $\alpha \geq 1$, the first integral dominates the second one. Thus,
    \begin{equation*}
        f(r) \asymp r^{-1-\alpha} (\log r)^{1+\beta} \qquad\mbox{as $r \to \infty$}.
    \end{equation*}
\end{proof}

In the proof of parts (c) and (d) of Corollary \ref{EHI:c:mainNoM2}, it will be convenient to switch the roles of $r$ and $s$. (In the statement, it was convenient to have $s<r$, to draw an analogy with the complementary conditions from parts (a) and (b), but in the proof it is more convenient to have $r<s$ so that we can draw conclusions about $r^{d+2}$, $m_2(r)$, and $r^2 \lambda(r)$ rather than $s^{d+2}$, $m_2(s)$, and $s^2 \lambda(s)$.) 

\begin{proof}[Proof of Corollary \ref{EHI:c:mainNoM2}(c)]
    Let $X$ be an isotropic unimodal L\'{e}vy jump process with jump kernel $j(r)$, satisfying \eqref{EHI:e:regularJumps}. Suppose $j(r) \gtrsim r^{-d}$ as $r \to 0^+$, and there exist $C, R,\alpha>0$ such that
    \begin{equation}\label{EHI:e:mainNoM2c1}
        \frac{s^{d+2} j(s)}{r^{d+2} j(r)} \leq C \left( \frac{\log(r^{-1})}{\log(s^{-1})} \right)^{1+\alpha} \qquad\mbox{for all $0 < r \leq s \leq R$}.
    \end{equation}
    For all $r \in (0, R]$,
    \begin{equation}\label{EHI:e:mainNoM2c2}
        \lambda(r) = \int_{B(0, R)^c} j(|x|) \dee{x} + \int_{B(0, R) \setminus B(0, r)} j(|x|) \dee{x}
    \end{equation}
    We handle each of the two integrals in the right-hand side of \eqref{EHI:e:mainNoM2c2} separately. The first of these integrals, $\int_{B(0, R)^c} j(|x|) \dee{x}$, remains constant as $r \to 0^+$. By converting the second integral into polar coordinates, then using \eqref{EHI:e:mainNoM2c1} to get an upper bound on $j(s)$ for $s>r$, and then using Lemma \ref{EHI:l:SpecialIntegralAsymptotic} to evaluate the remaining integral,
    \begin{align*}
        \int_{B(0, R) \setminus B(0, r)} j(|x|) \dee{x} &\asymp \int_r^R s^{d-1} j(s) \dee{s} \\
        &\lesssim \left(\log(r^{-1}) \right)^{1+\alpha} r^{d+2} j(r) \int_r^R s^{-3} \left(\log(s^{-1}) \right)^{-1-\alpha} \dee{s} \\
        &\asymp \left(\log(r^{-1}) \right)^{1+\alpha} r^{d+2} j(r) \cdot r^{-2} \left(\log(r^{-1}) \right)^{-1-\alpha} \\
        &=r^d j(r) \qquad\mbox{as $r \to 0^+$}.
    \end{align*}
    Since $j(r) \gtrsim r^{-d}$ as $r \to 0^+$, the second integral dominates the first one. Thus, $\lambda(r) \lesssim r^d j(r)$. Equivalently, $r^2\lambda(r) \lesssim r^{d+2} j(r)$. By Lemma \ref{EHI:l:SmallestOfThreeQuantities}, we also have $r^2 \lambda(r) \gtrsim r^{d+2} j(r)$ and $m_2(r) \gtrsim r^{d+2} j(r)$. Putting these all together, $m_2(r) \gtrsim r^{d+2} j(r) \asymp r^2 \lambda(r)$ as $r \to 0^+$. Therefore, by Corollary \ref{EHI:c:main}(a), $X$ satisfies $\EHI(r \leq 1)$.
\end{proof}

\begin{proof}[Proof of Corollary \ref{EHI:c:mainNoM2}(d)]
    Let $X$ be an isotropic unimodal L\'{e}vy jump process with jump kernel $j(r)$, satisfying \eqref{EHI:e:regularJumps}. Suppose there exist $C, R,\alpha>0$ such that
    \begin{equation}\label{EHI:e:mainNoM2d1}
        \frac{s^{d+2} j(s)}{r^{d+2} j(r)} \leq C \left( \frac{\log s}{\log r} \right)^{1+\alpha} \qquad\mbox{for all $R \leq r \leq s$}.
    \end{equation}
    For $r \geq R$, by converting the integral for $\lambda(r)$ into polar coordinates, then using \eqref{EHI:e:mainNoM2d1} to get an upper bound on $j(s)$ for $s>r$, and then using Lemma \ref{EHI:l:SpecialIntegralAsymptotic'} to evaluate the remaining integral,
    \begin{align*}
        \lambda(r) &=  \int_{B(0, r)^c} j(|x|) \dee{x} \asymp \int_r^\infty s^{d-1} j(s) \dee{s} \\
        &\lesssim (\log r)^{1+\alpha} r^{d+2} j(r) \int_r^\infty s^{-3} (\log s)^{1+\alpha} \dee{s} \\
        &= (\log r)^{1+\alpha} r^{d+2} j(r) \cdot r^{-2} (\log r)^{1+\alpha} \\
        &\asymp r^d j(r) \qquad\mbox{as $r \to \infty$}.
    \end{align*}
    Equivalently, $r^2\lambda(r) \lesssim r^{d+2} j(r)$. By Lemma \ref{EHI:l:SmallestOfThreeQuantities}, we also have $r^2 \lambda(r) \gtrsim r^{d+2} j(r)$ and $m_2(r) \gtrsim r^{d+2} j(r)$. Putting these all together, $m_2(r) \gtrsim r^{d+2} j(r) \asymp r^2 \lambda(r)$ as $r \to \infty$. Therefore, by Corollary \ref{EHI:c:main}(b), $X$ satisfies $\EHI(r \geq 1)$.
\end{proof}

\subsection{Calculating the jump kernel in Examples \ref{EHI:ex:geometric}-\ref{EHI:ex:iterated}}\label{EHI:ss:IteratedRelativistic}

In this subsection, we perform the calculations to asymptotically approximate $j(r)$ for small $r$ for the geometric stable, relativistic geometric stable, and iterated geometric stable processes. The first of these gives us an alternate proof of $\EHI(r \leq 1)$ in Example \ref{EHI:ex:geometric} The latter two of these are necessary to prove the claims we made in Examples \ref{EHI:ex:relativistic} and \ref{EHI:ex:iterated}.

Kim and Mimica \cite[Proposition 4.2]{KM} show that for any subordinate Brownian motion satisfying \ref{EHI:KM:A2} and \ref{EHI:KM:A3},
    \begin{equation*}
        j(r) \asymp r^{-d-2} \phi'(r^{-2}).
    \end{equation*}
For the geometric stable process,
    \begin{equation*}
        \phi'(\lambda) = \frac{\beta}{2} \cdot \frac{\lambda^{\beta/2-1}}{1+\lambda^{\beta/2}} \asymp \lambda^{-1} \qquad\mbox{as $\lambda \to \infty$},
    \end{equation*}
    so
    \begin{equation*} \label{EHI:e:GeometricStableJumpKernel}
        j(r) \asymp r^{-d-2} \phi'(r^{-2}) \asymp r^{-d} \qquad\mbox{as $r \to 0^+$}.
    \end{equation*}

    For the relativistic geometric stable process,
    \begin{equation*}
        \phi'(\lambda) = \frac{2}{\beta} \cdot \frac{(\lambda+m^{\beta/2})^{2/\beta-1}}{1+(\lambda+m^{\beta/2})^{2/\beta}-m} \asymp \lambda^{-1} \qquad\mbox{as $\lambda \to \infty$},
    \end{equation*}
    so
    \begin{equation*}\label{EHI:e:RelativisticGeometricStableJumpKernel}
        j(r) \asymp r^{-d} \qquad\mbox{as $r \to 0^+$},
    \end{equation*}
    similarly.

    For the iterated geometric stable process, $\phi_n$ is the $n$-fold composition of $\phi_1(\lambda)=\log(1+\lambda^{\beta/2})$. For all $n$, let $\log^{(n)}$ denote the $n$-fold composition of $\log$. We will show by induction that
    \begin{equation*}
        \phi_n(\lambda) \asymp \log^{(n)}(\lambda), \qquad \phi_n'(\lambda) \asymp \frac{1}{\lambda \cdot \log\lambda \cdot \log^{(2)}\lambda \cdot \cdots \cdot \log^{(n-1)}(\lambda) }, \qquad \mbox{as $\lambda\to0^+$},
    \end{equation*}
    where the constants implicit in $\asymp$ may depend on $n$. For the base case, see the calculation we have already done for the geometric stable process.
    For the inductive step, if the result holds for $n-1$, then
    \begin{align*}
        \phi_n(\lambda) &= \log(1+(\phi_{n-1}(\lambda))^{\beta/2}) \\
        &\asymp \log(\phi_{n-1}(\lambda)) \qquad\mbox{(since $\phi_{n-1}(\lambda) \to \infty$ when $\lambda \to \infty$)} \\
        & \asymp \log^{(n)}(\lambda) \qquad\mbox{(by the inductive hypothesis)}
    \end{align*}
    and
    \begin{align*}
        \phi_n'(\lambda) &= \phi_1'(\phi_{n-1}(\lambda)) \phi_{n-1}'(\lambda) \\
        &= \frac{1}{\phi_{n-1}(\lambda)} \phi_{n-1}'(\lambda) \qquad\mbox{(since $\phi_{n-1}(\lambda) \to \infty$ when $\lambda\to\infty$)}\\
        &\asymp \frac{1}{\log^{(n-1)}(\lambda)} \cdot \frac{1}{\lambda \cdot \log\lambda \cdot \log^{(2)}\lambda \cdot \cdots \cdot \log^{(n-2)}(\lambda) } \quad \mbox{as $\lambda\to0^+$} \qquad\mbox{(by the inductive hypothesis)}.
    \end{align*}
    Thus, if the subordinator has Laplace exponent $\phi_n$, then
    \begin{align*}%\label{EHI:e:IteratedGeometricStableJumpKernel}
        j(r) &\asymp \frac{r^{-d-2}}{r^{-2} \log(r^{-2}) \log^{(2)}(r^{-2}) \cdots \log^{(n-1)}(r^{-2})} = \frac{r^{-d}}{\log(r^{-2}) \log^{(2)}(r^{-2}) \cdots \log^{(n-1)}(r^{-2})} \qquad \mbox{as $r\to 0^+$}.
    \end{align*}
    
\subsection{\texorpdfstring{Example \ref{EHI:ex:NoA3example}: a subordinate Brownian that does not satisfy \ref{EHI:KM:A3} but satisfies $\EHI(r \leq 1)$}{TEXT}} \label{EHI:ss:NoA3example}

In this subsection, we prove all the claims we made in Example \ref{EHI:ex:NoA3example}.
Let $X=(X_t)_{t \geq 0} = (B_{S_t})_{t \geq 0}$ be a subordinate Brownian motion on $\R^d$, such that the subordinator $S=(S_t)_{t \geq 0}$ has drift $0$ and a L\'{e}vy measure satisfying $\mu(dt) \asymp t^{-2} \left(\log(t^{-1}) \right)^{-2} \dee{t}$ as $t \to 0^+$.
We show that the Laplace exponent of $S$ satisfies $\phi(\lambda) \gtrsim \lambda/\log\lambda$.
We show that $X$ does not satisfy Kim and Mimica's condition \ref{EHI:KM:A3}. Finally, we asymptotically approximate the jump kernel of $X$, its truncated second moments, and its tails, to show that $m_2(r) \gg r^2 \lambda(r) \asymp r^{d+2} j(r)$ as $r \to 0^+$, and therefore $X$ satisfies $\EHI(r \leq 1)$ by Corollary \ref{EHI:c:main}(a).

\begin{lemma}\label{EHI:l:NoA3example1}
    Let $S$ be a subordinator with Laplace exponent $\phi$, drift $\gamma=0$, and L\'{e}vy measure $\mu$.
    If $\mu(dt) \gtrsim t^{-2} (\log(t^{-1}))^{-2} \dee{t}$ as $t \to 0^+$, then
    \begin{equation*}
        \phi(\lambda) \gtrsim \frac{\lambda}{\log \lambda} \qquad \mbox{as $\lambda \to \infty$}.
    \end{equation*}
\end{lemma}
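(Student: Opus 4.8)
The plan is to work directly from the L\'evy--Khintchine representation \eqref{EHI:e:driftAndLevy} of $\phi$. Since the drift of $S$ is $\gamma=0$, we have $\phi(\lambda)=\int_{(0,\infty)}(1-e^{-\lambda t})\,\mu(\mathrm{d}t)$. The hypothesis $\mu(\mathrm{d}t)\gtrsim t^{-2}(\log(t^{-1}))^{-2}\dee{t}$ as $t\to0^+$ means there are constants $c>0$ and $\delta\in(0,1)$ with $\mu(\mathrm{d}t)\ge c\,t^{-2}(\log(t^{-1}))^{-2}\dee{t}$ on $(0,\delta)$; discarding the mass of $\mu$ outside $(0,\delta)$ then gives, for all $\lambda>0$,
\begin{equation*}
    \phi(\lambda)\ \ge\ c\int_0^{\delta}(1-e^{-\lambda t})\,t^{-2}\big(\log(t^{-1})\big)^{-2}\dee{t}.
\end{equation*}

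The key point, and the only place any thought is needed, is \emph{which} part of this integral to keep: the logarithmic factor $1/\log\lambda$ comes from the range of \textit{small} $t$, namely $t\in(0,1/\lambda)$, not from $t\gtrsim 1/\lambda$ where $1-e^{-\lambda t}$ is merely of order one. (On $[1/\lambda,\delta]$ one would only get $\phi(\lambda)\gtrsim\lambda(\log\lambda)^{-2}$, by a routine change of variables of the type in Lemma \ref{EHI:l:SpecialIntegralAsymptotic}, so the small-$t$ piece is genuinely the dominant contribution and must not be thrown away.) Accordingly I would restrict to $\lambda$ large enough that $1/\lambda<\delta$ and keep only $t\in(0,1/\lambda)$:
\begin{equation*}
    \phi(\lambda)\ \ge\ c\int_0^{1/\lambda}(1-e^{-\lambda t})\,t^{-2}\big(\log(t^{-1})\big)^{-2}\dee{t}.
\end{equation*}

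To finish, I would use that on $(0,1/\lambda)$ the argument $\lambda t$ lies in $[0,1]$, where the concave function $s\mapsto 1-e^{-s}$ (which vanishes at $0$) lies above the chord from $(0,0)$ to $(1,1-e^{-1})$, so $1-e^{-\lambda t}\ge(1-e^{-1})\lambda t$. Substituting this bound and then setting $u=\log(t^{-1})$ (so that $\mathrm{d}u=-\mathrm{d}t/t$, with $u=\log\lambda$ at $t=1/\lambda$ and $u\to\infty$ as $t\to0^+$) yields
\begin{equation*}
    \phi(\lambda)\ \ge\ c\,(1-e^{-1})\,\lambda\int_0^{1/\lambda}\frac{\dee{t}}{t\,(\log(t^{-1}))^{2}}\ =\ c\,(1-e^{-1})\,\lambda\int_{\log\lambda}^{\infty}\frac{\mathrm{d}u}{u^{2}}\ =\ \frac{c\,(1-e^{-1})}{\log\lambda}\,\lambda,
\end{equation*}
for all $\lambda$ with $1/\lambda<\delta$, which is exactly $\phi(\lambda)\gtrsim\lambda/\log\lambda$ as $\lambda\to\infty$. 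There is no real obstacle: once one sees that the contribution of $t\lesssim 1/\lambda$ is the relevant one, the inequality $1-e^{-\lambda t}\ge(1-e^{-1})\lambda t$ and the change of variables $u=\log(t^{-1})$ close the argument immediately.
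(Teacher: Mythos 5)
Your proof is correct and follows essentially the same route as the paper's: discard $t>1/\lambda$, replace $1-e^{-\lambda t}$ by a constant multiple of $\lambda t$ on $(0,1/\lambda)$, and substitute $u=\log(t^{-1})$. The side remark about why the small-$t$ range is the right one to keep is accurate but not part of the paper's argument.
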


\begin{proof}
    Choose a $\delta>0$ such that $\mu(\mathrm{d}t) \gtrsim t^{-2} (\log(t^{-1}))^{-2} \dee{t}$ for $t \in (0, \delta]$. Fix $\lambda>\delta^{-1}$.
    If we start with \eqref{EHI:e:driftAndLevy}, use the fact that $1-e^{-\lambda t} \asymp \lambda t$ for $\lambda t \leq 1$, and ignore all $t > 1/\lambda$, we see that
    \begin{equation*}
        \phi(\lambda) \geq \int_{(0, 1/\lambda]} (1-e^{-\lambda t}) \, \mu(\mathrm{d}t) \asymp \int_{(0, 1/\lambda]} \lambda t \, \mu(\mathrm{d}t) \gtrsim \int_0^{1/\lambda} \lambda t^{-1} (\log(t^{-1}))^{-2} \dee{t} \qquad\mbox{as $\lambda \to \infty$}.
    \end{equation*}
    By substituting $u=\log(t^{-1})$ and $\dee{u}=-t^{-1}\dee{t}$, we obtain
    \begin{equation*}
        \phi(\lambda) \gtrsim \lambda \int_{\log\lambda}^\infty u^{-2}\dee{u} = \frac{\lambda}{\log\lambda} \qquad \mbox{as $\lambda \to \infty$}.
    \end{equation*}
\end{proof}

\begin{lemma}\label{EHI:l:NoA3example2}
    Let $S$ be a subordinator with Laplace exponent $\phi$. If $\phi(\lambda) \gtrsim \lambda/\log\lambda$ as $\lambda\to\infty$, then \ref{EHI:KM:A3} is not satisfied.
\end{lemma}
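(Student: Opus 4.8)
The plan is to argue by contradiction: assume condition \ref{EHI:KM:A3} holds and deduce from it an upper bound on the growth of $\phi$ that is incompatible with $\phi(\lambda) \gtrsim \lambda/\log\lambda$. Concretely, I expect \ref{EHI:KM:A3} to force $\phi(\lambda)$ to grow no faster than $\lambda^{1-\delta}$ (when $\delta < 1$) or than $\log\lambda$ (when $\delta = 1$), and both of these are $o(\lambda/\log\lambda)$.

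First I would fix the constants $\sigma>0$, $\lambda_0>0$, $\delta \in (0,1]$ witnessing \ref{EHI:KM:A3}, specialize the inequality $\phi'(\lambda x)/\phi'(\lambda) \leq \sigma x^{-\delta}$ to $\lambda = \lambda_0$, and substitute $\mu = \lambda_0 x$. This gives $\phi'(\mu) \leq C_0 \mu^{-\delta}$ for all $\mu \geq \lambda_0$, where $C_0 := \sigma\,\phi'(\lambda_0)\,\lambda_0^{\delta}$ is finite because $\phi \in C^\infty(0,\infty)$. (If $\phi'(\lambda_0)=0$, then since $\phi'$ is non-negative and non-increasing we get $\phi' \equiv 0$ on $[\lambda_0,\infty)$, so $\phi$ is bounded there, already contradicting $\phi(\lambda)\gtrsim\lambda/\log\lambda$; so I may assume $C_0>0$.) Then I integrate: for $\lambda \geq \lambda_0$,
\begin{equation*}
    \phi(\lambda) = \phi(\lambda_0) + \int_{\lambda_0}^\lambda \phi'(\mu)\dee{\mu} \leq \phi(\lambda_0) + C_0 \int_{\lambda_0}^\lambda \mu^{-\delta}\dee{\mu}.
\end{equation*}
When $\delta<1$ the last integral is at most $\lambda^{1-\delta}/(1-\delta)$, so $\phi(\lambda) = O(\lambda^{1-\delta})$ as $\lambda\to\infty$; when $\delta=1$ it equals $\log(\lambda/\lambda_0)$, so $\phi(\lambda) = O(\log\lambda)$.

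Finally I would note that in either case $\phi(\lambda) = o(\lambda/\log\lambda)$ as $\lambda\to\infty$: indeed $(\lambda/\log\lambda)/\lambda^{1-\delta} = \lambda^{\delta}/\log\lambda \to \infty$ for any $\delta>0$, and $\log\lambda = o(\lambda/\log\lambda)$ trivially. This contradicts the hypothesis $\phi(\lambda)\gtrsim\lambda/\log\lambda$, completing the proof. I do not anticipate any real obstacle here; the only points requiring a word of care are the $\delta=1$ case (logarithmic rather than polynomial bound) and the degenerate sub-case $\phi'(\lambda_0)=0$, both handled as above.
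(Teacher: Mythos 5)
Your proof is correct and follows essentially the same route as the paper's: specialize \ref{EHI:KM:A3} at $\lambda = \lambda_0$ to bound $\phi'(\mu) \lesssim \mu^{-\delta}$, integrate via the fundamental theorem of calculus to get $\phi(\lambda) = O(\lambda^{1-\delta})$ (or $O(\log\lambda)$ when $\delta=1$), and contradict $\phi(\lambda)\gtrsim\lambda/\log\lambda$. The extra care you take with the degenerate case $\phi'(\lambda_0)=0$ is a small, harmless addition that the paper omits.
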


\begin{proof}
    Assume for the sake of contradiction that we have both $\phi(\lambda) \gtrsim \lambda/\log\lambda$ and \ref{EHI:KM:A3}. Let $\delta$, $\sigma$, and $\lambda_0$ be the constants from \ref{EHI:KM:A3}.
    First, assume $\delta \in (0, 1)$. Then, for all $\lambda_1 \geq \lambda_0$, by \ref{EHI:KM:A3} and the Fundamental theorem of calculus, we have
    \begin{align*}
        \phi(\lambda_1) - \phi(\lambda_0) &= \int_{\lambda_0}^{\lambda_1} \phi'(\lambda) \dee{\lambda} \\
        &\leq \int_{\lambda_0}^{\lambda_1} \sigma \left( \frac{\lambda}{\lambda_0} \right)^{-\delta} \phi'(\lambda_0) \dee{\lambda} \\
        &= \frac{\sigma \lambda_0^\delta \phi'(\lambda_0)}{1-\delta} (\lambda_1^{1-\delta} - \lambda_0^{1-\delta}).
    \end{align*}
    Therefore, for large $\lambda$, we have both $\phi(\lambda) \lesssim \lambda^{1-\delta}$ and $\phi(\lambda) \gtrsim \lambda/\log\lambda$, a contradiction.
    If $\delta=1$, a similar calculation shows that for large $\lambda$, we have both $\phi(\lambda) \lesssim \log\lambda$ and $\phi(\lambda) \gtrsim \lambda/\log\lambda$, another contradiction.
\end{proof}

\begin{lemma}\label{EHI:l:NoA3example3}
    Let $S$ be a subordinator with Laplace exponent $\phi$, drift $\gamma=0$, and L\'{e}vy measure $\mu$. Let $X=(X_t)_{t \geq 0} = (B_{S_t})_{t \geq 0}$ be a subordinate Brownian motion with subordinator $S$, and let $j(r)$ be the jump kernel of $X$.
    If $\mu(dt) \asymp t^{-2} (\log(t^{-1}))^{-2} \dee{t}$ as $t \to 0^+$, then
    \begin{equation*}
        j(r) \asymp r^{-d-2} \left(\log(r^{-1}) \right)^{-2} \qquad \mbox{as $r \to 0^+$}.
    \end{equation*}
\end{lemma}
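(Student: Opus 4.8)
The plan is to estimate $j(r)$ directly from the representation \eqref{EHI:e:JumpKernelIntermsofMu}, localizing the integral near $t=0$ where the asymptotics of $\mu$ are known. Fix $\delta\in(0,1)$ small enough that $\mu(\mathrm{d}t)\asymp t^{-2}(\log(t^{-1}))^{-2}\dee{t}$ on $(0,\delta)$, and split
\begin{equation*}
    j(r) = \int_{(0,\delta)} (4\pi t)^{-d/2}\exp(-\tfrac{r^2}{4t})\,\mu(\mathrm{d}t) + \int_{[\delta,\infty)} (4\pi t)^{-d/2}\exp(-\tfrac{r^2}{4t})\,\mu(\mathrm{d}t).
\end{equation*}
The second term is at most $(4\pi\delta)^{-d/2}\mu([\delta,\infty))$, a finite constant by the L\'evy--Khintchine condition \eqref{EHI:e:LevyKhintchineSubordinators}, hence negligible compared to $r^{-d-2}(\log(r^{-1}))^{-2}$ as $r\to0^+$ and may be discarded. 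By the comparison for $\mu$, the first term is $\asymp J(r):=\int_0^\delta (4\pi t)^{-d/2}\exp(-\tfrac{r^2}{4t})\,t^{-2}(\log(t^{-1}))^{-2}\dee{t}$, so it suffices to prove $J(r)\asymp r^{-d-2}(\log(r^{-1}))^{-2}$. Note that this cannot be shortcut via the Kim--Mimica estimate $j(r)\asymp r^{-d-2}\phi'(r^{-2})$, since condition \ref{EHI:KM:A3} fails here by Lemmas \ref{EHI:l:NoA3example1} and \ref{EHI:l:NoA3example2}; the direct computation is genuinely needed.

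The second step is the substitution $u=r^2/(4t)$, which is routine and yields
\begin{equation*}
    J(r) = 4\pi^{-d/2}\,r^{-d-2}\int_{r^2/(4\delta)}^{\infty} u^{d/2}e^{-u}\left(\log(4u)+2\log(r^{-1})\right)^{-2}\dee{u}.
\end{equation*}
Thus the whole lemma reduces to the claim that $\int_{r^2/(4\delta)}^{\infty} u^{d/2}e^{-u}(\log(4u)+2\log(r^{-1}))^{-2}\dee{u}\asymp(\log(r^{-1}))^{-2}$ as $r\to0^+$.

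To establish this claim I would break the range of $u$ into three pieces. On the \emph{bulk} $u\in[r/4,\,r^{-2}/4]$, the quantity $\log(4u)+2\log(r^{-1})$ is increasing in $u$ and lies between $\log(r^{-1})$ (at $u=r/4$) and $4\log(r^{-1})$ (at $u=r^{-2}/4$), so $(\log(4u)+2\log(r^{-1}))^{-2}\asymp(\log(r^{-1}))^{-2}$ throughout, while $\int_{r/4}^{r^{-2}/4}u^{d/2}e^{-u}\dee{u}\to\Gamma(d/2+1)\in(0,\infty)$; hence the bulk contributes $\asymp(\log(r^{-1}))^{-2}$, giving both the upper and lower bounds. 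On the \emph{left tail} $u\in[r^2/(4\delta),\,r/4]$, again $\log(4u)+2\log(r^{-1})$ is increasing and equals $\log(\delta^{-1})>0$ at the left endpoint, so the log factor is bounded by $(\log(\delta^{-1}))^{-2}$, and $u^{d/2}\le(r/4)^{d/2}$, making this piece $O(r^{1+d/2})$, negligible. On the \emph{right tail} $u\ge r^{-2}/4$, the integral $\int_{r^{-2}/4}^{\infty}u^{d/2}e^{-u}\dee{u}$ is super-exponentially small in $r^{-2}$ while the log factor is $\le(\log(r^{-1}))^{-2}\le1$ for small $r$, so this piece is also negligible. Adding the three pieces proves the claim, and therefore $j(r)\asymp J(r)\asymp r^{-d-2}(\log(r^{-1}))^{-2}$.

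The main obstacle is purely the bookkeeping in the three-region estimate: identifying the $u$-range on which the slowly varying factor $(\log(4u)+2\log(r^{-1}))^{-2}$ is genuinely comparable to $(\log(r^{-1}))^{-2}$, and verifying that the $u$-mass of $u^{d/2}e^{-u}$ outside that range --- near $u=0$, where the lower limit $r^2/(4\delta)$ comes into play, and near $u=\infty$ --- is too small to affect the asymptotics. Everything else is elementary calculus together with the change of variables.
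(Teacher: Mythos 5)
Your proof is correct, and the first two steps coincide with the paper's: split $j(r)$ into an integral over $(0,\delta)$ and one over $[\delta,\infty)$, bound the second by a constant, and perform the substitution $u=r^2/(4t)$ in the first. Where you diverge is the final estimate. The paper normalizes the transformed integral by dividing by $r^{-d-2}(\log(r^{-1}))^{-2}$, argues that the integrand converges pointwise, and invokes the Dominated Convergence Theorem to conclude that the ratio tends to a positive constant. Your argument instead splits the $u$-range into a left tail $[r^2/(4\delta),r/4]$, a bulk $[r/4,r^{-2}/4]$, and a right tail $[r^{-2}/4,\infty)$, checks that the slowly varying factor $(\log(4u)+2\log(r^{-1}))^{-2}$ is comparable to $(\log(r^{-1}))^{-2}$ on the bulk where $\int u^{d/2}e^{-u}\,\dee{u}$ concentrates, and shows both tails are $o((\log(r^{-1}))^{-2})$. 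This is a genuinely different, more elementary route: it yields explicit two-sided bounds directly rather than passing to a limit. It also sidesteps a subtlety in the DCT route: a uniform (in $r$) dominating function is not immediate near $u\approx r^2/(4\delta)$, where $\log(e+4ur^{-2})$ is only bounded below by a constant while the normalization $\log(r^{-1})$ grows; your left-tail estimate handles precisely this region cleanly by showing its total mass is $O(r^{1+d/2})$. So the two proofs purchase slightly different things: the paper's DCT is shorter to state and identifies the exact limiting constant, whereas your three-region bound is more self-contained and makes the source of the $(\log(r^{-1}))^{-2}$ asymptotics visibly localized to $u\asymp 1$. Both are valid; yours is the more robust presentation of the computation.
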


\begin{proof}
    Note that $\log(t^{-1}) \asymp \log(e+t^{-1})$ as $t \to 0^+$. Therefore, the hypothesis of this lemma can be written alternatively as $\mu(dt) \asymp t^{-2} (\log(e+t^{-1}))^{-2} \dee{t}$ as $t \to 0^+$. Choose a $\delta>0$ such that $\mu(dt) \asymp t^{-2} (\log(e+t^{-1}))^{-2} \dee{t}$ on $(0, \delta]$. (Now that the logarithmic term is bounded for large $t$, the following proof will be simpler.)
    By \eqref{EHI:e:JumpKernelIntermsofMu}, we have $j(r)=\mathcal{A}(r) + \mathcal{B}(r)$, where
    \begin{align}
        \mathcal{A}(r) &:= \int_{(0, \delta]} (4\pi t)^{-d/2} e^{-\frac{r^2}{4t}} \, \mu(\mathrm{d}t) \asymp \int_0^\delta t^{-d/2} e^{-\frac{r^2}{4t}} t^{-2} (\log(e+t^{-1}))^{-2} \dee{t}, \label{EHI:e:A(r)integral}\\
        \mathcal{B}(r) &:= \int_{(\delta, \infty)} (4\pi t)^{-d/2} e^{-\frac{r^2}{4t}} \, \mu(\mathrm{d}t). \notag\\
    \end{align}
    We will show that $\mathcal{A}(r) \asymp r^{-d-2} \left(\log(r^{-1}) \right)^{-2}$ and $\mathcal{B}(r) \lesssim 1$ for $r \to 0^+$. Therefore, $\mathcal{A}(r)$ is the dominant term, and $j(r) \asymp r^{-d-2} \left(\log(r^{-1}) \right)^{-2}$ as $r \to 0^+$.

    Let us first prove a constant upper bound on $\mathcal{B}(r)$. For all $r>0$,
    \begin{equation*}
        \mathcal{B}(r) \leq \mu((\delta, \infty)) (4\pi\delta)^{-d/2} \lesssim 1.
    \end{equation*}
    For $\mathcal{A}(r)$, let us use the substitution $u=\frac{r^2}{4t}$. Then $\mathrm{d}u = -\frac{r^2}{4} t^{-2} \dee{t}$ and $t=\frac{r^2}{4u}$. Applying this substitution to \eqref{EHI:e:A(r)integral} yields
    \begin{align*}
        \mathcal{A}(r) &\asymp \int_{\frac{r^2}{4\delta}}^\infty \left( \frac{r^2}{u} \right)^{-d/2} e^{-u} \left(\log(e+4ur^{-2}) \right)^{-2} r^{-2} \dee{u} \\
        &= r^{-d-2} \int_0^\infty \indicatorWithSetBrackets{u>\frac{r^2}{4\delta}} \cdot u^{d/2} e^{-u} \left(\log(e+4ur^{-2}) \right)^{-2} \dee{u}.
    \end{align*}
    Dividing both sides by $r^{-d-2}\left(\log(r^{-1}) \right)^{-2}$,
    \begin{equation} \label{EHI:e:A(r)beforeDCT}
        \frac{\mathcal{A}(r)}{r^{-d-2} \left(\log(r^{-1}) \right)^{-2}} \asymp \int_0^\infty \indicatorWithSetBrackets{u>\frac{r^2}{4\delta}} \cdot u^{d/2} e^{-u} \left( \frac{\log(e+4u r^{-2})}{\left(\log(r^{-1}) \right)^{-2}} \right)^{-2} \dee{u}.
    \end{equation}
    Fix $r \in (0, 1)$.
    We would like to apply the Dominated convergence theorem to compute the limit of the right-hand side of \eqref{EHI:e:A(r)beforeDCT} as $r \to 0^+$. First, we must show that the integrand is bounded by an integrable function. (This integrable function is allowed to depend on $r$, since $r$ is fixed at the time we apply the Dominated convergence theorem.) For all $u>0$, because $\log(e+4u r^{-2}) \geq 1$, we have
    \begin{equation*}
        \left( \frac{\log(e+4u r^{-2})}{\left(\log(r^{-1}) \right)^{-2}} \right)^{-2} \leq \left(\log(r^{-1}) \right)^2.
    \end{equation*}
    Thus, the integrand of right-hand side of \eqref{EHI:e:A(r)beforeDCT} is bounded above by the function
    \begin{equation*}
        u \mapsto u^{d/2} e^{-u} \left(\log(r^{-1}) \right)^2,
    \end{equation*}
    which is integrable, since
    \begin{equation*}
        \int_0^\infty u^{d/2} e^{-u} \left(\log(r^{-1}) \right)^2 \dee{u} = \left(\log(r^{-1}) \right)^2 \int_0^\infty u^{d/2} e^{-u} \dee{u} < \infty.
    \end{equation*}
    Therefore, we can apply the Dominated convergence theorem to the right-hand side of \eqref{EHI:e:A(r)beforeDCT}. The result is
    \begin{equation} \label{EHI:e:ApplyDctA(r)}
        \lim_{r \to 0^+} \int_0^\infty \indicatorWithSetBrackets{u>\frac{r^2}{4\delta}} \cdot u^{d/2} e^{-u} \left( \frac{\log(e+4u r^{-2})}{\left(\log(r^{-1}) \right)^{-2}} \right)^{-2} \dee{u} = \int_0^\infty u^{d/2} e^{-u} \cdot 2^{-2} \dee{u}.
    \end{equation}
    The right-hand side of \eqref{EHI:e:ApplyDctA(r)} is a constant depending only on $d$. Therefore, by \eqref{EHI:e:A(r)beforeDCT} and \eqref{EHI:e:ApplyDctA(r)},
    \begin{equation*}
        \frac{\mathcal{A}(r)}{r^{-d-2} \left(\log(r^{-1}) \right)^{-2}} \asymp 1 \qquad \mbox{as $r \to 0^+$}.
    \end{equation*}
    Since $\mathcal{A}(r) \asymp r^{-d-2} \left(\log(r^{-1}) \right)^{-2}, r \to 0^+$ and $\mathcal{B}(r) \lesssim 1$, we have
    \begin{equation*}
        j(r) = \mathcal{A}(r)+\mathcal{B}(r) \asymp r^{-d-2} \left(\log(r^{-1}) \right)^{-2}\qquad \mbox{as $r \to 0^+$}.
    \end{equation*}
\end{proof}

\begin{lemma}\label{EHI:l:NoA3example4}
    Let $X$ be an isotropic unimodal L\'{e}vy jump process on $\R^d$ such that $j(r) \asymp r^{-d-2} \left(\log(r^{-1}) \right)^{-2}$ as $r \to 0^+$. Then
    \begin{equation*}
        m_2(r) \asymp \left(\log(r^{-1})\right)^{-1} \qquad\mbox{as $r \to 0^+$}
    \end{equation*}
    and
    \begin{equation*}
        \lambda(r) \asymp r^{-2}\left(\log(r^{-1})\right)^{-2} \qquad\mbox{as $r \to 0^+$}.
    \end{equation*}
\end{lemma}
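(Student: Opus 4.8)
The plan is a direct computation in polar coordinates, splitting each of $m_2(r)$ and $\lambda(r)$ at the radius $\delta$ below which the hypothesis $j(s)\asymp s^{-d-2}(\log(s^{-1}))^{-2}$ is valid. Throughout, the implicit constants in $\asymp$ will depend only on $d$, on the constant from the hypothesis, and on $\delta$ (which is itself fixed by the hypothesis).

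First I would fix $\delta\in(0,1)$ with $j(s)\asymp s^{-d-2}(\log(s^{-1}))^{-2}$ for $s\in(0,\delta]$, and restrict attention to $r<\delta$. Converting $m_2(r)=\int_{B(0,r)}|x|^2 j(|x|)\dee{x}$ to polar coordinates gives $m_2(r)\asymp\int_0^r s^{d+1}j(s)\dee{s}\asymp\int_0^r s^{-1}(\log(s^{-1}))^{-2}\dee{s}$. The substitution $u=\log(s^{-1})$ converts this into $\int_{\log(r^{-1})}^\infty u^{-2}\dee{u}=(\log(r^{-1}))^{-1}$; in particular the integral converges at $s=0$, consistently with the L\'evy--Khintchine condition. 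Hence $m_2(r)\asymp(\log(r^{-1}))^{-1}$ as $r\to0^+$.

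For the tail I would write $\lambda(r)=\int_{B(0,\delta)^c}j(|x|)\dee{x}+\int_{B(0,\delta)\setminus B(0,r)}j(|x|)\dee{x}$. The first term is a finite constant: finiteness of $\int_{|x|\ge\delta}j(|x|)\dee{x}$ follows from the L\'evy--Khintchine condition, since $\min\{1,|x|^2\}\ge\delta^2$ on $\{|x|\ge\delta\}$. For the second term, polar coordinates and the hypothesis give $\asymp\int_r^\delta s^{d-1}j(s)\dee{s}\asymp\int_r^\delta s^{-3}(\log(s^{-1}))^{-2}\dee{s}$, and Lemma \ref{EHI:l:SpecialIntegralAsymptotic} with $\alpha=2$ and $\beta=1$ evaluates this to $\asymp r^{-2}(\log(r^{-1}))^{-2}$. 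Since $r^{-2}(\log(r^{-1}))^{-2}\to\infty$ as $r\to0^+$, this dominates the constant term, so $\lambda(r)\asymp r^{-2}(\log(r^{-1}))^{-2}$.

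There is no substantive obstacle here; the only care needed is the bookkeeping just described — verifying that the "boundary" pieces ($\int_0^r$ being genuinely convergent, $\int_{B(0,\delta)^c}j$ being finite) do not interfere with the stated asymptotics, and keeping the hidden constants' dependence honest.
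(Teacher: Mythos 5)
Your proof is correct and follows essentially the same route as the paper's: the same split at a fixed radius $\delta$ below which the asymptotic for $j$ holds, the same polar-coordinate reduction of $m_2(r)$ to $\int_0^r s^{-1}(\log(s^{-1}))^{-2}\dee{s}$ followed by the substitution $u=\log(s^{-1})$, and the same invocation of Lemma \ref{EHI:l:SpecialIntegralAsymptotic} (with $\alpha=2$, $\beta=1$) together with the observation that the tail piece $\int_{|x|\ge\delta}j(|x|)\dee{x}$ is a constant dominated by the divergent part. The minor extra remarks you add (noting the convergence of the $m_2$ integral at $s=0$ and invoking the L\'evy--Khintchine condition for finiteness of the outer tail) are harmless clarifications of points the paper leaves implicit.
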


\begin{proof}
    Choose a small $\delta$ such that $j(r) \asymp r^{-d-2} \left(\log(r^{-1}) \right)^{-2}$ on $(0, \delta]$.
    Using polar coordinates and the substitution $u=\log(s^{-1})$, we see that for $r \in (0, \delta]$,
    \begin{align*}
        m_2(r) &\asymp \int_0^r s^{d-1} s^2 j(s) \dee{s} \asymp \int_0^r s^{-1} (\log(s^{-1}))^{-2} \dee{s} = \int_{\log(r^{-1})}^\infty u^{-2} \dee{u} = \left(\log(r^{-1}) \right)^{-1}.
    \end{align*}
    A similar calculation tells us that for all $r \in (0, \delta]$,
    \begin{equation}\label{EHI:e:A3counterexLambda}
        \lambda(r) \asymp \int_r^\infty s^{d-1} j(s) \dee{s} \asymp  \int_\delta^\infty s^{d-1} j(s) \dee{s} + \int_r^\delta s^{-3} (\log(s^{-1}))^{-2} \dee{s}.
    \end{equation}
    The first integral in the right-hand side of \eqref{EHI:e:A3counterexLambda} remains constant as $r \to 0^+$. By Lemma \ref{EHI:l:SpecialIntegralAsymptotic}, the second integral is on the order of $r^{-2} \left(\log(r^{-1}) \right)^{-2}$ as $r \to 0^+$. Therefore, the second integral dominates and $\lambda(r) \asymp r^{-2} \left(\log(r^{-1}) \right)^{-2}$ as $r \to 0^+$.
\end{proof}

Let us verify that we have now proved every claim we made in Example \ref{EHI:ex:NoA3example}.
Let $X$ be a subordinate Brownian motion on $\R^d$, such that the subordinator $S$ has drift $0$ and L\'{e}vy measure $\mu(dt) \asymp t^{-2} \left(\log(t^{-1}) \right)^{-2} \dee{t}$ as $t \to 0^+$. Let $\phi(\lambda)$ be the Laplace exponent of $S$.
By Lemma \ref{EHI:l:NoA3example1}, $\phi(\lambda) \gtrsim \lambda/\log\lambda$.
By Lemma \ref{EHI:l:NoA3example2}, $X$ does not satisfy \ref{EHI:KM:A3}.
By Lemma \ref{EHI:l:NoA3example3}, $j(r) \asymp r^{-d-2} \left(\log(r^{-1}) \right)^{-2}$, or equivalently $r^{d+2}j(r) \asymp \left(\log(r^{-1}) \right)^{-2}$ as $r \to 0^+$. By Lemma \ref{EHI:l:NoA3example4}, $m_2(r) \asymp \left(\log(r^{-1}) \right)^{-1}$ and $r^2 \lambda(r) \asymp \left(\log(r^{-1}) \right)^{-2}$ as $r \to 0^+$.

\section{Negative results and counterexample}

In this section, we prove Theorem \ref{EHI:t:mainNegative} and show that the process described in Example \ref{EHI:ex:counterexample} is subordinate Brownian motion with jump kernel satisfying \eqref{EHI:e:regularJumps} for which $\EHI(r \leq 1)$ fails.

\subsection{A recipe for counterexamples}

The following proposition is the basis for all of our negative results.
It gives a general recipe for identifying counterexamples to $\EHI$. Its statement is somewhat abstract and refers to a sequence of Meyer-decoposition-arising stopping times, defined in Definition \ref{EHI:d:meyerStoppingTimes} (stopping times $T^{(r_0)}$, $T^{\inner{r_0}}$, or $T^{\brackets{s}}$). The only properties of Meyer-decoposition-arising stopping times that we use in the proof are that they are memoryless and independent of what $X$ does up until they occur.
Using this general recipe, we will go on to derive more concrete recipes that are easier to directly apply. One of these more concrete recipes (Corollary \ref{EHI:c:j(r)Counterex}) involves directly looking at the jump kernel of $X$, while the other (Proposition \ref{EHI:p:SbmCounterex}) only applies to subordinate Brownian motions and involves looking at the L\'{e}vy measure of the subordinator instead. Recall that given sequences $(a_n)$ and $(b_n)$ of positive numbers, we say $a_n \ll b_n$ (or equivalently, $b_n \gg a_n$) whenever $\lim_{n \to \infty} a_n / b_n = 0$.

\begin{prop}\label{EHI:p:GeneralCounterex}
    Let $X=(X_t)_{t \geq 0}$ be an isotropic unimodal L\'{e}vy jump process. Suppose there exist sequences $(R_{1, n})_{n=1}^\infty, (R_{2, n})_{n=1}^\infty, (R_{3, n})_{n=1}^\infty \subseteq (0, \infty)$ and a sequence $(T_n)$ of Meyer-decomposition-arising exponential stopping times, satisfying the following properties:
    \begin{itemize}
        \item For all $n$,
        \begin{equation*}
            0 < R_{1, n} < R_{2, n} < R_{1, n}+R_{2, n} < R_{3, n}.
        \end{equation*}
        \item There exists a $C>0$ such that for all $n$,
        \begin{equation} \label{EHI:e:GeneralCounterexR2R1}
            R_{2, n} \leq C R_{1, n}.
        \end{equation}
        \item We have
        \begin{equation} \label{EHI:e:GeneralCounterexSandwich}
            1 \gg \P_0 \left( \tau_{B(0, R_{1, n})} < T_n \right) \gg \P_0 \left( |\Delta X(T_n)| \leq R_{1, n}+R_{2, n}+R_{3, n} \right).
        \end{equation}
        \item As $n \to \infty$,
        \begin{equation} \label{EHI:e:GeneralCounterexLeaving1}
            \sup_{x \in B(0, R_{1, n})} \P_x \left( X_{\tau_{B(0, R_{1, n})}} \notin B(0, R_{2, n}) \Big| \tau_{B(0, R_{1, n})} < T_n \right) \to 0. 
        \end{equation}
        \item There exists a $c>0$ such that for all $n$, for all $x \in B(0, R_{1, n}+R_{2, n})$,
        \begin{equation}\label{EHI:e:GeneralCounterexLeaving1+2}
            \P_x \left( X_{\tau_{B(0, R_{1, n}+R_{2, n})}} \in B(0, R_{3, n}) \Big| \tau_{B(0, R_{1, n}+R_{2, n})} < T_n \right) \geq c.
        \end{equation}
    \end{itemize}
    Then $X$ does not satisfy $\EHI$. Furthermore, if $r_n \to 0$ along a subsequence then $X$ does not satisfy $\EHI(r \leq 1)$, and if $r_n \to \infty$ along a subsequence then $X$ does not satisfy $\EHI(r \geq 1)$.
\end{prop}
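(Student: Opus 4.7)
The plan is to refute EHI by constructing, for each large $n$, a non-negative harmonic function on the ball $D_n := B(0, R_{1,n}+R_{2,n})$ whose values at two carefully chosen interior points differ by a factor tending to infinity. Concretely, I would take the test function
\[
h_n(x) := \P_x\bigl(X_{\tau_{D_n}} \in A_n\bigr), \qquad A_n := \overline{B(0, R_{3,n})} \setminus \overline{D_n},
\]
which is non-negative and harmonic on $D_n$. Heuristically, $h_n(x)$ is the probability that, starting at $x$, the process exits $D_n$ into the \emph{nearby} annulus $A_n$ rather than jumping clear past $B(0, R_{3,n})$; condition \eqref{EHI:e:GeneralCounterexSandwich} makes such exits rare for a process started deep in $D_n$, but hypothesis \eqref{EHI:e:GeneralCounterexLeaving1+2} guarantees they are plentiful and typically land in $A_n$ for a process started near the boundary.

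\textbf{Step 1:} Show $h_n(0) \to 0$. Write $h_n(0) = I_1^{(n)} + I_2^{(n)}$ with $I_1^{(n)} = \P_0(\tau_{D_n} < T_n,\,X_{\tau_{D_n}} \in A_n)$ and $I_2^{(n)} = \P_0(\tau_{D_n} \geq T_n,\,X_{\tau_{D_n}} \in A_n)$. Since any exit of $D_n$ follows an earlier exit of $B(0, R_{1,n}) \subseteq D_n$, $I_1^{(n)} \leq \P_0(\tau_{B(0, R_{1,n})} < T_n)$. For $I_2^{(n)}$, a triangle-inequality argument at the first big jump shows $|\Delta X(T_n)| \leq R_{1,n}+R_{2,n}+R_{3,n}$ on the event in question: in the subcase $\tau_{D_n} = T_n$ this follows from $|X_{T_n}| \leq R_{3,n}$ together with $|X_{T_n-}| \leq R_{1,n}+R_{2,n}$, while in the subcase $\tau_{D_n} > T_n$ both $X_{T_n-}$ and $X_{T_n}$ lie in $D_n$, so $|\Delta X(T_n)| \leq 2(R_{1,n}+R_{2,n}) < R_{1,n}+R_{2,n}+R_{3,n}$ using $R_{3,n} > R_{1,n}+R_{2,n}$. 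Since $\Delta X(T_n)$ is independent of $\{\tau_{D_n} \geq T_n\}$ in the Meyer decomposition, $I_2^{(n)} \leq \P_0(|\Delta X(T_n)| \leq R_{1,n}+R_{2,n}+R_{3,n})$. Hypothesis \eqref{EHI:e:GeneralCounterexSandwich} then forces $h_n(0) \to 0$.

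\textbf{Step 2:} Exhibit $y_n \in D_n$ with $h_n(y_n) \geq c/2$. I would take $|y_n|$ close enough to $R_{1,n}+R_{2,n}$ that $\P_{y_n}(\tau_{D_n} < T_n) \geq 1/2$. Such a $y_n$ exists for each $n$ via the identity $\P_{y_n}(\tau_{D_n} < T_n) = \E_{y_n}\bigl[e^{-\lambda_n \tau^{X'}_{D_n}}\bigr]$ (where $X'$ is the small-jump part of the Meyer decomposition defining $T_n$ and $\lambda_n$ is its rate) combined with the fact that the small-jump $D_n$-exit time tends to zero as the starting point approaches $\partial D_n$. Applied to such a $y_n$, \eqref{EHI:e:GeneralCounterexLeaving1+2} yields
\[
h_n(y_n) \geq \P_{y_n}(\tau_{D_n} < T_n) \cdot \P_{y_n}\bigl(X_{\tau_{D_n}} \in B(0, R_{3,n}) \,\big|\, \tau_{D_n} < T_n\bigr) \geq c/2.
\]

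\textbf{Conclusion.} Combining, $h_n(y_n)/h_n(0) \to \infty$, violating EHI at scale $R_{1,n}+R_{2,n}$ for every $\kappa$ large enough to accommodate both $0$ and $y_n$ inside $B(0, \kappa(R_{1,n}+R_{2,n}))$; the failure then extends to all $\kappa \in (0,1)$ by the standard equivalence of EHI across values of $\kappa$ (a Harnack-chaining argument in the spirit of Lemma \ref{EHI:l:EHI'impliesEHI}). The refinements to $\EHI(r \leq 1)$ and $\EHI(r \geq 1)$ follow by localizing the construction to the relevant subsequence of radii. The main obstacle is Step~2: for each fixed $n$ one can push $|y_n|$ as close to $\partial D_n$ as needed, but verifying that $\P_{y_n}(\tau_{D_n} < T_n)$ can simultaneously be kept bounded below \emph{and} $y_n$ fit inside $B(0, \kappa(R_{1,n}+R_{2,n}))$ for prescribed small $\kappa$ requires either the indicated chaining argument or a variant of the construction on a shifted ball $B(z_n, r_n)$. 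Condition \eqref{EHI:e:GeneralCounterexLeaving1} does not explicitly enter the basic plan but is plausibly useful for such a quantitative refinement of Step~2.
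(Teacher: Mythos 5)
Your Step 2 has a genuine gap that the sketched fixes do not close. You need $y_n$ at which $\P_{y_n}(\tau_{D_n}<T_n)$ is bounded below, but the hypothesis \eqref{EHI:e:GeneralCounterexSandwich} gives $\P_0(\tau_{B(0,R_{1,n})}<T_n)\to 0$, and since $R_{2,n}\leq CR_{1,n}$ the same decay holds for any starting point whose distance to $\partial D_n$ is a fixed fraction of $R_{1,n}+R_{2,n}$. Hence $|y_n|/(R_{1,n}+R_{2,n})$ is forced to $1$, and then $0$ and $y_n$ cannot both live in $B(0,\kappa(R_{1,n}+R_{2,n}))$ for a fixed $\kappa<1$. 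The chaining and shifted-ball repairs you propose do not help: chaining needs a number of overlapping balls that grows without bound as $y_n$ approaches $\partial D_n$, giving constants $C^{m_n}\to\infty$, and the shifted-ball trick likewise requires $\kappa$ to increase to $1$ along the sequence. So as written the argument never contradicts $\EHI$ with a fixed $(C,\kappa)$.

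What the paper does differently — and what you need — is to place $y$ at a fixed depth inside $D_n$ (namely $y=((1-\tfrac1d)R_{1,n}+R_{2,n})\vec e_1$, so $\kappa$ can be taken to be $(1-\tfrac{1}{2d}+C)/(1+C)$, uniform in $n$), accept that $h_n(y)$ also tends to $0$, and instead make the comparison $h_n(0)\ll h_n(y)$ between two vanishing quantities. This requires (i) a lower bound $h_n(y)\gtrsim\P_0(\tau_{B(0,R_{1,n})}<T_n)$ via a radial-symmetry argument: conditioned on exiting $B(y,R_{1,n})$ before $T_n$, the largest-magnitude coordinate of the exit displacement is positive in the $\vec e_1$-direction with probability $\tfrac1{2d}$, and on that event the process has already left $D_n$; and (ii) a stronger upper bound $h_n(0)=o(\P_0(\tau_{B(0,R_{1,n})}<T_n))$, obtained by splitting the exit event into three cases (exit $B(0,R_{1,n})$ into $B(0,R_{2,n})$ and then exit again — probability bounded by the square of $\P_0(\tau_{B(0,R_{1,n})}<T_n)$; exit $B(0,R_{1,n})$ directly past $B(0,R_{2,n})$ — controlled by \eqref{EHI:e:GeneralCounterexLeaving1}; or $T_n$ strikes first — controlled by the jump-size tail). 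Your Step 1 gives only $h_n(0)\lesssim\P_0(\tau_{B(0,R_{1,n})}<T_n)$, not the required $o(\cdot)$, and you never use \eqref{EHI:e:GeneralCounterexLeaving1}; that condition is precisely what eliminates the dominant one-exit contribution. These two pieces — the geometric symmetry bound for $h_n(y)$ and the three-case refinement of $h_n(0)$ — are the missing core of the argument.
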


Before we give a proof, let us explain the significance of some of these conditions. Condition \eqref{EHI:e:GeneralCounterexSandwich} is the most delicate: it says that the process is unlikely to exit a ball of radius $R_{1, n}$ before time $T_n$, but much more unlikely to stay within a distance of $R_{1, n}+R_{2, n}+R_{3, n}$ when the ``scattering jump" (at time $T_n$) occurs. In practice, this will be the hardest condition to satisfy. It will be achieved by constructing processes where for all $n$, most jumps either belong to a much smaller scale than the $R_{j, n}$'s, or a much larger scale. Thus, it is unlikely that the small jumps (before $T_n$) alone are enough to combine to reach this scale, but the first large jump (at $T_n$) far surpasses this scale.
Conditions \eqref{EHI:e:GeneralCounterexLeaving1} and \eqref{EHI:e:GeneralCounterexLeaving1+2} tell us that at when the process exits a ball using only ``small jumps" (ie, before time $T_n$), the first point outside the ball that it reaches is unlikely to be too far from the ball it just exited. In practice, these conditions will follow easily from our constructions.

\begin{proof}[Proof of Proposition \ref{EHI:p:GeneralCounterex}]
    Fix $n$. For the sake of brevity, suppress the subscript and let $R_j := R_{j, n}$ for all $j \in \{1, 2, 3\}$ and $T:=T_n$. In the remainder of the proof, we will construct objects ($y$ and $h$) that depend on $n$. When we use the notation ``$\ll$" or ``$\gg$" for quantities relating to these objects, we mean this when considering sequences indexed by $n$.

    Let $h$ be the function
    \begin{equation*}
        h(x) := \P_x \left( X_{\tau_{B(0, R_1+R_2)}} \in B(0, R_3) \right),
    \end{equation*}
    which is harmonic on $B(0, R_1+R_2)$. Let $y := ((1-\frac{1}{d})R_1+R_2) \vec{e_1}$, where $d$ is the dimension and $\vec{e_1}$ is the unit vector $(1, 0, 0, \dots, 0)$. Note that $y$ is at a distance of $R_1/d$ from the boundary of $B(0, R_1+R_2)$. By \eqref{EHI:e:GeneralCounterexR2R1},
    \begin{equation*}
        \frac{|y|}{R_1+R_2} = \frac{1-\frac{1}{d}+\frac{R_2}{R_1}}{1+\frac{R_2}{R_1}} \leq \frac{1-\frac{1}{d}+C}{1+C}.
    \end{equation*}
    Let $\kappa := (1-\frac{1}{2d}+C)/(1+C)$. Then both $0$ and $y$ are contained in the ball $B(0, \kappa(R_1+R_2))$. Note that $\kappa \in (0, 1)$ does not depend on $n$. We will show that $h(0) \ll h(y)$, contradicting $\EHI$.

    Let us start by putting a lower bound on $h(y)$. For all $x \in B(0, R_1+R_2)$, by \eqref{EHI:e:GeneralCounterexLeaving1+2},
    \begin{align*}
        h(x) &= \P_x \left( X_{\tau_{B(0, R_1+R_2)}} \in B(0, R_3) \right) \\
        &\geq \P_x \left( \tau_{B(0, R_1+R_2)} < T \right) \P_x \left( X_{\tau_{B(0, R_1+R_2)}} \in B(0, R_3) \Big| \tau_{B(0, R_1+R_2)} < T \right) \\
        &\geq c \P_x \left( \tau_{B(0, R_1+R_2)} < T \right).
    \end{align*}
    In particular,
    \begin{equation} \label{EHI:e:GeneralCounterexh(y)1}
        h(y) \geq c \P_y \left( \tau_{B(0, R_1+R_2)} < T \right).
    \end{equation}
    We will use a symmetry argument to show that if the process has initial value $y$ and exits $B(y, R_1)$ before time $T$, then the conditional probability that it has exited $B(0, R_1+R_2)$ by this time is at least $\frac{1}{2d}$. Consider the vector $X_{\tau_{B(X_0, R_1)}}-X_0$, whose distribution is radially symmetric. The distribution of this vector remains radially symmetric when we condition on the event $\{ \tau_{B(X_0, R_1)} < T \}$. For all $1 \leq j \leq d$, let $A^+_j$ (resp, $A^-_j$) be the event that the coordinate of $X_{\tau_{B(X_0, R_1)}}-X_0$ with the largest magnitude is the $j$th coordinate, and that this coordinate is positive (resp, negative). By radial symmetry, for all $1 \leq j \leq d$,
    \begin{equation*}
        \P_y(A^+_j) = \P_y(A^-_j) = \P_y \left(A^+_j \Big| \tau_{B(X_0, R_1)}<T \right) = \P_y \left(A^-_j \Big| \tau_{B(X_0, R_1)}<T \right) = \frac{1}{2d}.
    \end{equation*}
    Note that at least one coordinate of $X_{\tau_{B(X_0, R_1)}}-X_0$ must have magnitude at least $R_1/d$, because the vector itself has magnitude at least $R_1$. Therefore, if the process begins at $X_0=y$ and the events $\{\tau_{B(y, R_1)} < T \}$ and $A^+_1$ both happen, then the first coordinate of $X_{\tau_{B(y, R_1)}}$ is at least $(1-\frac{1}{d})R_1 + R_2 + \frac{R_1}{d} = R_1+R_2$, which means $X_{\tau_{B(y, R_1)}}$ is outside of $B(0, R_1+R_2)$, so $\tau_{B(0, R_1+R_2)} < T$. In other words, assuming $X_0=y$, we have
    \begin{equation*}
        \{\tau_{B(y, R_1)} < T \} \cap A^+_1 \subseteq \{ \tau_{B(0, R_1+R_2)} < T \}.
    \end{equation*}
    Therefore,
    \begin{align}
        \P_y \left( \tau_{B(0, R_1+R_2)} < T \right) &\geq \P_y \left( {\tau_{B(y, R_1)} < T} \} \cap A^+_1 \right) \notag\\
        &= \P_y \left( \tau_{B(y, R_1)} < T \right) \P_y \left( A^+_1 \Big| \tau_{B(y, R_1)} < T \right) \notag\\
        &= \P_0 \left( \tau_{B(0, R_1)} < T \right) \cdot \frac{1}{2d}. \label{EHI:e:GeneralCounterexSymmetry}
    \end{align}
    By \eqref{EHI:e:GeneralCounterexh(y)1} and \eqref{EHI:e:GeneralCounterexSymmetry},
    \begin{equation}\label{EHI:e:GeneralCounterexh(y)2}
        h(y) \geq \frac{c}{2d} \P_0 \left( \tau_{B(0, R_1)} < T \right).
    \end{equation}

    Next, let us put an upper bound on $h(0)$. If the process begins at $X_0=0$, there are three ways that the event $X_{\tau_{B(0, R_1+R_2)}} \in B(0, R_3)$ can occur:
    \begin{itemize}
        \item First, the process exits $B(0, R_1)$ with $X_{\tau_{B(0, R_1)}} \in B(0, R_2)$, then travels another distance of at least $R_1$ to exit $B(0, R_1+R_2)$ with $X_{\tau_{B(0, R_1+R_2)}} \in B(0, R_3)$, all before time $T$. This involves the process exiting a ball of radius $R_1$ twice before time $T$, so by the Strong Markov property and the stationary property of $X$, the probability of this happening is at most $\left[ \P_0 \left( \tau_{B(0, R_1) < T} \right) \right]^2$.
        \item The process exits $B(0, R_1)$ with $X_{\tau_{B(0, R_1)}} \notin B(0, R_2)$, and either is already in $B(0, R_3) \setminus B(0, R_1+R_2)$ at time $\tau_{B(0, R_1)}$, or is still in $B(0, R_1+R_2)$ and goes on to have $X_{\tau_{B(0, R_1+R_2)}} \in B(0, R_3)$, all before time $T$. This involves both $\tau_{B(0, R_1)} < T$ and $X_{\tau_{B(0, R_1)}} \notin B(0, R_2)$, so the probability of this happening is at most
        \begin{equation*}
            \P_0 \left( \tau_{B(0, R_1)} < T \right) \P \left( X_{\tau_{B(0, R_1)}} \notin B(0, R_2) \Big| \tau_{B(0, R_1)} < T\right) .          
        \end{equation*}
        \item The stopping time $T$ occurs before or at time $\tau_{B(0, R_1+R_2)}$, and $X_{\tau_{B(0, R_1+R_2)}} \in B(0, R_3)$. In this case, in order to have $X_{\tau_{B(0, R_1+R_2)}} \in B(0, R_3)$, we must have both $X(T-) \in B(0, R_1+R_2)$ and $X(T) \in B(0, R_3)$, so $|\Delta X(T)| = |X(T)-X(T-)| \leq R_1+R_2+R_3$. Thus, the probability of this happening is at most $\P_0 \left( |\Delta X(T)| \leq R_1+R_2+R_3 \right)$.
    \end{itemize}
    Since these are the only three ways we can have $X_{\tau_{B(0, R_1+R_2)}} \in B(0, R_3)$ when $X_0=0$,
    \begin{align} \label{EHI:e:GeneralCounterexh(0)}
        h(0) &= \P_0 \left( X_{\tau_{B(0, R_1+R_2)}} \in B(0, R_3) \right) \notag\\
        &\leq \left[ \P_0 \left( \tau_{B(0, R_1)} < T \right) \right]^2 + \P_0 \left( \tau_{B(0, R_1)} < T \right) \P_0 \left( X_{\tau_{B(0, R_1)}} \notin B(0, R_2) \Big| \tau_{B(0, R_1)} < T\right) \notag\\
        & \qquad +  \P_0 \left( |\Delta X(T)| \leq R_1+R_2+R_3 \right).
    \end{align}
    By \eqref{EHI:e:GeneralCounterexh(y)2}, \eqref{EHI:e:GeneralCounterexh(0)}, \eqref{EHI:e:GeneralCounterexLeaving1}, and \eqref{EHI:e:GeneralCounterexSandwich},
    \begin{align*}
        \frac{h(0)}{h(y)} &\leq \frac{2d}{c} \left[ \P_0 \left( \tau_{B(0, R_1)} < T \right) + \P_0\left( X_{\tau_{B(0, R_1)}} \notin B(0, R_2) \Big| \tau_{B(0, R_1)} < T\right) + \frac{\P_0 \left( |\Delta X(T)| \leq R_1+R_2+R_3 \right)}{\P_0 \left( \tau_{B(0, R_1)} < T \right)} \right] \\
        &\to 0.
    \end{align*}
    If we re-apply the subscripts that we have been suppressing, $h_n(0) \ll h_n(y_n)$, even though $0$ and $y_n$ belong to $B(0, \kappa (R_{1, n}+R_{2, n}))$ and $h_n$ is non-negative everywhere and harmonic on $B(0, R_{1, n}+R_{2, n})$. This contradicts $\EHI$. If $r_n \to 0$ along a subsequence, this contradicts $\EHI(r \leq 1)$. If $r_n \to \infty$ along a subsequence, this contradicts $\EHI(r \geq 1)$.
\end{proof}

Our first application of Proposition \ref{EHI:p:GeneralCounterex} is the following corollary.

\begin{corollary} \label{EHI:c:j(r)Counterex}
    Let $X=(X_t)_{t \geq 0}$ be an isotropic unimodal L\'{e}vy jump process on $\mathbb{R}^d$, and let $j(r)$ be its jump kernel. For all $r>0$, let $T^{(r)} := \inf \{ t>0 : |\Delta X(t)| \geq r\}$.
    Suppose there exist sequences $(r_n)$ and $(\rho_n)$ of positive numbers such that
    \begin{itemize}
        \item There exists a $c>0$ (that does not depend on $n$) such that for all $n$,
        \begin{equation}\label{EHI:e:RhoOnOrderOfR}
            \rho_n \geq c r_n.
        \end{equation}
        \item We have \begin{equation}\label{EHI:e:Hypothesis}
            1 \gg \mathbb{P}_0 \left( \tau_{B(0, \rho_n)} < T^{(r_n)} \right) \gg \P_0 (|\Delta X(T^{(r_n)})| \leq 4\rho_n+3r_n) .
        \end{equation}
    \end{itemize}
    Then $X$ does not satisfy $\EHI$. Furthermore, if $r_n \to 0$ along a subsequence then $X$ does not satisfy $\EHI(r \leq 1)$, and if $r_n \to \infty$ along a subsequence then $X$ does not satisfy $\EHI(r \geq 1)$.
\end{corollary}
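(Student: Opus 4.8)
The plan is to obtain Corollary \ref{EHI:c:j(r)Counterex} as an immediate application of Proposition \ref{EHI:p:GeneralCounterex}, by producing the three radius-sequences and the sequence of stopping times it requires. The first step is to observe that $T^{(r_n)} = \inf\{t>0 : |\Delta X(t)| \ge r_n\}$ is a Meyer-decomposition-arising exponential stopping time in the sense of Definition \ref{EHI:d:meyerStoppingTimes}: it is precisely the stopping time $T^{(r_n)}$ attached to the small/large decomposition of $X$ at $r_n$ (Definition \ref{EHI:d:smalllarge}), whose ``large'' part has jump kernel $j(r)\indicator\{r>r_n\}$ of finite mass $\lambda(r_n)<\infty$. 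The distinction between $|\Delta X(t)|\ge r_n$ and $|\Delta X(t)|>r_n$ is immaterial, since the jumps of $X$ form a Poisson point process with displacement intensity $j(|x|)\,dx$ and the sphere $\{|x|=r_n\}$ is Lebesgue-null, so almost surely no jump lands on it. Hence we set $T_n := T^{(r_n)}$ and inherit the only properties of $T_n$ used in the proof of Proposition \ref{EHI:p:GeneralCounterex}, namely memorylessness and independence from the behaviour of $X$ before time $T_n$.

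The second step is the choice of radii. I would set
\[
  R_{1,n} := \rho_n,\qquad R_{2,n} := \rho_n + r_n,\qquad R_{3,n} := 2\rho_n + 2r_n .
\]
Then $0 < R_{1,n} < R_{2,n} < R_{1,n}+R_{2,n} = 2\rho_n + r_n < R_{3,n}$, so the nesting hypothesis holds; condition \eqref{EHI:e:GeneralCounterexR2R1} holds with $C := 1 + c^{-1}$, since \eqref{EHI:e:RhoOnOrderOfR} gives $r_n \le \rho_n/c$ and hence $R_{2,n} \le (1+c^{-1})\rho_n$; and, crucially, $R_{1,n}+R_{2,n}+R_{3,n} = 4\rho_n + 3r_n$, which is exactly the threshold appearing in hypothesis \eqref{EHI:e:Hypothesis}.

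The third step is to check the remaining hypotheses of Proposition \ref{EHI:p:GeneralCounterex}. Condition \eqref{EHI:e:GeneralCounterexSandwich} becomes $1 \gg \mathbb{P}_0(\tau_{B(0,\rho_n)} < T^{(r_n)}) \gg \mathbb{P}_0(|\Delta X(T^{(r_n)})| \le 4\rho_n + 3r_n)$, which is precisely \eqref{EHI:e:Hypothesis}. For conditions \eqref{EHI:e:GeneralCounterexLeaving1} and \eqref{EHI:e:GeneralCounterexLeaving1+2}, the point is that on the event that $X$ exits a given ball before time $T^{(r_n)}$, the exit jump has magnitude strictly less than $r_n$ (no large jump has yet occurred and $X$ is pure-jump), so $X$ overshoots the ball by less than $r_n$. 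Consequently, on $\{\tau_{B(0,R_{1,n})} < T^{(r_n)}\}$ we have $|X_{\tau_{B(0,R_{1,n})}}| < \rho_n + r_n = R_{2,n}$, so the conditional probability in \eqref{EHI:e:GeneralCounterexLeaving1} is identically $0$; and on $\{\tau_{B(0,R_{1,n}+R_{2,n})} < T^{(r_n)}\}$ we have $|X_{\tau_{B(0,R_{1,n}+R_{2,n})}}| < (2\rho_n + r_n) + r_n = R_{3,n}$, so the conditional probability in \eqref{EHI:e:GeneralCounterexLeaving1+2} is identically $1$ and we may take $c=1$ there. Proposition \ref{EHI:p:GeneralCounterex} then yields that $X$ fails $\EHI$. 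For the subsequential refinements, harmonicity in that proof is tested on balls of radius $R_{1,n}+R_{2,n} = 2\rho_n + r_n$; by \eqref{EHI:e:RhoOnOrderOfR} this exceeds $(2c+1)r_n$, hence tends to $\infty$ along any subsequence with $r_n\to\infty$, giving the failure of $\EHI(r\ge1)$; and along a subsequence with $r_n\to0$, \eqref{EHI:e:Hypothesis} forces $\rho_n\to0$ as well (otherwise $\mathbb{P}_0(\tau_{B(0,\rho_n)}<T^{(r_n)})\to0$ while the dominated quantity $\mathbb{P}_0(|\Delta X(T^{(r_n)})|\le 4\rho_n+3r_n) = 1 - \lambda(4\rho_n+3r_n)/\lambda(r_n)$ stays bounded away from $0$, a contradiction), so the test radius $\to0$ and $\EHI(r\le1)$ fails.

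Honestly there is almost no obstacle here: all the probabilistic substance — the symmetry argument producing a point $y$ with $h(0)\ll h(y)$ for the harmonic function $h(x)=\mathbb{P}_x(X_{\tau_{B(0,R_{1,n}+R_{2,n})}}\in B(0,R_{3,n}))$ — is packaged inside Proposition \ref{EHI:p:GeneralCounterex}. The only thing requiring care is engineering the radii so that simultaneously (i) they are pairwise comparable (forced by \eqref{EHI:e:RhoOnOrderOfR}), (ii) their consecutive gaps exceed $r_n$, so that the ``small-jump overshoot'' arguments make \eqref{EHI:e:GeneralCounterexLeaving1}--\eqref{EHI:e:GeneralCounterexLeaving1+2} trivially true, and (iii) their sum equals $4\rho_n + 3r_n$, so that \eqref{EHI:e:GeneralCounterexSandwich} transfers verbatim from \eqref{EHI:e:Hypothesis}; the choice above is essentially the unique natural one meeting all three requirements at once.
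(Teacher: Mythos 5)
Your proof is correct and follows essentially the same route as the paper: you choose the identical radius sequences $R_{1,n}=\rho_n$, $R_{2,n}=\rho_n+r_n$, $R_{3,n}=2\rho_n+2r_n$ and $T_n=T^{(r_n)}$, and verify conditions \eqref{EHI:e:GeneralCounterexR2R1}--\eqref{EHI:e:GeneralCounterexLeaving1+2} exactly as the paper does, with \eqref{EHI:e:GeneralCounterexLeaving1} and \eqref{EHI:e:GeneralCounterexLeaving1+2} made trivial by the bounded-overshoot observation. The paragraph you add to justify $\rho_n\to 0$ along the subsequence (via $1-\lambda(4\rho_n+3r_n)/\lambda(r_n)$ staying bounded away from $0$) is a welcome extra step: the paper states the subsequential refinements in terms of an $r_n$ that is not formally one of the variables in Proposition \ref{EHI:p:GeneralCounterex}, and your argument closes that small gap explicitly.
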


\begin{proof}
    We would like to apply Proposition \ref{EHI:p:GeneralCounterex} with
    \begin{align*}
        R_{1, n} &:= \rho_n , \\
        R_{2, n} &:= \rho_n+r_n, \\
        R_{1, n}+R_{2, n} &:= 2\rho_n+r_n, \\
        R_{3, n} &:= 2\rho_n + 2r_n, \\
        T_n &:= T^{(r_n)}.
    \end{align*}
    We must verify that conditions \eqref{EHI:e:GeneralCounterexR2R1}-\eqref{EHI:e:GeneralCounterexLeaving1+2} hold. Condition \eqref{EHI:e:GeneralCounterexR2R1} follows from \eqref{EHI:e:RhoOnOrderOfR}:
    \begin{equation*}
        \frac{R_{2, n}}{R_{1, n}} = 1+\frac{r_n}{\rho_n} \leq 1+c^{-1}.
    \end{equation*}
    Condition \eqref{EHI:e:GeneralCounterexSandwich} is exactly \eqref{EHI:e:Hypothesis}. Conditions \eqref{EHI:e:GeneralCounterexLeaving1} and \eqref{EHI:e:GeneralCounterexLeaving1+2} hold trivially, because the process $X$ never takes any jumps with magnitude greater than $r_n$ before $T^{(r_n)}$, by the definition of $T^{(r_n)}$. Indeed,
    \begin{equation*}
        \sup_{x \in B(0, \rho_n)} \P_x \left( X_{\tau_{B(0, \rho_n)}} \notin B(0, \rho_n + r_n) \Big| \tau_{B(0, \rho_n)} < T^{(r_n)} \right) = 0
    \end{equation*}
    and
    \begin{equation*}
        \P_x \left( X_{\tau_{B(0, 2\rho_n+r_n}} \in B(0, 2\rho_n+2r_n) \Big| \tau_{B(0, 2\rho_n+r_n)} < T^{(r_n)} \right) = 1.
    \end{equation*}
\end{proof}

\begin{proof}[Proof of Theorem \ref{EHI:t:mainNegative}]
    Let $X=(X_t)_{t \geq 0}$ be an isotropic unimodal L\'{e}vy jujmp process on $\R^d$ with jump kernel $j(r)$. Suppose there exists a sequence $(r_n) \subseteq (0, \infty)$ satisfying \eqref{EHI:e:m2intermedCondition}. By re-arranging \eqref{EHI:e:m2intermedCondition}, this is equivalent to the existance of constant $c_0>0$ such that
    \begin{itemize}
        \item For all $n$,
        \begin{equation} \label{EHI:e:m2>=lambda}
            m_2(r_n) \geq c_0 r_n^2 \lambda(r_n).
        \end{equation}
        \item We have
        \begin{equation} \label{EHI:e:ComparingLevelsOfDominance}
            \left( \frac{m_2(r_n)}{r_n^2 \lambda(r_n)} \right)^d \ll \left( \frac{r_n^2 \lambda(r_n)}{r_n^{d+2} j(r_n)} \right)^2.
        \end{equation}
    \end{itemize}
    We will show that $X$ does not satisfy $\EHI$. If $r_n \to 0^+$, along a subsequence, then it can be gleaned from our proof that $\EHI(r \leq 1)$ fails. Similarly, if $r_n \to \infty$ along a subsequence, then $\EHI(r \geq 1)$ fails.

    Let $s_n := 2\sqrt{m_2(r_n) / \lambda(r_n)}$. By \eqref{EHI:e:m2>=lambda},
    \begin{equation*} %\label{EHI:}
        s_n \geq 2\sqrt{c_0} r_n.
    \end{equation*}
    (In other words, $s_n$ is at least on the order of $r_n$.) By Lemmas \ref{EHI:l:FactorOf2ForExits} and \ref{EHI:l:m2isvariance} applied to $X^{(r_n)}$,
    \begin{equation} \label{EHI:e:PExitSHalf}
        \mathbb{P}_0 \left( \tau_{B(0, s_n)} < T^{(r_n)} \right) \leq 2 \mathbb{P}_0 \left( \left| X^{(r_n)}_{T^{(r_n)}} \right| \geq s_n \right) \leq \frac{2}{s_n^2} \mathbb{E}_0 \left[ \left| X^{(r_n)}_{T^{(r_n)}} \right|^2 \right] = \frac{2 m_2(r_n)}{s_n^2 \lambda(r_n)} = \frac12.
    \end{equation}
    We would like to apply Lemma \ref{EHI:l:CloseToSubmultiplicative} to get an exponentially-decaying upper bound on $\mathbb{P}_0 \left( \tau_{B(0, \rho)} < T^{(r_n)} \right)$ for $\rho \geq s_n$. For a given $\rho \geq s_n$, the largest integer $m$ such that $\rho \geq ms_n + (m-1)r_n$ is $m := \floor{(\rho+r_n)/(s_n+r_n)}$. Thus, applying Lemma \ref{EHI:l:CloseToSubmultiplicative} gives us
    \begin{equation}\label{EHI:e:PexitRhoExponential}
        \mathbb{P}_0 \left( \tau_{B(0, \rho)} < T^{(r_n)} \right) \leq 2^{-\floor{(\rho+r_n)/(s_n+r_n)}} \qquad\mbox{for all $\rho \geq s_n$}.
    \end{equation}
    On the other hand, we can use Lemma \ref{EHI:l:LeaveBallConstant} to get a lower bound. For all $\rho \geq s_n$, if we let $C:=C_{\ref{EHI:l:LeaveBallConstant}}(1/2)$ and $t := C\rho^2 / m_2(r_n)$, then
    \begin{align}
        \mathbb{P}_0 \left( \tau_{B(0, \rho)} < T^{(r_n)} \right) &\geq \mathbb{P}_0 \left( \tau_{B(0, \rho)} < t < T^{(r_n)} \right) = \mathbb{P}_0 \left( \tau_{B(0, \rho)} < t \right) \mathbb{P}_0 \left( T^{(r_n)}>t \right) \notag\\
        &\geq \frac12 e^{-\lambda(r_n) t} = \frac12 \exp(-\frac{C \rho^2 \lambda(r_n)}{m_2(r_n)}) = \frac12 \exp(-\frac{4C\rho^2}{s_n^2}) \qquad\mbox{for all $\rho \geq s_n$}. \label{EHI:e:PhoExitAtLeastSquareExponential}
    \end{align}
    Let $\xi_n := s_n^d j(r_n) / \lambda(r_n)$. By \eqref{EHI:e:ComparingLevelsOfDominance},
    \begin{equation*}
        \xi_n = 2^d \left(\frac{m_2(r_n)}{\lambda(r_n)} \right)^{d/2} \frac{j(r_n)}{\lambda(r_n)} = 2^d \left(\frac{m_2(r_n)}{r_n^2 \lambda(r_n)} \right)^{d/2} \frac{r_n^d j(r_n)}{\lambda(r_n)} \ll 1.
    \end{equation*}
    Assume without loss of generality that $\xi_n^{1/3} \leq \frac12 e^{-4C}$ for all $n$. (Since $\xi_n \ll 1$, this is true for all but finitely many $n$, and we can simply disregard all $n$ for which this is not true.)

    We would like to choose a sequence $(\rho_n)$ such that $\rho_n \gg s_n$ but $\rho_n/s_n$ is not too large, so that we still have $\frac12 \exp(-4C \rho_n^2/s_n^2) \gg \rho_n^d j(r_n)/\lambda(r_n)$. Let $\tilde{\rho}_n \geq s_n$ and $\hat{\rho}_n \geq s_n$ be the solutions to
    \begin{equation*}
        \frac12 \exp(-\frac{4C(\tilde{\rho}_n)^2}{s_n^2}) = \xi_n^{1/3} \qquad\mbox{and}\qquad \frac{(\hat{\rho}_n)^d j(r_n)}{\lambda(r_n)} = \xi^{2/3}.
    \end{equation*}
    Let us briefly justify the existence and uniqueness of $\tilde{\rho}_n$ and $\hat{\rho}_n$. The function $\rho \mapsto \frac12 \exp(4C\rho^2/s_n^2)$ is continuous and decreasing on $[s_n, \infty]$, takes value $\frac12 e^{-4C} \geq \xi^{1/3}$ at $\rho=s_n$ and approaches $0$ as $\rho \to \infty$. Therefore, there exists some unique $\tilde{\rho}_n \geq s_n$ where this function achieves the value $\xi_n^{1/3}$. Similarly, the function $\rho \mapsto \rho^d j(r_n) / \lambda(r_n)$ is continuous and increasing on $[s_n, \infty)$, takes value $\xi_n \leq \xi_n^{2/3}$ at $s_n$, and approaches $\infty$ as $\rho \to \infty$, so there exists a unique $\hat{\rho}_n \geq s_n$ where this function takes the value $\xi_n^{2/3}$.
    Let
    \begin{equation*}
        \rho_n := \min\{ \tilde{\rho}_n, \hat{\rho}_n \}.
    \end{equation*}
    We claim that $\rho_n \gg s_n$. To prove this, it is enough to prove $\tilde{\rho}_n \gg s_n$ and $\hat{\rho}_n \gg s_n$. We have $\tilde{\rho}_n \gg s_n$, because
    \begin{equation*}
        \exp(-4C \left( \left( \frac{\tilde{\rho}_n}{s_n} \right)^2 - 1 \right)) = \frac{\frac12\exp(-4C(\tilde{\rho}_n)^2/s_n^2)}{\frac12 e^{-4C}} = \frac{\xi_n^{1/3}}{\frac12 e^{-4C}} \ll 1.
    \end{equation*}
    Similarly, we have $\hat{\rho}_n \gg s_n$, because
    \begin{equation*}
        \left( \frac{\hat{\rho}_n}{s_n} \right)^d = \frac{\xi_n^{2/3}}{\xi_n} \gg 1.
    \end{equation*}
    
    We will show that the sequences $(r_n)$ and $(\rho_n)$ together satisfy the conditions of Corollary \ref{EHI:c:j(r)Counterex}. Condition \eqref{EHI:e:RhoOnOrderOfR} is met, since $\rho_n \geq s_n \geq 2\sqrt{c_0} r_n$. All that remains is to show \eqref{EHI:e:Hypothesis} (that $1 \gg \mathbb{P}_0 \left( \tau_{B(0, \rho_n)} < T^{(r_n)} \right) \gg \P_0 (|\Delta X(T^{(r_n)})| \leq 4\rho_n+3r_n)$).
    Since $\rho_n \gg s_n \geq 2\sqrt{c_0} r_n$, by \eqref{EHI:e:PexitRhoExponential},
    \begin{equation*}
        1 \gg 2^{-\floor{(\rho_n+r_n)/(s_n+r_n)}} \geq \mathbb{P}_0 \left( \tau_{B(0, \rho_n)} < T^{(r_n)} \right).
    \end{equation*}
    By \eqref{EHI:e:PhoExitAtLeastSquareExponential} and the definitions of $\tilde{\rho}_n$, $\hat{\rho}_n$, and $\rho_n$,
    \begin{align*}
        \mathbb{P}_0 \left( \tau_{B(0, \rho_n)} < T^{(r_n)} \right) &\geq \mathbb{P}_0 \left( \tau_{B(0, \tilde{\rho}_n)} < T^{(r_n)} \right) \\
        &\geq \frac{1}{8} \exp(-\frac{4C (\tilde{\rho}_n)^2}{s_n^2}) = \xi_n^{1/3} \gg \xi_n^{2/3} = \frac{(\hat{\rho}_n)^d j(r_n)}{\lambda(r_n)} \geq \frac{\rho_n^d j(r_n)}{\lambda(r_n)}.
    \end{align*}
    Finally,
    \begin{align*}
        \P_0(|\Delta X(T^{(r_n)})| \leq 4\rho_n+3r_n) &= \frac{\int_{B(0, 4\rho_n+3r_n) \setminus B(0, r_n)} j(|x|) \, dx}{\int_{B(0, r_n)^c } j(|x|) \, dx} \leq \frac{|B(0, 1)| (4\rho_n + 3r_n)^d j(r_n)}{\lambda(r_n)} \\
        &\lesssim \frac{\rho_n^d j(r_n)}{\lambda(r_n)}.
    \end{align*}\
    Thus,
    \begin{equation*}
        1 \gg \mathbb{P}_0 \left( \tau_{B(0, \rho_n)} < T^{(r_n)} \right) \gg \frac{\rho_n^d j(r_n)}{\lambda(r_n)} \gtrsim \P_0(|\Delta X(T^{(r_n)})| \leq 4\rho_n+3r_n),
    \end{equation*}
    completing the proof.
\end{proof}

\subsection{Negative results for subordinate Brownian motions}

We now turn to subordinate Brownian motions. In this subsection, we prove the following proposition.

\begin{prop} \label{EHI:p:SbmCounterex}
    Let $X=(X_t)_{t \geq 0} = (B_{S_t})_{t \geq 0}$ be a subordinate Brownian motion. For all $s>0$, let $T^{\{s\}} := \inf\{ t : \Delta S(t) > s \}$. Suppose there exist sequences $(s_n)$ and $(r_n)$ of positive numbers such that
    \begin{itemize}
        \item $s_n \ll r_n^2$.
        \item $1 \gg \P_0(\tau_{B(0, r_n)} < T^{\{s_n\}}) \gg \P_0 (|\Delta X(T^{\{s_n\}})| \leq 36 r_n)$.
    \end{itemize}
    Then $X$ does not satisfy $\EHI$. Furthermore, if $r_n \to 0$ along a subsequence then $X$ does not satisfy $\EHI(r \leq 1)$, and if $r_n \to \infty$ along a subsequence then $X$ does not satisfy $\EHI(r \geq 1)$.
\end{prop}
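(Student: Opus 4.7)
The plan is to invoke Proposition~\ref{EHI:p:GeneralCounterex} with the Meyer-decomposition-arising exponential stopping time $T_n := T^{\{s_n\}}$ (from the SBM decomposition at scale $s_n$; see Definition~\ref{EHI:d:sbmDecomposition}) and with radii $R_{1,n} := r_n$, $R_{2,n} := 2 r_n$, $R_{3,n} := 33 r_n$, so that $R_{1,n}+R_{2,n}+R_{3,n} = 36 r_n$. The hierarchy $0 < R_{1,n} < R_{2,n} < R_{1,n}+R_{2,n} < R_{3,n}$ and the ratio condition~\eqref{EHI:e:GeneralCounterexR2R1} with $C = 2$ are immediate, and with these choices the sandwich condition~\eqref{EHI:e:GeneralCounterexSandwich} is literally the second bullet of the present hypothesis.

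The remaining work lies in verifying the two overshoot conditions~\eqref{EHI:e:GeneralCounterexLeaving1} and~\eqref{EHI:e:GeneralCounterexLeaving1+2}, which control the behavior of $X$ at an exit of a ball before $T^{\{s_n\}}$. Before $T^{\{s_n\}}$, $X$ coincides with $X^{\{s_n\}}$, and each jump of $X^{\{s_n\}}$ at a time $t$ is a Gaussian displacement with covariance $2 \Delta S(t) I_d$ where $\Delta S(t) \leq s_n$. The assumption $s_n \ll r_n^2$ makes $\sqrt{s_n} \ll r_n$, so each such Gaussian jump exceeds any fixed positive multiple of $r_n$ in magnitude only with super-exponentially small probability in $r_n^2 / s_n$. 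Condition~\eqref{EHI:e:GeneralCounterexLeaving1+2} is then easy: conditional on exit from $B(0, R_{1,n}+R_{2,n}) = B(0, 3 r_n)$ before $T^{\{s_n\}}$, the probability that the exit jump has magnitude exceeding $30 r_n$ is $o(1)$, so the process lies in $B(0, 33 r_n)$ with conditional probability tending to $1$, providing the required uniform positive constant lower bound.

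The main obstacle is~\eqref{EHI:e:GeneralCounterexLeaving1}: the conditional probability of $X$ overshooting $B(0, 2 r_n)$ upon exiting $B(0, r_n)$ before $T^{\{s_n\}}$ must tend to $0$. By a Poisson-process comparison (the ``large'' jumps of $X^{\{s_n\}}$, i.e.\ those of magnitude $>r_n$, form a Poisson process of rate $\Lambda_n := \int_{|y|>r_n} j^{\{s_n\}}(|y|)\,dy$, independent of $T^{\{s_n\}}$), this conditional probability is at most $\bigl(\Lambda_n/\lambda\{s_n\}\bigr)\big/\P_0\bigl(\tau_{B(0,r_n)} < T^{\{s_n\}}\bigr)$. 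Using the Fubini identity $\Lambda_n = \int_0^{s_n} \P_0(|B_t|>r_n)\,\mu(dt)$ and the analogous formula $\P_0(|\Delta X(T^{\{s_n\}})|\leq 36 r_n) = \lambda\{s_n\}^{-1} \int_{s_n}^\infty \P_0(|B_t| \leq 36 r_n)\,\mu(dt)$, together with the second bullet of the hypothesis, it then suffices to show $\int_0^{s_n} \P_0(|B_t|>r_n) \,\mu(dt) \ll \int_{s_n}^\infty \P_0(|B_t| \leq 36 r_n)\, \mu(dt)$. This comparison follows from $s_n \ll r_n^2$: for $t \leq s_n$ the left integrand has Gaussian tail and decays super-exponentially in $r_n^2/s_n$, whereas for $t$ slightly above $s_n$ we have $\sqrt{t} \ll r_n$, so the right integrand is bounded below by a positive constant (and the hypothesis guarantees the right-hand side is nontrivial). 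Once both overshoot conditions are verified, Proposition~\ref{EHI:p:GeneralCounterex} yields the failure of $\EHI$, and subsequential convergence of $r_n$ to $0$ or to $\infty$ yields the failure of $\EHI(r \leq 1)$ or $\EHI(r \geq 1)$ respectively.
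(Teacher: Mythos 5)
Your setup is close to the paper's, but the way you verify the overshoot condition \eqref{EHI:e:GeneralCounterexLeaving1} has a real gap, and in fact the key inequality you reduce to is false for the counterexample this proposition is designed to handle.

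You choose $R_{1,n}=r_n$, $R_{2,n}=2r_n$, $R_{3,n}=33r_n$, whereas the paper uses $R_{2,n}=5r_n$, $R_{3,n}=30r_n$ (both sum to $36r_n$). The paper's choice matters: the paper verifies \eqref{EHI:e:GeneralCounterexLeaving1} and \eqref{EHI:e:GeneralCounterexLeaving1+2} via Lemma~\ref{EHI:l:SbmExitsNotTooFar}, which bounds the conditional overshoot probability out of $B(0,r)$ into $B(0,5r)^c$ by $Ce^{-cr^2/s}$. That lemma's proof runs entirely through the L\'{e}vy system formula: the conditional probability is written as a ratio of two expectations of the form $\E_x\bigl[\int_0^{\tau\wedge T} j^{\brackets{s}}(X_t,E)\,dt\bigr]$, the common time integral cancels, and one is left bounding the ratio $j^{\brackets{s}}(z,B(0,5r)^c)/j^{\brackets{s}}(z,B(0,r)^c)$ uniformly over $z\in B(0,r)$, which is done by a Gaussian tail comparison (Lemma~\ref{EHI:l:234}). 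Crucially this gives a bound that (i) is uniform in $x\in B(0,r)$, and (ii) depends only on $r^2/s$, with no reference to $\lambda\{s_n\}$ or to the hypothesis quantities.

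Your argument, by contrast, bounds the \emph{unconditional} probability of the overshoot event by $\Lambda_n/\lambda\{s_n\}$ and then divides by $\P_0(\tau_{B(0,r_n)}<T^{\{s_n\}})$. Two problems. First, condition \eqref{EHI:e:GeneralCounterexLeaving1} requires a supremum over $x\in B(0,R_{1,n})$; dividing by $\P_0$ only treats $x=0$, and you would need a monotonicity statement (that $\P_x(\tau_{B(0,r)}<T)$ is smallest at $x=0$) which you do not justify. Second, and more seriously, your reduction to $\Lambda_n/\lambda\{s_n\}\ll\P_0(|\Delta X(T^{\{s_n\}})|\leq36r_n)$ is not a consequence of the hypotheses and actually fails for the process of Example~\ref{EHI:ex:counterexample}. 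There, $\P_0(|\Delta X(T^{\{s_n\}})|\leq36r_n)$ decays like a power of $2^{-n}$ (see \eqref{EHI:e:RegularCounterexScattering}), while a direct estimate (using $e^{-cr_n^2/u}\lesssim (u/r_n^2)\,e^{-cr_n^2/2s_n}$ and $\int_0^{s_n}u\,\mu(du)\asymp H_{n+1}A_{n+1}$, so $\Lambda_n/\lambda\{s_n\}\lesssim(s_n/r_n^2)\,n^{-c}$) shows $\Lambda_n/\lambda\{s_n\}$ decays only polynomially in $n$. So $\Lambda_n/\lambda\{s_n\}\gg\P_0(|\Delta X(T^{\{s_n\}})|\leq36r_n)$ for that example, and your reduction cannot close. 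The heuristic that ``the right integrand is bounded below by a positive constant for $t$ slightly above $s_n$'' does not yield the needed domination because $\mu$ can place essentially all of its mass $\mu((s_n,\infty))$ far above $r_n^2$, making $\int_{s_n}^\infty\P_0(|B_t|\leq36r_n)\,\mu(dt)$ much smaller than $\lambda\{s_n\}$. The repair is to bound the conditional overshoot probability directly in terms of $r_n^2/s_n$ alone, as the paper does, rather than routing through a comparison to $\P_0(|\Delta X(T^{\{s_n\}})|\leq36r_n)$.
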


We will use Proposition \ref{EHI:p:GeneralCounterex} to prove Proposition \ref{EHI:p:SbmCounterex}, but first we must prove a few lemmas.

\begin{lemma} \label{EHI:l:234}
    Let $Z \sim N(0, I_d)$ be a standard $d$-dimensional Gaussian. There exist positive constants $C$ and $c$ (which may depend on $d$) such that for all $r \geq 1$,
    \begin{equation*}
        \frac{\P(|Z| \geq 4r)}{\P(2r \leq |Z| \leq 3r)} \leq Ce^{-c r^2}.
    \end{equation*}
\end{lemma}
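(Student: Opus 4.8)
The plan is to compare both the numerator and the denominator to explicit Gaussian tail quantities and then take their ratio. Recall that $|Z|^2$ has a $\chi^2_d$ distribution, so $\P(|Z| \geq R) = \P(\chi^2_d \geq R^2)$. For the numerator, I would use a standard sub-Gaussian-type tail bound: there is a constant $C_d$ such that $\P(|Z| \geq R) \leq C_d R^{d-1} e^{-R^2/2}$ for all $R \geq 1$ (this follows by integrating the radial density $c_d \rho^{d-1} e^{-\rho^2/2}$ over $[R,\infty)$ and bounding the tail integral, e.g.\ by the substitution $\rho = R + u$ or by noting $\rho^{d-1} e^{-\rho^2/2}$ is eventually decreasing). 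Applying this with $R = 4r$ gives $\P(|Z| \geq 4r) \leq C_d (4r)^{d-1} e^{-8r^2}$.

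For the denominator, I would bound $\P(2r \leq |Z| \leq 3r)$ from below. The cleanest way is to observe that the event $\{2r \le |Z| \le 3r\}$ contains, say, $\{2r \le |Z| \le 2r + 1/2\}$ when $r \geq 1$ (since $2r + 1/2 \le 3r$), and on the shell $\{2r \le \rho \le 2r+1/2\}$ the radial density $c_d \rho^{d-1} e^{-\rho^2/2}$ is at least $c_d (2r)^{d-1} e^{-(2r+1/2)^2/2} \ge c_d (2r)^{d-1} e^{-(2r)^2/2 - 2r \cdot (1/2) - 1/8} \gtrsim (2r)^{d-1} e^{-2r^2 - r}$. Multiplying by the shell width $1/2$ gives a lower bound of the form $c_d' r^{d-1} e^{-2r^2 - r}$. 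Alternatively one can integrate over the whole annulus $[2r,3r]$ and extract the dominant contribution near the inner radius; either route produces a lower bound $\gtrsim r^{d-1} e^{-2r^2 - O(r)}$.

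Taking the ratio, the polynomial factors $r^{d-1}$ cancel up to constants, and the exponential factor becomes $e^{-8r^2} / e^{-2r^2 - O(r)} = e^{-6r^2 + O(r)} \le C e^{-cr^2}$ for any $c < 6$ and a suitable $C$, valid for all $r \geq 1$ (the linear term $O(r)$ is absorbed into the quadratic exponent at the cost of shrinking $c$ and enlarging $C$, using that $r \ge 1$). This yields the claimed bound with $c$ and $C$ depending only on $d$.

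The only mild subtlety — and the step I would be most careful about — is making the lower bound on the denominator genuinely uniform in $r \ge 1$ with constants depending only on $d$; writing $\P(2r \le |Z| \le 3r) \ge \int_{2r}^{2r+1/2} c_d \rho^{d-1} e^{-\rho^2/2}\, d\rho$ and then bounding $\rho^{d-1} \ge (2r)^{d-1}$ and $e^{-\rho^2/2} \ge e^{-(2r+1/2)^2/2}$ on that shell makes this completely explicit, and the rest is elementary algebra of exponents. No new ideas beyond Gaussian tail estimates are needed; this is essentially a bookkeeping lemma feeding into the main counterexample construction.
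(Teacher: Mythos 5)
Your proof is correct, but it takes a different route from the paper's. You estimate the numerator via a standard Gaussian tail bound $\P(|Z| \geq R) \lesssim R^{d-1} e^{-R^2/2}$ and the denominator via a thin-shell lower bound, producing the explicit exponent $-6r^2 + O(r)$ and then absorbing the linear term using $r \geq 1$. This is sound, and the bookkeeping you flag as the mild subtlety (uniformity of the shell lower bound) is handled correctly. The paper instead applies the change of variables $x = ry$ to write $g(r) := \P(|Z|\geq 4r)/\P(2r\leq|Z|\leq 3r)$ as a ratio of two integrals of $e^{-r^2|y|^2/2}$ over the fixed regions $\{|y|\geq 4\}$ and $\{2\leq|y|\leq 3\}$, and then compares the integrands pointwise to the case $r=1$: on $\{|y|\geq 4\}$, $e^{-r^2|y|^2/2} \leq e^{-8(r^2-1)}e^{-|y|^2/2}$, while on $\{|y|\leq 3\}$, $e^{-r^2|y|^2/2} \geq e^{-\tfrac92(r^2-1)}e^{-|y|^2/2}$, giving $g(r)\leq g(1)\,e^{7/2}\,e^{-7r^2/2}$ immediately. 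The paper's rescaling trick is a bit slicker in that it sidesteps both the explicit tail asymptotics and the polynomial-prefactor cancellation, requiring only that the integrals at $r=1$ are finite and positive; your version is more hands-on and yields a slightly stronger exponent ($c$ up to $5$ versus $7/2$), though the extra strength is irrelevant to its use downstream.
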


\begin{proof}
    For all $r \geq 1$, let $g(r) := \P(|Z| \geq 4r) / \P(2r \leq |Z| \leq 3r)$. The density of $Z$ is $(2\pi)^{-d/2}e^{-|x|^2/2}$, so by the change of variables $\{ x=ry; dx=r^d dy \}$, we have
    \begin{equation}\label{EHI:e:g(r)ChangeOfVariables}
        g(r) = \frac{\int_{\{ x \in \R^d : |x| > 4r \}} e^{-|x|^2/2} \, dx}{\int_{\{ x \in \R^d : 2r \leq |x| \leq 3r \}} e^{-|x|^2} \, dx} = \frac{\int_{\{y \in \R^d:|y| \geq 4\}} e^{-r^2 |y|^2 /2} \, dy}{\int_{\{ y \in \R^d: 2 \leq |y| \leq 3 \}} e^{-r^2 |y|^2 /2} \, dy}.
    \end{equation}
    For both the numerator and the denominator of the right-hand side of \eqref{EHI:e:g(r)ChangeOfVariables}, let us compare the integrand to what it would be for $r=1$. This comparison will give us an upper bound for the ratio $g(r)/g(1)$. For the numerator, note that for all $|y| \geq 4$, we have
    \begin{equation*}
        \frac{e^{-r^2|y|^2/2}}{e^{-|y|^2/2}} = e^{-(r^2-1)|y|^2/2} \leq e^{-8(r^2-1)},
    \end{equation*}
    or equivalently,
    \begin{equation}\label{EHI:e:g(r)Numerator}
        e^{-r^2|y|^2/2} \leq e^{-8(r^2-1)} e^{-|y|^2/2} \qquad\mbox{for all $r \geq 1, |y| \geq 4$}.
    \end{equation}
    For the denominator, note that for all $|y| \leq 3$, we have
    \begin{equation*}
        \frac{e^{-r^2|y|^2/2}}{e^{-|y|^2/2}} = e^{-(r^2-1)|y|^2/2} \geq e^{-\frac92 (r^2-1)},
    \end{equation*}
    or
    \begin{equation}\label{EHI:e:g(r)Denominator}
        e^{-r^2|y|^2/2} \geq e^{-\frac92 (r^2-1)} e^{-|y|^2/2} \qquad\mbox{for all $r \geq 1, |y| \leq 3$}.
    \end{equation}
    By applying the bounds \eqref{EHI:e:g(r)Numerator} and \eqref{EHI:e:g(r)Denominator} to the integrands from \eqref{EHI:e:g(r)ChangeOfVariables},
    \begin{equation*}
        g(r) \leq \frac{e^{-8(r^2-1)} \int_{\{ y : |y| \geq 4 \} } e^{-|y|^2/2} \, dy}{e^{-\frac{9}{2}(r^2-1)} \int_{\{ y : 2 \leq |y| \leq 3 \} } e^{-|y|^2/2} \, dy} = e^{-(8-\frac{9}{2})(r^2-1)} g(1) = g(1) e^{7/2} e^{-\frac72 r^2}.
    \end{equation*}
    This upper bound is of the form we desired, with $C=g(1)e^{7/2}$ and $c=7/2$.
\end{proof}

\begin{lemma} \label{EHI:l:SbmExitsNotTooFar}
    Let $X=(X_t)_{t \geq 0} = (B_{S_t})_{t \geq 0}$ be a subordinate Brownian motion on $\mathbb{R}^d$. Fix $r>0$ and $s \leq r^2/2$. Let $T^{\{s\}} := \inf\{ t>0 : \Delta S(t)>s \}$ be the first time that the subordinator $S$ takes a jump of size greater than $s$. Then for all $x \in B(0, r)$,
    \begin{equation*}
        \mathbb{P}_x \left( X_{\tau_{B(0, r)}} \notin B(0, 5 r) \Big| \tau_{B(0, r)} < T^{\{s\}} \right) \leq Ce^{-cr^2/s},
    \end{equation*}
    where $C, c>0$ are constants that depend only on $d$.
\end{lemma}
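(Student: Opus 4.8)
The plan is to run the argument through the SBM decomposition of $X$ at $s$ (Definition \ref{EHI:d:sbmDecomposition}), $X = X^{\{s\}} + \hat{X}^{\{s\}}$. Since $\hat{X}^{\{s\}}$ sits at $0$ until its first jump, which occurs at $T^{\{s\}}$, on the event $\{\tau_{B(0,r)} < T^{\{s\}}\}$ the process $X$ coincides with $X^{\{s\}}$ throughout $[0,\tau_{B(0,r)}]$; hence it is enough to bound $\P_x(X^{\{s\}}_{\tau^{\{s\}}_{B(0,r)}} \notin B(0,5r) \mid \tau^{\{s\}}_{B(0,r)} < T^{\{s\}})$, where $\tau^{\{s\}}$ denotes the exit time for $X^{\{s\}}$. (We may assume this conditioning event has positive probability, as otherwise there is nothing to prove.) First I would express both the numerator and the denominator via the Ikeda--Watanabe/Lévy system formula \cite{IW} for the process $X^{\{s\}}$, killed both on leaving $B(0,r)$ and at the independent exponential time $T^{\{s\}}$: this is exactly the construction of Definition \ref{EHI:d:GkDef} and Lemma \ref{EHI:l:GkPurpose} (for $k=0$) transported to the SBM decomposition, yielding a Green's function $G^{\{s\},0}_{B(0,r)}$ — the analogue of $G^{\inner{r_0},0}_{B(0,r)}$ — and, via the analogue of \eqref{EHI:e:K0formula} and \eqref{EHI:e:h0def}--\eqref{EHI:e:h0h1relatedToK0K1}, the identity $\P_x(X^{\{s\}}_{\tau^{\{s\}}_{B(0,r)}} \in U,\ \tau^{\{s\}}_{B(0,r)} < T^{\{s\}}) = \int_{B(0,r)} G^{\{s\},0}_{B(0,r)}(x,z)\big(\int_U j^{\{s\}}(|w-z|)\,dw\big)\,dz$ for any $U \subseteq B(0,r)^c$, with $j^{\{s\}}$ the jump kernel of $X^{\{s\}}$. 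Bounding the denominator from below by the probability of a \emph{jump}-exit (a continuous exit, possible only when the subordinator has a drift, lands on $\partial B(0,r) \subseteq B(0,5r)$ and so only helps), I get
\[
\P_x\!\left(X^{\{s\}}_{\tau^{\{s\}}_{B(0,r)}} \notin B(0,5r) \ \middle|\ \tau^{\{s\}}_{B(0,r)} < T^{\{s\}}\right) \ \le\ \frac{\int_{B(0,r)} G^{\{s\},0}_{B(0,r)}(x,z)\, I_1(z)\,dz}{\int_{B(0,r)} G^{\{s\},0}_{B(0,r)}(x,z)\, I_2(z)\,dz},
\]
where $I_1(z) := \int_{B(0,5r)^c} j^{\{s\}}(|w-z|)\,dw$ and $I_2(z) := \int_{B(0,r)^c} j^{\{s\}}(|w-z|)\,dw$.

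Since $G^{\{s\},0}_{B(0,r)} \ge 0$, the whole problem then reduces to the \emph{pointwise} kernel comparison $I_1(z) \le C e^{-cr^2/s}\, I_2(z)$ for every $z \in B(0,r)$. For $|z| < r$ the elementary inclusions $\{|w| \ge 5r\} \subseteq \{|w-z| \ge 4r\}$ and $\{|w-z| \ge 2r\} \subseteq \{|w| \ge r\}$, followed by the translation $v = w - z$, give $I_1(z) \le \int_{|v| \ge 4r} j^{\{s\}}(|v|)\,dv$ and $I_2(z) \ge \int_{|v| \ge 2r} j^{\{s\}}(|v|)\,dv$, so it suffices to show
\[
\int_{|v|\ge 4r} j^{\{s\}}(|v|)\,dv \ \le\ C e^{-cr^2/s}\int_{|v|\ge 2r} j^{\{s\}}(|v|)\,dv.
\]

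To prove this I would plug in the SBM jump-kernel formula $j^{\{s\}}(\rho) = \int_{(0,s]} (4\pi t)^{-d/2} e^{-\rho^2/(4t)}\,\mu(dt)$ (equation \eqref{EHI:e:JumpKernelIntermsofMu} with $\mu$ restricted to $(0,s]$), and apply Tonelli to recognise the spherical integral as a Gaussian tail: $\int_{|v|\ge a} j^{\{s\}}(|v|)\,dv = \int_{(0,s]} \P\!\big(|\sqrt{2t}\,Z| \ge a\big)\,\mu(dt)$ for $Z \sim N(0,I_d)$. For each $t \in (0,s]$ put $b_t := r/\sqrt{2t}$; the hypothesis $s \le r^2/2$ forces $b_t \ge 1$, so Lemma \ref{EHI:l:234} applied with $b_t$ in place of its variable gives $\P(|Z| \ge 4b_t) \le C e^{-c b_t^2}\,\P(2b_t \le |Z| \le 3b_t) \le C e^{-c b_t^2}\,\P(|Z| \ge 2b_t)$, and $b_t^2 = r^2/(2t) \ge r^2/(2s)$ lets me replace $e^{-cb_t^2}$ by $e^{-cr^2/(2s)}$. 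Because $\P(|\sqrt{2t}\,Z| \ge 4r) = \P(|Z| \ge 4b_t)$ and $\P(|\sqrt{2t}\,Z| \ge 2r) = \P(|Z| \ge 2b_t)$, integrating this bound against $\mu$ over $(0,s]$ yields the displayed inequality with $c/2$ in place of $c$; combined with the reduction above and after renaming constants, this finishes the proof.

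I expect the only genuine subtlety to be in the first step: correctly setting up the $G^{\{s\},0}_{B(0,r)}$ identity (i.e.\ carrying Definition \ref{EHI:d:GkDef} and Lemma \ref{EHI:l:GkPurpose} over to the SBM decomposition, together with the extra exponential clock coming from $T^{\{s\}}$) and checking that continuous exits — which can occur if the subordinator's drift is nonzero — do not interfere with the inequality direction. Everything downstream of the reduction to the pointwise comparison of $I_1$ and $I_2$ is a short Gaussian-tail computation driven by Lemma \ref{EHI:l:234}.
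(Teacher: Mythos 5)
Your proposal is correct and is essentially the paper's own argument: reduce via the SBM decomposition and the L\'{e}vy system formula to a ratio of the integrals of $j^{\brackets{s}}$ over $B(0,5r)^c$ and $B(0,r)^c$ against a nonnegative kernel, translate the inclusions $B(0,5r)^c \subseteq B(z,4r)^c$ and $B(0,r)^c \supseteq B(z,2r)^c$ (the paper uses the annulus $B(z,3r)\setminus B(z,2r)$, which is what Lemma~\ref{EHI:l:234} directly compares, but you recover it by the trivial bound $\P(2b_t\le |Z|\le 3b_t)\le \P(|Z|\ge 2b_t)$), and then apply Lemma~\ref{EHI:l:234} scale-by-scale under the integral in $\mu$. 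Your explicit treatment of possible continuous exits (they land on $\partial B(0,r)\subseteq B(0,5r)$, so they can only increase the denominator) is a careful point that the paper's proof elides; it is moot here because the standing hypotheses make $X$ pure-jump, but it does no harm.
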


\begin{proof}
    Recall from the definition of the SBM decomposition (Definition \ref{EHI:d:sbmDecomposition}) that $S^{\brackets{s}}=\left(S^{\brackets{s}}_t\right)_{t \geq 0}$ is the process that results from removing all the jumps of size greater than $s$ from $S$. That is,
    \begin{equation*}
        S^{\brackets{s}}_t := S_t - \sum_{0 <t' \leq t : \Delta S(t')>s} \Delta S(t').
    \end{equation*}
    Moreover, $X^{\brackets{s}} = (X^{\brackets{s}}_t)_{t \geq 0} := (B_{S^{\brackets{s}}_t})_{t \geq 0}$, and $j^{\brackets{s}}$ is the jump kernel of $X^{\brackets{s}}$, which can be expressed as
    \begin{equation} \label{EHI:e:JtildeFormula}
        j^{\brackets{s}}(r) = \int_{(0, s]} (4 \pi u)^{-d/2} \exp(-\frac{r^2}{4u}) \, \mu(du),
    \end{equation}
    where $\mu$ is the L\'{e}vy measure of $S$. Let us also use the notation $j^{\brackets{s}}(x, y) := j^{\brackets{s}}(|x-y|)$ and $j^{\brackets{s}}(x, E) = \int_E j^{\brackets{s}}(x, y) \, dy$. For all measurable sets $E$, let $\tau^{\brackets{s}}_E := $

    Let $\mathcal{A} := \mathbb{P}_x \left( X_{\tau_{B(0, r)}} \notin B(0, A r) \Big| \tau_{B(0, r)} < T^{\{s\}} \right)$, the quantity that we are interested in upper-bounding. Note that $S$ and $S^{\brackets{s}}$ agree up until time $T^{\{s\}}$, so an equivalent expression for $\mathcal{A}$ is
    \begin{equation} \label{EHI:e:CondProbMathcalAsRatio}
        \mathcal{A} = \P_x \left( X^{\brackets{s}}_{\tau^{\brackets{s}}_{B(0, r)}} \notin B(0, 5r) \Big| \tau^{\brackets{s}}_{B(0, r)} < T^{\{s\}} \right) = \frac{\P_x \left( \tau^{\brackets{s}}_{B(0, r)} < T^{\{s\}}, X^{\brackets{s}}_{\tau^{\brackets{s}}_{B(0, r)}} \notin B(0, 5r) \right)}{\P_x \left( \tau^{\brackets{s}}_{B(0, r)} < T^{\{s\}} \right)}.
    \end{equation}
    For all measurable $E \subseteq B(0, r)^c$, by the L\'{e}vy system formula (cf. \cite{BL}, \cite{ck1}),
    \begin{align*}
        \P_x \left( \tau^{\brackets{s}}_{B(0, r)} < T^{\{s\}}, X^{\brackets{s}}_{\tau^{\brackets{s}}_{B(0, r)}} \in E \right) = \E_x \left[ \int_0^{\tau^{\brackets{s}}_{B(0, r)} \wedge T^{\{s\}}} j^{\brackets{s}}(X^{\brackets{s}}_t, E) \, dt \right].
    \end{align*}
    Thus, another way of writing \eqref{EHI:e:CondProbMathcalAsRatio} is
    \begin{align} \label{EHI:e:CondProbMathcalAsJratio}
        \mathcal{A} &= \frac{\P_x \left( \tau^{\brackets{s}}_{B(0, r)} < T^{\{s\}}, X^{\brackets{s}}_{\tau^{\brackets{s}}_{B(0, r)}} \in B(0, 5r)^c \right)}{\P_x \left( \tau^{\brackets{s}}_{B(0, r)} < T^{\{s\}}, X^{\brackets{s}}_{\tau^{\brackets{s}}_{B(0, r)}} \in B(0, r)^c \right)} \notag\\
        &= \frac{\E_x \left[ \int_0^{\tau^{\brackets{s}}_{B(0, r)} \wedge T^{\{s\}}} j^{\brackets{s}}(X^{\brackets{s}}_t, B(0, 5r)^c) \, dt \right]}{\E_x \left[ \int_0^{\tau^{\brackets{s}}_{B(0, r)} \wedge T^{\{s\}}} j^{\brackets{s}}(X^{\brackets{s}}_t, B(0, r)^c) \, dt \right]} \leq \sup_{z \in B(0, r)} \frac{j^{\brackets{s}}(z, B(0, 5r)^c)}{j^{\brackets{s}}(z, B(0, r)^c)}.
    \end{align}
    All that remains is to show that for all $z \in B(0, r)$, we have $j^{\brackets{s}}(z, B(0, 5r)^c) / j^{\brackets{s}}(z, B(0, r)^c) \leq Ce^{-cr^2/s}$. Fix $z \in B(0, r)$. Note that by the triangle inequality, we have $B(0, 5r)^c \subseteq B(z, 4r)^c$ and $B(0, r)^c \supseteq B(z, 2r)^c \supseteq B(z, 3r) \setminus B(0, 2r)$, so
    \begin{equation*}
        \frac{j^{\brackets{s}}(z, B(0, 5r)^c)}{j^{\brackets{s}}(z, B(0, r)^c)} \leq \frac{j^{\brackets{s}}(z, B(z, 4r)^c)}{j^{\brackets{s}}(z, B(z, 3r) \setminus B(z, 2r))} = \frac{j^{\brackets{s}}(0, B(0, 4r)^c)}{j^{\brackets{s}}(0, B(0, 3r) \setminus B(0, 2r))}.
    \end{equation*}
    Since this holds for all $z \in B(0, r)$, \eqref{EHI:e:CondProbMathcalAsJratio} implies
    \begin{equation}\label{EHI:e:JzIntoJ0}
        \mathcal{A} \leq \frac{j^{\brackets{s}}(0, B(0, 4r)^c)}{j^{\brackets{s}}(0, B(0, 3r) \setminus B(0, 2r))}.
    \end{equation}
    Let us apply \eqref{EHI:e:JtildeFormula} to both the numerator and denominator of the right-hand side of \eqref{EHI:e:JzIntoJ0}, to compare their ratio. Let $Z \sim N(0, I_d)$ be a standard $d$-dimensional Gaussian. Note that for all $u>0$, the density of $\sqrt{2u}Z$ is $(4\pi u)^{-d/2} \exp(-\frac{|x|^2}{4u})$
    Then by \eqref{EHI:e:JtildeFormula} and Tonelli's theorem,
    \begin{align}
        j^{\brackets{s}}(0, B(0, 4r)^c) &= \int_{B(0, 4r)^c} j^{\brackets{s}}(|x|) \, dx = \int_{B(0, 4r)^c} \int_{(0, s]} (4\pi u)^{-d/2} \exp(-\frac{|x|^2}{4u}) \, \mu(du) \, dx \notag\\
        &= \int_{(0, s]} \int_{B(0, 4r)^c} (4\pi u)^{-d/2} \exp(-\frac{|x|^2}{4u}) \, dx \, \mu(du) \notag\\
        &= \int_{(0, s]} \P \left( \sqrt{2u} Z \in B(0, 4r)^c \right) \, \mu(du) = \int_{(0, s]} \P \left( |Z| \geq \frac{4r}{\sqrt{2u}} \right) \, \mu(du). \label{EHI:e:JtildeUsingTonelli1}
    \end{align}
    By a similar calculation,
    \begin{equation}\label{EHI:e:JtildeUsingTonelli2}
        j^{\brackets{s}}(0, B(0, 3r) \setminus B(0, 2r)) = \int_{(0, s]} \P \left( \frac{2r}{\sqrt{2u}} \leq |Z| \leq \frac{3r}{\sqrt{2u}} \right) \, \mu(du).
    \end{equation}
    Let $C$ and $c$ be the constants from Lemma \ref{EHI:l:234}.
    By plugging \eqref{EHI:e:JtildeUsingTonelli1} and \eqref{EHI:e:JtildeUsingTonelli2} into \eqref{EHI:e:JzIntoJ0}, and then applying Lemma \ref{EHI:l:234},
    \begin{align*}
        \mathcal{A} &\leq \frac{\int_{(0, s]} \P \left( |Z| \geq \frac{4r}{\sqrt{2u}} \right) \, \mu(du)}{\int_{(0, s]} \P \left( \frac{2r}{\sqrt{2u}} \leq |Z| \leq \frac{3r}{\sqrt{2u}} \right) \, \mu(du)} \leq \sup_{u \in (0, s]} \frac{\P \left( |Z| \geq \frac{4r}{\sqrt{2u}} \right)}{\P \left( \frac{2r}{\sqrt{2u}} \leq |Z| \leq \frac{3r}{\sqrt{2u}} \right)} \leq C e^{-cr^2/(2s)},
    \end{align*}
    completing the proof. (Note that the $c$ in the statement of this lemma is half of the $c$ from Lemma \ref{EHI:l:234}. Also note that the assumption that $s \leq r^2/2$ was necessary so that we would have $\frac{r}{\sqrt{2u}} \geq 1$ for all $u \in (0, s]$, which allows us to apply Lemma \ref{EHI:l:234}.)
\end{proof}

\begin{proof}[Proof of Proposition \ref{EHI:p:SbmCounterex}]
    Let $X=(X_t)_{t \geq 0} = (B_{S_t})_{t \geq 0}$ be a subordinate Brownian motion, and suppose there exist sequences $(s_n), (r_n) \subseteq (0, \infty)$ such that $s_n \ll r_n^2$ and $1 \gg \P_0(\tau_{B(0, r_n)} < T^{\{s_n\}}) \gg \P_0 (|\Delta X(T^{\{s_n\}})| \leq 36 r_n)$. We would like apply Proposition \ref{EHI:p:GeneralCounterex} with
    \begin{align*}
        R_{1, n} &:= r_n, \\
        R_{2, n} &:= 5r_n, \\
        R_{1, n}+R_{2, n} &:= 6r_n, \\
        R_{3, n} &:= 30r_n, \\
        T_n &:= T^{\{s_n\}}.
    \end{align*}
    We must verify conditions \eqref{EHI:e:GeneralCounterexR2R1}-\eqref{EHI:e:GeneralCounterexLeaving1+2}. Condition \eqref{EHI:e:GeneralCounterexR2R1} clearly holds, with $C=5$. Condition \eqref{EHI:e:GeneralCounterexSandwich} holds, since we assumed $1 \gg \P_0(\tau_{B(0, r_n)} < T^{\{s_n\}}) \gg \P_0 (|\Delta X(T^{\{s_n\}})| \leq 36 r_n)$. By Lemma \ref{EHI:l:SbmExitsNotTooFar},
    \begin{equation*}
        \sup_{x \in B(0, r_n)} \P_x \left( \tau_{B(0, r_n)} \notin B(0, 5r_n) \Big| \tau_{B(0, r_n)} < T^{\{s_n\}} \right) \leq Ce^{-cr_n^2/s_n} \to 0
    \end{equation*}
    and
    \begin{equation*}
        \sup_{x \in B(0, 6r_n)} \P_x \left( \tau_{B(0, 6r_n)} \notin B(0, 30r_n) \Big| \tau_{B(0, 6r_n)} < T^{\{s_n\}} \right) \leq Ce^{-c(6r_n)^2/s_n} \to 0,
    \end{equation*}
    so conditions \eqref{EHI:e:GeneralCounterexLeaving1} and \eqref{EHI:e:GeneralCounterexLeaving1+2} hold, completing the proof.
\end{proof}

\subsection{A highly regular counterexample}\label{EHI:ss:Counterexample}

In this section, we construct a process $X$ that has many regularity properties, and yet does not satisfy $\EHI$. This process is a subordinate Brownian motion, and has the property that $j(2r) \asymp j(r)$ for all $r>0$. Better yet, the L\'{e}vy measure $\mu$ of its subordinator $S$ is absolutely continuous with respect to the Lebesgue measure on $(0, \infty)$, and can be written as $\mu(du) = f(u) \, du$, where $f : (0, \infty) \to (0, \infty)$ is a continuous, decreasing function, and $f(2u) \asymp f(u)$ for all $u>0$. (From this, it follows easily that $j(2r) \asymp j(r)$.) In this sense, the process feels very far from pathological. As it turns out, its unexpected behavior results from $f$ being too flat (rather than too fast-decaying) on certain intervals (in fact, it is close to constant on some intervals).

Before we introduce this highly regular counterexample, let us speak about a distribution that will feature prominently in its construction. Let $\alpha=3$. Let $Y$ be a positive random variable with density
\begin{equation}\label{EHI:e:Ydef}
    f_Y(y) := \left\{
    \begin{matrix}
        \frac{\alpha-1}{\alpha} &:& \mbox{if $0 \leq y \leq 1$}\\
        \frac{\alpha-1}{\alpha}y^{-\alpha} &:& \mbox{if $y \geq 1$}.
    \end{matrix} \right.
\end{equation}
In other words, the density of $Y$ is continuous, non-increasing, constant on $[0, 1]$, and decays like $y^{-\alpha}$ on $[1, \infty)$.
For all $y \in [0, 1]$,
\begin{equation}\label{EHI:e:ProbYsmall}
    \P(Y<y) = \frac{\alpha-1}{\alpha}y.
\end{equation}
For all $y \geq 1$, 
\begin{equation}\label{EHI:e:ProbYbig}
    \P(Y>y) = \frac{1}{\alpha} y^{-(\alpha-1)}.
\end{equation}

\begin{lemma}\label{EHI:l:YZlemma}
    Let $Y$ have distribution \eqref{EHI:e:Ydef}, and let $Z$ be a standard Gaussian in $\mathbb{R}^d$, independent of $Y$. Then there exists a constant $C$ such that for all $a \in (0, 1)$,
    \begin{equation*}
        \P(Y|Z| \leq a) \leq Ca^{\frac{2d}{d+2}}.
    \end{equation*}
\end{lemma}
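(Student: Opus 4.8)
The plan is to reduce the estimate to the small-ball behaviour of a Gaussian and then feed in the explicit law of $Y$. Since the density of $Z$ is bounded pointwise by $(2\pi)^{-d/2}$, we have $\P(|Z| \leq t) \leq c_d t^{d}$ for every $t>0$ (with $c_d := (2\pi)^{-d/2}|B(0,1)|$), and of course $\P(|Z|\leq t)\leq 1$. Conditioning on $Y$ and using independence of $Y$ and $Z$,
\begin{equation*}
    \P\bigl(Y|Z| \leq a\bigr) = \E\!\left[\,\P\!\left(|Z| \leq \tfrac{a}{Y}\ \Big|\ Y\right)\right] \leq \E\!\left[\min\!\left\{1,\ c_d\left(\tfrac{a}{Y}\right)^{d}\right\}\right],
\end{equation*}
so everything comes down to estimating this expectation using \eqref{EHI:e:ProbYsmall}--\eqref{EHI:e:ProbYbig}.

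I would fix a threshold $\theta\in(a,1)$, to be optimised at the end, and split according to whether $Y\leq\theta$ or $Y>\theta$. On $\{Y\leq\theta\}$ I bound the integrand by $1$, contributing at most $\P(Y\leq\theta)=\tfrac{\alpha-1}{\alpha}\theta$ by \eqref{EHI:e:ProbYsmall}. On $\{Y>\theta\}$ the event $\{Y|Z|\leq a\}$ forces $|Z|\leq a/Y$, which is small, so I control this part by the Gaussian small-ball bound; here it is convenient to separate the range $\{Y\geq 1\}$ (where $\{Y|Z|\leq a\}$ forces $|Z|\leq a$, giving only a negligible $O(a^{d})$) from the intermediate range, on which the polynomial tail \eqref{EHI:e:ProbYbig} of $Y$ enters. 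Assembling the pieces yields a bound of the shape $\P(Y|Z|\leq a)\lesssim \theta^{p}+a^{q}\theta^{-s}$ for suitable positive exponents, and choosing $\theta$ so as to balance the two terms produces the claimed power $a^{2d/(d+2)}$.

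The argument is elementary throughout; the two points that need care are (i) checking that the optimal $\theta$ indeed lands in the admissible window $(a,1)$ — otherwise neither the identity $\P(Y\leq\theta)=\tfrac{\alpha-1}{\alpha}\theta$ nor the reduction coming from $|Z|\leq a/\theta$ is available — and (ii) disposing of the range of $a$ bounded away from $0$, where the inequality is trivial after enlarging the constant $C$. The only real content is the bookkeeping in the threshold optimisation, which is exactly what pins down the exponent $\frac{2d}{d+2}$; I expect that to be the main (and mild) obstacle.
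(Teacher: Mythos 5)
Two issues. First, the target is off: the paper's own proof of this lemma, and the later application bounding $\P\left(\sqrt{2A_nY}\,|Z|\leq 36r_n\right)$, both concern $\sqrt{Y}\,|Z|$ rather than $Y|Z|$; a square root has been dropped in the displayed statement. For the literal $Y|Z|$, the claimed bound fails whenever $d\geq 3$: by \eqref{EHI:e:ProbYsmall}, $\P(Y|Z|\leq a)\geq\P(Y<a)\,\P(|Z|\leq 1)\gtrsim a$, whereas $a^{2d/(d+2)}=o(a)$ as $a\to 0^+$ when $d>2$. So no argument can rescue the statement as typed in those dimensions.

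Second, even setting aside the square root, your $\theta$-optimisation would not land on $\tfrac{2d}{d+2}$. On $\{Y>\theta\}$ the uniform small-ball bound gives $\P(|Z|\leq a/\theta)\lesssim a^d\theta^{-d}$, and balancing $\theta$ against $a^d\theta^{-d}$ yields $\theta=a^{d/(d+1)}$ and exponent $\tfrac{d}{d+1}$, not $\tfrac{2d}{d+2}$; integrating against the density of $Y$ on $(\theta,1)$ sharpens this to a bound of order $a$, still not $a^{2d/(d+2)}$. (Your appeal to the polynomial tail \eqref{EHI:e:ProbYbig} on the range $(\theta,1)$ is also a misstep: that formula governs $Y\geq 1$, while on $(\theta,1)$ the density of $Y$ is the constant $\tfrac{\alpha-1}{\alpha}$.) With the square root restored, your framework does work: one has $\P(|Z|\leq a/\sqrt{\theta})\lesssim a^d\theta^{-d/2}$, and balancing $\theta$ against $a^d\theta^{-d/2}$ gives $\theta=a^{2d/(d+2)}$ with both pieces $\asymp a^{2d/(d+2)}$. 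This is precisely the paper's one-line argument, written as a union bound: $\left\{\sqrt{Y}\,|Z|\leq a\right\}\subseteq\left\{Y\leq a^{2d/(d+2)}\right\}\cup\left\{|Z|\leq a^{2/(d+2)}\right\}$ (if both events fail, the product exceeds $a$), with each piece controlled by \eqref{EHI:e:ProbYsmall} and the boundedness of the Gaussian density; no integration over $Y$ is needed.
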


\begin{proof}
    In order to have $\sqrt{Y}|Z| \le a$, we must have either $\sqrt{Y} \leq a^{\frac{d}{d+2}}$ or $|Z| \leq a^{\frac{2}{d+2}}$. Therefore,
    \begin{align*}
        \P(\sqrt{Y}|Z| \leq a) &\leq \P(Y \leq a^{\frac{2d}{d+2}}) + \P(|Z| \leq a^{\frac{2}{d+2}}) \\
        &= \frac{\alpha-1}{\alpha} a^{\frac{2d}{d+2}} + \int_{B(0, a^{\frac{2}{d+2}})} (2\pi)^{-d/2} \exp(-\frac{|x|^2}{2}) \, dx \\
        & \leq \left( \frac{\alpha-1}{\alpha} + (2\pi)^{-d/2} |B(0, 1)| \right) a^{\frac{2d}{d+2}}.
    \end{align*}
\end{proof}

For all $n \in \mathbb{N}$, let $H_n := 2^{n^2}$ and $A_n = 2^{-2n^2}$. The description of the subordinator $S$ is as follows. For each $n \in \mathbb{N}$, there is a Poisson clock which rings with rate $H_n$. Whenever the $n$th clock rings, $S$ takes a jump forward with size equal in distribution to $A_n Y$ (where $Y$ is as described in \eqref{EHI:e:Ydef}). We call this a ``type-$n$ jump." More formally, for all $n$, let $N^{(n)} = (N^{(n)}_t)_{t\geq0}$ be a Poisson point process with rate $H_n$. Let the collection $(N^{(n)})_{n \in \mathbb{N}}$ be independent. Then let $(Y_{n, i})_{n \in \mathbb{N}, i \in \mathbb{N}}$ be an iid collection of copies of $Y$.
For all $t\geq0$, let
\begin{equation*}
    S_t := \sum_{n=1}^\infty \sum_{i=1}^{N^{(n)}_t} A_n Y_{n, i}.
\end{equation*}
Note that $N^{(n)}_t$ is the number of type-$n$ jumps that have occurred by time $t$.
As usual, let $B$ be a Brownian motion on $\mathbb{R}^d$ with twice the standard speed, and let $X_t := B(S_t)$.

In addition to the probabilistic description above, we can describe $S$ in terms of its L\'{e}vy measure:
\begin{equation*}
    \mu(\mathrm{d}t) = \frac{\alpha-1}{\alpha}\sum_{n=1}^\infty H_n A_n^{-1} \min\{1, (t/A_n)^{-3} \} \dee{t} = \frac{2}{3} \sum_{n=1}^\infty 2^{3n^2} \min\left\{1, \left(2^{2n^2} t \right)^{-3} \right\} \dee{t}.
\end{equation*}

\begin{theorem} \label{EHI:t:HighlyRegularCounterex}
    If $X=(X_t)_{t \geq 0} = (B(S_t))_{t\geq0}$ is the process constructed above, then $X$ does not satisfy $\EHI$.
\end{theorem}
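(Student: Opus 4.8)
The plan is to apply Proposition~\ref{EHI:p:SbmCounterex}: it suffices to produce threshold levels $s_n\downarrow0$ for the subordinator and spatial scales $r_n\downarrow0$ with $s_n\ll r_n^2$ and
\begin{equation*}
  1 \;\gg\; \P_0\!\big(\tau_{B(0,r_n)} < T^{\{s_n\}}\big) \;\gg\; \P_0\!\big(|\Delta X(T^{\{s_n\}})| \le 36\, r_n\big),
\end{equation*}
where $T^{\{s_n\}}=\inf\{t:\Delta S(t)>s_n\}$ as in the proposition. Writing $A_n=2^{-2n^2}$ and $H_n=2^{n^2}$ as in the construction, I would take
\begin{equation*}
  s_n := A_n\,2^{-2n} = 2^{-2n^2-2n}, \qquad r_n^2 := 2^{-2n^2-c_n}, \quad c_n := 2n - g_n,
\end{equation*}
where $g_n\to\infty$ grows extremely slowly (for instance $g_n=\lfloor\sqrt{\log n}\rfloor$); then $s_n\ll r_n^2\ll A_n$ and $r_n\to0$. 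Heuristically $s_n$ sits just below the $n$-th ``plateau'' of the L\'evy density (the range where it is essentially constant, at height $\asymp 2^{3n^2}$), and $r_n$ is placed strictly between the two natural scales $\sqrt{s_n}$ and $\sqrt{A_n}$. The pathology is exactly the one flagged in the section heading: because the density is too \emph{flat} (not too fast-decaying), the subordinator accumulates essentially nothing before the ``scattering jump'' at $T^{\{s_n\}}$, while $T^{\{s_n\}}$ is itself caused, with overwhelming probability, by an honest type-$n$ jump of size $\asymp A_n\gg r_n^2$.

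\emph{The L\'evy-measure functionals and the bad jump.} Set $\Lambda_n := \mu\big((s_n,\infty)\big)$, the rate of $T^{\{s_n\}}$, and $D_n := \int_{(0,s_n]}t\,\mu(\mathrm dt)$, the drift rate of the truncated subordinator $S^{\{s_n\}}$ (so that $X^{\{s_n\}}=(B(S^{\{s_n\}}_t))_t$ has $\E_0|X^{\{s_n\}}_t|^2 = 2d\,D_n t$ since $B$ has twice the standard speed). Summing the doubly-exponential series term by term gives $\Lambda_n\asymp H_n=2^{n^2}$ and $D_n\asymp 2^{-(n+1)^2}$, whence $D_n/\Lambda_n\asymp 2^{-2n^2-2n-1}\asymp s_n$. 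Moreover the rate of type-$n$ jumps exceeding $s_n$ is $H_n\P(A_nY>s_n)=H_n(1-\tfrac23\cdot2^{-2n})\asymp 2^{n^2}$, which dominates the combined rate $\lesssim 2^{n^2-2n}$ of exceedances of all other types; hence $T^{\{s_n\}}$ comes from a type-$n$ jump with probability $1-O(2^{-2n})$, and conditionally $\Delta S(T^{\{s_n\}})\stackrel{d}{=}A_nY'$ with $Y'=Y\mid\{Y>2^{-2n}\}$, a law within total variation $O(2^{-2n})$ of that of $Y$. Conditionally on this event $\Delta X(T^{\{s_n\}})\stackrel{d}{=}\sqrt{2A_nY'}\,Z$ for $Z\sim N(0,I_d)$ independent of $Y'$, and since $36 r_n/\sqrt{2A_n}=\tfrac{36}{\sqrt2}\,2^{-c_n/2}\to0$, Lemma~\ref{EHI:l:YZlemma} (applied once this is $<1$) yields
\begin{equation*}
  \P_0\!\big(|\Delta X(T^{\{s_n\}})|\le 36 r_n\big)\;\lesssim\; 2^{-2n} + \big(r_n^2/A_n\big)^{d/(d+2)}\;\asymp\; 2^{-c_n d/(d+2)},
\end{equation*}
the $2^{-2n}$ absorbing both the event that $T^{\{s_n\}}$ is not type-$n$ and the total-variation error.

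\emph{Two-sided control of $\P_0(\tau_{B(0,r_n)}<T^{\{s_n\}})$ and conclusion.} For the upper bound, $X^{\{s_n\}}$ and $T^{\{s_n\}}$ are independent, so by Lemma~\ref{EHI:l:FactorOf2ForExits}, Markov's inequality and $\E_0|X^{\{s_n\}}_{T^{\{s_n\}}}|^2 = 2dD_n/\Lambda_n$,
\begin{equation*}
  \P_0\!\big(\tau_{B(0,r_n)}<T^{\{s_n\}}\big)\;\lesssim\;\frac{D_n}{r_n^2\,\Lambda_n}\;\asymp\;\frac{s_n}{r_n^2}\;=\;2^{-g_n}\;\to\;0 ,
\end{equation*}
so the middle quantity is $\ll1$. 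For the lower bound I would use the Gaussian structure: writing $\tau^B$ for the exit time of $B$ from $B(0,r_n)$, which is independent of $S$, one has $\{\tau_{B(0,r_n)}<t\}=\{\tau^B\le S^{\{s_n\}}_t\}$; for a suitable $t_n\asymp r_n/\sqrt{D_n\Lambda_n}$ one gets $\P(S^{\{s_n\}}_{t_n}\ge D_n t_n/2)\ge\tfrac12$ (a law-of-large-numbers estimate for $S^{\{s_n\}}$, valid because $D_n t_n/s_n\asymp r_n/\sqrt{s_n}\to\infty$), $\P(\tau^B\le D_n t_n/2)\gtrsim\exp(-c\,r_n^2/(D_n t_n))$ (a small-time Gaussian exit bound for $B$), and $\P(T^{\{s_n\}}>t_n)=e^{-\Lambda_n t_n}$; multiplying these three independent events and optimising in $t_n$ gives
\begin{equation*}
  \P_0\!\big(\tau_{B(0,r_n)}<T^{\{s_n\}}\big)\;\gtrsim\;\exp\!\Big(-C\sqrt{r_n^2\Lambda_n/D_n}\Big)\;\asymp\;\exp\!\big(-C'\,2^{g_n/2}\big).
\end{equation*}
Since $g_n$ grows slowly enough that $2^{g_n/2}=o(n)$, this lower bound dominates the overshoot estimate, i.e.\ $\exp(-C'2^{g_n/2})\gg 2^{-c_n d/(d+2)}$; combined with the previous displays this verifies the sandwich, so Proposition~\ref{EHI:p:SbmCounterex} applies and (as $r_n\to0$) $X$ fails $\EHI(r\le1)$, a fortiori $\EHI$.

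\emph{Main difficulty.} The crux is the lower bound just above, and it is precisely why Theorem~\ref{EHI:t:mainNegative} and Corollary~\ref{EHI:c:j(r)Counterex} do not suffice here: for a generic jump process the analogue of $\P_0(\tau_{B(0,r_n)}<T)$ only admits a lower bound of order $\exp(-C r_n^2\Lambda_n/D_n)$, and with $r_n^2\Lambda_n/D_n\asymp r_n^2/s_n$ forced to diverge this decays faster than the polynomially-small overshoot probability, leaving no admissible window for $(s_n,r_n)$. The square-root improvement $\exp(-C\sqrt{r_n^2\Lambda_n/D_n})$ is exactly what the subordinate-Brownian-motion structure buys: a single size-$s_n$ increment of the subordinator produces a spatial displacement with \emph{Gaussian}, not bounded, tails, and the exponent halves because one optimises the competition between ``the Brownian clock leaves $B(0,r_n)$ unusually early'' and ``the big subordinator jump arrives unusually late.'' The remaining, more routine, care goes into the term-by-term estimates $\Lambda_n\asymp 2^{n^2}$, $D_n\asymp 2^{-(n+1)^2}$ (so that $D_n/\Lambda_n\asymp s_n$ and the governing parameter $r_n^2/s_n=2^{g_n}$ is clean) and into checking that the growth window for $g_n$ is non-empty.
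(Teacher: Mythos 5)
Your proposal uses the same reduction as the paper — Proposition~\ref{EHI:p:SbmCounterex} with $s_n$ sitting just below the $n$th plateau (your $s_n=A_n2^{-2n}$ is within a factor of $2$ of the paper's $s_n=\tfrac{H_{n+1}}{H_n}A_{n+1}$), and the same upper-bound and overshoot estimates. The genuine divergence is in the choice of $r_n$ and the lower bound for $\P_0(\tau_{B(0,r_n)}<T^{\{s_n\}})$, and it is here that you go wrong in an instructive way.

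Your ``Main difficulty'' paragraph asserts that the generic lower bound $\exp\!\big(-C\,r_n^2\Lambda_n/D_n\big)\asymp\exp(-C\,r_n^2/s_n)$ is too small once $r_n^2/s_n\to\infty$ is imposed, ``leaving no admissible window'' and forcing the square-root improvement via the Gaussian structure. This is false. Divergence of $r_n^2/s_n$ can be as slow as you like, and the paper simply takes $r_n^2=s_n\log n$, so that $\exp(-c\,r_n^2/s_n)=n^{-c}$, which is \emph{polynomially} small in $n$, whereas the overshoot probability is exponentially small in $n$ (of order $2^{-\Theta(n)}$). The sandwich closes with room to spare. The paper's lower bound is then obtained not by a Gaussian small-ball estimate but by a much more elementary jump-counting argument: the compound Poisson process of type-$(n+1)$ jumps of size in $[A_{n+1}/2,s_n]$ has rate $\gtrsim H_{n+1}$, while the scattering rate is only $\asymp H_n$, and the probability of accumulating $\asymp r_n^2/A_{n+1}$ such jumps (hence $S_t\gtrsim r_n^2$) before the first jump of size $>s_n$ is $\gtrsim\exp\!\big(-\tfrac{H_n}{H_{n+1}}\cdot\tfrac{r_n^2}{A_{n+1}}\big)=\exp(-c\,r_n^2/s_n)$. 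Once $S_t\gtrsim r_n^2$, the (independent) Brownian clock has exited $B(0,r_n)$ with probability of order one. You forced yourself into needing the square-root improvement only because you chose $r_n^2/s_n=2^{g_n}$ with $g_n=\lfloor\sqrt{\log n}\rfloor$, which is \emph{larger} than $\log n$ for large $n$ and would indeed kill the un-improved bound.

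Your square-root bound itself is plausible and would also close the sandwich, but as written it has a gap. The identity $\{\tau_{B(0,r_n)}<t\}=\{\tau^B\le S^{\{s_n\}}_t\}$ (on $\{t<T^{\{s_n\}}\}$) is not correct: $S^{\{s_n\}}$ is a pure-jump subordinator, so the time $\tau^B$ can fall in a gap of the range of $S^{\{s_n\}}$, in which case $B$ exits $B(0,r_n)$ and re-enters without the subordinate process ever seeing it. One can repair this by arguing that, since all gaps have length $\le s_n\ll r_n^2$, once $B$ exits a slightly smaller ball it will, with probability bounded away from zero (by the strong Markov property of $B$ at $\tau^B$ and the small-increment variance $2s_n\ll r_n^2$), still lie outside $B(0,r_n)$ at the next point of the range. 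Likewise your ``law-of-large-numbers estimate for $S^{\{s_n\}}$'' is asserted rather than proven; it is true because the maximal increment $s_n$ is small relative to the mean $D_nt_n$, but a Paley--Zygmund or second-moment argument should be supplied. Compared with the paper's jump-counting argument, your route uses strictly more structure (the Gaussian transition density of $B$ and small-ball asymptotics) to prove something not actually needed at this choice of scales; its upside is conceptual — it explains cleanly \emph{why} the subordinate-Brownian structure would help if the scales were tighter — but for this theorem the paper's elementary counting both suffices and is shorter.
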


We will usually write $\alpha$, $H_n$, and $A_n$ (rather than their exact values $3$, $2^{n^2}$, and $2^{-2n^2}$) to illustrate that the idea behind this proof is easily adaptable to different instantiations of these parameters. Generally, $\alpha$ must be a sufficiently large exponent, $(H_n)$ must be a sequence that grows faster than exponentially (with $H_n \ll H_{n+1}$), and $(A_n)$ must be a sequence that decays much faster than $H_n$ grows (with $H_n A_n \gg H_{n+1} A_{n+1}$). Throughout the proof, when we use notation like $o(1)$, $\lesssim$, or $\ll$, we mean as $n \to \infty$.

The key idea is that $S$ usually takes jumps on the order of $A_n$ for some $n$, which corresponds to $X$ taking jumps with magnitude on the order of $\sqrt{A_n}$. At a scale between $\sqrt{A_{n+1}}$ and $\sqrt{A_n}$, the type-$m$ jumps for $m>n$ are all very small, so the process looks like a Brownian motion until the first jump of type $n$ or less. (The exponent $\alpha$ being larger than $2$ is enough to achieve this.) We construct sequences $(s_n)$ and $(r_n)$ such that $A_{n+1} \ll s_n \ll r_n^2 \ll A_n$. Then the stopping time $T^{\{s_n\}}$ (the first time $S$ takes a jump of size greater than $s_n$) will usually be the time of the first jump of type $n$ or less. At this time, because $A_n$ is so much larger than $r_n^2$, the process will usually jump somewhere very far away. The exact choices of $(s_n)$ and $(r_n)$ are reverse-engineered so that the conditions of Proposition \ref{EHI:p:SbmCounterex} will be satisfied.

\begin{proof}[Proof of Theorem \ref{EHI:t:HighlyRegularCounterex}]
    We will apply Proposition \ref{EHI:p:SbmCounterex}, with
    \begin{align*}
        s_n &:= \frac{H_{n+1}}{H_n} A_{n+1} = 2^{-(2n^2+2n+1)}, \\
    r_n^2 &:= \frac{H_{n+1}}{H_n} A_{n+1} \log(n) = 2^{-(2n^2+2n+1)} \log n.
    \end{align*}
    Then $A_{n+1} \ll s_n \ll r_n^2 \ll A_n$. We must show that $1 \gg \P_0(\tau_{B(0, r_n)} < T^{\{s_n\}}) \gg \P_0(|\Delta X(T^{\{s_n\}})| \leq 36 r_n)$.

    Let $\lambda_{\{s_n\}}$ denote the rate at which jumps such that $\Delta S(t) > s_n$ occur. Since $A_{n+1} \ll s_n \ll A_n$, by \eqref{EHI:e:ProbYsmall} and \eqref{EHI:e:ProbYbig},
    \begin{equation*}
        \lambda_{\{s_n\}} = \sum_{m=1}^\infty H_m \P(A_m Y > s_n) = (1-o(1))\sum_{m=1}^n H_m + \sum_{m=n+1}^\infty \frac{1}{\alpha} H_m \left( \frac{A_m}{s_n} \right)^{\alpha-1}.
    \end{equation*}
    One may verify that the dominant term of this series is $H_n$, so
    \begin{equation}\label{EHI:e:LambdasnRegularCounterex}
        \lambda_{\{s_n\}} = (1\pm o(1)) H_n.
    \end{equation}

    Next, let us consider the rate at which jumps such that $\frac{A_{n+1}}{2} \leq \Delta S(t) \leq s_n$ occur. Let $\lambda^*_n$ denote this rate. By putting a lower bound on $\lambda^*_n$, we will put a lower bound on the probability that enough of these jumps occur before time $T^{\{s_n\}}$ that $S_t$ is already on the order of $r_n^2$. Note that more than half of the type-$(n+1)$ jumps are of this size (at least for sufficiently large $n$), since by \eqref{EHI:e:ProbYsmall} and \eqref{EHI:e:ProbYbig},
    \begin{equation*}
        \P \left( \frac{A_{n+1}}{2} \leq A_{n+1}Y \leq s_n \right) = 1 - \P \left( Y \leq \frac12 \right) - \P \left( Y \geq \frac{s_n}{A_{n-1}} \right) = 1-\frac{\alpha-1}{\alpha} \cdot\frac12 - o(1) > \frac12.
    \end{equation*}
    Therefore, for sufficiently large $n$,
    \begin{equation} \label{EHI:e:Lambdastarn}
        \lambda^*_n \geq \frac12 H_{n+1}.
    \end{equation}
    Let $c$ be a small constant. The probability of getting at least $c\frac{r_n^2}{A_{n+1}}$ jumps with $\frac{A_{n+1}}{2} \leq s_n \leq s_n$ before time $T^{\{s_n\}}$ is equal to
    \begin{align*}
        \left( \frac{\lambda^*_n}{\lambda^*_n + \lambda_{\{s_n\}}} \right)^{\ceil{c \frac{r_n^2}{A_{n+1}}}} &= \left(1 + \frac{\lambda_{\{s_n\}}}{\lambda^*_n} \right)^{-\ceil{c \frac{r_n^2}{A_{n+1}}}} \\
        &\geq \exp(- \frac{\lambda_{\{s_n\}}}{\lambda^*_n} \ceil{ c \frac{r_n^2}{A_{n+1}}}).
    \end{align*}
    Since $r_n^2 \gg A_{n+1}$, $\lambda_{\{ s_n \} } = (1+o(1)) H_n$, and $\lambda^*_n \geq \frac12 H_{n+1}$, we can easily choose a $c$ so that
    \begin{equation*}
        \frac{\lambda_{\{s_n\}}}{\lambda^*_n} \ceil{ c \frac{r_n^2}{A_{n+1}}} \leq \frac{H_n}{H_{n+1}} \cdot \frac{r_n^2}{A_{n+1}}.
    \end{equation*}
    Therefore, with probability at least $\exp(-(H_n / H_{n+1}) (r_n^2 / A_{n+1}))$, the subordinator $S$ takes at least $cr_n^2/A_{n+1}$ jumps of size at least $A_{n+1}/2$ before time $T^{ \{ s_n \} }$. If this happens, then we already have $S_t \geq \frac{c}{2} r_n^2$ before time $T^{\{s_n \}}$, so the conditional probability that $|X_t| \geq r_n$ for some $t<T^{\{s_n\}}$ is on the order of $1$. Therefore,
    \begin{equation} \label{EHI:e:RegularCounterexLowerbound}
        \P_0 (\tau_{B(0, r_n)} < T^{\{s_n\}}) \gtrsim \exp(-\frac{H_n}{H_{n+1}} \cdot \frac{r_n^2}{A_{n+1}}) = \frac{1}{n}.
    \end{equation}

    We would like to put an upper bound on $\P_0 (|\Delta X(T^{\{s_n\}})| \leq 36 r_n)$ that decays much faster than $1/n$, to show that $\P_0 (\tau_{B(0, r_n)} < T^{\{s_n\}}) \gg \P_0 (|\Delta X(T^{\{s_n\}})| \leq 36 r_n)$. There are two ways we could have $|\Delta X(T^{\{s_n\}})| \leq 36 r_n$. First, $T^{\{s_n\}}$ might (surprisingly) be a type-$m$ jump for some $m>n$. Second, $T^{\{s_n\}}$ could be type $n$ or less, but the Gaussian factor in the displacement of $X$ resulting from this jump might be surprisingly small. We will put upper bounds on the probabilities each of these occurrences.

    The rate at which jumps in $S$ of size greater than $s_n$ due to type-$m$ jumps for $m>n$ occur is $\sum_{m=n+1}^\infty H_m \P(A_m Y > s_n)$. The rate at which jumps in $S$ with size greater than $s_n$ due to type-$n$ jumps is $(1-o(1)) H_n$. Therefore,
    \begin{align*}
        \P_0 \left( \mbox{$T^{\{s_n\}}$ is type-$m$ for some $m<n$} \right) &\leq \frac{\sum_{m=n+1}^\infty H_m \frac{1}{\alpha} \left( \frac{A_m}{s_n} \right)^{\alpha-1}}{(1-o(1)) H_n} \\
        &\asymp \frac{H_{n+1}}{H_n} \left( \frac{A_{n+1}}{s_n} \right)^{\alpha-1} = \left( \frac{H_n}{H_{n+1}} \right)^{\alpha-2}.
    \end{align*}
    (We obtained the $\asymp$ in the above equation by considering the dominant term in the series.)
    On the other hand, if $T^{\{ s_n \}}$ is type $m$ for some $m \leq n$, then the displacement of $X$ from this jump is equal in distribution to $\sqrt{2 A_m Y} Z$, where $Z \sim N(0, I_d)$ is independent of $Y$, so by Lemma \ref{EHI:l:YZlemma},
    \begin{align*}
        \P_0 \left( |\Delta X(T^{\{s_n\}})| \Big| \mbox{$T^{\{s_n\}}$ is type-$m$ for some $m \geq n$} \right) &\leq \P \left( \sqrt{2 A_n Y} |Z| \leq 36 r_n \Big| Y \geq s_n \right) \\
        &\leq \P \left( \sqrt{2 A_n Y} |Z| \leq 36 r_n \right) \\
        &\lesssim \left( \frac{r_n^2}{A_n} \right)^{\frac{2d}{d+2}}.
    \end{align*}
    Thus,
    \begin{equation} \label{EHI:e:RegularCounterexScattering}
        \P_0 \left( |\Delta X(T^{\{s_n\}})| \leq 36 r_n \right) \lesssim \left( \frac{H_n}{H_{n+1}} \right)^{\alpha-2} + \left( \frac{r_n^2}{A_n} \right)^{\frac{2d}{d+2}} = 2^{-(2n+1)(\alpha-2)} + \left( 2^{-(2n+1)} \log n \right)^{\frac{2d}{d+2}} \ll \frac{1}{n}.
    \end{equation}

    We must still show that $\P_0 (\tau_{B(0, r_n)} < T^{\{s_n\}}) \ll 1$. Consider the truncated processes $\tilde{S}^{\{s_n\}}$ and $\tilde{X}^{\{s_n \}}$, which do not include the jumps of size greater than $S_n$ in $S$.
    We use the notation $\tilde{\tau}^{\{s_n\}}_U$ to denote exit times for $\tilde{X}^{\{s_n\}}$
    By Lemma \ref{EHI:l:FactorOf2ForExits}, Markov's inequality, and conditioning on $\tilde{S}^{\{ s_n \}}_{T^{\{ s_n \} }}$,
    \begin{align*}
        \P_0 \left( \tau_{B(0, r_n)} < T^{\{s_n\}} \right) &= \P_0 \left( \tilde{\tau}^{\{s_n\}}_{B(0, r_n)} < T^{\{s_n\}} \right) \leq 2 \P_0 \left( \left| \tilde{X}^{\{s_n\}}_{T^{\{ s_n \}}} \right| \geq r_n \right) \leq 2r_n^{-2} \E_0 \left[ \left| \tilde{X}^{\{s_n\}}_{T^{\{ s_n \}}} \right|^2 \right] \notag\\
        & \asymp r_n^{-2} \E_0 \left[ \tilde{S}^{\{s_n\}}_{T^{\{ s_n \}}} \right] = r_n^{-2} \frac{\E_0 \left[ \tilde{S}^{\{s_n\}}_{1}  \right]}{\lambda_{\{s_n\}}}. \notag\\
    \end{align*}
    To compute $\E_0 \left[ \tilde{S}^{\{s_n\}}_{1}  \right]$,
    \begin{align*}
        \E_0 \left[ \tilde{S}^{\{s_n\}}_{1}  \right] &= \sum_{m=1}^\infty H_m \E[A_m Y \cdot \indicatorWithSetBrackets{A_m Y \leq s_n}] \\
        &= \sum_{m=1}^\infty H_m A_m \E \left[Y \indicatorWithSetBrackets{Y \leq \frac{s_n}{A_m}} \right].
    \end{align*}
    For $m > n$, let us just replace the indicator with $1$ for an upper bound. For $m \leq n$, let us replace $Y \indicatorWithSetBrackets{Y \leq s_n/A_m}$ with $(s_n / A_m ) \indicatorWithSetBrackets{Y \leq s_n/A_m}$. The result is
    \begin{align*}
        \E_0 \left[ \tilde{S}^{\{s_n\}}_{1}  \right] &\leq \sum_{m=1}^n H_m s_n \P_0 \left( Y \leq \frac{s_n}{A_m} \right) + \sum_{m=n+1}^\infty H_m A_m \E[Y] \\
        &\lesssim s_n \sum_{m=1}^n H_m + \sum_{m=n+1}^\infty H_m A_m.
    \end{align*}
    This series has two dominant terms, corresponding to $m=n$ and $m=n+1$, which are both equal to $H_{n+1} A_{n+1}$. Therefore,
    \begin{equation} \label{EHI:e:RegularCounterex<<1}
        \P_0 \left( \tau_{B(0, r_n)} < T^{\{s_n\}} \right) \leq \frac{H_{n+1} A_{n+1}}{r_n^2 \lambda_{\{ s_n \}}} \asymp \frac{H_{n+1} A_{n+1}}{r_n^2 H_n} = \frac{s_n}{r_n^2} = \frac{1}{\log n } \ll 1.
    \end{equation}
    By \eqref{EHI:e:RegularCounterex<<1}, \eqref{EHI:e:RegularCounterexLowerbound}, and \eqref{EHI:e:RegularCounterexScattering},
    \begin{equation*}
        1 \gg \frac{1}{\log n} \gtrsim \P_0 \left( \tau_{B(0, r_n)} < T^{\{s_n\}} \right) \gtrsim \frac{1}{n} \gg \P_0 \left( |\Delta X(T^{\{s_n\}})| \leq 36 r_n \right),
    \end{equation*}
    as desired.
\end{proof}

\bibliographystyle{alpha}
\bibliography{biblio} 
\end{document}